\theoremstyle{plain}
\newtheorem{thm}{Theorem}[section]
\newtheorem*{thm*}{Theorem}
\newtheorem{lem}[thm]{Lemma}
\newtheorem{prop}[thm]{Proposition}
\theoremstyle{definition}
\newtheorem{defn}[thm]{Definition}
\newtheorem{nota}[thm]{Notation}
\newtheorem{ex}[thm]{Example}
\newtheorem{constr}[thm]{Construction}
\newtheorem{hyp}[thm]{Hypothesis}
\theoremstyle{remark}
\newtheorem{rem}[thm]{Remark}
\Crefname{thm}{Theorem}{Theorems}
\Crefname{lm}{Lemma}{Lemmata}
\Crefname{prop}{Proposition}{Propositions}
\Crefname{cor}{Corollary}{Corollaries}
\Crefname{hyp}{Hypothesis}{Hypotheses}
\Crefname{q}{Question}{Questions}
\Crefname{defn}{Definition}{Definitions}
\Crefname{nota}{Notation}{Notations}
\Crefname{ex}{Example}{Examples}
\Crefname{xca}{Exercise}{Exercises}
\Crefname{rem}{Remark}{Remarks}
\Crefname{constr}{Construction}{Constructions}
\newcommand{\Q}{\mathbb{Q}}
\newcommand{\Ccal}{\mathcal{C}}
\newcommand{\Ical}{\mathcal{I}}
\newcommand{\Rcal}{\mathcal{R}}
\newcommand{\Scal}{\mathcal{S}}
\newcommand{\Asf}{\mathsf{A}}
\newcommand{\Ssf}{\mathsf{S}}
\newcommand{\Tsf}{\mathsf{T}}
\newcommand{\Vsf}{\mathsf{V}}
\newcommand{\id}{\textup{id}}
\newcommand{\unit}{\mathbf{1}}
\newcommand{\Hbb}{\mathbb{H}}
\newcommand{\conn}{\mathsf{conn}}
\newcommand*{\isoarrow}[1]{\arrow[#1,"\rotatebox{90}{\(\sim\)}"]}
\newcommand{\Var}{\mathsf{Var}}
\newcommand{\Sm}{\mathsf{Sm}}
\newcommand{\Perv}{\mathsf{Perv}}
\newcommand\blfootnote[1]{%
	\begingroup
	\renewcommand\thefootnote{}\footnote{#1}%
	\addtocounter{footnote}{-1}%
	\endgroup
}
\title{Tensor structures on fibered categories}
\author[Luca Terenzi]{Luca Terenzi}
\address{Luca Terenzi \newline
	\indent UMPA, ENS de Lyon \newline
	\indent 46 Allée d'Italie, 69364 Lyon Cedex 07 (France)}
\email{\normalfont\href{mailto:luca.terenzi@ens-lyon.fr}{luca.terenzi@ens-lyon.fr}}
\begin{document}

\maketitle

\begin{abstract}
Let $\Scal$ be a small category admitting binary products. We show that the whole theory of monoidal $\Scal$-fibered categories, which is customarily formulated in terms of the usual internal tensor product, can be rephrased purely in terms of the associated external tensor product. 
More precisely, we construct a canonical dictionary relating the classical structures and properties of the internal tensor product to analogous structures and properties of the external tensor product; the translation between the internal version and the external version of the theory is mediated by suitable notions of equivalence. 
Our method applies to associativity, commutativity, and unit constraints, to projection formulae, as well as to monoidality of morphisms between monoidal $\Scal$-fibered categories. 
For instance, we show how Mac Lane's classical pentagon and hexagon axioms can be stated using the external tensor product. 
Our results provide a satisfactory abstract framework to study monoidal structures in the setting of perverse sheaves.
\end{abstract}

\tableofcontents

\blfootnote{\textit{\subjclassname}. 18D30, 18M05.}
\blfootnote{\textit{\keywordsname}. Monoidal fibered categories, external tensor product, perverse sheaves.}

\blfootnote{The author acknowledges support by the GK 1821 "Cohomological Methods in Geometry" at the University of Freiburg and by the Labex Milyon at the ENS Lyon.}

\section*{Introduction}

\subsection*{Motivation and goal of the paper}

Given a small category $\Scal$, the theory of $\Scal$-fibered categories can be merged with the theory of monoidal categories into a theory of \textit{monoidal $\Scal$-fibered categories}. The basic definitions of this theory can be found, for example, in \cite[\S~2.3.1]{Ayo07a}, where fibered categories are regarded as contravariant pseudo-functors with domain $\Scal$ rather than as fibrations over $\Scal$; the same perspective is adopted throughout the present paper. In a nutshell, giving a monoidal $\Scal$-fibered category amounts to giving an $\Scal$-fibered category $\Hbb$ equipped with a collection of tensor product functors
\begin{equation*}
	- \otimes - = - \otimes_S -: \Hbb(S) \times \Hbb(S) \rightarrow \Hbb(S), \quad (A,B) \rightsquigarrow A \otimes B \qquad (S \in \Scal)
\end{equation*}
carrying an associativity constraint (and possibly also a commutativity constraint and a unit constraint) together with, for every morphism $f: T \rightarrow S$ in $\Scal$, a natural isomorphism of functors $\Hbb(S) \times \Hbb(S) \rightarrow \Hbb(T)$
\begin{equation}\label{m_f:intro-ch1}
	m = m_f: f^* A \otimes f^* B \xrightarrow{\sim} f^*(A \otimes B)
\end{equation}
witnessing the monoidality of the inverse image functor $f^*: \Hbb(S) \rightarrow \Hbb(T)$. These structures are required to satisfy a certain list of compatibility conditions ensuring that any two given iterates of tensor product and inverse image functors do not get identified in more than one way.
If the base category $\Scal$ admits binary products, one can also define the \textit{external tensor product} functors
\begin{equation*}
	- \boxtimes - = - \boxtimes_{S_1,S_2} -: \Hbb(S_1) \times \Hbb(S_2) \rightarrow \Hbb(S_1 \times S_2), \quad (A_1,A_2) \mapsto A_1 \boxtimes A_2 \qquad (S_1, S_2 \in \Scal)
\end{equation*}
by the formula
\begin{equation}\label{formulae_int_to_ext}
	A_1 \boxtimes A_2 := pr^{12*}_{1} A_1 \otimes pr^{12*}_{2} A_2,
\end{equation}
where $pr^{12}_{i}: S_1 \times S_2 \rightarrow S_i$, $i = 1,2$, denote the canonical projections. Informally, the usual tensor product - also called the \textit{internal tensor product} - can be recovered from the external tensor product via the formula
\begin{equation}\label{formulae_ext_to_int}
	A \otimes B = \Delta_S^*(A \boxtimes B),
\end{equation}
where $\Delta_S: S \hookrightarrow S \times S$ denotes the diagonal embedding.
However, strictly speaking, the two constructions in \eqref{formulae_int_to_ext} and \eqref{formulae_ext_to_int} are not mutually inverse. Moreover, it is a priori unclear whether the monoidality isomorphisms \eqref{m_f:intro-ch1} and the various additional constraints (together with their coherence and mutual compatibility conditions) can be expressed purely in terms of the external tensor product. A similar issue arises when studying monoidality of morphisms between $\Scal$-fibered categories equipped with monoidal structures. The main goal of this paper is to provide a precise answer to all these questions. 

Our main motivation for this abstract work comes from the theory of perverse sheaves over algebraic or analytic varieties. To fix notation, let $k$ be a subfield of the complex numbers, and consider the category $\Var_k$ of (quasi-projective) algebraic $k$-varieties. To every $S \in \Var_k$ one can attach the algebraically constructible $\Q$-linear derived category $D^b_c(S,\Q)$, a tensor-triangulated category; as $S$ varies, these categories assemble into a monoidal $\Var_k$-fibered category. The theory of \cite{BBD82} yields a perverse $t$-structure on $D^b_c(S,\Q)$ whose heart is the $\Q$-linear abelian category of perverse sheaves on $S$, denoted $\Perv(S)$. As shown by Beilinson in his celebrated paper \cite{Bei87}, the triangulated category $D^b_c(S,\Q)$ can be canonically recovered as the bounded derived category $D^b(\Perv(S))$. In light of Beilinson's result, it is natural to wonder how much of the functoriality of the triangulated categories $D^b_c(S,\Q)$ can be reconstructed directly on the level of perverse shaves. 

This question is not a mere formal quirk.
Over the last decades, the categories $D^b_c(S,\Q)$ have served as a model for several other refined triangulated systems of coefficient over algebraic varieties. In some cases, it has proven successful to construct the triangulated coefficient categories directly as bounded derived categories of suitable abelian categories of enhanced perverse sheaves. The most remarkable example is M. Saito's theory of \textit{mixed Hodge modules}, developed in \cite{Sai90}, which provides a system of coefficients enhancing the classical theory of mixed Hodge structures. Saito's triangulated categories enjoy a rich functoriality which perfectly mimics that of the underlying constructible derived categories; in particular, they assemble into a monoidal $\Var_k$-fibered category. Saito's approach to the tensor structure on mixed Hodge modules uses the external perspective in an essential way: it is the internal tensor product that is defined from the external one via the above-mentioned formula \eqref{formulae_ext_to_int}, not the other way around. The fundamental reason behind Saito's approach is the lack of perverse $t$-exactness of the internal tensor product: the bi-triangulated functor
\begin{equation*}
	- \otimes -: D^b_c(S,\Q) \times D^b_c(S,\Q) \rightarrow D^b_c(S,\Q)
\end{equation*}
does not respect the perverse $t$-structure unless $\dim(S) = 0$. On the other hand, the external tensor product
\begin{equation*}
	- \boxtimes -: D^b_c(S_1,\Q) \times D^b_c(S_2,\Q) \rightarrow D^b_c(S_1 \times S_2,\Q)
\end{equation*}
does always respect the perverse $t$-structures; it restricts to a bi-exact functor
\begin{equation*}
	- \boxtimes -: \Perv(S_1) \times \Perv(S_2) \rightarrow \Perv(S_1 \times S_2)
\end{equation*}
and, in fact, it can be canonically recovered from the latter modulo Beilinson's equivalences. In the setting of mixed Hodge modules, one is thus led to construct the external tensor product on the perverse hearts in the first place. (Note that what ultimately prevents the internal tensor product from being $t$-exact is the lack of $t$-exactness of the inverse image functor $\Delta_S^*$; however, it turns out that the latter can be conveniently expressed in terms of $t$-exact functors, which saves the day. We prefer to gloss over this point, as it will play no role in the present paper.) It is fair to say that Saito's impressive work deals mainly with the concrete definition of the single functors on mixed Hodge modules, leaving the most genuinely category-theoretic aspects of the construction aside. In particular, the associativity, symmetry, and unitarity properties of Saito's monoidal structure are not studied systematically in \cite{Sai90}. Our work thus provides an adequate abstract framework for Saito's construction. Based on the same general framework, in \cite{Ter24Fact} and \cite{Ter24Emb} we further investigate more specific questions related to the monoidality of inverse image functors and the construction of unit constraints. 

The same need for precise foundations for the theory of monoidal fibered categories based on the external tensor product is even more visible in the setting of \textit{perverse Nori motives}, a further motivic refinement of Saito's coefficient system recently introduced by F. Ivorra and S. Morel in \cite{IM24}.
Again, the starting datum is a system of abelian categories of enhanced perverse sheaves, and the goal is to equip their bounded derived categories with the same functoriality as the underlying constructible derived categories.
Contrary to the case of mixed Hodge modules, perverse Nori motives are defined by a purely categorical construction, which prevents one from describing their objects and morphisms in any concrete manner. 
In fact, the nature of Ivorra and Morel's categories makes it impossible to construct functors on them if not by lifting suitable $t$-exact functors on the constructible derived categories, as an abstract application of a certain categorical universal property.
As long as one is concerned with the definition of the single functors, Saito's ideas can be successfully adapted to the motivic setting.
However, making sure that these functors give rise to a coherent structure is a rather delicate question, for the structural reasons just mentioned. 
In our article \cite{Ter24Nori}, we apply the general method developed here to constructing the monoidal structure on perverse Nori motives, again starting from the external tensor product on the perverse hearts. In the motivic setting, the highly inexplicit nature of the categories involved forces us to be as formally precise as possible. For this reason, having a general dictionary expressing the usual structures and properties of the internal tensor product as structures and properties of the external tensor product is highly desirable in our opinion. 

\subsection*{Main results}

In order not to restrict the application range of our techniques unnecessarily, and also for sake of notational and conceptual clarity, we formulate our results in the general framework of fibered categories, thus avoiding any reference to perverse sheaves or to concrete geometric situations.

Given a small category $\Scal$ admitting binary products, we introduce the language of \textit{internal and external tensor structures} on a given $\Scal$-fibered category $\Hbb$.
Our treatment of internal tensor structures is just a re-arrangement of the classical theory of monoidal $\Scal$-fibered categories: for example, we introduce associativity, commutativity, and unit constraints (as well as their compatibility conditions) as formalized in Mac Lane's classical paper \cite{Mac63}, and we describe their compatibility with inverse image functors as done in Ayoub's work \cite{Ayo07a}. The only difference is that, in order to make the exposition as neat as possible, we prefer not to include an associativity constraint from the very beginning but rather treat it as an accessory structure, exactly as one sometimes does with commutativity and unit constraints. 
Our theory of external tensor structures should be new in the literature. The main notions that we introduce are the natural transliteration of the corresponding notions for internal tensor structures. This operation works nicely both for monoidality isomorphisms and for each of the three types of additional constraint (as well as for their compatibility conditions).  

Motivated by the theory of perverse sheaves and by the abstract theory of six functor formalisms (as formalized in \cite{Ayo07a,Ayo07b}, \cite{CisDeg} or \cite{DrewGal}), we also study the so-called \textit{projection formulae} in our setting of tensor structures. Not surprisingly, it turns out that the usual formulation in terms of the internal tensor product admits a natural translation in terms of the external tensor product.
Again, the main difference with respect to the existing literature is that we treat the validity of projection formulae as a separate property rather than imposing it from the beginning.

Generally speaking, an important ingredient in our presentation is the notion of \textit{equivalence} between internal or external tensor structures: this allows us to make it precise in which sense the classical formulae \eqref{formulae_int_to_ext} and \eqref{formulae_ext_to_int} should be considered to be mutually inverse.
Our first main result can be stated as follows:

\begin{thm*}[\Cref{thm_bij}, \Cref{lem_pf}]
	Fix an $\Scal$-fibered category $\Hbb$. Then there exist canonical mutually inverse bijections between equivalence classes of (unitary, symmetric, associative) internal and external tensor structures on $\Hbb$. Moreover, these bijections respect the validity of projection formulae.
\end{thm*}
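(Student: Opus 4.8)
The plan is to construct the two assignments at the level of tensor structures and only afterwards descend to equivalence classes. Write $\Phi$ for the operation sending an internal tensor structure to the external one determined by $A_1 \boxtimes A_2 := pr^{12*}_1 A_1 \otimes pr^{12*}_2 A_2$ as in \eqref{formulae_int_to_ext}, and $\Psi$ for the reverse operation built from \eqref{formulae_ext_to_int}, namely $A \otimes B := \Delta_S^*(A \boxtimes B)$. Producing the underlying functors is immediate; the substance of $\Phi$ and $\Psi$ lies in transliterating all the accessory data — the monoidality isomorphisms $m_f$ together with the associativity, commutativity, and unit constraints — and in checking that the transliterated data again satisfy the required coherence axioms. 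For the monoidality isomorphism under $\Phi$, for instance, one assembles the external comparison $(f_1 \times f_2)^*(A_1 \boxtimes A_2) \xrightarrow{\sim} f_1^* A_1 \boxtimes f_2^* A_2$ out of the internal $m_{f_1 \times f_2}$ and the pseudo-functoriality isomorphisms of $\Hbb$ witnessing $pr_i \circ (f_1 \times f_2) = f_i \circ pr_i$; the three constraints transliterate in the same spirit, the associativity constraint additionally incorporating the coherence isomorphism of the binary product in $\Scal$. I would establish these transliterations as the preparatory statements that the theorem cites (\Cref{thm_bij}), so that here they can simply be invoked.

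The second step is to show that $\Phi$ and $\Psi$ descend to equivalence classes and are mutually inverse there, the \emph{equivalence} relation being exactly what makes the two formulae inverse. The key observation is that neither composite is the identity on the nose, but each is canonically equivalent to it. Beginning from an internal structure, the roundtrip $\Psi \circ \Phi$ produces $A \otimes' B = \Delta_S^*(pr^{12*}_1 A \otimes pr^{12*}_2 B)$; applying $m_{\Delta_S}$ followed by the pseudo-functoriality isomorphisms attached to $pr^{12}_i \circ \Delta_S = \id_S$ yields a natural isomorphism $A \otimes' B \xrightarrow{\sim} A \otimes B$. Symmetrically, the roundtrip $\Phi \circ \Psi$ produces $A_1 \boxtimes' A_2 = \Delta_{S_1 \times S_2}^*(pr^{12*}_1 A_1 \boxtimes pr^{12*}_2 A_2)$, which the external monoidality isomorphism and the relation $(pr^{12}_1 \times pr^{12}_2) \circ \Delta_{S_1 \times S_2} = \id_{S_1 \times S_2}$ identify canonically with $A_1 \boxtimes A_2$. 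I would then verify that each of these natural isomorphisms is compatible with the transliterated monoidality isomorphisms and with each of the three constraints, hence constitutes an equivalence of tensor structures. Since the comparison isomorphisms used throughout involve no choices, the resulting mutually inverse maps on equivalence classes are canonical, and by construction the bijection preserves each of the adjectives unitary, symmetric, and associative.

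For the last clause I would argue that the same formulae transport the projection-formula data in parallel: under $\Phi$ an internal projection morphism induces an external one and under $\Psi$ the converse, and the roundtrip equivalences of the previous step are compatible with these morphisms. Consequently the property of satisfying projection formulae is invariant along the bijection, which is the content of \Cref{lem_pf}.

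The main obstacle is the coherence bookkeeping hidden in the first two steps. Verifying that the transliterated constraints satisfy Mac Lane's pentagon and hexagon axioms in the external setting, and that the roundtrip isomorphisms respect all constraints simultaneously, requires systematic diagram chasing that intertwines three separate sources of comparison isomorphisms: the monoidality isomorphisms $m_f$, the pseudo-functoriality isomorphisms of $\Hbb$, and the coherence isomorphisms of the cartesian products in $\Scal$. The associativity constraint, where the ambient base object genuinely changes under reassociation, is where this interplay is most delicate.
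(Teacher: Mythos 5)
Your first two steps are exactly the paper's route: your $\Phi$ and $\Psi$ are \Cref{lem_int_to_ext} and \Cref{lem_ext_to_int} (with \Cref{lem_ass_int_to_ext}, \Cref{lem_symm_int_to_ext}, \Cref{lem:unit-int_to_ext} and their converses handling the three constraints), and your roundtrip isomorphisms — built from $m_{\Delta_S}$, the external monoidality isomorphism, and the connection isomorphisms attached to $pr^{12}_i \circ \Delta_S = \id_S$ and $(pr^{12}_1 \times pr^{12}_2) \circ \Delta_{S_1 \times S_2} = \id$ — are precisely the equivalences of \Cref{prop_bij_in_ext}, refined in \Cref{prop_asso}, \Cref{prop_symm}, \Cref{prop:unit} and \Cref{sect:comp}. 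One methodological remark on the "coherence bookkeeping" you identify as the main obstacle: the paper does not attack it by raw diagram chasing but first proves strictification lemmas (\Cref{lem-coherent_conn}, \Cref{lem-coherent_conn-monoint}, \Cref{lem-coherent_conn-monoext}) asserting that \emph{any} two composites of connection and monoidality isomorphisms between the same pair of functors coincide; every large diagram then collapses to a small core plus pieces that commute for free. You would want such a lemma, since otherwise the verifications you defer (especially for associativity, where the base object changes) become unwieldy.

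The projection-formula step, however, has a genuine gap. You treat it as if the dictionary between $\otimes$ and $\boxtimes$ alone transported the projection morphisms, but the internal morphism \eqref{pf-otimes} involves $p_{\#}$ over $S$, while the external morphism \eqref{pf-boxtimes} involves $(p_1 \times \id_{S_2})_{\#}$ over $S_1 \times S_2$: these are left adjoints of inverse images along \emph{different} morphisms, and no composite of connection and monoidality isomorphisms relates $(p_1 \times \id_{S_2})_{\#}\bigl(pr^{12*}_1 A_1 \otimes pr^{12*}_2 A_2\bigr)$ to $pr^{12*}_1 p_{1\#} A_1 \otimes pr^{12*}_2 A_2$. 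What is needed — and what your proposal never invokes — is the standing hypothesis that $\Hbb$ is geometric (\Cref{defn:geometric-fibcat}), i.e.\ the existence of the adjoints $p_{\#}$ together with the smooth base-change axiom ($\Scal^{sm}$-1): the paper compares the two canonical morphisms through the exchange isomorphisms $(p_1 \times \id_{S_2})_{\#}\, pr^{12*}_1 \cong pr^{12*}_1\, p_{1\#}$ (from the Cartesian square of $p_1$ and $pr^{12}_1$) in one direction, and $p_{\#}(\id_P,p)^* \cong \Delta_S^*\,(p \times \id_S)_{\#}$ (from the square of $\Delta_S$ and $p \times \id_S$) in the other. Without these, the comparison diagram of \Cref{lem_pf} cannot even be written down, so "the same formulae transport the projection-formula data" is not a proof step but a restatement of what must be proved. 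Note also that the projection formula is a property — invertibility of a canonical map that always exists — so the correct shape of the argument is: the two canonical maps are intertwined by the exchange isomorphisms, hence one is invertible if and only if the other is; equivalence-invariance of the property (\Cref{rem_sh2f_eq_proj}) then lets the statement descend to equivalence classes.
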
  
\noindent
It is worth noting that all the equivalences between tensor structures involved in the proofs are explicit and canonical, so that the actual result is more precise than what asserted.

In the same spirit, given a morphism of $\Scal$-fibered categories $R: \Hbb_1 \rightarrow \Hbb_2$, we introduce the notion of \textit{internal and external tensor structures on $R$} (with respect to two given internal or external tensor structures on $\Hbb_1$ and $\Hbb_2$, respectively). In the internal setting, we recover the usual notion of monoidal morphism between monoidal $\Scal$-fibered categories; in the external setting, we consider its natural transliteration. We then study the compatibility of internal and external tensor structures on $R$ with given associativity, commutativity, or unit constraints on $\Hbb_1$ and $\Hbb_2$.
Our second main result can be stated as follows:

\begin{thm*}[\Cref{thm_bij_morph}]
	Fix a morphism of $\Scal$-fibered categories $R: \Hbb_1 \rightarrow \Hbb_2$. Then, under the bijective correspondence of the previous theorem, there exist canonical mutually inverse bijections between (unitary, symmetric, associative) internal and external tensor structures on $R$ with respect to two given (unitary, symmetric, associative) tensor structures on $\Hbb_1$ and $\Hbb_2$.
\end{thm*}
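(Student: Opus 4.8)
The plan is to transport tensor structures on $R$ across the defining formulas \eqref{formulae_int_to_ext} and \eqref{formulae_ext_to_int}, precisely as the proof of \Cref{thm_bij} transports tensor structures on $\Hbb$, but now threading in the structural exchange isomorphisms of $R$ as a morphism of $\Scal$-fibered categories. For a morphism $f$ in $\Scal$ write $\rho_f\colon f^* R(-) \xrightarrow{\sim} R(f^* -)$ for the connection isomorphism of $R$, and fix corresponding internal and external tensor structures on $\Hbb_1$ and $\Hbb_2$ as in \Cref{thm_bij}; the equivalence between them supplies the canonical identifications $A_1 \boxtimes A_2 \cong pr_1^* A_1 \otimes pr_2^* A_2$ and $A \otimes B \cong \Delta_S^*(A \boxtimes B)$ that I use below. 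Given an internal tensor structure $t_{A,B}\colon R(A) \otimes R(B) \xrightarrow{\sim} R(A \otimes B)$ on $R$, I associate the external datum
\[ R(A_1) \boxtimes R(A_2) \cong pr_1^* R(A_1) \otimes pr_2^* R(A_2) \xrightarrow{\rho_{pr_1} \otimes \rho_{pr_2}} R(pr_1^* A_1) \otimes R(pr_2^* A_2) \xrightarrow{\;t\;} R(A_1 \boxtimes A_2), \]
and conversely, given an external tensor structure $t^{\boxtimes}_{A_1,A_2}$ on $R$, I associate the internal datum
\[ R(A) \otimes R(B) \cong \Delta_S^*\bigl(R(A) \boxtimes R(B)\bigr) \xrightarrow{\Delta_S^* t^{\boxtimes}} \Delta_S^* R(A \boxtimes B) \xrightarrow{\rho_{\Delta_S}} R(A \otimes B). \]

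First I would check that each assignment lands among the intended structures: that the first composite satisfies the external compatibility conditions defining an external tensor structure on $R$, and symmetrically for the second. After unfolding all definitions, each such verification reduces to the corresponding internal compatibility condition for $t$, combined with the coherence of the fibered structures $\Hbb_1, \Hbb_2$ and the naturality and cocycle identities obeyed by the exchange isomorphisms $\rho$. I expect this to be the main obstacle: the relevant diagrams involve simultaneously the tensor structure on $R$, the monoidality isomorphisms $m_f$ of both $\Hbb_1$ and $\Hbb_2$, and the exchange isomorphisms $\rho_f$, so they are sizeable. However, they require no idea beyond the bookkeeping already carried out at the object level in \Cref{thm_bij}, applied here one categorical layer up.

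Next I would show that the two assignments are mutually inverse. Each round trip collapses by the same cancellation used in \Cref{thm_bij}: since $pr_i \circ \Delta_S = \id_S$, one has $\Delta_S^* pr_i^* \cong \id$, and the cocycle identity for $\rho$ along this composite makes the inserted exchange isomorphisms cancel against one another, returning the original $t$ (resp.\ $t^{\boxtimes}$). Because the structures on $\Hbb_1$ and $\Hbb_2$ are held fixed as corresponding internal and external pairs, these cancellations take place over matching data on the two fibered categories, so the two maps on tensor structures on $R$ are honestly inverse bijections.

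Finally I would verify compatibility with the accessory constraints. For each of unitarity, symmetry, and associativity, the property that the internal tensor structure on $R$ be compatible with the chosen constraints on $\Hbb_1$ and $\Hbb_2$ translates, under the formulas above, into the property that the external tensor structure on $R$ be compatible with the corresponding external constraints. As these are conditions on the data rather than extra data, it suffices to match the two conditions, and this matching is the morphism-level shadow of the object-level dictionary already established in \Cref{thm_bij}; the verification is once more a diagram chase feeding on that dictionary.
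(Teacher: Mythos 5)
Your proposal is correct and follows essentially the same route as the paper: your two transport formulas are exactly those of \Cref{lem_mor_int_to_ext} and \Cref{lem_mor_ext_to_int} (your $\rho_f$ is the paper's transition isomorphism $\theta_f$ and your $t$ is the paper's $\rho$), and your mutual-inverse argument --- naturality, the axiom (mor-ITS)/(mor-ETS), and cancellation of the transition isomorphisms along $pr_i \circ \Delta_S = \id_S$, carried out relative to the fixed equivalences between the internal and external structures on $\Hbb_1$ and $\Hbb_2$ rather than returning the original structure on the nose --- is precisely the content of \Cref{prop_int_ext_morph}. Your final step, matching associativity, symmetry and unitarity as properties transported through the object-level dictionary, is likewise how the paper concludes (\Cref{lem:mor-asso_int_to_ext}, \Cref{lem:mor-asso_ext_to_int}, \Cref{lem:mor-symm_int_to_ext}, \Cref{lem:mor-symm_ext_to_int}, \Cref{lem:mor-unit_int_to_ext}, \Cref{lem:mor-unit_ext_to_int}).
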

\noindent
In order for this statement to even make sense, one needs to use the explicit equivalences witnessing the bijections between internal and external tensor structures on $\Hbb_1$ and on $\Hbb_2$ as mentioned above.

Concretely, all the axioms introduced in the course of the paper ask for the commutativity of suitable natural diagrams; similarly, both the main results described above and all the intermediate results used to prove them amount to showing the commutativity of certain natural diagrams. Checking the commutativity of these diagrams directly is not hard: in fact, it does not require any new insight once the various claims are made explicit. The main issue is that one often has to decompose the diagrams under consideration into several pieces, and the resulting figures rarely fit into the space of one or two pages. 
As a partial workaround, we have chosen to include a few auxiliary results (\Cref{lem-coherent_conn-monoint}, \Cref{lem-coherent_conn-monoext}, \Cref{lem-coherent_conn-mor-monoint} and \Cref{lem-coherent_conn-mor-monoext}) which allow us to cleverly simplify the diagrams in question; in this way, it also becomes easier to understand the roles that the various axioms play in each proof.

These auxiliary results have independent interest, since they make it clear that internal and external tensor structures, both on fibered categories and on morphisms of such, are just suitable variations on the theme of fibered categories.
For instance, we show that an internal tensor structure on a given $\Scal$-fibered category can be encoded into a plain fibered category over $\Scal \times \Ical$, where $\Ical$ denotes the category with two objects and one non-identity arrow (\Cref{constr_Hotimes}); similarly, an external tensor structure can be encoded into a plain fibered category over $\Scal^2 \times \Ical$. One could certainly push this point of view further so as to include associativity, commutativity, and unit constraints; we leave this task to the interested reader.

\subsection*{Related work}

The current literature counts several papers devoted to the foundations of monoidal fibered categories at various levels of abstraction. 
The interpretation of monoidal structures on $\Scal$-fibered categories in terms of categories fibered over a lager base discussed the present paper was influenced by F. Hörmann's approach to six functor formalisms in the language of derivators, as developed in \cite{Hoer17}.
The classical Grothendieck construction relating contravariant pseudo-functors with domain $\Scal$ to fibrations over $\Scal$ admits a monoidal analogue, investigated in \cite{Shu08} and \cite{MV20} in the 2-categorical setting and in \cite{Ram22} in the $\infty$-categorical setting.

The present paper differs in spirit from these and similar works: our aim is not to encode the whole theory of monoidal fibered categories into a synthetic conceptual structure, but rather to turn the usual set of explicit axioms  of the theory (given in terms of the internal tensor product) into a different but equivalent explicit set of axioms (tailored to the external tensor product).
This means that our results can be used in practice to construct well-behaved monoidal structures on fibered categories in those contexts where it is more natural to work with the external than with the internal tensor product: one can construct associativity, commutativity, and unit constraints directly in terms of the external tensor product, and the coherence of the monoidal structure is guaranteed by an explicit list of natural conditions which often can be checked by hand.
We believe that this is the main advantage offered by our approach.

\subsection*{Structure of the paper}

Throughout the paper, we work over a fixed small category $\Scal$. Essentially, all fibered categories involved are fibered over $\Scal$; the only exception is represented by certain auxiliary fibered categories over larger bases (such as $\Scal \times \Ical$ or $\Scal^2 \times \Ical$, as already mentioned) which are not involved explicitly in the proofs of the main results.

In \Cref{sect:rec-fib-cats} we collect the general notions on fibered categories employed in the paper, including those of section, morphism, and image of a section under a morphism. We also prove a couple of useful auxiliary results (\Cref{lem-coherent_conn} and \Cref{lem-coherent_conn-mor}) which are repeatedly used in the sequel in order to shorten our constructions and simplify certain proofs.

Starting from \Cref{sect:int-ext-tens}, we assume that the base category $\Scal$ admits binary products: this is only needed to define the external tensor product; it is irrelevant for our discussion whether $\Scal$ possesses or not a terminal object. In \Cref{sect:int-ext-tens} we explain how to rephrase the structure of a monoidal $\Scal$-fibered category purely in terms of the external tensor product. In order to highlight the symmetry between the internal and external versions of the tensor product, it is useful to write down the corresponding axioms of the two versions of the theory in parallel. To this end, we introduce appropriate notions of \textit{internal tensor structure} and of \textit{external tensor structure} (\Cref{defn:ITS}(1) and \Cref{defn:ETS}(1), respectively) that only include the single tensor product functors and their monoidality isomorphisms. We then explain how to transform internal tensor structures into external tensor structures (\Cref{lem_int_to_ext}) and conversely (\Cref{lem_ext_to_int}). Moreover, we  introduce natural notions of \textit{equivalence} between internal or external tensor structures (\Cref{defn:ITS}(2) and \Cref{defn:ETS}(2), respectively) that allow us to make it precise in what sense the two formulae \eqref{formulae_int_to_ext} and \eqref{formulae_ext_to_int} can be promoted to mutually inverse constructions (\Cref{prop_bij_in_ext}).

In \Cref{sect:asso}, \Cref{sect:comm} and \Cref{sect:unit} we consider associativity, commutativity, and unit constraints, respectively. Following the approach of \Cref{sect:int-ext-tens}, we start by introducing the internal and external versions of the relevant constraint, and we refine the notions of equivalence of tensor structures accordingly; we then explain how to transform an internal constraint into an external one and conversely; lastly, we show that the canonical bijection between equivalence classes of internal and external tensor structures can be refined compatibly with the constraints under consideration. 

In \Cref{sect:comp} we examine the mutual compatibility conditions between associativity, commutativity, and unit constraints. Again, each of these condition can be formulated both in the internal and in the external setting, and the homologous conditions correspond to one another under the bijection between internal and external tensor structures. Once we have this last ingredient, we are able to deduce our first main result (\Cref{thm_bij}) in its full strength.

The short \Cref{sect:proj} is devoted to the study of projection formulae, which are an important property of most monoidal fibered categories of geometric nature. We show that the usual projection formulae in the language of internal tensor structures have a natural analogue in the language of external tensor structures in such a way that the bijection of \Cref{prop_bij_in_ext} respects the validity of projection formulae (\Cref{lem_pf}).

In \Cref{sect:tens-mor} we study monoidality of morphisms between monoidal $\Scal$-fibered categories. In the first half of the section, we introduce the notion of \textit{internal tensor structure} and of \textit{external tensor structure} on a morphism of $\Scal$-fibered categories with respect to given internal or external tensor structures on the two single $\Scal$-fibered categories involved (\Cref{defn:mor-ITS} and \Cref{defn:mor-ETS}, respectively). By analogy with the case of monoidal structures on single $\Scal$-fibered categories, we explain how to transform internal tensor structures on a morphism into external tensor structures on the same morphism (\Cref{lem_mor_int_to_ext}) and conversely (\Cref{lem_mor_ext_to_int}); this is done coherently with the translation procedure between internal and external tensor structures on the two $\Scal$-fibered categories involved. Then we prove that these two constructions are mutually inverse modulo equivalence of internal and external tensor structures on the two sides (\Cref{prop_int_ext_morph}).
In the second half of the section, we refine such a bijection with respect to associativity, commutativity, and unit constraints: to this end, we consider the natural properties of associativity, symmetry and unitarity for tensor structures on morphisms, both in the internal and in the external setting. We show without difficulty that the correspondence between internal and external tensor structures is compatible with each of these properties. Once we have this last ingredient, we are able to deduce our second main result (\Cref{thm_bij_morph}) in its full strength. 

In the final \Cref{sect:rem-tri}, motivated by our applications to the setting of perverse sheaves, we specialize our discussion to the case of triangulated $\Scal$-fibered categories. We fully spell out the natural triangulated analogues of the various definitions and results obtained previously, but we omit all details of proof. The commutativity conventions that we have chosen here are coherent with the usual conventions for derived categories of abelian ($\Scal$-fibered) categories endowed with bi-exact tensor product functors.

\subsection*{Acknowledgments}

The contents of this paper correspond to the first chapter of my Ph.D. thesis, written at the University of Freiburg under the supervision of Annette Huber-Klawitter. It is a pleasure to thank her for many useful discussions around the subject of this article, as well as for her constant support and encouragement. I would also like to thank Swann Tubach for some helpful notational comments on an earlier version of this work.

\section*{Notation and conventions}

\begin{itemize}
	\item Unless otherwise dictated, categories are assumed to be small with respect to some fixed universe.
	\item Let $\Scal$ be a small category.
	\begin{itemize}
		\item The notation $S \in \Scal$ means that $S$ is an object of $\Scal$.
		\item A \textit{composable tuple} in $\Scal$ is a tuple of the form $\underline{g} = (g_1,\dots,g_r)$ where $g_1, \dots, g_r$ are morphisms in $\Scal$ such that the composition $g_r \circ \cdots \circ g_1$ makes sense in $\Scal$.
		\item We say that a sequence $\underline{g}^{(1)} = (g_1^{(1)}, \dots, g_{r_1}^{(1)}), \dots, \underline{g}^{(n)} = (g_1^{(n)}, \dots, g_{r_n}^{(n)})$ of composable tuples in $\Scal$ is \textit{composable} if, for each $i = 1, \dots, n-1$, the composition $g_1^{(i+1)} \circ g_{r_i}^{(i)}$ makes sense in $\Scal$.
	\end{itemize} 
    \item Let $\Scal$ be a small category admitting binary products.
    \begin{itemize}
    	\item Given two composable tuples in $\Scal$ with the same length
    	\begin{equation*}
    		\underline{g}^{(1)} = (g_1^{(1)}, \dots, g_r^{(1)}), \quad \underline{g}^{(2)} = (g_1^{(2)}, \dots, g_r^{(2)}),
    	\end{equation*}
        we let $\underline{g}^{(1)} \times \underline{g}^{(2)}$ denote the composable tuple $(g_1^{(1)} \times g_1^{(2)}, \dots, g_r^{(1)} \times g_r^{(2)})$.
        \item For every object $S \in \Scal$ and every integer $n \geq 2$, we let 
        \begin{equation*}
        	\Delta_S^{(n)}: S \hookrightarrow S \times \dots \times S
        \end{equation*}
    denote the diagonal embedding of $S$ into the direct product of $n$ copies of $S$ in $\Scal$. In the special case where $n = 2$, we omit the superscript from the notation.
    \item Given a collection of objects $S_{i_1}, \dots, S_{i_n} \in \Scal$ indexed by natural numbers $i_1, \dots, i_n$, for every substring $j_1 \dots j_r$ of the string $i_1 \dots i_n$ we let
    \begin{equation*}
    	pr^{i_1 \dots i_n}_{j_1 \dots j_r}: S_{i_1} \times \dots \times S_{i_n} \rightarrow S_{j_1} \times \dots \times S_{j_r} 
    \end{equation*}
    denote the canonical projection.
    \item Given a collection of objects $S_{i_1}, \dots, S_{i_n} \in \Scal$ indexed by a set of natural numbers $I = \left\{i_1, \dots, i_n\right\}$, for every permutation $\sigma \in \textup{Sym}(I)$ we let
    \begin{equation*}
    	\tau_{\sigma}: S_{i_1} \times \dots S_{i_n} \xrightarrow{\sim} S_{\sigma(i_1)} \times \dots \times S_{\sigma(i_n)}
    \end{equation*}
    denote the canonical permutation isomorphism. In the special case where $n = 2$ and $\sigma$ is the non-identity permutation, we omit the subscript from the notation.
    \end{itemize}
	\item Given categories $\Ccal_1, \dots, \Ccal_n$, we let $\Ccal_1 \times \cdots \times \Ccal_n$ denote their direct product category.
\end{itemize}

\section{Recollections on fibered categories}\label{sect:rec-fib-cats}

Throughout this paper, we fix a small category $\Scal$; starting from \Cref{sect:int-ext-tens} we will need to assume binary products to exist in $\Scal$, but this is not necessary for the moment. All fibered categories mentioned in this paper are fibered over $\Scal$ unless stated otherwise.

The conventions on $2$-categories and $2$-functors that we follow are those adopted in \cite[\S~1.4.1]{Ayo07a} and in \cite[\S~1.1]{CisDeg}: what we call a fibered category here is what in the original reference \cite[Exp.~VI, Defn.~7.1]{SGA1} is called a "catégorie clivée normalisée"; we consider $2$-categories in the strict sense but $2$-functors in the lax sense. See \cite[\S~XII.3]{Mac71} for generalities on $2$-categories. 

\subsection{Fibered categories and sections}

In the first part of this section, we quickly review $\Scal$-fibered categories. We define an $\Scal$-fibered category as a contravariant $2$-functor from $\Scal$ to the $2$-category of small categories. For simplicity, we assume every such $2$-functor to be strictly unital; in fact, this is not a severe requirement (see for example \cite[Lemma~2.5]{DelVoe}). In detail:

\begin{defn}\label{defn:S-fib}
	An \textit{$\Scal$-fibered category} $\Hbb$ is the datum of
	\begin{itemize}
		\item for every $S \in \Scal$, a category $\Hbb(S)$,
		\item for every morphism $f: T \rightarrow S$ in $\Scal$, a functor
		\begin{equation*}
			f^*: \Hbb(S) \rightarrow \Hbb(T),
		\end{equation*}  
	    called the \textit{inverse image functor} along $f$,
		\item for every pair of composable morphisms $f: T \rightarrow S$ and $g: S \rightarrow V$ in $\Scal$, a natural isomorphism of functors $\Hbb(V) \rightarrow \Hbb(T)$
		\begin{equation}\label{conn_fg}
			\conn = \conn_{f,g}: (gf)^* A \xrightarrow{\sim} f^* g^* A
		\end{equation}
		called the \textit{connection isomorphism} at $(f,g)$
	\end{itemize}
	such that the following conditions are satisfied:
	\begin{enumerate}
		\item[($\Scal$-fib-0)] For every $S \in \Scal$, we have $\id_S^* = \id_{\Hbb(S)}$.
		\item[($\Scal$-fib-1)] For every triple of composable morphisms $f: T \rightarrow S$, $g: S \rightarrow V$ and $h: V \rightarrow W$ in $\Scal$, the diagram of functors $\Hbb(W) \rightarrow \Hbb(T)$
		\begin{equation}\label{dia_conn}
			\begin{tikzcd}
				(hgf)^* A \arrow{rr}{\conn_{f,hg}} \arrow{d}{\conn_{gf,h}} && f^* (hg)^* \arrow{d}{\conn_{g,h}} A \\
				(gf)^* h^* A \arrow{rr}{\conn_{f,g}} && f^* g^* h^* A
			\end{tikzcd}
		\end{equation}
		is commutative.
	\end{enumerate}
\end{defn}

\begin{nota}\label{nota:comp-tuple}
	Given a composable tuple $\underline{g} = (g_1,\dots,g_r)$ in $\Scal$, we let $\underline{g}^*$ denote the composite of inverse image functors $g_1^* \circ \cdots \circ g_r^*$ in any $\Scal$-fibered category. 
    In order to avoid possible confusion, we agree that giving an empty composable tuple $\underline{g}$ just means giving an object $S \in \Scal$; in this case, we set $\underline{g}^* := \id_S^* = \id_{\Hbb(S)}$. 
\end{nota}

The following result explains the full meaning of axiom ($\Scal$-fib-1); it will serve as a model for similar results obtained in \Cref{sect:int-ext-tens} and \Cref{sect:tens-mor}.

\begin{lem}\label{lem-coherent_conn}
	Let $\Hbb$ be a $\Scal$-fibered category, and fix a morphism $f: T \rightarrow S$ in $\Scal$. For every composable tuple in $\Scal$
	\begin{equation*}
		\underline{g} = (g_1,\dots,g_r)
	\end{equation*}
	satisfying $g_r \circ \cdots \circ g_1 = f$, consider the functor $F_{\underline{g}}: \Hbb(S) \rightarrow \Hbb(T)$ defined by the formula
	\begin{equation*}
		F_{\underline{g}}(A) := \underline{g}^* A.
	\end{equation*}
	Then, given any two tuples $\underline{g}^{(1)}$, $\underline{g}^{(2)}$ as above, all natural isomorphisms of functors $\Hbb(S) \rightarrow \Hbb(T)$
	\begin{equation*}
		F_{\underline{g}^{(1)}}(A) \xrightarrow{\sim} F_{\underline{g}^{(2)}}(A)
	\end{equation*}
	obtained by composing connection isomorphisms (and inverses thereof) coincide.
\end{lem}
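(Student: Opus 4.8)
The plan is to run a Mac Lane--style coherence argument, using the length-one tuple $(f)$ as a normal form. Since composition of inverse image functors is strictly associative, the only nontrivial comparisons between the functors $F_{\underline{g}}$ come from the connection isomorphisms: whiskered into context, each $\conn_{g_i,g_{i+1}}$ \emph{splits} a single entry of a tuple along a factorization, while its inverse \emph{merges} two adjacent entries $g_i^{*}g_{i+1}^{*}$ into $(g_{i+1}g_i)^{*}$. Every composable tuple with composite $f$ can be brought to $(f)$ by a chain of merges, and I would first fix, for each $\underline{g}$, a \emph{canonical} normalization isomorphism $\kappa_{\underline{g}}\colon F_{\underline{g}}\xrightarrow{\sim} f^{*}$ obtained by repeatedly merging the leftmost adjacent pair. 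Entries with $g_i=\id$ cause no trouble: by ($\Scal$-fib-0) the functor $\id^{*}$ is the identity and the corresponding connection isomorphisms are identities, so such entries may be ignored throughout.

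The technical heart is the following order-independence statement: $\kappa_{\underline{g}}$ coincides with the isomorphism produced by \emph{any} sequence of merges reducing $\underline{g}$ to $(f)$, not just the leftmost one. I would prove this by induction on the length $r$, the point being that two merge sequences differ by local interchanges of elementary merges. When two merges act on disjoint pairs of entries they commute by the interchange law for whiskered natural transformations; when they overlap, i.e. they concern three consecutive entries $g_i^{*}g_{i+1}^{*}g_{i+2}^{*}$, their two possible orders are reconciled by exactly one instance of axiom ($\Scal$-fib-1), since diagram \eqref{dia_conn} asserts precisely that the two successive splittings of a triple composite $(hgf)^{*}$ into $f^{*}g^{*}h^{*}$ agree. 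This is the only place where \eqref{dia_conn} enters.

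Granting order-independence, the lemma follows formally. First, any single elementary move $m\colon F_{\underline{h}}\to F_{\underline{h}'}$ satisfies $\kappa_{\underline{h}'}\circ m=\kappa_{\underline{h}}$: if $m$ is a merge this is immediate, since performing $m$ first and then normalizing $\underline{h}'$ canonically is one admissible merge sequence for $\underline{h}$; if $m$ is a split it is the inverse of a merge, and the identity follows by inversion. Then, for an arbitrary isomorphism $\phi\colon F_{\underline{g}^{(1)}}\to F_{\underline{g}^{(2)}}$ written as a composite of elementary moves, an induction on the number of moves using the previous step gives $\kappa_{\underline{g}^{(2)}}\circ\phi=\kappa_{\underline{g}^{(1)}}$. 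Hence $\phi=\kappa_{\underline{g}^{(2)}}^{-1}\circ\kappa_{\underline{g}^{(1)}}$ is forced, independently of how $\phi$ was built, which is the assertion.

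The main obstacle is organizational rather than conceptual: the elementary moves act at varying positions inside a long string of functors, so the real work is the bookkeeping needed to keep track of the whiskering and to reduce each local commutation either to the interchange law or to a single instance of \eqref{dia_conn}. No ingredient beyond strict unitality ($\Scal$-fib-0) and the single cocycle condition \eqref{dia_conn} is required.
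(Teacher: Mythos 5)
Your proof is correct, and it shares its engine with the paper's: both take the one-entry tuple $(f)$ as the normal form, and both ultimately rest on the statement that all merge-only composites (composites of inverses of connection isomorphisms) from a fixed tuple down to $(f)$ coincide, which is exactly where \eqref{dia_conn} and the interchange law for whiskered natural transformations enter. The difference is in how words mixing splits and merges are handled. The paper rewrites the word itself: by cancelling mutually inverse factors, interchanging disjoint ones, and replacing a merge-followed-by-split across three adjacent entries by the other two sides of a square of type \eqref{dia_conn}, it pushes all splits to the front, reducing to a word of the form (merges)\,$\circ$\,(splits), and then invokes merge-only uniqueness twice; this leaves implicit a small termination argument for the rewriting procedure. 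You never rewrite the word: you fix canonical normalizations $\kappa_{\underline{h}}\colon F_{\underline{h}}\xrightarrow{\sim}f^{*}$, prove once that they are independent of the chosen merge order (your diamond-lemma step, which is the same content as the paper's first case, there dispatched with a terse ``induction plus repeated applications of ($\Scal$-fib-1)''), and then absorb each generator via $\kappa_{\underline{h}'}\circ m=\kappa_{\underline{h}}$, so that an arbitrary word telescopes to $\kappa_{\underline{g}^{(2)}}^{-1}\circ\kappa_{\underline{g}^{(1)}}$. Your organization buys a one-line final induction and no termination question; its price is that the order-independence lemma must be proved in full detail, which you do.

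One small correction: your parenthetical claim that the connection isomorphisms involving identity entries are themselves identities does not follow from ($\Scal$-fib-0) as literally stated; it is part of the normalization convention for cleavages, which the paper adopts implicitly (for instance, the idempotency argument in \Cref{rem:theta_id} needs $\conn_{\id,\id}=\id$). Fortunately your argument nowhere uses this remark --- splits and merges involving identity entries are handled exactly like all others --- so you should simply delete that sentence rather than attempt to justify it.
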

\begin{proof}
	It suffices to show that, for every tuple $\underline{g}$ as in the statement, every natural isomorphism of functors $\Hbb(S) \rightarrow \Hbb(T)$
	\begin{equation*}
		\phi: F_{\underline{g}}(A) \xrightarrow{\sim} f^* A
	\end{equation*}
	obtained by composing connection isomorphisms (and inverses thereof) coincides with the composite of inverse connection isomorphisms
	\begin{equation*}
		\alpha_{\underline{g}}: F_{\underline{g}}(A) := g_1^* \cdots g_r^* A \xleftarrow{\conn} (g_2 g_1)^* \cdots g_r^* \xleftarrow{\conn} \dots \xleftarrow{\conn} (g_r \dots g_1)^* A = f^* A.
	\end{equation*}
	To see this, write $\phi$ explicitly as
	\begin{equation*}
		\phi: F_{\underline{g}^{(0)}} \xrightarrow{\phi_1} F_{\underline{g}^{(1)}} \xrightarrow{\phi_2} \dots \xrightarrow{\phi_N} F_{\underline{g}^{(N)}}
	\end{equation*}
	where we set $\underline{g}^{(0)} := \underline{g}$, $\underline{g}^{(N)} := (f)$, and each factor $\phi_i$ is either a direct or an inverse connection isomorphism; note that, by construction, the number of inverse minus the number of direct connection isomorphisms among the $\phi_i$'s equals $r-1$. We prove our claim by analyzing the following cases separately:
	\begin{itemize}
		\item Suppose that all factors $\phi_i$ are inverse connection isomorphisms. Then, by repeated applications of axiom ($\Scal$-fib-1), the claim follows by induction on $N = r-1$.
		\item Suppose that there exists an index $n$ with $0 < n < N$ such that $\phi_i$ is a direct connection isomorphism for $1 \leq i \leq n$ and an inverse connection isomorphism for $n + 1 \leq i \leq N$. Then we can consider the factorization
		\begin{equation*}
			\phi: F_{\underline{g}^{(0)}} \xrightarrow{\psi_1} F_{\underline{g}^{(n)}} \xrightarrow{\psi_2} F_{\underline{g}^{(N)}}
		\end{equation*}
		where $\psi_1 := \phi_n \circ \dots \circ \phi_1$ and $\psi_2 := \phi_N \circ \dots \circ \phi_{n+1}$. By the previous case, we know that every natural isomorphism
		\begin{equation*}
			F_{\underline{g}^{(n)}}(A) \xrightarrow{\sim} F_{\underline{g}^{(N)}}(A) := f^* A
		\end{equation*}
		obtained as the composite of inverse connection isomorphisms coincides with $\alpha_{\underline{g}^{(n)}}$. In particular, we have
		\begin{equation*}
			\psi_2 = \alpha_{\underline{g}^{(n)}} = \alpha_{\underline{g}^{(0)}} \circ \psi_1^{-1}.
		\end{equation*}
		The claim follows from this, since we have the equality
		\begin{equation*}
			\phi = \psi_2 \circ \psi_1 = \alpha_{\underline{g}^{(n)}} \circ \psi_1 = \alpha_{\underline{g}^{(0)}}.
		\end{equation*}
		\item Suppose only that at least one of the factors $\phi_i$ is a direct connection isomorphism. We want to transform this factorization into one of the form considered in the previous case. To this end it suffices to show that, given two consecutive factors $\phi_i$ and $\phi_{i+1}$ with $\phi_i$ inverse and $\phi_{i+1}$ direct, it is possible to write the composite $\phi_{i+1} \circ \phi_i$ in the form $\phi'_{i+1} \circ \phi'_i$ with $\phi'_i$ direct and $\phi'_{i+i}$ inverse. We prove this by distinguishing the following cases:
		\begin{itemize}
			\item If $\phi_i$ and $\phi_{i+1}$ are mutually inverse, they can be just cancelled out.
			\item If $\phi_i$ and $\phi_{i+1}$ are not mutually inverse and the sets of morphisms where they act are disjoint from one another, then the corresponding natural transformations can be interchanged.
			\item If $\phi_i$ and $\phi_{i+1}$ are not mutually inverse and the sets of morphisms where they act are not disjoint, then they fit into the bottom-right corner of a square of type \eqref{dia_conn}. By axiom ($\Scal$-fib-1) we can replace them with the edges in the top-left corner of the same square.
		\end{itemize}
		Therefore the claim follows from the previous case.
	\end{itemize}
    This concludes the proof.
\end{proof}

Let us briefly review sections of $\Scal$-fibered categories: this notion will play a role in the discussion about unit constraints in \Cref{sect:unit} and in the final part of \Cref{sect:tens-mor}. 

\begin{defn}
	Let $\Hbb$ be an $\Scal$-fibered category.
	\begin{enumerate}
		\item A \textit{section} $A$ of $\Hbb$ over $\Scal$ is the datum of
		\begin{itemize}
			\item for every $S \in \Scal$, an object $A_S \in \Hbb(S)$,
			\item for every morphism $f: T \rightarrow S$ in $\Scal$, an isomorphism in $\Hbb(T)$
			\begin{equation*}
				A^* = A_f^*: f^* A_S \xrightarrow{\sim} A_T
			\end{equation*}
		\end{itemize}
		satisfying the following condition:
		\begin{enumerate}
			\item[($\Scal$-sect)] For every pair of composable morphisms $f: T \rightarrow S$, $g: S \rightarrow V$ in $\Scal$, the diagram in $\Hbb(T)$
			\begin{equation*}
				\begin{tikzcd}
					f^* g^* A_V \arrow{r}{A_g^*} \arrow{d}{\conn_{f,g}} & f^* A_S \arrow{d}{A_f^*} \\
					(gf)^* A_V \arrow{r}{A_{gf}^*} & A_T \\
				\end{tikzcd}
			\end{equation*}
			is commutative.
		\end{enumerate}
		\item Let $A$ and $B$ be two sections of $\Hbb$. A \textit{morphism of sections} $w: A \rightarrow B$ is the datum of
		\begin{itemize}
			\item for every $S \in \Scal$, a morphism
			\begin{equation*}
				w_S: A_S \rightarrow B_S
			\end{equation*}
		\end{itemize}
		satisfying the following condition:
		\begin{enumerate}
			\item[(mor-$\Scal$-sect)] For every morphism $f: T \rightarrow S$ in $\Scal$, the diagram in $\Hbb(T)$
			\begin{equation*}
				\begin{tikzcd}
					f^* A_S \arrow{r}{w_S} \arrow{d}{A^*_f} & f^* B_S \arrow{d}{B^*_f} \\
					A_T \arrow{r}{w_T} & B_T
				\end{tikzcd}
			\end{equation*}
			is commutative.
		\end{enumerate}
	\end{enumerate}
\end{defn}

\begin{rem}\label{rem:iso-sect}
	There are natural notions of identity and of composition for morphisms between sections of a same $\Scal$-fibered categories $\Hbb$. Of course, a morphism of sections $w: A \rightarrow B$ is invertible with respect to this composition if and only if each morphism $w_S: A_S \rightarrow B_S$ is an isomorphism in $\Hbb(S)$ for every $S \in \Scal$: this yields the notion of \textit{isomorphism of sections}.
\end{rem}

\subsection{Morphisms of fibered categories}

In the second part of this section, we quickly review morphisms of $\Scal$-fibered categories. Let $\Hbb_1$ and $\Hbb_2$ be two $\Scal$-fibered categories as in \Cref{defn:S-fib}; for every pair of composable pair $(f,g)$ in $\Scal$, we write the connection isomorphism at $(f,g)$ for $\Hbb_1$ and $\Hbb_2$ as $\conn_{f,g}^{(1)}$ and $\conn_{f,g}^{(2)}$, respectively.

\begin{defn}\label{defn:mor-Scal-fib}
	\begin{enumerate}
		\item A \textit{morphism of $\Scal$-fibered categories} $R: \Hbb_1 \rightarrow \Hbb_2$ is the datum of
		\begin{itemize}
			\item for every $S \in \Scal$, a functor 
			\begin{equation*}
				R_S: \Hbb_1(S) \rightarrow \Hbb_2(S),
			\end{equation*}
			\item for every morphism $f: T \rightarrow S$ in $\Scal$, a natural isomorphism of functors $\Hbb_1(S) \rightarrow \Hbb_2(T)$
			\begin{equation*}
				\theta = \theta_f: f^* R_S(A) \xrightarrow{\sim} R_T (f^* A),
			\end{equation*}
			called the \textit{$R$-transition isomorphism} along $f$
		\end{itemize}
		such that the following condition is satisfied:
		\begin{enumerate}
			\item[(mor-$\Scal$-fib)] For every pair of composable morphisms $f: T \rightarrow S$ and $g: S \rightarrow V$ in $\Scal$, the diagram of functors $\Hbb_1(V) \rightarrow \Hbb_2(T)$
			\begin{equation*}
				\begin{tikzcd}
					(gf)^* R_V(A) \arrow{rr}{\theta_{gf}} \arrow{d}{\conn^{(2)}_{f,g}} && R_T ((gf)^* A) \arrow{d}{\conn^{(1)}_{f,g}} \\
					f^* g^* R_V(A) \arrow{r}{\theta_g} & f^* R_S (g^* A) \arrow{r}{\theta_f} & R_T (f^* g^* A)
				\end{tikzcd}
			\end{equation*}
			is commutative.
		\end{enumerate}
		\item If  we are already given a family of functors $\Rcal := \left\{R_S: \Hbb_1(S) \rightarrow \Hbb_2(S) \right\}_{S \in \Scal}$, we say that a collection of natural isomorphisms $\theta = \left\{\theta_f: f^* \circ R_S \xrightarrow{\sim} R_T \circ f^* \right\}_{f: T \rightarrow S}$ satisfying condition (mor-$\Scal$-fib) defines an \textit{$\Scal$-structure} on the family $\Rcal$.
	\end{enumerate}
\end{defn}

\begin{rem}\label{rem:theta_id}
	Fix $S \in \Scal$. In view of axiom ($\Scal$-fib-0) from \Cref{defn:S-fib}, condition (mor-$\Scal$-fib) in the case where $f = g = \id_S$ means that the natural isomorphism of functors $\Hbb_1(S) \rightarrow \Hbb_2(S)$
	\begin{equation*}
		\theta_{\id_S}: R_S(A) \xrightarrow{\sim} R_S(A)
	\end{equation*}
	is idempotent. Hence it must be the identity.
\end{rem}

The full meaning of axiom (mor-$\Scal$-fib) is explained by the following result, which is the generalization of \Cref{lem-coherent_conn} to morphisms of $\Scal$-fibered categories:

\begin{lem}\label{lem-coherent_conn-mor}
	Let $R: \Hbb_1 \rightarrow \Hbb_2$ be a morphism of $\Scal$-fibered categories, and fix a morphism $f: T \rightarrow S$ in $\Scal$. For every composable pair of composable tuples in $\Scal$
	\begin{equation*}
		\underline{g} = (g_1, \dots, g_r), \quad \underline{h} = (h_1, \dots, h_s)
	\end{equation*}
	satisfying $h_s \circ \cdots \circ h_1 \circ g_r \circ \cdots \circ g_1 = f$, consider the functor $\Hbb_1(S) \rightarrow \Hbb_2(T)$
	\begin{equation*}
		F^R_{(\underline{g}, \underline{h})}(A) := \underline{g}^* R_{T'}(\underline{h}^* A)
	\end{equation*}
	where $T'$ denotes the domain of $h_1$. Then, given any two pairs of tuples $(\underline{g}^{(1)}, \underline{h}^{(1)})$ and $(\underline{g}^{(2)}, \underline{h}^{(2)})$ as above, all natural isomorphisms of functors $\Hbb_1(S) \rightarrow \Hbb_2(T)$
	\begin{equation*}
		F^R_{(\underline{g}^{(1)}, \underline{h}^{(1)})}(A) \xrightarrow{\sim} F^R_{(\underline{g}^{(2)}, \underline{h}^{(2)})}(A)
	\end{equation*}
	obtained by composing connection isomorphisms and $R$-transition isomorphisms (and inverses thereof) coincide.
\end{lem}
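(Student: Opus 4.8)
The plan is to mirror the proof of \Cref{lem-coherent_conn}, the only difference being that three species of elementary isomorphism are now in play: the connection isomorphisms $\conn^{(1)}$ of $\Hbb_1$ (acting inside $R_{T'}$ on the tuple $\underline h$), the connection isomorphisms $\conn^{(2)}$ of $\Hbb_2$ (acting on the tuple $\underline g$, outside $R_{T'}$), and the transition isomorphisms $\theta$ (each of which moves a single morphism across the symbol $R$, thereby displacing the evaluation point $T'$). As a first step I would fix a canonical target, namely $R_T(f^*A)$, arising from the degenerate pair $(\underline g,\underline h)=((),(f))$, together with, for every admissible $(\underline g,\underline h)$, a canonical isomorphism $\alpha_{(\underline g,\underline h)}\colon F^R_{(\underline g,\underline h)}(A)\xrightarrow{\sim}R_T(f^*A)$: contract $\underline g$ by $\conn^{(2)}$ and $\underline h$ by $\conn^{(1)}$ to reach $p^*R_{T'}(q^*A)$ with $p=g_r\cdots g_1$ and $q=h_s\cdots h_1$, then apply $\theta_p$ to reach $R_T(p^*q^*A)$, and finally a single $\conn^{(1)}$ to reach $R_T(f^*A)$. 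Exactly as in \Cref{lem-coherent_conn}, the whole lemma reduces to the claim that every isomorphism $\phi\colon F^R_{(\underline g,\underline h)}(A)\xrightarrow{\sim}R_T(f^*A)$ obtained by composing elementary isomorphisms coincides with $\alpha_{(\underline g,\underline h)}$: given two admissible pairs and any elementary composite $\phi$ between the associated functors, the composite $\alpha_{(\underline g^{(2)},\underline h^{(2)})}\circ\phi$ is again such an isomorphism to the target, hence equals $\alpha_{(\underline g^{(1)},\underline h^{(1)})}$, so that $\phi=\alpha_{(\underline g^{(2)},\underline h^{(2)})}^{-1}\circ\alpha_{(\underline g^{(1)},\underline h^{(1)})}$ is forced, independently of $\phi$.

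To prove this target-claim I would reproduce the three-case scheme of \Cref{lem-coherent_conn}. Call a factor \emph{reducing} if it is a contraction (an inverse of $\conn^{(1)}$ or of $\conn^{(2)}$) or a transition isomorphism $\theta$ moving a morphism inward across $R$, and \emph{expanding} otherwise; thus the canonical $\alpha_{(\underline g,\underline h)}$ is a purely reducing word. Writing $\phi=\phi_N\circ\cdots\circ\phi_1$, I would first treat the case in which every $\phi_i$ is reducing, by induction on $N$: any two purely reducing words agree with $\alpha_{(\underline g,\underline h)}$ because their $\conn^{(2)}$-factors fully contract $\underline g$ and agree by \Cref{lem-coherent_conn} applied to $\Hbb_2$, their $\conn^{(1)}$-factors fully contract $\underline h$ and agree by \Cref{lem-coherent_conn} applied to $\Hbb_1$, while the $\theta$-factors can be commuted past adjacent contractions by means of axiom (mor-$\Scal$-fib). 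Next I would treat the single-peak case, in which all expanding factors precede all reducing ones, reducing it to the previous case verbatim as in \Cref{lem-coherent_conn}. Finally, the general case follows by the same bubble-sorting argument: any adjacent pair ``reducing then expanding'' can be rewritten as ``expanding then reducing'' by one of three local moves, namely cancellation of a factor against its inverse; interchange of two factors acting on disjoint morphisms (two connections in non-overlapping positions, or a $\theta$ and a connection away from the boundary); or replacement through a commuting square, which is \eqref{dia_conn} for $\Hbb_1$, \eqref{dia_conn} for $\Hbb_2$, or the defining square of axiom (mor-$\Scal$-fib) when a $\theta$ meets a connection at the boundary between $\underline g$ and $\underline h$.

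The main obstacle is precisely this last, overlapping, sub-case, which carries the only genuinely new content of the proof: one must account for every interaction of a transition isomorphism $\theta$ with a connection isomorphism occurring at the mobile boundary point $T'$, and these are exactly the interactions encoded by axiom (mor-$\Scal$-fib). The delicate part is the bookkeeping of which positions count as disjoint and which as overlapping, since $\theta$ always acts precisely at the evaluation point $T'$ rather than at a fixed spot; once this is pinned down, every valley of the shape ``reducing then expanding'' is removable by a single square, the sorting terminates, and the general case collapses to the single-peak case. Conceptually, this argument is nothing but \Cref{lem-coherent_conn} for the fibered category over $\Scal\times\Ical$ obtained by gluing $\Hbb_1$ and $\Hbb_2$ along $R$, with $\conn^{(1)}$, $\conn^{(2)}$ and $\theta$ assembling into a single connection isomorphism and axiom (mor-$\Scal$-fib) becoming axiom ($\Scal$-fib-1); I would record this interpretation as a remark but carry out the direct argument above, which requires no auxiliary construction.
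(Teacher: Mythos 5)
Your proposal is correct in substance, but it takes the opposite route from the paper. The paper explicitly declines to redo the case analysis of \Cref{lem-coherent_conn} in the presence of $R$-transition isomorphisms: instead it glues $\Hbb_1$ and $\Hbb_2$ along $R$ into an $(\Scal\times\Ical)$-fibered category $\Hbb_1\coprod_R\Hbb_2$ (\Cref{constr_Scal_mor}), verifies axiom (($\Scal\times\Ical$)-fib-1) for the glued data in five cases (each a small diagram commuting by ($\Scal$-fib-1), naturality, or (mor-$\Scal$-fib)), and then quotes \Cref{lem-coherent_conn} over the base $\Scal\times\Ical$ verbatim. You do exactly the reverse: you carry out the direct combinatorial argument and relegate the gluing to a closing remark. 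Both proofs rest on the same local identities, so this is a genuine trade-off rather than a gap. The paper's route buys economy and reuse: all the new content is concentrated in the five-case check of the construction, the bubble-sorting combinatorics is inherited unchanged, and the same construction serves as the template for \Cref{constr_Hotimes}, \Cref{constr_Hboxtimes} and their analogues for morphisms later in the paper. Your route buys self-containment (no auxiliary base category, no reinterpretation of the statement), but pays in bookkeeping, and at one point your sketch understates the cost: in the purely reducing base case the $\theta$-factors cannot simply be ``commuted past adjacent contractions'', since reordering changes their indexing morphism --- the relevant rewrite is $R(\conn^{(1),-1}_{f,g})\circ\theta_f\circ\theta_g=\theta_{gf}\circ\conn^{(2),-1}_{f,g}$ from (mor-$\Scal$-fib), so that case needs its own induction on such rewrites, exactly parallel to (and no easier than) the valley-removal step; likewise termination of the sorting, with rewrites that lengthen words, deserves a word. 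These are sketch-level omissions of the same kind the paper itself tolerates in the proof of \Cref{lem-coherent_conn}, not errors; had you quantified them, you would have rediscovered why the author judged the auxiliary construction to be the cheaper path.
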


It would be possible to prove this result via a detailed case-by-case analysis as done for \Cref{lem-coherent_conn}; however, the mixture of connection isomorphisms and $R$-transition isomorphisms would force us to consider more cases and introduce more reduction steps in the argument. We prefer to introduce an auxiliary construction which will reduce us directly to the situation of \Cref{lem-coherent_conn}, thereby avoiding all additional difficulties. To this end, we need to introduce some more notation:

\begin{nota}\label{nota:Ical}
	\begin{itemize}
		\item We let $\Ical$ be the category with two objects, denoted $1$ and $2$, and exactly one non-identity arrow $r: 2 \rightarrow 1$.
		\item Given a small category $\Scal$, we consider the product category $\Scal \times \Ical$. We write an object $\Ssf \in \Scal \times \Ical$ as a pair $(S;i)$ with $S \in \Scal$ and $i \in \left\{1,2\right\}$. For every $S \in \Scal$, we let $r_S$ denote the morphism $(\id_S,r): (S;2) \rightarrow (S;1)$.
		Note that there exists a natural partition of morphisms in $\Scal \times \Ical$ into the following three families, each of which is parameterized by the family of all morphisms $f: T \rightarrow S$ in $\Scal$:
		\begin{itemize}
			\item morphisms of the form $(f;1): (T;1) \rightarrow (S;1)$;
			\item morphisms of the form $(f;2): (T;2) \rightarrow (S;2)$;
			\item morphisms of the form $r_S \circ (f;2): (T;2) \rightarrow (S;1)$.
		\end{itemize}
		We will always use this presentation for morphisms in $\Scal \times \Ical$.
	\end{itemize}
\end{nota}

\begin{constr}\label{constr_Scal_mor}
	Let $R: \Hbb_1 \rightarrow \Hbb_2$ be a morphism of $\Scal$-fibered categories. Then, following \Cref{nota:Ical}, we can define an $(\Scal \times \Ical)$-fibered category $\Hbb_1 \coprod_R \Hbb_2$ out of it, as follows:
	\begin{itemize}
		\item For every $(S;i) \in \Scal \times \Ical$, we set $(\Hbb_1 \coprod_R \Hbb_2)(S;i) := \Hbb_i(S)$.
		\item Given a morphism $\phi$ in $\Scal\times \Ical$, we define the functor $\phi^*$ according to the type of $\phi$:
		\begin{itemize}
			\item if $\phi = (f;1): (T;1) \rightarrow (S;1)$, we set $\phi^* A := f^* A$;
			\item if $\phi = (f;2): (T;2) \rightarrow (S;2)$, we set $\phi^* B := f^* B$;
			\item if $\phi = r_S \circ (f;2)$, we set $\phi^* A := f^* R_S(A)$.
		\end{itemize}
		\item Given two composable morphisms $\phi$ and $\psi$ in $\Scal \times \Ical$, we define the connection isomorphism $\conn_{\phi,\psi}$ according to the type of $\phi$ and $\psi$:
		\begin{enumerate}
			\item[(i)] if $\phi = (f;1)$ and $\psi = (g;1)$, we set 
			\begin{equation*}
				\conn_{\phi,\psi}(A): (gf)^* A \xrightarrow{\conn_{f,g}^{(1)}} f^* g^* A;
			\end{equation*}
			\item[(ii)] if $\phi = (f;2)$ and $\psi = (g;2)$, we set 
			\begin{equation*}
				\conn_{\phi,\psi}(B) : (gf)^* B \xrightarrow{\conn_{f,g}^{(2)}} f^* g^* B;
			\end{equation*}
			\item[(iii)] if $\phi = (f;2)$ and $\psi = r_V \circ (g;2)$, we set 
			\begin{equation*}
				\conn_{\phi,\psi}(A): (gf)^* R_V(A)  \xrightarrow{\conn_{f,g}^{(2)}} f^* g^* R_V(A);
			\end{equation*}
			\item[(iv)] if $\phi = r_S \circ (f;2)$ and $\psi = (g;1)$, we set 
			\begin{equation*}
				\conn_{\phi,\psi}: (gf)^* R_S(A) \xrightarrow{\conn_{f,g}^{(2)}} f^* g^* R_V(A) \xrightarrow{\theta_g} f^* R_S(g^* A).
			\end{equation*}
		\end{enumerate}
	\end{itemize}
	Let us check that this really defines an $(\Scal \times \Ical)$-fibered category.
	It is clear that condition (($\Scal \times \Ical$)-fib-0) is satisfied. We need to check that condition (($\Scal \times \Ical$)-fib-1) holds as well: given three composable morphisms $\phi, \psi, \xi$ in $\Scal \times \Ical$, we have to show that the diagram
	\begin{equation*}
		\begin{tikzcd}
			(\xi \psi \phi)^* A \arrow{rr}{\conn_{\phi,\xi \psi}} \arrow{d}{\conn_{\phi,\xi \psi}} && \phi^* (\xi \psi)^* A \arrow{d}{\conn_{\psi,\xi}} \\    
			(\psi \phi)^* \xi^* A \arrow{rr}{\conn_{\phi,\psi}} && \phi^* \psi^* \xi^* A
		\end{tikzcd}
	\end{equation*}
	is commutative. We have the following possibilities, parametrized by triples of composable morphisms $f: T \rightarrow S$, $g: S \rightarrow V$ and $h: V \rightarrow W$ in $\Scal$:
	\begin{enumerate}
		\item If $\phi = (f;1)$, $\psi = (g;1)$ and $\xi = (h;1)$, we obtain the diagram
		\begin{equation*}
			\begin{tikzcd}[font=\small]
				(hgf)^* A \arrow{rr}{\conn_{f,hg}^{(1)}} \arrow{d}{\conn_{gf,h}^{(1)}} && f^* (hg)^* A \arrow{d}{\conn_{g,h}^{(1)}} \\
				(gf)^* h^* A \arrow{rr}{\conn_{f,g}^{(1)}} && f^* g^* h^* A
			\end{tikzcd}
		\end{equation*}
		which is commutative by axiom ($\Scal$-fib-1).
		\item If $\phi = (f;2)$, $\psi = (g;2)$ and $\xi = (h;2)$, we obtain the diagram
		\begin{equation*}
			\begin{tikzcd}[font=\small]
				(hgf)^* B \arrow{rr}{\conn_{f,hg}^{(2)}} \arrow{d}{\conn_{gf,h}^{(2)}} && f^* (hg)^* B \arrow{d}{\conn_{g,h}^{(2)}} \\
				(gf)^* h^* B \arrow{rr}{\conn_{f,g}^{(2)}} && f^* g^* h^* B
			\end{tikzcd}
		\end{equation*}
		which is commutative by axiom ($\Scal$-fib-1).
		\item If $\phi = (f;2)$, $\phi = (g;2)$ and $\xi = r_W \circ (h;2)$, we obtain the diagram
		\begin{equation*}
			\begin{tikzcd}[font=\small]
				(hgf)^* R_W(A) \arrow{rr}{\conn_{f,hg}^{(2)}} \arrow{d}{\conn_{gf,h}^{(2)}} && f^* (hg)^* R_W(A) \arrow{d}{\conn_{g,h}^{(2)}} \\
				(gf)^* h^* R_W(A) \arrow{rr}{\conn_{f,g}^{(2)}} && f^* g^* h^* R_W(A)
			\end{tikzcd}
		\end{equation*}
		which is again commutative by axiom ($\Scal$-fib-1).
		\item If $\phi = (f;2)$, $\psi = r_V \circ (g;2)$ and $\xi = (h;1)$, we obtain the outer part of the diagram
		\begin{equation*}
			\begin{tikzcd}[font=\small]
				(hgf)^* R_W(A) \arrow{rr}{\conn_{f,hg}^{(2)}} \arrow{d}{\conn_{gf,h}^{(2)}} && f^* (hg)^* R_W(A) \arrow{d}{\conn_{g,h}^{(2)}} \\
				(gf)^* h^* R_W(A) \arrow{rr}{\conn_{f,g}^{(2)}} \arrow{d}{\theta_h} && f^* g^* h^* R_W(A) \arrow{d}{\theta_h} \\
				(gf)^* R_V(h^* A) \arrow{rr}{\conn_{f,g}^{(2)}} && f^* g^* R_V(h^* A)
			\end{tikzcd}
		\end{equation*}
		where the upper square is commutative by axiom ($\Scal$-fib-1) and the lower square is commutative by naturality.
		\item Finally, if $\phi = r_S \circ (f;2)$, $\psi = (g;1)$ and $\xi = (h;1)$, we obtain the outer part of the diagram
		\begin{equation*}
			\begin{tikzcd}[font=\small]
				(hgf)^* R_W(A) \arrow{rr}{\conn_{f,hg}^{(2)}} \arrow{d}{\conn_{gf,h}^{(2)}} && f^* (hg)^* R_W(A) \arrow{r}{\theta_{hg}} \arrow{d}{\conn_{g,h}^{(2)}} & f^* R_S((hg)^* A) \arrow{dd}{\conn_{g,h}^{(1)}} \\
				(gf)^* h^* R_W(A) \arrow{rr}{\conn_{f,g}^{(2)}} \arrow{d}{\theta_h} && f^* g^* h^* R_W(A) \arrow{d}{\theta_h} \\
				(gf)^* R_V(h^* A) \arrow{rr}{\conn_{f,g}^{(2)}} && f^* g^* R_V(h^* A) \arrow{r}{\theta_g} & f^* R_S(g^* h^* A)
			\end{tikzcd}
		\end{equation*}
		where the upper-left piece is commutative by axiom ($\Scal$-fib-1), the lower-left piece is commutative by naturality, and the right-most piece is commutative by axiom (mor-$\Scal$-fib).
	\end{enumerate}
	In fact, the argument shows that giving a morphism of $\Scal$-fibered categories $R: \Hbb_1 \rightarrow \Hbb_2$ is the same as giving an $(\Scal \times \Ical)$-fibered category $\Hbb$ satisfying $\Hbb|_{(\Scal;i)} = \Hbb_i$ for $i = 1,2$ and $r_S^* = R_S$ for all $S \in \Scal$.
\end{constr}

\begin{proof}[Proof of \Cref{lem-coherent_conn-mor}]
	As a consequence of \Cref{constr_Scal_mor}, this is a particular case of \Cref{lem-coherent_conn} over the base category $\Scal \times \Ical$.
\end{proof}

We conclude this preliminary discussion by reviewing the notion of image of a section along a morphism of $\Scal$-fibered categories.

\begin{constr}
	Let $R: \Hbb_1 \rightarrow \Hbb_2$ be a morphism of $\Scal$-fibered categories, and let $A$ be a section of $\Hbb_1$ over $\Scal$.
	We define the \textit{image} of $A$ under $R$ to be the section $R(A)$ of $\Hbb_2$ over $\Scal$ defined as follows:
	\begin{itemize}
		\item for every $S \in \Scal$, the object $R(A)_S \in \Hbb_2(S)$ is just $R_S(A_S)$;
		\item for every morphism $f: T \rightarrow S$ in $\Scal$, the isomorphism in $\Hbb_2(T)$
		\begin{equation*}
			R(A)_f^*: f^* R(A)_S \xrightarrow{\sim} R(A)_T
		\end{equation*}
		is the composite
		\begin{equation*}
			f^* R_S(A_S) \xrightarrow{\theta_f} R_T(f^* A_S) \xrightarrow{\sim} R_T(A_T).
		\end{equation*}
	\end{itemize}
	Let us check that this definition indeed satisfies condition ($\Scal$-sect): given two composable morphisms $f: T \rightarrow S$ and $g: S \rightarrow V$ in $\Scal$, we have to show that the diagram in $\Hbb_2(T)$
	\begin{equation*}
		\begin{tikzcd}
			(gf)^* R(A)_V \arrow{rr}{R(A)_{gf}^*} \arrow{d}{\conn_{f,g}^{(2)}} && R(A)_T \arrow{d}{\conn_{f,g}^{(2)}} \\
			f^* g^* R(A)_V \arrow{r}{R(A)_g^*} & f^* R(A)_S \arrow{r}{R(A)_f^*} & R(A)_T
		\end{tikzcd}
	\end{equation*}
	is commutative. Expanding the definitions, we obtain the outer part of the diagram
	\begin{equation*}
		\begin{tikzcd}[font=\small]
			(gf)^* R_V(A_V) \arrow{rr}{\theta_{gf}} \arrow{dd}{\conn_{f,g}^{(2)}} && R_T((gf)^* A_V) \arrow{r}{A_{gf}^*} \arrow{d}{\conn_{f,g}^{(1)}} & R_T(A_T) \\
			&& R_T(f^* g^* A_V) \arrow{dr}{A_g^*} \\
			f^* g^* R_V(A_V) \arrow{r}{\theta_g} & f^* R_S(g^* A_V) \arrow{r}{A_g^*} \arrow{ur}{\theta_f} & f^* R_S(A_S) \arrow{r}{\theta_f} & R_T(f^* A_S) \arrow{uu}{A_f^*}
		\end{tikzcd}
	\end{equation*}
	where the lower central piece is commutative by naturality, the left-most piece is commutative by axiom (mor-$\Scal$-fib), and the right-most piece is commutative by axiom ($\Scal$-sect). This proves the claim.
\end{constr}

\section{Internal and external tensor structures}\label{sect:int-ext-tens}

From now henceforth, we assume that the small category $\Scal$ admits binary products; in most concrete applications $\Scal$ also happens to possess a terminal object, but this is not necessary for our constructions. 

From now until the end of \Cref{sect:proj} we also fix an $\Scal$-fibered category $\Hbb$. 
In the present section, we start our investigations about monoidal $\Scal$-fibered categories by introducing the notions of internal and external tensor structure on $\Hbb$ together with the corresponding notions of equivalence. 

\begin{nota}\label{nota:simple}
	From now on, we simplify the notation by systematically adopting the following conventions:
	\begin{itemize}
		\item We write all connection isomorphisms of $\Scal$-fibered categories as equalities (this is legitimate in view of \Cref{lem-coherent_conn}).
		\item We write all transition isomorphisms $\theta_f$ associated to a morphism of $\Scal$-fibered categories simply as $\theta$.
		More generally, when a structure on $\Scal$-fibered category is defined by a family of natural isomorphisms parameterized by objects or morphisms of $\Scal$, we often indicate all members of this family by the same symbol, without mentioning the specific parameter involved. In particular, this will be applied to all the notation concerning tensor structures.
	\end{itemize} 
\end{nota}

\subsection{Tensor structures and their equivalences}

To begin with, we introduce internal and external tensor structures and equivalences thereof. In order to keep the exposition as neat as possible, it is convenient to start from the coarsest possible kind of structure including only monoidality isomorphisms: in particular, we do not include associativity constraints in the picture from the beginning but rather regard them as one possible additional structure (on the same level as commutativity and unit constraints) and treat them separately in \Cref{sect:asso}.

\begin{defn}\label{defn:ITS}
	\begin{enumerate}
		\item An \textit{internal tensor structure} $(\otimes,m)$ on $\Hbb$ is the datum of
		\begin{itemize}
			\item for every $S \in \Scal$, a functor
			\begin{equation*}
				-\otimes- = -\otimes_S-: \Hbb(S) \times \Hbb(S) \rightarrow \Hbb(S),
			\end{equation*}
			called the \textit{internal tensor product functor} over $S$,
			\item for every morphism $f: T \rightarrow S$ in $\Scal$, a natural isomorphism of functors $\Hbb(S) \times \Hbb(S) \rightarrow \Hbb(T)$
			\begin{equation*}
				m = m_f: f^* A \otimes f^* B \xrightarrow{\sim} f^*(A \otimes B),
			\end{equation*}
			called the \textit{internal monoidality isomorphism} along $f$
		\end{itemize}
		satisfying the following condition:
		\begin{enumerate}
			\item[\hypertarget{mITS}{($m$ITS)}] For every pair of composable morphisms $f: T \rightarrow S$ and $g: S \rightarrow V$ in $\Scal$, the diagram of functors $\Hbb(V) \times \Hbb(V) \rightarrow \Hbb(T)$
			\begin{equation*}
				\begin{tikzcd}
					(gf)^* A \otimes (gf)^* B \arrow{rr}{m} \arrow[equal]{d} && (gf)^*(A \otimes B)  \arrow[equal]{d} \\
					f^* g^* A \otimes f^* g^* B \arrow{r}{m} & f^* (g^* A \otimes g^* B) \arrow{r}{m} & f^* g^* (A \otimes B)
				\end{tikzcd}
			\end{equation*}
			is commutative.
		\end{enumerate}
		\item Let $(\otimes,m)$ and $(\otimes',m')$ be two internal tensor structures on $\Hbb$. An \textit{equivalence of internal tensor structures} $e: (\otimes,m) \xrightarrow{\sim} (\otimes',m')$ is the datum of
		\begin{itemize}
			\item for every $S \in \Scal$, a natural isomorphism of functors $\Hbb(S) \times \Hbb(S) \rightarrow \Hbb(S)$
			\begin{equation*}
				e = e_S: A \otimes B \xrightarrow{\sim} A \otimes' B
			\end{equation*}
		\end{itemize}
		satisfying the following condition:
		\begin{enumerate}
			\item[(eq-ITS)] For every morphism $f: T \rightarrow S$ in $\Scal$, the diagram of functors $\Hbb(S) \times \Hbb(S) \rightarrow \Hbb(T)$
			\begin{equation*}
				\begin{tikzcd}
					f^* A \otimes f^* B \arrow{r}{m} \arrow{d}{e} & f^* (A \otimes B)  \arrow{d}{e} \\
					f^* A \otimes' f^* B \arrow{r}{m'} & f^* (A \otimes' B)  
				\end{tikzcd}
			\end{equation*}
			is commutative.
		\end{enumerate}
	\end{enumerate}
\end{defn}

\begin{rem}\label{rem_ITS-id}
	Fix $S \in \Scal$. In view of axiom ($\Scal$-fib-0) of \Cref{defn:S-fib}, condition ($m$ITS) in the case where $f = g = \id_S$  means that the natural isomorphism of functors $\Hbb(S) \times \Hbb(S) \rightarrow \Hbb(S)$
	\begin{equation*}
		m_{\id_S}: A \otimes B \xrightarrow{\sim} A \otimes B
	\end{equation*}
	is idempotent. Hence it must be the identity.
\end{rem}

The following result clarifies the meaning of axiom ($m$ITS): it guarantees that, between any two given iterations of tensor product functors and inverse image functors, it is possible to construct at most one natural isomorphism by combining connection isomorphisms and internal monoidality isomorphisms in any possible order. As the reader might expect, this is just another variant of \Cref{lem-coherent_conn}.

\begin{lem}\label{lem-coherent_conn-monoint}
	Let $(\otimes,m)$ be an internal tensor structure on $\Hbb$, and let $f: T \rightarrow S$ be a morphism in $\Scal$. For every composable pair of composable tuples in $\Scal$
	\begin{equation*}
		\underline{g} = (g_1, \dots, g_r), \quad \underline{h} = (h_1, \dots, h_s)
	\end{equation*}
	satisfying $h_s \circ \dots \circ h_1 \circ g_r \circ \dots \circ g_1 = f$, consider the functor $\Hbb(S) \times \Hbb(S) \rightarrow \Hbb(T)$
	\begin{equation*}
		F^{\otimes}_{(\underline{g}, \underline{h})}(A,B) := \underline{g}^* (\underline{h}^* A \otimes \underline{h}^* B).
	\end{equation*}
	Then, given any two pairs of tuples $(\underline{g}^{(1)},\underline{h}^{(1)})$ and $(\underline{g}^{(2)},\underline{h}^{(2)})$ as above, all possible isomorphisms of functors $\Hbb(S) \times \Hbb(S) \rightarrow \Hbb(T)$
	\begin{equation*}
		F^{\otimes}_{(\underline{g}^{(1)}, \underline{h}^{(1)})}(A,B) \xrightarrow{\sim} F^{\otimes}_{(\underline{g}^{(2)}, \underline{h}^{(2)})}(A,B)
	\end{equation*}
	obtained by composing connection isomorphisms and internal monoidality isomorphisms (and inverses thereof) coincide.
\end{lem}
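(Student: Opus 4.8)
The plan is to imitate the proof of \Cref{lem-coherent_conn-mor} as closely as possible: instead of manipulating connection and internal monoidality isomorphisms by hand, I would encode the whole internal tensor structure $(\otimes,m)$ into a single auxiliary fibered category over the enlarged base $\Scal \times \Ical$ and then quote \Cref{lem-coherent_conn} over that base. Concretely, in direct analogy with the construction $\Hbb_1 \coprod_R \Hbb_2$ of \Cref{constr_Scal_mor}, I would build an $(\Scal \times \Ical)$-fibered category $\Hbb^{\otimes}$ with fibres
\[
\Hbb^{\otimes}(S;1) := \Hbb(S) \times \Hbb(S), \qquad \Hbb^{\otimes}(S;2) := \Hbb(S),
\]
the inverse image along $r_S: (S;2) \rightarrow (S;1)$ being the tensor product functor $r_S^* := - \otimes_S -$. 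The two pure families of morphisms would act by $(f;1)^* := f^* \times f^*$ and $(f;2)^* := f^*$, and the mixed morphisms by $(r_S \circ (f;2))^*(A,B) := f^*(A \otimes B)$.

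For the connection isomorphisms of $\Hbb^{\otimes}$ I would reproduce the four cases of \Cref{constr_Scal_mor}: on the pure families they are the connection isomorphisms of $\Hbb$ (doubled at level $1$, single at level $2$), while on the mixed family they are assembled from a connection isomorphism of $\Hbb$ together with an internal monoidality isomorphism $m$, exactly as the transition isomorphism $\theta$ entered the mixed connection in case (iv) of \Cref{constr_Scal_mor}. Checking that $\Hbb^{\otimes}$ satisfies the cocycle axiom then amounts to verifying the same five configurations of composable triples as in \Cref{constr_Scal_mor}: the purely level-$1$ and level-$2$ configurations reduce to axiom ($\Scal$-fib-1) for $\Hbb$, the configurations involving a single change of level reduce to naturality of $m$, and the genuinely mixed configuration reduces to axiom ($m$ITS) — precisely the role played by (mor-$\Scal$-fib) in the morphism case.

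With $\Hbb^{\otimes}$ in hand, I would note that each functor $F^{\otimes}_{(\underline{g},\underline{h})}$ appearing in the statement is literally the functor $F_{\underline{\Phi}}$ attached in \Cref{lem-coherent_conn} to the composable tuple
\[
\underline{\Phi} := \bigl( (g_1;2), \dots, (g_r;2),\, r_{T'},\, (h_1;1), \dots, (h_s;1) \bigr)
\]
over $\Scal \times \Ical$, and that all such tuples are decompositions of one and the same mixed morphism $r_S \circ (f;2): (T;2) \rightarrow (S;1)$. Under this identification, composing connection and internal monoidality isomorphisms of $\Hbb$ is the same as composing connection isomorphisms of $\Hbb^{\otimes}$ (note in particular that each $m$ arises as a composite of two mixed connections of $\Hbb^{\otimes}$), so the asserted uniqueness becomes an immediate instance of \Cref{lem-coherent_conn} applied over the base $\Scal \times \Ical$.

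I expect the only genuine work to lie in setting up $\Hbb^{\otimes}$ correctly — pinning down the fibres, the mixed inverse image functors, and above all the direction in which $m$ is inserted into the mixed connection isomorphisms so that the cocycle verification closes up. This is the precise analogue of the (already somewhat lengthy) check carried out in \Cref{constr_Scal_mor}, the only new features being that the level-$1$ fibre is now a product and that it is the monoidality axiom ($m$ITS), rather than (mor-$\Scal$-fib), that forces the mixed cocycle to commute. Once the construction is in place, the reduction to \Cref{lem-coherent_conn} is purely formal.
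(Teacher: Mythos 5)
Your proposal is correct and follows essentially the same route as the paper: the paper's own proof consists precisely of \Cref{constr_Hotimes}, which builds the auxiliary $(\Scal \times \Ical)$-fibered category $\Hbb^{\otimes}$ you describe (same four families of inverse image functors, same four cases of connection isomorphisms with $m$ inserted into the mixed ones, same five-case cocycle verification resting on ($\Scal$-fib-1), naturality of $m$, and ($m$ITS)), after which the lemma is quoted as a particular case of \Cref{lem-coherent_conn} over the base $\Scal \times \Ical$. The only difference is immaterial and in your favour: you assign the pair category $\Hbb(S) \times \Hbb(S)$ to level $1$ and $\Hbb(S)$ to level $2$, which is the type-consistent choice given that $r_S^*$ must realize $\otimes_S$ and that the mixed inverse image sends pairs to single objects, whereas the paper's text states the fibres with the two levels interchanged while using inverse-image formulas that agree with yours.
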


It would be possible to prove this result directly; however, the argument would be quite cumbersome due to the mixture of connection isomorphisms and internal monoidality isomorphisms. We prefer to reduce ourselves directly to the setting of usual fibered categories considered in \Cref{sect:rec-fib-cats} via the following auxiliary construction:

\begin{constr}\label{constr_Hotimes}
	Let $(\otimes,m)$ be an internal tensor structure on $\Hbb$. Then, following \Cref{nota:Ical}, we can construct a $(\Scal \times \Ical)$-fibered category $\Hbb^{\otimes}$ out of it, as follows:
	\begin{itemize}
		\item For every $S \in \Scal$ we set $\Hbb^{\otimes}(S;1) := \Hbb(S)$ and $\Hbb^{\otimes}(S;2) := \Hbb(S) \times \Hbb(S)$.
		\item Given a morphism $\phi$ in $\Scal \times \Ical$, we define the functor $\phi^*$ according to the type of $\phi$:
		\begin{itemize}
			\item if $\phi = (f;1): (T;1) \rightarrow (S;1)$, we set $\phi^* A := f^* A$;
			\item if $\phi = (f;2): (T;2) \rightarrow (S;2)$, we set $\phi^*(A,B) := (f^* A,f^* B)$;
			\item if $\phi = r_S \circ (f;2)$, we set $\phi^*(A,B) := f^* (A \otimes B)$.
		\end{itemize}
		\item Give two composable morphisms $\phi$ and $\psi$ in $\Scal \times \Ical$, we define the connection isomorphism $\conn_{\phi,\psi}$ according to the type of $\phi$ and $\psi$:
		\begin{enumerate}
			\item[(i)] if $\phi = (f;1)$ and $\psi = (g;1)$, we set
			\begin{equation*}
				\conn_{\phi,\psi}(A): (gf)^* A \xrightarrow{\conn_{f,g}} f^* g^* A;
			\end{equation*} 
			\item[(ii)] if $\phi = (f;2)$ and $\psi = (g;2)$, we set
			\begin{equation*}
				\conn_{\phi,\psi}(A,B): ((gf)^* A, (gf)^* B) \xrightarrow{(\conn_{f,g}, \conn_{f,g})} (f^* g^* A, f^* g^* B);
			\end{equation*} 
			\item[(iii)] if $\phi = (f;2)$ and $\psi = r_V \circ (g;2)$, we set
			\begin{equation*}
				\conn_{\phi,\psi}(A,B): (gf)^*(A \otimes B) \xrightarrow{\conn_{f,g}} f^* g^* (A \otimes B);
			\end{equation*} 
			\item[(iv)] if $\phi = r_S \circ (f;2)$ and $\psi = (g;1)$, we set
			\begin{equation*}
				\conn_{\phi,\psi}(A,B): (gf)^*(A \otimes B) \xrightarrow{\conn_{f,g}} f^* g^* (A \otimes B) \xleftarrow{m_g} f^*(g^* A \otimes g^* B).
			\end{equation*} 
		\end{enumerate}
	\end{itemize}
	Let us check that this construction indeed defines a $(\Scal \times \Ical)$-fibered category. The validity of condition (($\Scal \times \Ical$)-fib-0) follows trivially from axiom ($\Scal$-fib-0). Let us check that condition (($\Scal \times \Ical$)-fib-1) holds as well: given three composable morphisms $\phi,\psi,\xi$ in $\Scal \times \Ical$, we have to show that the diagram
	\begin{equation*}
		\begin{tikzcd}
			(\xi \psi \phi)^* A \arrow{rr}{\conn_{\phi,\xi \psi}} \arrow{d}{\conn_{\psi \phi,\xi}} && \phi^* (\xi \psi)^* A \arrow{d}{\conn_{\psi,\xi}} \\    
			(\psi \phi)^* \xi^* A \arrow{rr}{\conn_{\phi,\psi}} && \phi^* \psi^* \xi^* A
		\end{tikzcd}
	\end{equation*}
	is commutative. We have the following possibilities, parametrized by triples of composable morphisms $f: T \rightarrow S$, $g: S \rightarrow V$ and $h: V \rightarrow W$ in $\Scal$:
	\begin{enumerate}
		\item If $\phi = (f;1)$, $\psi = (g;1)$ and $\xi = (h;1)$, we obtain the diagram
		\begin{equation*}
			\begin{tikzcd}[font=\small]
				(hgf)^* A \arrow[equal]{rr} \arrow[equal]{d} && f^* (hg)^* A \arrow[equal]{d} \\
				(gf)^* h^* A \arrow[equal]{rr} && f^* g^* h^* A
			\end{tikzcd}
		\end{equation*}
		which is commutative by axiom ($\Scal$-fib-1).
		\item If $\phi = (f;2)$, $\psi = (g;2)$ and $\xi = (h;2)$, we obtain the diagram
		\begin{equation*}
			\begin{tikzcd}[font=\small]
				((hgf)^* A, (hgf)^* B) \arrow[equal]{rr} \arrow[equal]{d} && (f^* (hg)^* A, f^* (hg)^* B) \arrow[equal]{d} \\
				((gf)^* h^* A, (gf)^* h^* B) \arrow[equal]{rr} && (f^* g^* h^* A, f^* g^* h^* B)
			\end{tikzcd}
		\end{equation*}
		which is commutative by axiom ($\Scal$-fib-1).
		\item If $\phi = (f;2)$, $\phi = (g;2)$ and $\xi = r_W \circ (h;2)$, we obtain the diagram
		\begin{equation*}
			\begin{tikzcd}[font=\small]
				(hgf)^* (A \otimes B) \arrow[equal]{rr} \arrow[equal]{d} && f^* (hg)^* (A \otimes B) \arrow[equal]{d} \\
				(gf)^* h^* (A \otimes B) \arrow[equal]{rr} && f^* g^* h^* (A \otimes B)
			\end{tikzcd}
		\end{equation*}
		which is again commutative by axiom ($\Scal$-fib-1).
		\item If $\phi = (f;2)$, $\psi = r_V \circ (g;2)$ and $\xi = (h;1)$, we obtain (the outer rectangle of) the diagram
		\begin{equation*}
			\begin{tikzcd}[font=\small]
				(hgf)^* (A \otimes B) \arrow[equal]{rr} \arrow[equal]{d} && f^* (hg)^* (A \otimes B) \arrow[equal]{d} \\
				(gf)^* h^* (A \otimes B) \arrow[equal]{rr} && f^* g^* h^* (A \otimes B) \\
				(gf)^* (h^* A \otimes h^* B) \arrow[equal]{rr} \arrow{u}{m} && f^* g^* (h^* A \otimes h^* B) \arrow{u}{m}
			\end{tikzcd}
		\end{equation*}
		where the upper square is commutative by axiom ($\Scal$-fib-1) and the lower square is commutative by naturality.
		\item If $\phi = r_S \circ (f;2)$, $\psi = (g;1)$ and $\xi = (h;1)$, we obtain (the outer rectangle of) the diagram
		\begin{equation*}
			\begin{tikzcd}[font=\small]
				(hgf)^* (A \otimes B) \arrow[equal]{rr} \arrow[equal]{d} && f^* (hg)^* (A \otimes B)  \arrow[equal]{d} & f^* ((hg)^* A \otimes (hg)^* B) \arrow[equal]{dd} \arrow{l}{m} \\
				(gf)^* h^* (A \otimes B) \arrow[equal]{rr}  && f^* g^* h^* (A \otimes B)  \\
				(gf)^* (h^* A \otimes h^* B) \arrow[equal]{rr} \arrow{u}{m} && f^* g^* (h^* A \otimes h^* B) \arrow{u}{m} & f^* (g^* h^* A \otimes g^* h^* B) \arrow{l}{m} 
			\end{tikzcd}
		\end{equation*}
		where the upper-left piece is commutative by axiom ($\Scal$-fib-1), the lower-left piece is commutative by naturality, and the right-most piece is commutative by axiom ($m$ITS).
	\end{enumerate}
\end{constr}

\begin{proof}[Proof of \Cref{lem-coherent_conn-monoint}]
	As a consequence of \Cref{constr_Hotimes}, this is just a particular case of \Cref{lem-coherent_conn}.
\end{proof}

In a similar way, we now introduce external tensor structures and equivalences thereof.

\begin{defn}\label{defn:ETS}
	\begin{enumerate}
		\item An \textit{external tensor structure} $(\boxtimes,m)$ on $\Hbb$ is the datum of
		\begin{itemize}
			\item for every $S_1, S_2 \in \Scal$, a functor
			\begin{equation*}
				-\boxtimes- = - \boxtimes_{S_1,S_2} -: \Hbb(S_1) \times \Hbb(S_2) \rightarrow \Hbb(S_1 \times S_2),
			\end{equation*}
			called the \textit{external tensor product functor} over $(S_1,S_2)$,
			\item for every choice of morphisms $f_i: T_i \rightarrow S_i$ in $\Scal$, $i = 1,2$, a natural isomorphism of functors $\Hbb(S_1) \times \Hbb(S_2) \rightarrow \Hbb(T_1 \times T_2)$
			\begin{equation*}
				m = m_{f_1,f_2}: f_1^* A_1 \boxtimes f_2^* A_2 \xrightarrow{\sim} (f_1 \times f_2)^*(A_1 \boxtimes A_2),
			\end{equation*}
			called the \textit{external monoidality isomorphism} along $(f_1,f_2)$
		\end{itemize}
		satisfying the following condition:
		\begin{enumerate}
			\item[($m$ETS)] For every choice of composable morphisms $f_i: T_i \rightarrow S_i$, $g_i: S_i \rightarrow V_i$, $i = 1,2$, the diagram of functors $\Hbb(V_1) \times \Hbb(V_2) \rightarrow \Hbb(T_1 \times T_2)$
			\begin{equation*}
				\begin{tikzcd}[font=\small]
					(g_1 f_1)^* A_1 \boxtimes (g_2 f_2)^* A_2  \arrow{rr}{m} \arrow[equal]{d} && (g_1 f_1 \times g_2 f_2)^* (A_1 \boxtimes A_2)  \arrow[equal]{d} \\
					f_1^* g_1^* A_1 \boxtimes f_2^* g_2^* A_2  \arrow{r}{m} & (f_1 \times f_2)^* (g_1^* A_1 \boxtimes g_2^* A_2) \arrow{r}{m} & (f_1 \times f_2)^* (g_1 \times g_2)^* (A_1 \boxtimes A_2)
				\end{tikzcd}
			\end{equation*}	
			is commutative.
		\end{enumerate}
		\item Let $(\boxtimes',m')$ and $(\boxtimes,m)$ be two external tensor structures on $\Hbb$. An \textit{equivalence of external tensor structures} $e: (\boxtimes',m') \xrightarrow{\sim} (\boxtimes,m)$ is the datum of
		\begin{itemize}
			\item for every $S_1, S_2 \in \Scal$, a natural isomorphism of functors $\Hbb(S_1) \times \Hbb(S_2) \rightarrow \Hbb(S_1 \times S_2)$
			\begin{equation*}
				e = e_{S_1,S_2}: A_1 \boxtimes' A_2 \xrightarrow{\sim} A_1 \boxtimes A_2
			\end{equation*}
		\end{itemize}
		satisfying the following condition:
		\begin{enumerate}
			\item[(eq-ETS)] For every pair of morphisms $f_i: T_i \rightarrow S_i$ in $\Scal$, the diagram of functors $\Hbb(S_1) \times \Hbb(S_2) \rightarrow \Hbb(T_1 \times T_2)$
			\begin{equation*}
				\begin{tikzcd}
					f_1^* A_1 \boxtimes' f_2^* A_2 \arrow{rr}{m'} \arrow{d}{e} && (f_1 \times f_2)^*(A_1 \boxtimes' A_2)  \arrow{d}{e} \\
					f_1^* A_1 \boxtimes f_2^* A_2 \arrow{rr}{m} && (f_1 \times f_2)^*(A_1 \boxtimes A_2)
				\end{tikzcd}
			\end{equation*}
			is commutative.
		\end{enumerate}
	\end{enumerate}
\end{defn}

\begin{rem}\label{rem_ETS-id}
	For $i = 1,2$ fix $S_i \in \Scal$. In view of axiom ($\Scal$-fib-0) of \Cref{defn:S-fib}, condition ($m$ETS) in the case where $f_i = g_i = \id_{S_i}$ means that the natural isomorphism of functors $\Hbb(S_1) \times \Hbb(S_2) \rightarrow \Hbb(S_1 \times S_2)$
	\begin{equation*}
		m_{\id_{S_1},\id_{S_2}}: A_1 \boxtimes A_2 \xrightarrow{\sim} A_1 \boxtimes A_2
	\end{equation*}
	is idempotent. Hence it must be the identity.
\end{rem}

As we now explain, the result of \Cref{lem-coherent_conn-monoint} has an analogue in the setting of external tensor structures, which clarifies the meaning of axiom ($m$ETS): between any two given iterations of external tensor product functors and inverse image functors (respecting the single factors), it is possible to construct at most one natural isomorphism by combining connection isomorphisms and external monoidality isomorphisms in any possible order.

\begin{lem}\label{lem-coherent_conn-monoext}
	Let $(\boxtimes,m)$ be an external tensor structure on $\Hbb$ and, for $i = 1,2$, let $f_i: T_i \rightarrow S_i$ be a morphism in $\Scal$. For every choice of two composable pairs of composable tuples in $\Scal$
	\begin{equation*}
		\underline{g}_i = (g_{i,1}, \dots, g_{i,r}), \quad \underline{h}_i = (h_{i,1}, \dots, h_{i,s}) \qquad (i = 1,2)
	\end{equation*}
	satisfying $h_{i,s} \circ \dots \circ h_{i,1} \circ g_{i,r} \circ \dots \circ g_{i,1} = f_i$, consider the functor $\Hbb(S_1) \times \Hbb(S_2) \rightarrow \Hbb(T_1 \times T_2)$
	\begin{equation*}
		F^{\boxtimes}_{(\underline{g}_1, \underline{h}_1), (\underline{g}_2, \underline{h}_2)}(A_1,A_2) := (\underline{g}_1 \times \underline{g}_2)^* (\underline{h}_1^* A_1 \boxtimes \underline{h}_2^* A_2).
	\end{equation*}
	Then, given, for $i = 1,2$, any two pairs of tuples $(\underline{g}_i^{(1)},\underline{h}_i^{(1)})$ and $(\underline{g}_i^{(2)},\underline{h}_i^{(2)})$ as above, all isomorphisms of functors $\Hbb(S_1) \times \Hbb(S_2) \rightarrow \Hbb(T_1 \times T_2)$
	\begin{equation*}
		F^{\boxtimes}_{(\underline{g}_1^{(1)}, \underline{h}_1^{(1)}), (\underline{g}_2^{(1)}, \underline{h}_2^{(1)})} \xrightarrow{\sim} F^{\boxtimes}_{(\underline{g}_1^{(2)}, \underline{h}_1^{(2)}), (\underline{g}_2^{(2)}, \underline{h}_2^{(2)})}
	\end{equation*}
	obtained by composing connection isomorphisms and external monoidality isomorphisms (and inverses thereof) coincide.
\end{lem}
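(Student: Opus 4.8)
The plan is to mirror \Cref{constr_Hotimes} verbatim, replacing the internal tensor product by the external one and enlarging the auxiliary base from $\Scal \times \Ical$ to $\Scal^2 \times \Ical$; the statement will then fall out as a special case of \Cref{lem-coherent_conn} over this larger base, exactly as \Cref{lem-coherent_conn-monoint} did. Concretely, out of the external tensor structure $(\boxtimes, m)$ I would build an $(\Scal^2 \times \Ical)$-fibered category $\Hbb^{\boxtimes}$ by setting $\Hbb^{\boxtimes}((S_1,S_2);1) := \Hbb(S_1 \times S_2)$ and $\Hbb^{\boxtimes}((S_1,S_2);2) := \Hbb(S_1) \times \Hbb(S_2)$. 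Following \Cref{nota:Ical}, morphisms of $\Scal^2 \times \Ical$ split into three families indexed by pairs $(f_1,f_2)$ with $f_i : T_i \to S_i$: the type-$1$ morphisms $((f_1,f_2);1)$, acting by $A \mapsto (f_1 \times f_2)^* A$; the type-$2$ morphisms $((f_1,f_2);2)$, acting by $(A_1,A_2) \mapsto (f_1^* A_1, f_2^* A_2)$; and the mixed morphisms $r_{(S_1,S_2)} \circ ((f_1,f_2);2)$, acting by $(A_1,A_2) \mapsto (f_1 \times f_2)^*(A_1 \boxtimes A_2)$. The connection isomorphisms are defined in the four cases (i)--(iv) just as in \Cref{constr_Hotimes}, the only change being that in the mixed case one inserts the external monoidality isomorphism $m_{g_1,g_2}$ in place of $m_g$.

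The one thing that genuinely has to be checked is that $\Hbb^{\boxtimes}$ satisfies axiom (($\Scal^2 \times \Ical$)-fib-$1$). I would run through the same five configurations of a composable triple as in \Cref{constr_Hotimes}. In the three pure cases the required square is the image under $\Hbb$ of a square of type \eqref{dia_conn} for the product morphisms $f_1 \times f_2$, $g_1 \times g_2$, $h_1 \times h_2$, hence commutes by ($\Scal$-fib-$1$); the two mixed configurations each split into a piece commuting by ($\Scal$-fib-$1$), a piece commuting by naturality, and --- in the configuration where the mixed morphism is composed with two type-$1$ morphisms --- a single piece whose commutativity is precisely axiom ($m$ETS). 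Thus ($m$ETS) plays here exactly the role that ($m$ITS) played in \Cref{constr_Hotimes}, and no new idea is needed beyond transcribing that verification with $\boxtimes$ and product morphisms in place of $\otimes$.

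With $\Hbb^{\boxtimes}$ in hand, I would identify the functor $F^{\boxtimes}_{(\underline{g}_1,\underline{h}_1),(\underline{g}_2,\underline{h}_2)}$ with an iterated inverse-image functor on $\Hbb^{\boxtimes}$. The mild subtlety absent from the internal case is that I must repackage the two separate families of $\Scal$-morphisms into single morphisms of $\Scal^2 \times \Ical$; this is possible because $\underline{g}_1,\underline{g}_2$ share the common length $r$ and $\underline{h}_1,\underline{h}_2$ the common length $s$. Pairing them step by step, the $s$ type-$2$ morphisms $((h_{1,j},h_{2,j});2)$ realize $(A_1,A_2) \mapsto (\underline{h}_1^* A_1, \underline{h}_2^* A_2)$, a single mixed morphism applies $\boxtimes$, and the $r$ type-$1$ morphisms $((g_{1,j},g_{2,j});1)$ realize $(\underline{g}_1 \times \underline{g}_2)^*$. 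Hence $F^{\boxtimes}_{(\underline{g}_1,\underline{h}_1),(\underline{g}_2,\underline{h}_2)} = \underline{G}^*$ for a composable tuple $\underline{G}$ in $\Scal^2 \times \Ical$ whose total composite is the mixed morphism $r_{(S_1,S_2)} \circ ((f_1,f_2);2)$ determined by $(f_1,f_2)$, independently of the chosen decomposition; and under this identification the natural isomorphisms built from connection isomorphisms of $\Hbb$ and external monoidality isomorphisms $m$ are exactly those built from connection isomorphisms of $\Hbb^{\boxtimes}$.

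The conclusion is then immediate: \Cref{lem-coherent_conn}, applied to $\Hbb^{\boxtimes}$ and to this fixed mixed morphism, asserts that any two composites of connection isomorphisms of $\Hbb^{\boxtimes}$ between $F^{\boxtimes}_{(\underline{g}_1^{(1)},\underline{h}_1^{(1)}),(\underline{g}_2^{(1)},\underline{h}_2^{(1)})}$ and $F^{\boxtimes}_{(\underline{g}_1^{(2)},\underline{h}_1^{(2)}),(\underline{g}_2^{(2)},\underline{h}_2^{(2)})}$ coincide, which is precisely the claim. I expect the main obstacle to be purely bookkeeping: arranging the connection isomorphisms in the mixed case so that the product morphisms $f_1 \times f_2$ interact correctly with ($m$ETS), and checking that the step-by-step pairing of the two families of tuples into single tuples of $\Scal^2 \times \Ical$ is compatible with the partition of morphisms recorded in \Cref{nota:Ical}. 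Everything else is a direct transcription of \Cref{constr_Hotimes} and its proof.
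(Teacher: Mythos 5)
Your proposal is correct and is essentially the paper's own proof: \Cref{constr_Hboxtimes} builds exactly this $(\Scal^2 \times \Ical)$-fibered category $\Hbb^{\boxtimes}$ (same three families of morphisms, same four cases of connection isomorphisms with $m_{g_1,g_2}$ inserted in the mixed case, verification deferred to the pattern of \Cref{constr_Hotimes}), and the lemma is then deduced as a particular case of \Cref{lem-coherent_conn}, just as you do. The only discrepancy is cosmetic: since $r: 2 \to 1$ and inverse images are contravariant, the mixed morphism $(T_1,T_2;2) \to (S_1,S_2;1)$ sends the index-$1$ fiber to the index-$2$ fiber, so for your formula $(A_1,A_2) \mapsto (f_1 \times f_2)^*(A_1 \boxtimes A_2)$ to type-check the index-$1$ fiber must be $\Hbb(S_1) \times \Hbb(S_2)$ and the index-$2$ fiber must be $\Hbb(S_1 \times S_2)$, i.e.\ the opposite of your assignment (you inherited the label swap present in \Cref{constr_Hotimes} itself; \Cref{constr_Hboxtimes} has it the consistent way), which affects nothing in the substance of the argument.
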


As in the case of internal tensor structures, we can reduce the proof of this result to the setting of usual fibered categories considered in \Cref{sect:rec-fib-cats} via an auxiliary construction:

\begin{constr}\label{constr_Hboxtimes}
	Let $(\boxtimes,m)$ be an internal tensor structure on $\Hbb$. Then, following \Cref{nota:Ical}, we can construct an $(\Scal^2 \times \Ical)$-fibered category $\Hbb^{\boxtimes}$ out of it, as follows:
	\begin{itemize}
		\item For every $S_1,S_2 \in \Scal$ we set $\Hbb^{\boxtimes}(S_1,S_2;1) := \Hbb(S_1) \times \Hbb(S_2)$ and $\Hbb^{\boxtimes}(S_1,S_2;2) := \Hbb(S_1 \times S_2)$.
		\item Given a morphism $\phi$ in $\Scal^2 \times \Ical$, we define the functor $\phi^*$ according to the type of $\phi$:
		\begin{itemize}
			\item if $\phi = (f_1,f_2;1): (T_1,T_2;1) \rightarrow (S_1,S_2;1)$, we set $\phi^*(A_1,A_2) := (f_1^* A_1, f_2^* A_2)$;
			\item if $\phi = (f,2): (T_1,T_2;2) \rightarrow (S_1,S_2;2)$, we set $\phi^* A := (f_1 \times f_2)^* A$;
			\item if $\phi = r_{S_1,S_2} \circ (f_1,f_2;2)$, we set $\phi^*(A_1,A_2) := (f_1 \times f_2)^* (A_1 \boxtimes A_2)$.
		\end{itemize}
		\item Given two composable morphisms $\phi$ and $\psi$ in $\Scal^2 \times \Ical$, we define the connection isomorphism $\conn_{\phi,\psi}$ according to the type of $\phi$ and $\psi$:
		\begin{enumerate}
			\item[(i)] if $\phi = (f_1,f_2;1)$ and $\psi = (g_1,g_2;1)$, we set
			\begin{equation*}
				\conn_{\phi,\psi}(A_1,A_2): ((g_1 f_1)^* A_1, (g_2 f_2)^* A_2) \xrightarrow{(\conn_{f_1,g_1}, \conn_{f_2,g_2})} (f_1^* g_1^* A_1, f_2^* g_2^* A_2);
			\end{equation*}
			\item[(ii)] if $\phi = (f_1,f_2;2)$ and $\psi = (g_1,g_2;2)$, we set
			\begin{equation*}
				\conn_{\phi,\psi}(A): (g_1 f_1 \times g_2 f_2)^* A \xrightarrow{\conn_{f_1 \times f_2,g_1 \times g_2}} (f_1 \times f_2)^* (g_1 \times g_2)^* A;
			\end{equation*}
			\item[(iii)] if $\phi = (f,2)$ and $\psi = r_V \circ (g,2)$, we set
			\begin{equation*}
				\conn_{\phi,\psi}(A_1,A_2): (g_1 f_1 \times g_2 f_2)^*(A_1 \boxtimes A_2) \xrightarrow{\conn_{f_1 \times f_2,g_1 \times g_2}} (f_1 \times f_2)^* (g_1 \times g_2)^* (A_1 \boxtimes A_2);
			\end{equation*}
			\item[(iv)] if $\phi = r_S \circ (f,2)$ and $\psi = (g,1)$, we set
			\begin{equation*}
				\begin{tikzcd}
					\conn_{\phi,\psi}(A_1,A_2): & (g_1 f_1 \times g_2 f_2)^*(A_1 \boxtimes A_2) \arrow{d}{\conn_{f_1 \times f_2,g_1 \times g_2}} \\ 
					& (f_1 \times f_2)^* (g_1 \times g_2)^* (A_1 \boxtimes A_2) & \arrow{l}{m_{g_1,g_2}} (f_1 \times f_2)^*(g_1^* A_1 \boxtimes g_2^* A_2).
				\end{tikzcd}
			\end{equation*}	
		\end{enumerate}
	\end{itemize}
	Arguing in the same way as in \Cref{constr_Hotimes}, one can check that this construction indeed defines a $(\Scal^2 \times \Ical)$-fibered category; we omit the details.
\end{constr}

\begin{proof}[Proof of \Cref{lem-coherent_conn-monoext}]
	As a consequence of \Cref{constr_Hboxtimes}, this is just a particular case of \Cref{lem-coherent_conn}.
\end{proof}

\subsection{Correspondence between internal and external tensor structures}

We now move towards the main goal of the present section: we want to prove that, modulo equivalence, giving an internal tensor structure on $\Hbb$ is the same as giving an external tensor structure on it. 

The following result shows how to extend the classical formula \eqref{formulae_int_to_ext} to a construction from internal to external tensor structures:

\begin{lem}\label{lem_int_to_ext}
	Let $(\otimes,m)$ be an internal tensor structure on $\Hbb$. Then, assigning
	\begin{itemize}
		\item to every $S_1, S_2 \in \Scal$, the functor
		\begin{equation*}
			-\boxtimes- = -\boxtimes_{S_1,S_2}-: \Hbb(S_1) \times \Hbb(S_2) \rightarrow \Hbb(S_1 \times S_2)
		\end{equation*}  
		defined by the formula
		\begin{equation*}
			A_1 \boxtimes A_1 := pr^{12*}_1 A_1 \otimes pr^{12*}_2 A_2,
		\end{equation*}
		\item to every choice of morphisms $f_i: T_i \rightarrow S_i$ in $\Scal$, $i = 1,2$, the natural isomorphism of functors $\Hbb(S_1) \times \Hbb(S_2) \rightarrow \Hbb(T_1 \times T_2)$
		\begin{equation*}
			m^e = m^e_{f_1,f_2}: f_1^* A_1 \boxtimes f_2^* A_2 \xrightarrow{\sim} (f_1 \times f_2)^*(A_1 \boxtimes A_2)
		\end{equation*}
		defined as the composite
		\begin{equation*}
			\begin{tikzcd}[font=\small]
				f_1^* A_1 \boxtimes f_2^* A_2 \arrow[equal]{d} && (f_1 \times f_2)^* (A_1 \boxtimes A_2) \arrow[equal]{d} \\
				pr^{12,*}_1 f_1^* A_1 \otimes pr^{12,*}_2 f_2^* A_2 \arrow[equal]{r} & (f_1 \times f_2)^* pr^{12,*}_1 A_1 \otimes (f_1 \times f_2)^* pr^{12,*}_2 A_2  \arrow{r}{m} & (f_1 \times f_2)^* (pr^{12,*}_1 A_1 \otimes pr^{12,*}_2 A_2)  
			\end{tikzcd}
		\end{equation*}
	\end{itemize}
	defines an external tensor structure $(\boxtimes,m^e)$ on $\Hbb$. 
\end{lem}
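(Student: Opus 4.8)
The plan is to note first that $m^e$, being defined as a composite of connection isomorphisms and the internal monoidality isomorphism $m$, is automatically a natural isomorphism; hence the only thing requiring proof is that the family $\{m^e_{f_1,f_2}\}$ satisfies the coherence axiom ($m$ETS). I would verify this by translating the entire ($m$ETS) square into a statement purely about the internal tensor structure over the product bases.

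First I would fix composable morphisms $f_i\colon T_i \to S_i$ and $g_i\colon S_i \to V_i$ for $i=1,2$, and abbreviate $\phi := f_1 \times f_2$ and $\gamma := g_1 \times g_2$, so that $\gamma\phi = g_1 f_1 \times g_2 f_2$. Writing $X := pr^{12*}_1 A_1$ and $Y := pr^{12*}_2 A_2$ for the two pullbacks to $\Hbb(V_1 \times V_2)$ (so that $A_1 \boxtimes A_2 = X \otimes Y$ by definition of $\boxtimes$), I would use the product-compatibility identities $pr^{12}_i \circ \phi = f_i \circ pr^{12}_i$ and $pr^{12}_i \circ \gamma = g_i \circ pr^{12}_i$, together with the convention (legitimate by \Cref{lem-coherent_conn}) that all connection isomorphisms are written as equalities, to rewrite every factor occurring in the source and target of the ($m$ETS) square as a pullback of the fixed objects $X, Y \in \Hbb(V_1 \times V_2)$ along the various decompositions of $\gamma\phi$. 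In particular, the source $(g_1 f_1)^* A_1 \boxtimes (g_2 f_2)^* A_2$ becomes $(\gamma\phi)^* X \otimes (\gamma\phi)^* Y$.

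After this bookkeeping, the definition of $m^e$ unwinds so that the top edge of the ($m$ETS) square becomes the single internal monoidality isomorphism $m_{\gamma\phi}\colon (\gamma\phi)^* X \otimes (\gamma\phi)^* Y \to (\gamma\phi)^*(X \otimes Y)$, while the bottom-then-right composite becomes $\phi^*(m_\gamma) \circ m_\phi$. These are precisely the two legs of axiom ($m$ITS) for $(\otimes,m)$ at the composable pair $(\phi,\gamma)$, applied to the objects $X$ and $Y$. Crucially, since ($m$ITS) is a statement about the bifunctor $\Hbb(V_1 \times V_2) \times \Hbb(V_1 \times V_2) \to \Hbb(T_1 \times T_2)$, the fact that $X$ and $Y$ are distinct pullbacks of different objects causes no difficulty: both factors are pulled back along the \emph{same} tuples, so the coherence applies verbatim. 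Equivalently, both composites are natural isomorphisms between functors of the shape $F^{\otimes}_{(\underline{g},\underline{h})}$ built from connection and internal monoidality isomorphisms, and hence coincide by \Cref{lem-coherent_conn-monoint}.

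I expect the only genuine obstacle to be the careful matching of connection isomorphisms, namely checking that the identifications implicitly built into $m^e_{g_1 f_1, g_2 f_2}$, into $m^e_{f_1,f_2}$ (applied to $g_1^* A_1, g_2^* A_2$), and into $\phi^*(m^e_{g_1,g_2})$ are exactly those furnished by the product-compatibility of the projections, so that all factors genuinely line up as pullbacks of $X$ and $Y$ along a common tuple. Once connections are suppressed as equalities via \Cref{lem-coherent_conn}, this matching is forced and no computation beyond ($m$ITS) is required; the verification of ($m$ETS) thus reduces cleanly to a single instance of the internal coherence over the product bases.
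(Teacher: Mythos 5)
Your proposal is correct and is essentially the paper's own argument: the paper likewise unwinds $m^e$ in terms of connection isomorphisms and $m$, and decomposes the ($m$ETS) square into a block of pure connection isomorphisms (commuting by \Cref{lem-coherent_conn}), the ($m$ITS) square for the composable pair $(f_1 \times f_2,\, g_1 \times g_2)$ evaluated at $(pr^{12,*}_1 A_1,\, pr^{12,*}_2 A_2)$, and a naturality square for $m_{f_1 \times f_2}$. The only nuance is that your claim that nothing beyond ($m$ITS) is required once connections are written as equalities tacitly invokes naturality of $m$ (to move the connection isomorphisms $pr^{12,*}_i g_i^* A_i \cong (g_1 \times g_2)^* pr^{12,*}_i A_i$ past $m_{f_1 \times f_2}$), which is precisely the third piece of the paper's decomposition.
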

\begin{proof}
	Let us check that this definition satisfies condition ($m$ETS): given, for $i = 1,2$, two composable morphisms $f_i: T_i \rightarrow S_i$ and $g_i: S_i \rightarrow V_i$ in $\Scal$, we have to show that the diagram of functors $\Hbb(V_1) \times \Hbb(V_2) \rightarrow \Hbb(T_1 \times T_2)$
	\begin{equation*}
		\begin{tikzcd}
			(g_1 f_1)^* A_1 \boxtimes (g_2 f_2)^* A_2 \arrow{rr}{m^e} \arrow[equal]{d} && (g_1 f_1 \times g_2 f_2)^* (A_1 \boxtimes A_2)   \arrow[equal]{d} \\
			f_1^* g_1^* A_1 \boxtimes f_1^* g_1^* A_2 \arrow{r}{m^e} & (f_1 \times f_2)^* (g_1^* A_1 \boxtimes g_2^* A_2) \arrow{r}{m^e} & (f_1 \times f_2)^* (g_1 \times g_2)^* (A_1 \boxtimes A_2)
		\end{tikzcd}
	\end{equation*}
	is commutative. Unwinding the definitions in terms of the internal tensor product, we obtain the more explicit diagram
	\begin{equation*}
		\begin{tikzcd}[font=\small]
			(g_{12} f_{12})^* pr^{12,*}_{1} A_1 \otimes (g_{12} f_{12})^* pr^{12,*}_{2} A_2 \arrow{r}{m} \arrow[equal]{d} & (g_1 f_1 \times g_2 f_2)^* (pr^{12,*}_{1} A_1 \otimes pr^{12,*}_{2} A_2) \arrow[equal]{d} \\
			pr^{12,*}_{1} (g_1 f_1)^* A_1 \otimes pr^{12,*}_{2} (g_2 f_2)^* A_2 \arrow[equal]{u} \arrow[equal]{d} & f_{12}^* g_{12}^* (pr^{12,*}_{1} A_1 \otimes pr^{12,*}_{2} A_2) \\
			pr^{12,*}_{1} f_1^* g_1^* A_1 \otimes pr^{12,*}_{2} f_2^* g_2^* A_2 \arrow[equal]{d} & f_{12}^*(g_{12}^* pr^{12,*}_{1} A_1 \otimes g_{12}^* pr^{12,*}_{2} A_2) \arrow{u}{m} \\
			f_{12}^* pr^{12,*}_{1} g_1^* A_1 \otimes f_{12}^* pr^{12,*}_{2} g_2^* A_2 \arrow{r}{m} & f_{12}^* (pr^{12,*}_{1} g_1^* A_1 \otimes pr^{12,*}_{2} g_2^* A_2) \arrow[equal]{u}
		\end{tikzcd}
	\end{equation*}
	that we decompose as
	\begin{equation*}
		\begin{tikzcd}[font=\small]
			\bullet \arrow{rr}{m} \arrow[equal]{d} && \bullet \arrow[equal]{d} \\
			\bullet \arrow[equal]{u} \arrow[equal]{d} && \bullet \\
			\bullet \arrow[equal]{d} & f_{12}^* g_{12}^* pr^{12*}_{1} A_1 \otimes f_{12}^* g_{12}^* pr^{12*}_{2} A_2 \arrow[equal]{uul} \arrow[equal]{dl} \arrow{r}{m} & \bullet \arrow{u}{m} \\
			\bullet \arrow{rr}{m} && \bullet \arrow[equal]{u}
		\end{tikzcd}
	\end{equation*}
    Here, the left-most piece is commutative by \Cref{lem-coherent_conn}, the upper-right piece is commutative by axiom ($m$ITS), and the lower-right piece is commutative by naturality. This concludes the proof.
\end{proof}

Conversely, the following result shows how to turn the classical formula \eqref{formulae_ext_to_int} into a construction from external to internal tensor structures:

\begin{lem}\label{lem_ext_to_int}
	Let $(\boxtimes,m)$ be an external tensor structure on $\Hbb$. Then, assigning
	\begin{itemize}
		\item to every $S \in \Scal$, the functor
		\begin{equation*}
			-\otimes- = -\otimes_S-: \Hbb(S) \times \Hbb(S) \rightarrow \Hbb(S)
		\end{equation*}  
		defined by the formula
		\begin{equation*}
			A \otimes B := \Delta_S^*(A \boxtimes B),
		\end{equation*}
		\item to every morphism $f: T \rightarrow S$ in $\Scal$, the natural isomorphism of functors $\Hbb(S) \times \Hbb(S) \rightarrow \Hbb(T)$
		\begin{equation*}
			m^i = m^i_f: f^* A \otimes f^* B \xrightarrow{\sim} f^*(A \otimes B)
		\end{equation*}
		defined as the composite
		\begin{equation*}
			\begin{tikzcd}[font=\small]
				f^*A \otimes f^*B \arrow[equal]{d} && f^*(A \otimes B)   \arrow[equal]{d} \\
				\Delta_T^*(f^*A \boxtimes f^*B) \arrow{r}{m} & \Delta_T^*(f \times f)^*(A \boxtimes B) \arrow[equal]{r} & f^* \Delta_S^*(A \boxtimes B)
			\end{tikzcd}
		\end{equation*}
	\end{itemize} 
	defines an internal tensor structure $(\otimes,m^i)$ on $\Hbb$.
\end{lem}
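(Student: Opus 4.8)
The plan is to check that the stated data satisfy axiom ($m$ITS); that each $m^i_f$ is a natural isomorphism is automatic, being a composite of the natural isomorphism $m$, connection isomorphisms, and the functor $\Delta_T^*$, all of which preserve naturality and invertibility. So the only genuine content is the coherence of $m^i$ with composition of inverse image functors.

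The single combinatorial fact I would isolate at the outset lives in $\Scal$: for every morphism $f\colon T\to S$ one has $(f\times f)\circ\Delta_T=\Delta_S\circ f$, by the universal property of binary products. Passing to inverse image functors and using \Cref{lem-coherent_conn}, this gives canonical identifications $\Delta_T^*(f\times f)^*=f^*\Delta_S^*$ (and analogues for $g$ and for $gf$), which under \Cref{nota:simple} we treat as equalities. These identifications are precisely the ones already built into the definition of $m^i$, and they are what let the diagonal pullbacks slide past the inverse images.

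With this in hand I would fix a composable pair $f\colon T\to S$, $g\colon S\to V$ and expand both legs of the ($m$ITS) square, writing $A\otimes B=\Delta_V^*(A\boxtimes B)$ and unwinding each internal monoidality isomorphism into its defining composite. The outcome is that every arrow in the square is the functor $\Delta_T^*$ applied to an arrow of $\Hbb(T\times T)$ assembled from external monoidality isomorphisms and connection isomorphisms alone: the top leg is $\Delta_T^*$ of $m_{gf,gf}$ (with its target read off through $\Delta_T^*(gf\times gf)^*=(gf)^*\Delta_V^*$), while the bottom leg is $\Delta_T^*$ of $(f\times f)^*(m_{g,g})\circ m_{f,f}$, the second factor $f^*(m^i_g)$ being rewritten via $f^*\Delta_S^*=\Delta_T^*(f\times f)^*$. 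The whole ($m$ITS) square is therefore the image under $\Delta_T^*$ of the single external identity
\begin{equation*}
m_{gf,gf}=(f\times f)^*(m_{g,g})\circ m_{f,f}
\end{equation*}
in $\Hbb(T\times T)$, compatibly with the source identification $(gf)^*A\boxtimes(gf)^*B=f^*g^*A\boxtimes f^*g^*B$. But this identity is exactly axiom ($m$ETS) of \Cref{defn:ETS} specialized to $f_1=f_2=f$ and $g_1=g_2=g$ (equivalently, an instance of \Cref{lem-coherent_conn-monoext}); since $\Delta_T^*$ is a functor, applying it preserves commutativity, and the reduction is complete.

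I expect the one delicate point to be the bookkeeping of the diagonal connection isomorphisms: one must verify that the identifications $\Delta_T^*(f\times f)^*=f^*\Delta_S^*$ invoked at the source, in the middle, and at the target are mutually consistent and dovetail with the external monoidality isomorphisms without any mismatch. This consistency is guaranteed by \Cref{lem-coherent_conn} together with its monoidal refinement \Cref{lem-coherent_conn-monoext}, so once the reduction to the external square is spelled out there is nothing left to compute. Concretely, I would present the argument by drawing the expanded diagram and labelling its subregions by the invocations of ($m$ETS) and of naturality, exactly in the style of the proof of \Cref{lem_int_to_ext}.
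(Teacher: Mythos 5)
Your proposal is correct and takes essentially the same route as the paper: both expand the ($m$ITS) square using the definition of $m^i$, identify the essential region as the instance of axiom ($m$ETS) at $f_1=f_2=f$, $g_1=g_2=g$, and dispose of the remaining regions by naturality (sliding the identification $\Delta_T^*(f\times f)^*=f^*\Delta_S^*$ past $m_{g,g}$) and by the coherence of connection isomorphisms from \Cref{lem-coherent_conn}. Your packaging of the argument as applying $\Delta_T^*$ to a single external square, with a separate consistency check on the diagonal identifications, is just a rephrasing of the paper's decomposition of one large diagram into those same three pieces.
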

\begin{proof}
	Let us check that this definition satisfies condition ($m$ITS): given two composable morphisms $f: T \rightarrow S$ and $g: S \rightarrow V$, we have to show that the diagram of functors $\Hbb(V) \times \Hbb(V) \rightarrow \Hbb(T)$
	\begin{equation*}
		\begin{tikzcd}
			(gf)^* A \otimes (gf)^* B  \arrow{rr}{m^i} \arrow[equal]{d} && (gf)^*(A \otimes B) \arrow[equal]{d} \\
			f^* g^* A \otimes f^* g^* B \arrow{r}{m^i} & f^* (g^* A \otimes g^* B) \arrow{r}{m^i} & f^* g^* (A \otimes B)
		\end{tikzcd}
	\end{equation*}
	is commutative. Unwinding the definitions in terms of the external tensor product, we obtain the more explicit diagram
	\begin{equation*}
		\begin{tikzcd}[font=\small]
		\Delta_T^*((gf)^* A \otimes (gf)^* B) \arrow{r}{m} \arrow[equal]{d} & \Delta_T^*(gf \times gf)^*(A \boxtimes B) \arrow[equal]{r} & (gf)^* \Delta_V^*(A \boxtimes B) \arrow[equal]{d} \\
		\Delta_T^*(f^* g^* A \boxtimes f^* g^* B) \arrow{d}{m} && f^* g^* \Delta_V^*(A \boxtimes B) \arrow[equal]{d} \\
		\Delta_T^*(f \times f)^*(g^* A \boxtimes g^* B)	\arrow[equal]{r} & f^* \Delta_S^*(g^* A \boxtimes g^* B) \arrow{r}{m} & f^* \Delta_S^* (g \times g)^*(A \boxtimes B)
		\end{tikzcd}
	\end{equation*}
    that we decompose as
    \begin{equation*}
    	\begin{tikzcd}[font=\small]
    		\bullet \arrow{r}{m} \arrow[equal]{d} & \bullet \arrow[equal]{r} & \bullet \arrow[equal]{d} \\
    		\bullet \arrow{d}{m} & \Delta_T^*(f \times f)^*(g \times g)^*(A \boxtimes B) \arrow[equal]{u} \arrow[equal]{dr} & \bullet \arrow[equal]{d} \\
    		\bullet	\arrow[equal]{r} \arrow{ur}{m} & \bullet \arrow{r}{m} & \bullet
    	\end{tikzcd}
    \end{equation*}
	Here, the upper-left piece is commutative by axiom ($m$ETS), the upper-right pice is commutative by \Cref{lem-coherent_conn}, and the lower piece is commutative by naturality. This concludes the proof.
\end{proof}

We can finally prove the main result of this section, asserting that the two constructions just described are canonically mutually inverse modulo equivalence:

\begin{prop}\label{prop_bij_in_ext}
	The constructions of \Cref{lem_int_to_ext} and \Cref{lem_ext_to_int} canonically define mutually inverse bijections between equivalence classes of internal and external tensor structures on $\Hbb$.
\end{prop}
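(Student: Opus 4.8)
The plan is to upgrade the two constructions to the level of equivalence classes and then, for each of the two round trips, to exhibit an explicit equivalence back to the original structure; by definition of equivalence classes this forces the two maps to be mutually inverse bijections.

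First I would check that the constructions of \Cref{lem_int_to_ext} and \Cref{lem_ext_to_int} are compatible with equivalences, so that they descend to well-defined maps on equivalence classes. Given an equivalence of internal tensor structures $e \colon (\otimes, m) \xrightarrow{\sim} (\otimes', m')$, the natural isomorphisms $pr^{12*}_1 A_1 \otimes pr^{12*}_2 A_2 \xrightarrow{e} pr^{12*}_1 A_1 \otimes' pr^{12*}_2 A_2$ assemble into a candidate equivalence between the external tensor structures produced by \Cref{lem_int_to_ext}; condition (eq-ETS) for it reduces, after unwinding the definition of $m^e$, to condition (eq-ITS) for $e$ together with the naturality of $e$. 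The dual check for \Cref{lem_ext_to_int} — applying $\Delta_S^*$ to an equivalence of external tensor structures — is entirely analogous.

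Next I would produce the comparison equivalences for the two round trips. The crucial geometric inputs are the identities $pr^{12}_i \circ \Delta_S = \id_S$ and $(pr^{12}_1 \times pr^{12}_2) \circ \Delta_{S_1 \times S_2} = \id_{S_1 \times S_2}$. For the internal round trip, applying \Cref{lem_ext_to_int} after \Cref{lem_int_to_ext} yields the functor $A \otimes' B = \Delta_S^*(pr^{12*}_1 A \otimes pr^{12*}_2 B)$; since $\Delta_S^* pr^{12*}_i = \id$ by the first identity (and \Cref{lem-coherent_conn}), the internal monoidality isomorphism $m_{\Delta_S}$ of the original structure gives a natural isomorphism $\eta_S \colon A \otimes B \xrightarrow{\sim} A \otimes' B$. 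For the external round trip, applying \Cref{lem_int_to_ext} after \Cref{lem_ext_to_int} yields $A_1 \boxtimes' A_2 = \Delta_{S_1 \times S_2}^*(pr^{12*}_1 A_1 \boxtimes pr^{12*}_2 A_2)$; since $(pr^{12}_1 \times pr^{12}_2)\circ \Delta_{S_1 \times S_2} = \id$, the isomorphism $\Delta_{S_1\times S_2}^*(m^{-1}_{pr^{12}_1, pr^{12}_2})$ gives a natural isomorphism $\epsilon_{S_1,S_2} \colon A_1 \boxtimes A_2 \xrightarrow{\sim} A_1 \boxtimes' A_2$.

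Then I would verify that $\eta$ and $\epsilon$ are genuine equivalences, i.e.\ that they satisfy (eq-ITS) and (eq-ETS) respectively. Every arrow occurring in the relevant compatibility squares is, after unwinding the definitions of the iterated monoidality isomorphisms $m^e$ and $m^i$, a composite of connection isomorphisms and monoidality isomorphisms of the fixed starting structure. The verification is therefore a diagram chase of the same flavour as the proofs of \Cref{lem_int_to_ext} and \Cref{lem_ext_to_int}, decomposing each square into cells that commute by ($m$ITS)/($m$ETS), by naturality, or by the coherence of pure connection isomorphisms (\Cref{lem-coherent_conn}). I expect this to be the main obstacle, for two reasons. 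First, the bookkeeping: the fully expanded squares are large, and the role of \Cref{lem-coherent_conn-monoint} and \Cref{lem-coherent_conn-monoext} is precisely to tame them, since they let us treat all connection isomorphisms as equalities and guarantee that any two composites of connection and monoidality isomorphisms with the same source and target agree. Second, and more subtly, in each round trip the two tensor factors become pulled back along the \emph{distinct} projections $pr^{12}_1, pr^{12}_2$; this takes one outside the scope of the internal coherence \Cref{lem-coherent_conn-monoint} (which requires the same inverse-image tuple on both factors), so the cleanest route is to re-express the ambiguous objects in external form via $A \otimes' B = \Delta_S^*(A \boxtimes B)$ and invoke the external coherence \Cref{lem-coherent_conn-monoext}, which does permit independent tuples on the two factors.

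Finally, once $\eta$ and $\epsilon$ are known to be equivalences, the equivalence class of $(\otimes, m)$ coincides with that of its internal round trip, and likewise for the external round trip; combined with the first step this shows that the two descended maps are mutually inverse bijections, with all the witnessing equivalences explicit and canonical as claimed.
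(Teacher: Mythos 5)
Your proposal is correct and follows the paper's strategy: the witnessing equivalences you construct are exactly the paper's (your $\eta_S$ is the paper's $e_S$, namely $m_{\Delta_S}$ evaluated at $(pr^{12,*}_1 A, pr^{12,*}_2 B)$ up to connection isomorphisms, and your $\epsilon_{S_1,S_2}$ is the inverse of the paper's $e_{S_1,S_2} = \Delta_{S_1 \times S_2}^* m_{pr^{12}_1,pr^{12}_2}$, which is immaterial), and the verification of (eq-ITS) and (eq-ETS) proceeds by the same kind of diagram decomposition. Two points of comparison are worth recording. First, your preliminary descent step — that the constructions of \Cref{lem_int_to_ext} and \Cref{lem_ext_to_int} send equivalent structures to equivalent structures — is a genuine addition: the paper never checks it, although it is needed for the maps on equivalence classes to be well defined (pointwise round-trip equivalences alone do not force descent), and your reduction of (eq-ETS) for the induced equivalence to (eq-ITS) at $f_1 \times f_2$ plus naturality is correct. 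Second, your diagnosis of the coherence subtlety is right, but your remedy is only half right. The paper \emph{does} apply \Cref{lem-coherent_conn-monoint} in the internal round trip despite the two distinct projections: the trick is to evaluate the natural isomorphisms of that lemma at the pair $(pr^{12,*}_1 A, pr^{12,*}_2 B)$, regarded as objects of $\Hbb(S \times S)$, after which both factors carry the \emph{same} inverse-image tuples, decomposing the single morphism $\Delta_S \circ f = (f \times f) \circ \Delta_T$. Your alternative — rewrite externally and invoke \Cref{lem-coherent_conn-monoext} — is exactly what the paper does for the \emph{external} round trip, where it works because the comparison map there is itself an external monoidality isomorphism $m_{pr^{12}_1,pr^{12}_2}$; but it cannot dispose of the key cell of the \emph{internal} round trip, whose boundary contains $m_f$ and the bridge maps $m_{\Delta_S}$, $m_{\Delta_T}$: these are internal monoidality isomorphisms of $(\otimes,m)$ that are not composites of connection isomorphisms and $m^e$'s, hence lie outside the scope of \Cref{lem-coherent_conn-monoext} for $(\boxtimes,m^e)$. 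That cell must instead be handled either by the change-of-variables trick just described, or by two direct applications of ($m$ITS) (for the composable pairs $(f,\Delta_S)$ and $(\Delta_T, f \times f)$, both composing to the same morphism $T \rightarrow S \times S$) together with \Cref{lem-coherent_conn} and naturality — and since your primary decomposition into ($m$ITS)/($m$ETS), naturality, and pure-connection cells covers precisely this, your proof still goes through.
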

\begin{proof}
	Start from an internal tensor structure $(\otimes,m)$ on $\Hbb$, and let $(\otimes',m')$ denote the internal tensor structure obtained from it by applying first \Cref{lem_int_to_ext} and then \Cref{lem_ext_to_int}: concretely, for every $S \in \Scal$, it is defined by the formula
	\begin{equation*}
		A \otimes' B := \Delta_S^*(pr^{12,*}_1 A \otimes pr^{12,*}_2 B).
	\end{equation*}
	We want to construct a canonical equivalence $e: (\otimes,m) \xrightarrow{\sim} (\otimes',m')$.
	To this end, for every $S \in \Scal$ we define an isomorphism of functors $\Hbb(S) \times \Hbb(S) \rightarrow \Hbb(S)$
	\begin{equation*}
		e = e_S: A \otimes B \xrightarrow{\sim} A \otimes' B
	\end{equation*}
	by taking the composite
	\begin{equation*}
		A \otimes B = \Delta_S^* pr^{12,*}_1 A \otimes \Delta_S^* pr^{12,*}_2 B \xrightarrow{m} \Delta_S^* (pr^{12,*}_1 A \otimes pr^{12,*}_2 B) =: A \otimes' B.
	\end{equation*}
	We claim that, as $S$ varies in $\Scal$, these isomorphisms satisfy condition (eq-ITS): given a morphism $f: T \rightarrow S$ in $\Scal$, we have to check that the diagram of functors $\Hbb(S) \times \Hbb(S) \rightarrow \Hbb(T)$
	\begin{equation*}
		\begin{tikzcd}
			f^* A \otimes f^* B \arrow{r}{m} \arrow{d}{e} & f^* (A \otimes B)  \arrow{d}{e} \\
			f^* A \otimes' f^* B \arrow{r}{m'} & f^* (A \otimes' B)  
		\end{tikzcd}
	\end{equation*}
	is commutative. Unwinding the definitions, we obtain the more explicit diagram
	\begin{equation*}
		\begin{tikzcd}[font=\small]
			& \Delta_T^* pr^{12,*}_1 f^* A \otimes \Delta_T^* pr^{12,*}_2 f^* B \arrow{r}{m} & \Delta_T^* (pr^{12,*}_1 f^* A \otimes pr^{12,*}_2 f^* B) \arrow[equal]{d} \\
			f^* A \otimes f^* B \arrow[equal]{ur} \arrow{dd}{m} && \Delta_T^* ((f \times f)^* pr^{12,*}_1 A \otimes (f \times f)^* pr^{12,*}_2 B) \arrow{d}{m} \\
			&& \Delta_T^* (f \times f)^* (pr^{12,*}_1 A \otimes pr^{12,*}_2 B) \arrow[equal]{d} \\
			f^* (A \otimes B) \arrow[equal]{r} & f^*(\Delta_S^* pr^{12,*}_1 A \otimes \Delta_S^* pr^{12,*}_2 B) \arrow{r}{m} & f^* (\Delta_S^* (pr^{12,*}_1 A \otimes pr^{12,*}_2 B))
		\end{tikzcd}
	\end{equation*}
    that we decompose as
    \begin{equation*}
    	\begin{tikzcd}[font=\small]
    		& \bullet \arrow{r}{m} \arrow[equal]{d} & \bullet \arrow[equal]{d} \\
    		\bullet \arrow[equal]{ur} \arrow{dd}{m} \arrow[equal]{dr} & \Delta_T^* (f \times f)^* pr^{12,*}_1 A \otimes \Delta_T^* (f \times f)^* pr^{12,*}_2 B \arrow{r}{m} \arrow[equal]{d} & \bullet \arrow{d}{m} \\
    		& f^* \Delta_S^* pr^{12,*}_1 A \otimes f^* \Delta_S^* pr^{12,*}_2 B \arrow{d}{m} & \bullet \arrow[equal]{d} \\
    		\bullet \arrow[equal]{r} & \bullet \arrow{r}{m} & \bullet
    	\end{tikzcd}
    \end{equation*}
    Here, the upper-left piece is commutative by \Cref{lem-coherent_conn}, the lower-left and upper-right pieces are commutative by naturality, and the lower-right piece is commutative by \Cref{lem-coherent_conn-monoint}. This proves the claim. 
	
	Conversely, start from an external tensor structure $(\boxtimes,m)$ on $\Hbb$, and let $(\boxtimes',m')$ denote the external tensor structure obtained from it by applying first \Cref{lem_ext_to_int} and then \Cref{lem_int_to_ext}: concretely, for every $S_1,S_2 \in \Scal$, it is defined by the formula
	\begin{equation*}
		A_1 \boxtimes' A_2 := \Delta_{S_1 \times S_2}^*(pr_1^* A_1 \boxtimes pr_2^* A_2).
	\end{equation*}
	We want to construct a canonical equivalence $e: (\boxtimes',m') \xrightarrow{\sim} (\boxtimes,m)$.
	To this end, for every $S_1, S_2 \in \Scal$ we define an isomorphism of functors $\Hbb(S_1) \times \Hbb(S_2) \rightarrow \Hbb(S_1 \times S_2)$
	\begin{equation*}
		e = e_{S_1,S_2}: A_1 \boxtimes' A_2 \xrightarrow{\sim} A_1 \boxtimes A_2
	\end{equation*}
	by taking the composite
	\begin{equation*}
		A_1 \boxtimes' A_2 := \Delta_{S_1 \times S_2}^* (pr^{12,*}_1 A_1 \boxtimes pr^{12,*}_2 A_2) \xrightarrow{m} \Delta_{S_1 \times S_2}^* (pr^{12}_1 \times pr^{12}_2)^* (A_1 \boxtimes A_2) = A_1 \boxtimes A_2.
	\end{equation*}
	We claim that, as $S_1,S_2$ vary in $\Scal$, these isomorphisms satisfy condition (eq-ETS): given two morphisms $f_i: T_i \rightarrow S_i$ in $\Scal$, $i = 1,2$, we have to show that the diagram of functors $\Hbb(S_1) \times \Hbb(S_2) \rightarrow \Hbb(T_1 \times T_2)$
	\begin{equation*}
		\begin{tikzcd}
			f_1^* A_1 \boxtimes' f_2^* A_2 \arrow{r}{m'} \arrow{d}{e} & (f_1 \times f_2)^* (A_1 \boxtimes' A_2)  \arrow{d}{e} \\
			f_1^* A_1 \boxtimes f_2^* A_2 \arrow{r}{m} & (f_1 \times f_2)^* (A_1 \boxtimes A_2)
		\end{tikzcd}
	\end{equation*}
	is commutative. Unwinding the definitions, we obtain the outer part of the diagram
	\begin{equation*}
		\begin{tikzcd}[font=\tiny]
			\Delta_{T_1 \times T_2}^* (pr^{12,*}_1 f_1^* A_1 \boxtimes pr^{12}_2 f_2^* A_2) \arrow{r}{m} \arrow[equal]{d} & \Delta_{T_1 \times T_2}^* (pr^{12}_1 \times pr^{12}_2)^* (f_1^* A_1 \boxtimes f_2^* A_2) \arrow[equal]{r} & f_1^* A_1 \boxtimes f_2^* A_2 \arrow{dd}{m} \\
			\Delta_{T_1 \times T_2}^* (f_{12}^* pr^{12,*}_1 A_1 \boxtimes f_{12}^* pr^{12,*}_2 A_2) \arrow{d}{m} \\
			\Delta_{T_1 \times T_2}^* (f_{12} \times f_{12})^* (pr^{12,*}_1 A_1 \boxtimes pr^{12,*}_2 A_2) \arrow[equal]{d} && (f_1 \times f_2)^* (A_1 \boxtimes A_2)\\
			f_{12}^* \Delta_{S_1 \times S_2}^* (pr^{12,*}_1 A_1 \boxtimes pr^{12,*}_2 A_2) \arrow{r}{m} & f_{12}^* \Delta_{S_1 \times S_2}^* (pr^{12}_1 \times pr^{12}_2)^* (A_1 \boxtimes A_2) \arrow[equal]{ur}
		\end{tikzcd}
	\end{equation*}	
    that we decompose as
    \begin{equation*}
    	\begin{tikzcd}[font=\small]
    		\bullet \arrow{r}{m} \arrow[equal]{d} & \bullet \arrow[equal]{r} \arrow{d}{m} & \bullet \arrow{dd}{m} \\
    		\bullet \arrow{d}{m} & \Delta_{T_1 \times T_2}^* (pr^{12}_1 \times pr^{12}_2)^* f_{12}^* (A_1 \boxtimes A_2) \arrow[equal]{dr} \arrow[equal]{d} \\
    		\bullet \arrow{r}{m} \arrow[equal]{d} & \Delta_{T_1 \times T_2}^* ((f_1 \times f_2) \times (f_1 \times f_2))^* (pr^{12}_1 \times pr^{12}_2)^* (A_1 \boxtimes A_2) \arrow[equal]{d} & \bullet \\
    		\bullet \arrow{r}{m} & \bullet \arrow[equal]{ur}
    	\end{tikzcd}
    \end{equation*}
    Here, the upper-left piece is commutative by \Cref{lem-coherent_conn-monoext}, the lower-left and upper-right pieces are commutative by naturality, and the lower-left piece is commutative by \Cref{lem-coherent_conn}. This proves the claim.
\end{proof}

The constructions described in the second part of this section will serve as a model for all the main constructions of this paper. In the course of the next three sections we study associativity, commutativity, and unit constraints in the setting of internal and external tensor structures. In each case, we follow the same route as above, namely: firstly, we introduce the relevant constraint both in the internal and in the external setting, and we refine the corresponding notion of equivalence accordingly; secondly, we explain how to turn an internal constraint into an external one and conversely along the constructions of \Cref{lem_int_to_ext} and \Cref{lem_ext_to_int}; thirdly, we refine the bijection of \Cref{prop_bij_in_ext} taking the relevant constraint into account. We do not discuss how to extend the results of \Cref{lem-coherent_conn-monoint} and \Cref{lem-coherent_conn-monoext} so as to include the additional constraints, since we do not need such an extension for our applications; however, we often make use of these two auxiliary results in order to simplify our computations.

\section{Associativity constraints}\label{sect:asso}

The goal of this section is to introduce associativity constraint in the setting of internal and of external tensor structures and to compare them.

The notion of associativity constraint for single monoidal categories comes from \cite[\S~3]{Mac63}; in the case of fibered categories, one needs to require inverse image functors to respect associativity constraints as in \cite[Defn.~2.1.85(1)]{Ayo07a}. The latter definition can be formulated in our language of internal tensor structures as follows:

\begin{defn}\label{defn:aITS}
	Let $(\otimes,m)$ be an internal tensor structure on $\Hbb$.
	\begin{enumerate}
		\item An \textit{internal associativity constraint} $a$ on $(\otimes,m)$ is the datum of
		\begin{itemize}
			\item for every $S \in \Scal$, a natural isomorphism between functors $\Hbb(S) \times \Hbb(S) \times \Hbb(S) \rightarrow \Hbb(S)$
			\begin{equation*}
				a = a_S: (A \otimes B) \otimes C \xrightarrow{\sim} A \otimes (B \otimes C)
			\end{equation*}
		\end{itemize}
		satisfying the following conditions:
		\begin{enumerate}
			\item[($a$ITS-1)] For every $S \in \Scal$, the diagram of functors $\Hbb(S) \times \Hbb(S) \times \Hbb(S) \times \Hbb(S) \rightarrow \Hbb(S)$
			\begin{equation*}
				\begin{tikzcd}
					((A \otimes B) \otimes C) \otimes D \arrow{d}{a} \arrow{rr}{a} && (A \otimes B) \otimes (C \otimes D) \arrow{d}{a} \\
					(A \otimes (B \otimes C)) \otimes D \arrow{r}{a} & A \otimes ((B \otimes C) \otimes D) \arrow{r}{a} & A \otimes (B \otimes (C \otimes D))
				\end{tikzcd}
			\end{equation*}
			is commutative.
			\item[($a$ITS-2)] For every morphism $f: T \rightarrow S$ in $\Scal$, the diagram of functors $\Hbb(S) \times \Hbb(S) \times \Hbb(S) \rightarrow \Hbb(T)$
			\begin{equation*}
				\begin{tikzcd}
					(f^*A \otimes f^*B) \otimes f^*C \arrow{r}{m} \arrow{d}{a} & f^*(A \otimes B) \otimes f^*C \arrow{r}{m} & f^*((A \otimes B) \otimes C) \arrow{d}{a} \\
					f^*A \otimes (f^*B \otimes f^*C) \arrow{r}{m} & f^*A \otimes f^*(B \otimes C) \arrow{r}{m} & f^*(A \otimes (B \otimes C))
				\end{tikzcd}
			\end{equation*}
			is commutative.
		\end{enumerate}
		We say that $(\otimes,m;a)$ is an \textit{associative internal tensor structure} on $\Hbb$.
		\item Let $(\otimes,m;a)$ and $(\otimes',m';a')$ be two associative internal tensor structures on $\Hbb$. An \textit{equivalence of associative internal tensor structures} $e: (\otimes,m;a) \xrightarrow{\sim} (\otimes',m;a')$ is an equivalence of internal tensor structures $e: (\otimes,m) \xrightarrow{\sim} (\otimes',m')$ satisfying the following additional condition:
		\begin{enumerate}
			\item[(eq-$a$ITS)] For every $S \in \Scal$, the diagram of functors $\Hbb(S) \times \Hbb(S) \times \Hbb(S) \rightarrow \Hbb(S)$
			\begin{equation*}
				\begin{tikzcd}
					(A \otimes B) \otimes C \arrow{r}{e} \arrow{d}{a} & (A \otimes' B) \otimes C \arrow{r}{e} & (A \otimes' B) \otimes' C \arrow{d}{a'} \\
					A \otimes (B \otimes C) \arrow{r}{e} & A \otimes (B \otimes' C) \arrow{r}{e} & A \otimes' (B \otimes' C)
				\end{tikzcd}
			\end{equation*}
			is commutative.
		\end{enumerate}
	\end{enumerate}
\end{defn}

In order to get a similar notion in the setting of external tensor structures, we just need to transliterate the previous conditions in the obvious way, as follows:

\begin{defn}\label{defn:aETS}
	Let $(\boxtimes,m)$ be an external tensor structure on $\Hbb$.
	\begin{enumerate}
		\item An \textit{external associativity constraint} $a$ on $(\boxtimes,m)$ is the datum of
		\begin{itemize}
			\item for every $S_1, S_2, S_3 \in \Scal$, a natural isomorphism between functors $\Hbb(S_1) \times \Hbb(S_2) \times \Hbb(S_3) \rightarrow \Hbb(S_1 \times S_2 \times S_3)$
			\begin{equation*}
				a = a_{S_1,S_2,S_3}: (A_1 \boxtimes A_2) \boxtimes A_3 \xrightarrow{\sim} A_1 \boxtimes (A_2 \boxtimes A_3)
			\end{equation*}
		\end{itemize}
		satisfying the following conditions:
		\begin{enumerate}
			\item[($a$ETS-1)] For every $S_1, S_2, S_3, S_4 \in \Scal$, the diagram of functors $\Hbb(S_1) \times \Hbb(S_2) \times \Hbb(S_3) \times \Hbb(S_4) \rightarrow \Hbb(S_1 \times S_2 \times S_3 \times S_4)$
			\begin{equation*}
				\begin{tikzcd}
					((A_1 \boxtimes A_2) \boxtimes A_3) \boxtimes A_4 \arrow{d}{a} \arrow{rr}{a} && (A_1 \boxtimes A_2) \boxtimes (A_3 \boxtimes A_4) \arrow{d}{a} \\
					(A_1 \boxtimes (A_2 \boxtimes A_3)) \boxtimes A_4 \arrow{r}{a} & A_1 \boxtimes ((A_2 \boxtimes A_3) \boxtimes A_4) \arrow{r}{a} & A_1 \boxtimes (A_2 \boxtimes (A_3 \boxtimes A_4))
				\end{tikzcd}
			\end{equation*}
			is commutative.
			\item[($a$ETS-2)] For every three morphisms $f_i: T_i \rightarrow S_i$ in $\Scal$, $i = 1,2,3$, the diagram of functors $\Hbb(S_1) \times \Hbb(S_2) \times \Hbb(S_3) \rightarrow \Hbb(T_1 \times T_2 \times T_3)$
			\begin{equation*}
				\begin{tikzcd}
					(f_1^* A_1 \boxtimes f_2^* A_2) \boxtimes f_3^* A_3 \arrow{r}{m} \arrow{d}{a} & (f_1 \times f_2)^* (A_1 \boxtimes A_2) \boxtimes f_3^* A_3 \arrow{r}{m} & (f_1 \times f_2 \times f_3)^* ((A_1 \boxtimes A_2) \boxtimes A_3) \arrow{d}{a} \\
					f_1^* A_1 \boxtimes (f_2^* A_2 \boxtimes f_3^* A_3) \arrow{r}{m} & f_1^* A_1 \boxtimes (f_2 \times f_3)^* (A_2 \boxtimes A_3) \arrow{r}{m} & (f_1 \times f_2 \times f_3)^* (A_1 \boxtimes (A_2 \boxtimes A_3))  
				\end{tikzcd}
			\end{equation*}
		    is commutative.
		\end{enumerate}
		We say that $(\boxtimes,m;a)$ is an \textit{associative external tensor structure} on $\Hbb$.
		\item Let $(\boxtimes',m';a')$ and $(\boxtimes,m;a)$ be two associative external tensor structures on $\Hbb$. An \textit{equivalence of associative external tensor structures} $e: (\boxtimes',m';a') \xrightarrow{\sim} (\boxtimes,m;a)$ is an equivalence of external tensor structures $e: (\boxtimes',m') \xrightarrow{\sim} (\boxtimes,m)$ satisfying the following additional condition:
		\begin{enumerate}
			\item[(eq-$a$ETS)] For every $S_1, S_2, S_3 \in \Scal$, the diagram of functors $\Hbb(S_1) \times \Hbb(S_2) \times \Hbb(S_3) \rightarrow \Hbb(S_1 \times S_2 \times S_3)$
			\begin{equation*}
				\begin{tikzcd}
					(A_1 \boxtimes' A_2) \boxtimes' A_3 \arrow{r}{e} \arrow{d}{a'} & (A_1 \boxtimes A_2) \boxtimes' A_3 \arrow{r}{e} & (A_1 \boxtimes A_2) \boxtimes A_3 \arrow{d}{a} \\
					A_1 \boxtimes' (A_2 \boxtimes' A_3) \arrow{r}{e} & A_1 \boxtimes' (A_2 \boxtimes A_3) \arrow{r}{e} & A_1 \boxtimes (A_2 \boxtimes A_3)
				\end{tikzcd}
			\end{equation*}
			is commutative.
		\end{enumerate}
	\end{enumerate}
\end{defn}

As promised, we now describe how associativity constraints can be translated between the internal and the external setting along the constructions of \Cref{lem_int_to_ext} and \Cref{lem_ext_to_int}. This is the content of the following two results:

\begin{lem}\label{lem_ass_int_to_ext}
	Let $(\otimes,m)$ be an internal tensor structure on $\Hbb$, and let $(\boxtimes,m^e)$ denote the external tensor structure defined from it via \Cref{lem_int_to_ext}. Suppose that we are given an internal associativity constraint $a$ on $(\otimes,m)$. Then, associating
	\begin{itemize}
		\item to every $S_1, S_2, S_3 \in \Scal$, the natural isomorphism of functors $\Hbb(S_1) \times \Hbb(S_2) \times \Hbb(S_3) \rightarrow \Hbb(S_1 \times S_2 \times S_3)$
		\begin{equation*}
			a^e = a^e_{S_1,S_2,S_3}: (A_1 \boxtimes A_2) \boxtimes A_3 \xrightarrow{\sim} A_1 \boxtimes (A_2 \boxtimes A_3)
		\end{equation*}
		defined by taking the composite
		\begin{equation*}
			\begin{tikzcd}[font=\small]
				(A_1 \boxtimes A_2) \boxtimes A_3 \arrow[equal]{d} && A_1 \boxtimes (A_2 \boxtimes A_3) \arrow[equal]{d} \\
				pr^{123,*}_{12} (pr^{12,*}_{1} A_1 \otimes pr^{12,*}_{2} A_2) \otimes pr^{123,*}_{3} A_3 && pr^{123,*}_{1} A_1 \otimes pr^{123,*}_{23} (pr^{23,*}_{2} A_2 \otimes pr^{23,*}_{3} A_3) \\
				(pr^{123,*}_{12} pr^{12,*}_{1} A_1 \otimes pr^{123,*}_{12} pr^{12,*}_{2} A_2) \otimes pr^{123,*}_{3} A_3 \arrow{u}{m} \arrow[equal]{d} && pr^{123,*}_{1} A_1 \otimes (pr^{123,*}_{23} pr^{23,*}_{2} A_2 \otimes pr^{123,*}_{23} pr^{23,*}_{3} A_3) \arrow{u}{m} \arrow[equal]{d} \\
				(pr^{123,*}_{1} A_1 \otimes pr^{123,*}_{2} A_2) \otimes pr^{123,*}_{3} A_3 \arrow{rr}{a} && pr^{123,*}_{1} A_1 \otimes (pr^{123,*}_{2} A_2 \otimes pr^{123,*}_{3} A_3)
			\end{tikzcd}
		\end{equation*}
	\end{itemize}
	defines an external associativity constraint $a^e$ on $(\boxtimes,m^e)$.
\end{lem}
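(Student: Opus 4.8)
The plan is to verify the two axioms (\hyperlink{aETS1}{}$a$ETS-1) and ($a$ETS-2) by unwinding the definition of $a^e$ together with that of $m^e$ from \Cref{lem_int_to_ext} entirely in terms of the internal data $\otimes$, $m$ and $a$, and then decomposing the resulting diagrams into pieces that are each controlled either by an internal axiom or by the coherence already established. Throughout, I would treat all connection isomorphisms as equalities (\Cref{nota:simple}) and declare every subdiagram assembled solely from connection isomorphisms and internal monoidality isomorphisms to be commutative by \Cref{lem-coherent_conn-monoint}; this isolates the genuinely new isomorphism $a$ and lets the two verifications reduce, respectively, to the internal pentagon ($a$ITS-1) and to the internal pullback-compatibility ($a$ITS-2).

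For axiom ($a$ETS-2) I would first rewrite the square over $T_1 \times T_2 \times T_3$ by expanding each horizontal $m^e$ into its defining composite and each vertical $a^e$ into the composite of the present statement. Using the connection identifications $pr^{123,*}_i f_i^* = (f_1 \times f_2 \times f_3)^* pr^{123,*}_i$, both the source and the target vertices become iterated internal tensor products of the objects $pr^{123,*}_i f_i^* A_i$. The left-hand external constraint $a^e_{T_1,T_2,T_3}$ then becomes the internal $a$ over $T_1 \times T_2 \times T_3$ applied to the pulled-back arguments, while the right-hand external constraint is the pullback along $f_1 \times f_2 \times f_3$ of the internal $a$ over $S_1 \times S_2 \times S_3$; the compatibility of these two is exactly axiom ($a$ITS-2) for the morphism $f_1 \times f_2 \times f_3$. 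Every remaining face of the diagram is built from connection and monoidality isomorphisms, or follows from naturality of $a$, and hence commutes by \Cref{lem-coherent_conn-monoint} and functoriality. Thus ($a$ETS-2) reduces cleanly to ($a$ITS-2).

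For axiom ($a$ETS-1) the natural move is to pull every vertex back to the fourfold product $P := S_1 \times S_2 \times S_3 \times S_4$. Using $m$ and connection isomorphisms, each of the five vertices of the external pentagon is identified with the corresponding bracketing of the internal tensor product of $pr^{1234,*}_1 A_1, \ldots, pr^{1234,*}_4 A_4$ over $P$; by \Cref{lem-coherent_conn-monoint} this identification is canonical. I would then check that, under it, each of the five external edges $a^e$ becomes the corresponding internal edge $a$ over $P$. For the three edges realized directly at the level of a threefold product of the form occurring in the definition (the top, lower-left and right edges) this is immediate. For the two edges of the shape $a^e \boxtimes \id$ and $\id \boxtimes a^e$ one must additionally transport the internal constraint from a threefold product to $P$ along the projection $pr^{1234}_{123}$, respectively $pr^{1234}_{234}$, which is again precisely axiom ($a$ITS-2). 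Once all five edges are so identified, the external pentagon becomes the internal pentagon ($a$ITS-1) for the objects $pr^{1234,*}_i A_i$, and the auxiliary faces gluing the two pictures commute by \Cref{lem-coherent_conn-monoint} and naturality.

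The main obstacle is organizational rather than conceptual: unwinding $a^e$ and $m^e$ produces large diagrams, and the bookkeeping of the projection indices together with the passage between threefold and fourfold products is delicate. The essential simplification, which \Cref{lem-coherent_conn-monoint} supplies once and for all, is that every isomorphism assembled from connection and monoidality isomorphisms is canonical; granting this, both axioms are a faithful transcription of ($a$ITS-1) and ($a$ITS-2), with no further input required.
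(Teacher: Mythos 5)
Your proposal follows essentially the same route as the paper's proof: both verifications are reduced to the corresponding internal axioms by decomposing each external diagram around a central rectangle --- the internal pentagon ($a$ITS-1) over $S_1 \times S_2 \times S_3 \times S_4$ for ($a$ETS-1), and the internal axiom ($a$ITS-2) applied to $f_1 \times f_2 \times f_3$ for ($a$ETS-2) --- with all remaining lateral faces handled by the coherence result of \Cref{lem-coherent_conn-monoint} together with naturality of $a$. Your additional observation that the two pentagon edges of the form $a^e \boxtimes \id$ and $\id \boxtimes a^e$ require transporting the internal constraint along $pr^{1234}_{123}$ and $pr^{1234}_{234}$ via ($a$ITS-2) is correct and makes explicit a point the paper leaves buried in its omitted details.
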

\begin{proof}
	We have to check that the definition satisfies conditions ($a$ETS-1) and ($a$ETS-2).
	
	We start from condition ($a$ETS-1): given $S_1, S_2, S_3, S_4 \in \Scal$, we have to show that the diagram of functors $\Hbb(S_1) \times \Hbb(S_2) \times \Hbb(S_3) \times \Hbb(S_4) \rightarrow \Hbb(S_1 \times S_2 \times S_3 \times S_4)$
	\begin{equation*}
		\begin{tikzcd}
			((A_1 \boxtimes A_2) \boxtimes A_3) \boxtimes A_4 \arrow{d}{a^e} \arrow{rr}{a^e} && (A_1 \boxtimes A_2) \boxtimes (A_3 \boxtimes A_4) \arrow{d}{a^e} \\
			(A_1 \boxtimes (A_2 \boxtimes A_3)) \boxtimes A_4 \arrow{r}{a^e} & A_1 \boxtimes ((A_2 \boxtimes A_3) \boxtimes A_4) \arrow{r}{a^e} & A_1 \boxtimes (A_2 \boxtimes (A_3 \boxtimes A_4))
		\end{tikzcd}
	\end{equation*}
	is commutative. To this end, we decompose the diagram as
	\begin{equation*}
		\begin{tikzcd}[font=\small]
			\bullet \arrow{ddd}{a^e} \arrow{rrrr}{a^e} \arrow{dr}{\sim} &&&& \bullet \arrow{ddd}{a^e} \arrow{dl}{\sim} \\
			& ((B_1 \otimes B_2) \otimes B_3) \otimes B_4 \arrow{rr}{a} \arrow{d}{a} && (B_1 \otimes B_2) \otimes (B_3 \otimes B_4) \arrow{d}{a} \\
			& (B_1 \otimes (B_1 \otimes B_3)) \otimes B_4 \arrow{r}{a} & B_1 \otimes ((B_2 \otimes B_3) \otimes B_4) \arrow{r}{a} & B_1 \otimes (B_2 \otimes (B_3 \otimes B_4)) \\
			\bullet \arrow{rr}{a^e} \arrow{ur}{\sim} && \bullet \arrow{rr}{a^e} \isoarrow{u} && \bullet \arrow{ul}{\sim}
		\end{tikzcd}
	\end{equation*} 
	where, for notational convenience, we have set $B_i := pr^{1234,*}_i A_i$, $i = 1,2,3,4$. The five unnamed natural isomorphisms are obtained by combining connection isomorphisms and internal monoidality isomorphisms along suitable paths; by \Cref{lem-coherent_conn-monoint}, we know that the definitions do not depend on the chosen paths. Since the cental rectangle in the latter diagram is commutative by axiom ($a$ITS-1), it now suffices to show that the five lateral pieces are commutative as well. For each of them, it suffices to expand the definition of the two auxiliary isomorphisms involved in the most convenient way, using again the independence result of \Cref{lem-coherent_conn-monoint}; we omit the details.
	
	We now treat condition ($a$ETS-2): given three morphisms $f_i: T_i \rightarrow S_i$ in $\Scal$, $i = 1,2,3$, we have to show that the diagram of functors $\Hbb(S_1) \times \Hbb(S_2) \times \Hbb(S_3) \rightarrow \Hbb(T_1 \times T_2 \times T_3)$
	\begin{equation*}
		\begin{tikzcd}
			(f_1^* A_1 \boxtimes f_2^* A_2) \boxtimes f_3^* A_3 \arrow{r}{m^e} \arrow{d}{a^e} & f_{12}^* (A_1 \boxtimes A_2) \boxtimes f_3^* A_3 \arrow{r}{m^e} & f_{123}^* ((A_1 \boxtimes A_2) \boxtimes A_3) \arrow{d}{a^e} \\
			f_1^* A_1 \boxtimes (f_2^* A_2 \boxtimes f_3^* A_3) \arrow{r}{m^e} & f_1^* A_1 \boxtimes f_{23}^* (A_2 \boxtimes A_3) \arrow{r}{m^e} & f_{123}^* (A_1 \boxtimes (A_2 \boxtimes A_3))
		\end{tikzcd}
	\end{equation*}
	is commutative. To this end, we decompose the diagram as
	\begin{equation*}
		\begin{tikzcd}[font=\small]
			\bullet \arrow{rr}{m^e} \arrow{ddd}{a^e} \arrow{dr}{\sim} && \bullet \arrow{rr}{m^e} \isoarrow{d} && \bullet \arrow{ddd}{a^e} \arrow{dl}{\sim} \\
			& (f_{123}^* B_1 \otimes f_{123}^* B_2) \otimes f_{123}^* B_3 \arrow{r}{m} \arrow{d}{a} & f_{123}^*(B_1 \otimes B_2) \otimes f_{123}^* B_3 \arrow{r}{m} & f_{123}^*((B_1 \otimes B_2) \otimes B_3) \arrow{d}{a} \\
			& f_{123}^* B_1 \otimes (f_{123}^* B_2 \otimes f_{123}^* B_3) \arrow{r}{m} & f_{123}^* B_1 \otimes f_{123}^*(B_2 \otimes B_3) \arrow{r}{m} & f_{123}^*(B_1 \otimes (B_2 \otimes B_3)) \\
			\bullet \arrow{rr}{m^e} \arrow{ur}{\sim} && \bullet \arrow{rr}{m^e} \isoarrow{u} && \bullet \arrow{ul}{\sim}	
		\end{tikzcd}
	\end{equation*}
	where, for notational convenience, we have set $B_i := pr^{123,*}_i A_i$, $i = 1,2,3$. The six unnamed natural isomorphisms are obtained by combining connection isomorphisms and internal monoidality isomorphisms along suitable paths; by \Cref{lem-coherent_conn-monoint}, we know that the definitions do not depend on the chosen paths. Since the central rectangle is the latter diagram is commutative by axiom ($a$ITS-2), it now suffices to show that the six lateral pieces are commutative as well. This can be achieved using the same trick as for the first part of the proof; we omit the details.
\end{proof}

\begin{lem}\label{lem_ass_ext_to_int}
	Let $(\boxtimes,m)$ be an external tensor structure on $\Hbb$, and let $(\otimes,m^i)$ denote the internal tensor structure defined from it via \Cref{lem_ext_to_int}. Suppose that we are given an external associativity constraint $a$ on $(\boxtimes,m)$. Then, associating
	\begin{itemize}
		\item to every $S \in \Scal$, the natural isomorphism of functors $\Hbb(S) \times \Hbb(S) \times \Hbb(S) \rightarrow \Hbb(S)$
		\begin{equation*}
			a^i = a^i_S: (A \otimes B) \otimes C \xrightarrow{\sim} A \otimes (B \otimes C)
		\end{equation*}
		defined by taking the composite
		\begin{equation*}
			\begin{tikzcd}[font=\small]
				(A \otimes B) \otimes C \arrow[equal]{d} && A \otimes (B \otimes C) \arrow[equal]{d} \\
				\Delta_S^*(\Delta_S^*(A \boxtimes B) \boxtimes C) \arrow{d}{m} && \Delta_S^*(A \boxtimes \Delta_S^*(B \boxtimes C)) \arrow{d}{m} \\
				\Delta_S^*(\Delta_S \times \id_S)^*((A \boxtimes B) \boxtimes C) \arrow[equal]{d} && \Delta_S^*(\id_S \times \Delta_S)^*(A \boxtimes (B \boxtimes C)) \arrow[equal]{d} \\
				\Delta_S^{(3),*}((A \boxtimes B) \boxtimes C) \arrow{rr}{a} && \Delta_S^{(3),*}(A \boxtimes (B \boxtimes C))
			\end{tikzcd}
		\end{equation*}
	\end{itemize}
	defines an internal associativity constraint $a^i$ on $(\otimes,m^i)$.
\end{lem}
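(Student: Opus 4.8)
The plan is to verify that the natural isomorphism $a^i$ satisfies the two axioms ($a$ITS-1) and ($a$ITS-2) of \Cref{defn:aITS}(1), reducing each of them to the homologous external axiom from \Cref{defn:aETS}(1). The argument is formally dual to the proof of \Cref{lem_ass_int_to_ext}: the roles of the internal and external tensor products are interchanged, and the coherence result \Cref{lem-coherent_conn-monoext} plays the part that \Cref{lem-coherent_conn-monoint} played there.

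First I would treat the pentagon axiom ($a$ITS-1). Unwinding the definition $-\otimes- = \Delta_S^*(-\boxtimes-)$, every vertex of the internal pentagon for four objects $A,B,C,D$ becomes an iterated-diagonal inverse image of a four-fold external product of $A,B,C,D$ carrying one of the five standard bracketings. Using the external monoidality isomorphisms $m$ together with the connection isomorphisms that identify the various composites of $\Delta_S$, $\Delta_S^{(3)}$ and $\Delta_S^{(4)}$ (for instance $(\Delta_S \times \id_S) \circ \Delta_S = \Delta_S^{(3)}$), each such vertex is canonically identified with $\Delta_S^{(4),*}$ applied to the corresponding external bracketing. Under these identifications the internal morphisms $a^i$ become $\Delta_S^{(4),*}$ of the external morphisms $a$. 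Concretely, I would decompose the internal pentagon as a central pentagon, obtained by applying $\Delta_S^{(4),*}$ to the external pentagon of axiom ($a$ETS-1) and hence commutative, surrounded by five lateral pieces each of which combines only connection isomorphisms and external monoidality isomorphisms and is therefore commutative by \Cref{lem-coherent_conn-monoext}.

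The compatibility axiom ($a$ITS-2) along a morphism $f: T \rightarrow S$ is handled in the same way. Expanding the definitions, the hexagonal diagram relating $a^i$ and $m^i$ becomes, after the same identifications, $\Delta_T^{(3),*}$ of a diagram whose central part is precisely the external compatibility square of axiom ($a$ETS-2) for the three equal morphisms $f_1 = f_2 = f_3 = f$; the surrounding pieces again only involve connection and external monoidality isomorphisms and are commutative by \Cref{lem-coherent_conn-monoext}. I expect the only (purely bookkeeping) obstacle to be organizing the many iterated diagonals and their interaction with the projections so that the central pieces match the external axioms verbatim; once \Cref{lem-coherent_conn-monoext} is invoked to collapse all auxiliary natural isomorphisms into unique ones, no genuinely new verification is required, and the routine details may be omitted exactly as in \Cref{lem_ass_int_to_ext}.
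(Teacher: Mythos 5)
Your proposal is correct and matches the paper's own proof essentially verbatim: the paper likewise verifies ($a$ITS-1) and ($a$ITS-2) by decomposing each internal diagram into a central copy of the corresponding external axiom (the pentagon of ($a$ETS-1) pulled back along $\Delta_S^{(4)}$, resp.\ the square of ($a$ETS-2) with $f_1=f_2=f_3=f$) surrounded by lateral pieces whose commutativity is handled by expanding the auxiliary isomorphisms and invoking the path-independence of \Cref{lem-coherent_conn-monoext}. The only cosmetic difference is that you state explicitly that the central piece in ($a$ITS-2) is $\Delta_T^{(3),*}$ of the external diagram, which the paper leaves implicit.
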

\begin{proof}
	We have to check that the definition satisfies conditions ($a$ITS-1) and ($a$ITS-2).
	
	We start from condition ($a$ITS-1): given $S \in \Scal$, we have to show that the diagram of functors $\Hbb(S) \times \Hbb(S) \times \Hbb(S) \times \Hbb(S) \rightarrow \Hbb(S)$
	\begin{equation*}
		\begin{tikzcd}
			((A \otimes B) \otimes C) \otimes D \arrow{d}{a^i} \arrow{rr}{a^i} && (A \otimes B) \otimes (C \otimes D) \arrow{d}{a^i} \\
			(A \otimes (B \otimes C)) \otimes D \arrow{r}{a^i} & A \otimes ((B \otimes C) \otimes D) \arrow{r}{a^i} & A \otimes (B \otimes (C \otimes D))
		\end{tikzcd}
	\end{equation*}
	is commutative. To this ends, we decompose it as
	\begin{equation*}
		\begin{tikzcd}[font=\small]
			\bullet \arrow{ddd}{a^i} \arrow{rrrr}{a^i} \arrow{dr}{\sim} &&&& \bullet \arrow{ddd}{a^i} \arrow{dl}{\sim} \\
			& \Delta_S^{(4)*} (((A \boxtimes B) \boxtimes C) \boxtimes D) \arrow{rr}{a} \arrow{d}{a} && \Delta_S^{(4)*} ((A \boxtimes B) \boxtimes (C \boxtimes D)) \arrow{d}{a} \\
			& \Delta_S^{(4)*} ((A \boxtimes (B \boxtimes C)) \boxtimes D) \arrow{r}{a} & \Delta_S^{(4)*} (A \boxtimes ((B \boxtimes C) \boxtimes D)) \arrow{r}{a} & \Delta_S^{(4)*} (A \boxtimes (B \boxtimes (C \boxtimes D))) \\
			\bullet \arrow{rr}{a^i} \arrow{ur}{\sim} && \bullet \arrow{rr}{a^i} \isoarrow{u} && \bullet \arrow{ul}{\sim}
		\end{tikzcd}
	\end{equation*}
	Here, the five unnamed natural isomorphisms are obtained by combining connection isomorphisms and external monoidality isomorphisms along suitable paths; by \Cref{lem-coherent_conn-monoext}, we know that the definitions do not depend on the chosen paths. Since the central rectangle is commutative by axiom ($a$ETS-1), it suffices to show that the five lateral pieces are commutative as well. This can be achieved by the same trick mentioned in the proof of \Cref{lem_ass_int_to_ext}; we omit the details.
	 
	We now treat condition ($a$ITS-2): given a morphism $f: T \rightarrow S$ in $\Scal$, we have to show that the diagram of functors $\Hbb(S) \times \Hbb(S) \times \Hbb(S) \rightarrow \Hbb(T)$
	\begin{equation*}
		\begin{tikzcd}
			(f^*A \otimes f^*B) \otimes f^*C \arrow{r}{m^i} \arrow{d}{a^i} & f^*(A \otimes B) \otimes f^*C \arrow{r}{m^i} & f^*((A \otimes B) \otimes C) \arrow{d}{a^i} \\
			f^*A \otimes (f^*B \otimes f^*C) \arrow{r}{m^i} & f^*A \otimes f^*(B \otimes C) \arrow{r}{m^i} & f^*(A \otimes (B \otimes C))
		\end{tikzcd}
	\end{equation*}
	is commutative. To this end, we decompose it as
	\begin{equation*}
		\begin{tikzcd}[font=\small]
			\bullet \arrow{rr}{m^i} \arrow{ddd}{a^i} \arrow{dr}{\sim} && \bullet \arrow{rr}{m^i} \isoarrow{d} && \bullet \arrow{ddd}{a^i} \arrow{dl}{\sim} \\
			& (f^* A \boxtimes f^* B) \boxtimes f^* C \arrow{r}{m} \arrow{d}{a} & (f \times f)^*(A \boxtimes B) \boxtimes f^*C \arrow{r}{m} & (f \times f \times f)^* ((A \boxtimes B) \boxtimes C) \arrow{d}{a} \\
			& f^* A \boxtimes (f^* B \boxtimes f^* C) \arrow{r}{m} & f^* A \boxtimes (f \times f)^*(B \boxtimes C) \arrow{r}{m} & (f \times f \times f)^*(A \boxtimes (B \boxtimes C)) \\
			\bullet \arrow{rr}{m^i} \arrow{ur}{\sim} && \bullet \arrow{rr}{m^i} \isoarrow{u} && \bullet \arrow{ul}{\sim}
		\end{tikzcd}
	\end{equation*}
    Again, the six unnamed natural isomorphisms are obtained by combining connection isomorphisms and external monoidality isomorphisms along suitable paths, the resulting definitions being independent of the chosen paths by \Cref{lem-coherent_conn-monoext}. Since the central rectangle is commutative by axiom ($a$ETS-2), it suffices to show that the five lateral pieces are commutative as well. Again, this can be achieved by the same trick mentioned in the proof of \Cref{lem_ass_int_to_ext}; we omit the details. 
\end{proof}

Finally, we prove that the correspondence of \Cref{prop_bij_in_ext} respects internal and external associativity constraints in a precise way:

\begin{prop}\label{prop_asso}
	The constructions of \Cref{lem_ass_int_to_ext} and \Cref{lem_ass_ext_to_int} refine the canonical bijection of \Cref{prop_bij_in_ext} to a bijection between equivalence classes of associative internal and external tensor structures on $\Hbb$.
\end{prop}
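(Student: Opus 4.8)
The plan is to follow the same two-step strategy already used for \Cref{prop_bij_in_ext}. First I would check that the assignments $a \mapsto a^e$ and $a \mapsto a^i$ of \Cref{lem_ass_int_to_ext} and \Cref{lem_ass_ext_to_int} are compatible with the respective notions of equivalence, so that they descend to maps between equivalence classes of associative tensor structures. Second, I would verify that the canonical equivalences produced in the proof of \Cref{prop_bij_in_ext} are in fact equivalences of \emph{associative} tensor structures. Together with \Cref{prop_bij_in_ext} itself, these two facts upgrade the existing bijection to a bijection between equivalence classes of associative internal and external tensor structures.

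For the first step, suppose that $e: (\otimes,m;a) \xrightarrow{\sim} (\otimes',m';a')$ is an equivalence of associative internal tensor structures. Its underlying equivalence of internal tensor structures corresponds, under \Cref{prop_bij_in_ext}, to an equivalence $e: (\boxtimes,m^e) \xrightarrow{\sim} (\boxtimes',(m')^e)$ of the associated external tensor structures; concretely, this induced equivalence is obtained by evaluating $e_{S_1 \times S_2}$ on suitable projection pullbacks, and its compatibility with the external monoidality isomorphisms follows from (eq-ITS) together with naturality. I would then show that this induced equivalence satisfies condition (eq-$a$ETS) with respect to $a^e$ and $(a')^e$. Unwinding the definitions of $a^e$, $(a')^e$ and of $e$, the relevant diagram decomposes into a central cell governed by (eq-$a$ITS) over $S_1 \times S_2 \times S_3$, surrounded by lateral cells built entirely out of connection and internal monoidality isomorphisms; the latter commute by \Cref{lem-coherent_conn-monoint} and by naturality. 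The external-to-internal direction is entirely symmetric, replacing (eq-$a$ITS) by (eq-$a$ETS) and \Cref{lem-coherent_conn-monoint} by \Cref{lem-coherent_conn-monoext}.

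For the second step, recall that the proof of \Cref{prop_bij_in_ext} equips the round-trip internal tensor structure $(\otimes',m')$, obtained from $(\otimes,m)$ by passing to the external side and back, with a canonical equivalence $e: (\otimes,m) \xrightarrow{\sim} (\otimes',m')$. Given an internal associativity constraint $a$ on $(\otimes,m)$, let $a'$ denote the round-trip constraint produced by applying \Cref{lem_ass_int_to_ext} and then \Cref{lem_ass_ext_to_int}. I would verify that $e$ satisfies (eq-$a$ITS) with respect to $a$ and $a'$. Since the definition of $a'$ reinserts a single copy of $a$ after conjugation by structural isomorphisms, expanding all definitions produces a diagram in which the copy of $a$ coming from (eq-$a$ITS) matches directly the copy of $a$ hidden inside $a'$, all remaining cells being composites of connection and monoidality isomorphisms that commute by \Cref{lem-coherent_conn-monoint}, \Cref{lem-coherent_conn} and naturality. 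The dual round trip, starting from an external tensor structure and using \Cref{lem-coherent_conn-monoext} in place of \Cref{lem-coherent_conn-monoint}, is handled in the same manner. This shows that the refined assignments are mutually inverse modulo equivalence, which completes the proof.

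The main obstacle is one of bookkeeping rather than of mathematical content. Because associativity constraints involve three tensor factors, and because each of the constructions $a^e$ and $a^i$ interposes several layers of projection and diagonal pullbacks together with their monoidality isomorphisms, the diagrams to be checked are substantially larger than in the non-associative case treated in \Cref{prop_bij_in_ext}. The decisive simplification, exactly as exploited throughout the paper, is that \Cref{lem-coherent_conn-monoint} and \Cref{lem-coherent_conn-monoext} collapse every composite of connection and monoidality isomorphisms into a single canonical arrow, independent of the chosen path. This is what reduces each verification to isolating the unique cell governed by the relevant associativity axiom and dismissing all the surrounding cells as purely structural; the real work therefore lies in decomposing each large diagram so that precisely one cell carries an associativity constraint.
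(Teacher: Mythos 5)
Your overall strategy coincides with the paper's: the paper's proof is exactly your second step. It verifies that the canonical equivalences $e$ constructed in the proof of \Cref{prop_bij_in_ext} satisfy (eq-$a$ITS), respectively (eq-$a$ETS), with respect to the original constraint and the round-trip constraint $a'$, by writing out the composite defining $a'$, substituting it into the relevant diagram, and decomposing. Your first step (descent of the constructions of \Cref{lem_ass_int_to_ext} and \Cref{lem_ass_ext_to_int} to equivalence classes) does not appear in the paper at all --- the paper leaves well-definedness implicit --- so including it is harmless and, if anything, more complete.

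There is, however, one concrete gap in your accounting of the round-trip verification. You claim that the copy of $a$ coming from (eq-$a$ITS) ``matches directly'' the copy of $a$ hidden inside $a'$, with all remaining cells being composites of connection and monoidality isomorphisms that commute by \Cref{lem-coherent_conn-monoint}, \Cref{lem-coherent_conn} and naturality. This cannot be right as stated. The copy of $a$ inside $a'$ is the pullback $\Delta_S^{(3),*}a_{S\times S\times S}$ evaluated at projection pullbacks, whereas the copy from (eq-$a$ITS) is $a_S$; the cell relating the two is a square whose horizontal edges are composites of monoidality isomorphisms and whose commutativity is precisely axiom ($a$ITS-2) applied to $\Delta_S^{(3)}$ --- in the paper's decomposition this is the ``right-most piece'', and it is invoked explicitly. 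That cell is not an instance of naturality (the pulled-back constraint $\Delta_S^{(3),*}a_{S\times S\times S}$ is not $a_S$ evaluated at any arguments), and it is not covered by the coherence lemmas, which concern only connection and monoidality isomorphisms, never $a$ itself. Indeed, if your accounting were correct, the proof would never use ($a$ITS-2), so the proposition would hold for any family $a$ satisfying only the pentagon ($a$ITS-1); this is false, since compatibility with inverse images is exactly what the round trip tests. The same applies in the external-to-internal direction, where ($a$ETS-2) is the needed axiom, and, on a smaller scale, in your first step, where the lateral cells also involve $e$ and therefore require (eq-ITS), respectively (eq-ETS), rather than coherence alone. Once these axioms are inserted at the correct cells, your plan goes through and agrees with the paper's proof.
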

\begin{proof}
	We keep the same notation adopted in the proof of \Cref{prop_bij_in_ext}.
	
	Start from an associative internal tensor structure $(\otimes,m;a)$ on $\Hbb$, and let $a'$ denote the internal associativity constraint on $(\otimes',m')$ obtained from $a$ by applying first \Cref{lem_ass_int_to_ext} and then \Cref{lem_ass_ext_to_int}. We claim that the equivalence of internal tensor structures $e: (\otimes,m) \xrightarrow{\sim} (\otimes',m')$ constructed in the first part of the proof of \Cref{prop_bij_in_ext} satisfies condition (eq-$a$ITS): given $S \in \Scal$, we have to show that the diagram of functors $\Hbb(S) \times \Hbb(S) \times \Hbb(S) \rightarrow \Hbb(S)$
	\begin{equation*}
		\begin{tikzcd}
			(A \otimes B) \otimes C \arrow{r}{e} \arrow{d}{a} & (A \otimes' B) \otimes C \arrow{r}{e} & (A \otimes' B) \otimes' C \arrow{d}{a'} \\
			A \otimes (B \otimes C) \arrow{r}{e} & A \otimes (B \otimes' C) \arrow{r}{e} & A \otimes' (B \otimes' C)
		\end{tikzcd}
	\end{equation*}
	is commutative. Expanding the definitions, we find that the internal associativity isomorphism $a'$ is given by the composite
	\begin{equation*}
		\begin{tikzcd}[font=\tiny]
			(A \otimes' B) \otimes' C \arrow[equal]{d} & A \otimes' (B \otimes' C). \arrow[equal]{d} \\
			\Delta_S^* (pr^{12,*}_1 \Delta_S^* (pr^{12,*}_1 A \otimes pr^{12,*}_2 B) \otimes pr^{12,*}_2 C) \arrow[equal]{d} & \Delta_S^* (pr^{12,*}_1 A \otimes pr^{12,*}_2 \Delta_S^* (pr^{12,*}_1 B \otimes pr^{12,*}_2 C)) \arrow[equal]{d} \\
			\Delta_S^* ((\Delta_S \times \id_S)^* pr^{123,*}_{12} (pr^{12,*}_1 A \otimes pr^{12,*}_2 B) \otimes (\Delta_S \times \id_S)^* pr^{123,*}_3 C) \arrow{d}{m} & \Delta_S^* ((\Delta_S \times \id_S)^* pr^{123,*}_1 A \otimes (\Delta_S \times \id_S)^* pr^{123,*}_{23} (pr^{23,*}_2 B \otimes pr^{23,*}_3 C)) \arrow{d}{m} \\
			\Delta_S^* (\Delta_S \times \id_S)^* (pr^{123,*}_{12}(pr^{12,*}_1 A \otimes pr^{12,*}_2 B) \otimes pr^{123,*}_3 C) \arrow[equal]{d} & \Delta_S^* (\id_S \times \Delta_S)^* (pr^{123,*}_1 A \otimes pr^{123,*}_{23}(pr^{23,*}_2 B \otimes pr^{23,*}_3 C)) \arrow[equal]{d} \\
			\Delta_S^{(3),*} (pr^{123,*}_{12}(pr^{12,*}_1 A \otimes pr^{12,*}_2 B) \otimes pr^{123,*}_3 C) &  \Delta_S^{(3),*} (pr^{123,*}_1 A \otimes pr^{123,*}_{23}(pr^{23,*}_2 B \otimes pr^{23,*}_3 C)) \\
			\Delta_S^{(3),*} ((pr^{123,*}_{12} pr^{12,*}_1 A \otimes pr^{123,*}_{12} pr^{12,*}_2 B) \otimes pr^{123,*}_3 C) \arrow{u}{m} \arrow[equal]{d} & \Delta_S^{(3),*} (pr^{123,*}_1 A \otimes (pr^{123,*}_{23} pr^{23,*}_2 B \otimes pr^{123,*}_{23} pr^{23,*}_3 C)) \arrow{u}{m} \arrow[equal]{d} \\
			\Delta_S^{(3),*} ((pr^{123,*}_1 A \otimes pr^{123,*}_2 B) \otimes pr^{123,*}_3 C) \arrow{r}{a} & \Delta_S^{(3),*} (pr^{123,*}_1 A \otimes (pr^{123,*}_2 B \otimes pr^{123,*}_3 C))
		\end{tikzcd}
	\end{equation*}
	Substituting this composite in the above diagram, we obtain a diagram of the form
	\begin{equation*}
		\begin{tikzcd}[font=\small]
			(A \otimes B) \otimes C \arrow{r}{\sim} \arrow{d}{a} & \Delta_S^{(3),*} ((pr^{123,*}_1 A \otimes pr^{123,*}_2 B) \otimes pr^{123,*}_3 C) \arrow{d}{a} \\
			A \otimes (B \otimes C) \arrow{r}{\sim} & \Delta_S^{(3),*} (pr^{123,*}_1 A \otimes (pr^{123,*}_2 B \otimes pr^{123,*}_3 C))
		\end{tikzcd}
	\end{equation*}
	where the two horizontal arrows are composites of connection isomorphisms and internal monoidality isomorphisms along suitable paths. Since, by \Cref{lem-coherent_conn-monoint}, we know that the resulting isomorphisms are independent of the chosen paths, we deduce that the latter diagram coincides, for example, with the outer part of the diagram
	\begin{equation*}
		\begin{tikzcd}[font=\tiny]
			(A \otimes B) \otimes C \arrow[equal]{r} \arrow{ddd}{a} & (\Delta_S^{(3),*} pr^{123,*}_1 A \otimes \Delta_S^{(3),*} pr^{123,*}_2 B) \otimes \Delta_S^{(3),*} pr^{123,*}_3 C \arrow{r}{m} \arrow{ddd}{a} & \Delta_S^{(3),*}(pr^{123,*}_1 A \otimes pr^{123,*}_2 B) \otimes \Delta_S^{(3),*} pr^{123,*}_3 C \arrow{d}{m} \\
			&& \Delta_S^{(3),*} ((pr^{123,*}_1 A \otimes pr^{123,*}_2 B) \otimes pr^{123,*}_3 C) \arrow{d}{a} \\
			&& \Delta_S^{(3),*} (pr^{123,*}_1 A \otimes (pr^{123,*}_2 B \otimes pr^{123,*}_3 C)) \\
			A \otimes (B \otimes C) \arrow[equal]{r}& \Delta_S^{(3),*} pr^{123,*}_1 A \otimes (\Delta_S^{(3),*} pr^{123,*}_2 B \otimes \Delta_S^{(3),*} pr^{123,*}_3 C) \arrow{r}{m} & \Delta_S^{(3),*} pr^{123,*}_1 A \otimes \Delta_S^{(3),*} (pr^{123,*}_2 B \otimes pr^{123,*}_3 C) \arrow{u}{m}
		\end{tikzcd}
	\end{equation*}
	where the left-most piece is commutative by naturality while the right-most piece is commutative by axiom ($a$ITS-2). This proves the claim.
	
	Conversely, start from an associative external tensor structure $(\boxtimes,m;a)$ on $\Hbb$, and let $a'$ denote the external associativity constraint on $(\boxtimes',m')$ obtained from $a$ by applying first \Cref{lem_ass_ext_to_int} and then \Cref{lem_ass_int_to_ext}. We claim that the equivalence of external tensor structures $e: (\boxtimes',m') \xrightarrow{\sim} (\boxtimes,m)$ constructed in the second part of the proof of \Cref{prop_bij_in_ext} satisfies condition (eq-$a$ETS): given $S_1, S_2, S_3 \in \Scal$, we have to show that the diagram of functors $\Hbb(S_1) \times \Hbb(S_2) \times \Hbb(S_3) \rightarrow \Hbb(S_1 \times S_2 \times S_3)$
	\begin{equation*}
		\begin{tikzcd}
			(A_1 \boxtimes' A_2) \boxtimes' A_3 \arrow{r}{e} \arrow{d}{a'} & (A_1 \boxtimes A_2) \boxtimes' A_3 \arrow{r}{e} & (A_1 \boxtimes A_2) \boxtimes A_3 \arrow{d}{a} \\
			A_1 \boxtimes' (A_2 \boxtimes' A_3) \arrow{r}{e} & A_1 \boxtimes' (A_2 \boxtimes A_3) \arrow{r}{e} & A_1 \boxtimes (A_2 \boxtimes A_3)
		\end{tikzcd}
	\end{equation*}
	is commutative. Expanding the definitions, we find that the external associativity isomorphism $a'$ is given by the composite
	\begin{equation*}
		\begin{tikzcd}[font=\tiny]
			(A_1 \boxtimes' A_2) \boxtimes' A_3 \arrow[equal]{d} & A_1 \boxtimes' (A_2 \boxtimes' A_3) \arrow[equal]{d} \\
			\Delta_{S_{123}}^* (pr^{123,*}_{12} \Delta_{S_{12}}^* (pr^{12,*}_1 A_1 \boxtimes pr^{12,*}_2 A_2) \boxtimes pr^{123,*}_3 A_3) \arrow[equal]{d} & \Delta_{S_{123}}^*(pr^{123,*}_1 A_1 \boxtimes pr^{123,*}_{23} \Delta_{S_{23}}^*(pr^{23,*}_2 A_2 \boxtimes pr^{23,*}_3 A_3)) \arrow[equal]{d} \\
			\Delta_{S_{123}}^* (\Delta_{S_{123}}^* (pr^{123,*}_{12} \times pr^{123,*}_{12})(pr^{12,*}_1 A_1 \boxtimes pr^{12,*}_2 A_2) \boxtimes pr^{123,*}_3 A_3) & \Delta_{S_{123}}^* (pr^{123,*}_1 A_1 \boxtimes \Delta_{S_{123}}^* (pr^{123,*}_{23} \times pr^{123,*}_{23}) (pr^{23,*}_2 A_2 \boxtimes pr^{23,*}_3 A_3)) \\
			\Delta_{S_{123}}^* (\Delta_{S_{123}}^* (pr^{123,*}_{12} pr^{12,*}_1 A_1 \boxtimes pr^{123,*}_{12} pr^{12,*}_2 A_2) \boxtimes pr^{123,*}_3 A_3) \arrow{u}{m} \arrow[equal]{d} & \Delta_{S_{123}}^* (pr^{123,*}_1 A_1 \boxtimes \Delta_{S_{123}}^* (pr^{123,*}_{23} pr^{23,*}_2 A_2 \boxtimes pr^{123,*}_{23} pr^{23,*}_3 A_3)) \arrow{u}{m} \arrow[equal]{d} \\
			\Delta_{S_{123}}^* (\Delta_{S_{123}}^* (pr^{123,*}_1 A_1 \boxtimes pr^{123,*}_2 A_2) \boxtimes pr^{123,*}_3 A_3) \arrow{d}{m} & \Delta_{S_{123}}^* (pr^{123,*}_1 A_1 \boxtimes \Delta_{S_{123}}^* (pr^{123,*}_2 A_2 \boxtimes pr^{123,*}_3 A_3)) \arrow{d}{m} \\
			\Delta_{S_{123}}^* (\Delta_{S_{123}} \times \id_{S_{123}})^* ((pr^{123,*}_1 A_1 \boxtimes pr^{123,*}_2 A_2) \boxtimes pr^{123,*}_3 A_3) \arrow[equal]{d} & \Delta_{S_{123}}^* (\id_{S_{123}} \times \Delta_{S_{123}})^* (pr^{123,*}_1 A_1 \boxtimes (pr^{123,*}_2 A_2 \boxtimes pr^{123,*}_3 A_3)) \arrow[equal]{d} \\
			\Delta_{S_{123}}^{(3),*} ((pr^{123,*}_1 A_1 \boxtimes pr^{123,*}_2 A_2) \boxtimes pr^{123,*}_3 A_3) \arrow{r}{a} & \Delta_{S_{123}}^{(3),*} (pr^{123,*}_1 A_1 \boxtimes (pr^{123,*}_2 A_2 \boxtimes pr^{123,*}_3 A_3))
		\end{tikzcd}
	\end{equation*}
	Substituting this composite in the above diagram, we obtain a diagram of the form
	\begin{equation*}
		\begin{tikzcd}[font=\small]
			\Delta_{S_{123}}^{(3),*} ((pr^{123,*}_1 A_1 \boxtimes pr^{123,*}_2 A_2) \boxtimes pr^{123,*}_3 A_3) \arrow{r}{\sim} \arrow{d}{a} & (A_1 \boxtimes A_2) \boxtimes A_3 \arrow{d}{a} \\
			\Delta_{S_{123}}^{(3),*} (pr^{123,*}_1 A_1 \boxtimes (pr^{123,*}_2 A_2 \boxtimes pr^{123,*}_3 A_3)) \arrow{r}{\sim} & A_1 \boxtimes (A_2 \boxtimes A_3)
		\end{tikzcd}
	\end{equation*}
	where the two horizontal arrows are composites of connection isomorphisms and internal monoidality isomorphisms along suitable paths. Since, by \Cref{lem-coherent_conn-monoext}, we know that the resulting isomorphisms are independent of the chosen paths, we deduce that the latter diagram coincides, for example, with the outer part of the diagram
	\begin{equation*}
		\begin{tikzcd}[font=\tiny]
			\Delta_{S_{123}}^{(3),*} ((pr^{123}_1 \times pr^{123}_2)^* (A_1 \boxtimes A_2) \boxtimes pr^{123,*}_3 A_3) \arrow{r}{m} & \Delta_{S_{123}}^{(3),*} (pr^{123}_1 \times pr^{123}_2 \times pr^{123}_3)^*((A_1 \boxtimes A_2) \boxtimes A_3) \arrow[equal]{r} \arrow{ddd}{a} & (A_1 \boxtimes A_2) \boxtimes A_3 \arrow{ddd}{a} \\
			\Delta_{S_{123}}^{(3),*} ((pr^{123,*}_1 A_1 \boxtimes pr^{123,*}_2 A_2) \boxtimes pr^{123,*}_3 A_3) \arrow{u}{m} \arrow{d}{a} \\
			\Delta_{S_{123}}^{(3),*} (pr^{123,*}_1 A_1 \boxtimes (pr^{123,*}_2 A_2 \boxtimes pr^{123,*}_3 A_3)) \arrow{d}{m} \\
			\Delta_{S_{123}}^{(3),*} (pr^{123,*}_1 A_1 \boxtimes (pr^{123}_2 \times pr^{123}_3)^* (A_2 \boxtimes A_3)) \arrow{r}{m} & \Delta_{S_{123}}^{(3),*} (pr^{123}_1 \times pr^{123}_2 \times pr^{123}_3)^*(A_1 \boxtimes (A_2 \boxtimes A_3)) \arrow[equal]{r} & A_1 \boxtimes (A_2 \boxtimes A_3)
		\end{tikzcd}
	\end{equation*}
	where the left-most piece is commutative by axiom ($a$ETS-2) while the right-most piece is commutative by naturality. This proves the claim and concludes the proof.
\end{proof}

\section{Commutativity constraints}\label{sect:comm}

The goal of this section is to introduce commutativity constraints in the setting of internal and external tensor structures and to compare them. 

The notion of commutativity constraint for single monoidal categories comes from \cite[\S~4]{Mac63}; in the case of fibered categories, one needs to require inverse image functors to respect commutativity constraints as in \cite[Defn.~2.1.86]{Ayo07a}. The latter definition can be formulated in the language of internal tensor structures as follows:

\begin{defn}\label{defn:cITS}
	Let $(\otimes,m)$ be an internal tensor structure on $\Hbb$.
	\begin{enumerate}
		\item An \textit{internal commutativity constraint} $c$ on $(\otimes,m)$ is the datum of
		\begin{itemize}
			\item for every $S \in \Scal$, a natural isomorphism between functors $\Hbb(S) \times \Hbb(S) \rightarrow \Hbb(S)$
			\begin{equation*}
				c = c_S: A \otimes B \xrightarrow{\sim} B \otimes A
			\end{equation*}
		\end{itemize}
		satisfying the following condition
		\begin{enumerate}
			\item[($c$ITS-1)] For every $S \in \Scal$, the diagram of functors $\Hbb(S) \times \Hbb(S) \rightarrow \Hbb(S)$
			\begin{equation*}
				\begin{tikzcd}
					A \otimes B \arrow{r}{c} \arrow{dr}{\id} & B \otimes A \arrow{d}{c} \\
					& A \otimes B
				\end{tikzcd}
			\end{equation*}
			is commutative.
			\item[($c$ITS-2)] For every morphism $f: T \rightarrow S$ in $\Scal$, the diagram of functors $\Hbb(S) \times \Hbb(S) \rightarrow \Hbb(T)$
			\begin{equation*}
				\begin{tikzcd}
					f^* A \otimes f^* B \arrow{r}{c} \arrow{d}{m} & f^* B \otimes f^* A \arrow{d}{m} \\
					f^* (A \otimes B)  \arrow{r}{c} & f^* (B \otimes A)
				\end{tikzcd}
			\end{equation*}
			is commutative.
		\end{enumerate}
		We say that $(\otimes,m;c)$ is a \textit{symmetric internal tensor structure} on $\Hbb$.
		\item Let $(\otimes,m;c)$ and $(\otimes',m';c')$ be two symmetric internal tensor structures on $\Hbb$. An \textit{equivalence of symmetric internal tensor structures} $e: (\otimes,m;c) \xrightarrow{\sim} (\otimes',m';c')$ is an equivalence of internal tensor structures $e: (\otimes,m) \xrightarrow{\sim} (\otimes',m')$ satisfying the following additional condition:
		\begin{enumerate}
			\item[(eq-$c$ITS)] For every $S \in \Scal$, the diagram of functors $\Hbb(S) \times \Hbb(S) \rightarrow \Hbb(S)$
			\begin{equation*}
				\begin{tikzcd}
					A \otimes B \arrow{r}{e} \arrow{d}{c} & A \otimes' B \arrow{d}{c} \\
					B \otimes A \arrow{r}{e} & B \otimes' A
				\end{tikzcd}
			\end{equation*}
			is commutative.
		\end{enumerate}
	\end{enumerate}
\end{defn}

In order to get a similar notion in the setting of external tensor structures, we need to reformulate the operation of switching factors with the help of suitable permutation isomorphisms, as follows:

\begin{defn}\label{defn:cETS}
	Let $(\boxtimes,m)$ be an external tensor structure on $\Hbb$.  
	\begin{enumerate}
		\item An \textit{external commutativity constraint} $c$ on $(\boxtimes,m)$ is the datum of
		\begin{itemize}
			\item for every $S_1, S_2 \in \Scal$, a natural isomorphism between functors $\Hbb(S_1) \times \Hbb(S_2) \rightarrow \Hbb(S_1 \times S_2)$
			\begin{equation*}
				c = c_{S_1,S_2}: A_1 \boxtimes A_2 \xrightarrow{\sim} \tau^*(A_2 \boxtimes A_1),
			\end{equation*}
			where $\tau = \tau_{(12)}: S_1 \times S_2 \rightarrow S_2 \times S_1$ denotes the permutation isomorphism
		\end{itemize}
		satisfying the following conditions:
		\begin{enumerate}
			\item[($c$ETS-1)]  For every $S_1, S_2 \in \Scal$, the diagram of functors $\Hbb(S_1) \times \Hbb(S_2) \rightarrow \Hbb(S_1 \times S_2)$
			\begin{equation*}
				\begin{tikzcd}
					A_1 \boxtimes A_2 \arrow{r}{c} \arrow[equal]{dr} & \tau^*(A_2 \boxtimes A_1) \arrow{d}{c} \\
					& \tau^* \tau^*(A_1 \boxtimes A_2)
				\end{tikzcd}
			\end{equation*}
			is commutative.
			\item[($c$ETS-2)] For every choice of of morphisms $f_i: T_i \rightarrow S_i$ in $\Scal$, $i = 1,2$, the diagram of functors $\Hbb(S_1) \times \Hbb(S_2) \rightarrow \Hbb(T_2 \times T_1)$
			\begin{equation*}
				\begin{tikzcd}
					f_1^* A_1 \boxtimes f_2^* A_2 \arrow{rr}{c} \arrow{d}{m} && \tau^* (f_2^* A_2 \boxtimes f_1^* A_1) \arrow{d}{m} \\
					(f_1 \times f_2)^* (A_1 \boxtimes A_2)\arrow{r}{c} & (f_1 \times f_2)^* \tau^* (A_2 \boxtimes A_1) \arrow[equal]{r} & \tau^* (f_2 \times f_1)^* (A_2 \boxtimes A_1)
				\end{tikzcd}
			\end{equation*}
			is commutative.
		\end{enumerate}
		We say that $(\boxtimes,m;c)$ is a \textit{symmetric external tensor structure} on $\Hbb$.
		\item Let $(\boxtimes',m';c')$ and $(\boxtimes,m;c)$ be two symmetric external tensor structures on $\Hbb$. An \textit{equivalence of symmetric external tensor structures} $e: (\boxtimes',m';c') \xrightarrow{\sim} (\boxtimes,m;c)$ is an equivalence of external tensor structures $e: (\boxtimes',m') \xrightarrow{\sim} (\boxtimes,m)$ satisfying the following additional condition:
		\begin{enumerate}
			\item[(eq-$c$ETS)] For every $S_1,S_2 \in \Scal$, the diagram of functors $\Hbb(S_1) \times \Hbb(S_2) \rightarrow \Hbb(S_1 \times S_2)$
			\begin{equation*}
				\begin{tikzcd}
					A_1 \boxtimes' A_2 \arrow{r}{e} \arrow{d}{c'} & A_1 \boxtimes A_2 \arrow{d}{c} \\
					\tau^*(A_2 \boxtimes' A_1) \arrow{r}{e}  & \tau^*(A_2 \boxtimes A_1)
				\end{tikzcd}
			\end{equation*}
			is commutative.
		\end{enumerate}
	\end{enumerate}
\end{defn}

In the same spirit as for the previous section, we now describe how commutativity constraints can be translated between the internal and the external setting along the constructions of \Cref{lem_int_to_ext} and \Cref{lem_ext_to_int}. This is the content of the following two results:

\begin{lem}\label{lem_symm_int_to_ext}
	Let $(\otimes,m)$ be an internal tensor structure on $\Hbb$, and let $(\boxtimes,m^e)$ denote the external tensor structure defined from it via \Cref{lem_int_to_ext}. Suppose that we are given an internal commutativity constraint $c$ on $(\otimes,m)$. Then, associating
	\begin{itemize}
		\item to every choice of $S_1, S_2 \in \Scal$, the natural isomorphism of functors $\Hbb(S_1) \times \Hbb(S_2) \rightarrow \Hbb(S_1 \times S_2)$
		\begin{equation}\label{symm_int_to_ext}
			c^e = c^e_{S_1,S_2}: A_1 \boxtimes A_2 \xrightarrow{\sim} \tau^*(A_2 \boxtimes A_1)
		\end{equation}
		defined by taking the composite
		\begin{equation*}
			\begin{tikzcd}[font=\small]
				A_1 \boxtimes A_2 \arrow[equal]{d} & \tau^* (A_2 \boxtimes A_1)  \\
				pr^{12,*}_{1} A_1 \otimes pr^{12,*}_{2} A_2 \arrow[equal]{d} & \tau^*(pr^{21,*}_{2} A_2 \otimes pr^{21,*}_{1} A_1) \arrow[equal]{u}  \\
				\tau^* pr^{21,*}_{1} A_1 \otimes \tau^* pr^{21,*}_{2} A_2 \arrow{r}{c} & \tau^* pr^{21,*}_{2} A_2 \otimes \tau^* pr^{21,*}_{1} A_1 \arrow{u}{m}
			\end{tikzcd}
		\end{equation*}
	\end{itemize}
	defines an external commutativity constraint $c^e$ on $(\boxtimes,m^e)$.
\end{lem}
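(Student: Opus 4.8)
The plan is to verify that the proposed family $c^e$ satisfies the two axioms ($c$ETS-1) and ($c$ETS-2) required of an external commutativity constraint, following the same strategy as in the proof of \Cref{lem_ass_int_to_ext}. In both cases I would begin by unwinding the definition of $c^e$ given by \eqref{symm_int_to_ext} in terms of the internal commutativity constraint $c$ and the internal monoidality isomorphisms $m$, together with the definition of the external monoidality isomorphism $m^e$ supplied by \Cref{lem_int_to_ext}. The structural fact underlying the whole computation is that the permutation isomorphism $\tau: S_1 \times S_2 \to S_2 \times S_1$ satisfies $pr^{21}_1 \circ \tau = pr^{12}_1$ and $pr^{21}_2 \circ \tau = pr^{12}_2$, so that the composites of connection isomorphisms occurring in \eqref{symm_int_to_ext} are governed by \Cref{lem-coherent_conn-monoint}; this is precisely what makes the various transport isomorphisms between iterated inverse images well defined independently of the chosen path.

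To verify ($c$ETS-1), I would form the composite $\tau^*(c^e) \circ c^e$ and expand each factor according to \eqref{symm_int_to_ext}. After substitution, the outer diagram should decompose into a central piece governed by the internal involutivity axiom ($c$ITS-1)---applied to the objects $pr^{21,*}_i A_i$ after transporting along $\tau$---surrounded by lateral pieces, each of which is either an instance of naturality of $m$ or a composite of connection and internal monoidality isomorphisms. By \Cref{lem-coherent_conn-monoint} the latter composites are uniquely determined; in particular, the canonical identification $A_1 \boxtimes A_2 = \tau^* \tau^* (A_1 \boxtimes A_2)$ agrees with the one read off from the diagram, and the commutativity of the lateral pieces then reduces ($c$ETS-1) to ($c$ITS-1).

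To verify ($c$ETS-2), I would expand the square expressing the compatibility of $c^e$ with $m^e$ into a larger diagram phrased entirely in terms of the internal tensor product. The decomposition would isolate a central square that is exactly an instance of the internal axiom ($c$ITS-2) for the morphisms $f_1, f_2$ applied to the pulled-back objects, while the remaining regions consist of naturality squares for $m$ and of transport isomorphisms built from connection and internal monoidality isomorphisms. Invoking \Cref{lem-coherent_conn-monoint} once more to identify these transport isomorphisms and to handle the equalities coming from $\tau$, the commutativity of the whole diagram would follow from that of its central square.

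The conceptually routine but practically delicate part of the argument is the bookkeeping required to track the permutation $\tau$ and the projections $pr^{12}_i$ and $pr^{21}_i$ simultaneously, together with the need to commute $\tau^*$ past internal tensor products. I expect this to be the main obstacle: unlike the associativity case of \Cref{lem_ass_int_to_ext}, the presence of $\tau$ forces one to check that the factor-switching performed by $c$ on the internal side is compatible with the factor-switching performed by $\tau$ on the base objects. Once this compatibility is encoded through the identity $pr^{21}_i \circ \tau = pr^{12}_i$ and absorbed into the coherence statement of \Cref{lem-coherent_conn-monoint}, no genuinely new input beyond ($c$ITS-1) and ($c$ITS-2) is needed, and the remaining verifications become mechanical.
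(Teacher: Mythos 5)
Your overall strategy coincides with the paper's: unwind $c^e$ and $m^e$, decompose the resulting diagrams, and reduce ($c$ETS-1) and ($c$ETS-2) to the internal axioms via naturality and the coherence lemmas. Your treatment of ($c$ETS-2) matches the paper's proof. However, there is a genuine gap in your verification of ($c$ETS-1).

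You claim that, after isolating a central piece governed by ($c$ITS-1), the lateral pieces are either naturality squares for $m$ or composites of connection and internal monoidality isomorphisms controlled by \Cref{lem-coherent_conn-monoint}, so that ($c$ETS-1) reduces to ($c$ITS-1) alone. This is not the case. The composite $\tau^*(c^e_{S_2,S_1}) \circ c^e_{S_1,S_2}$ contains the factor $\tau^*\bigl(c_{S_2 \times S_1}(pr^{21,*}_2 A_2, pr^{21,*}_1 A_1)\bigr)$, i.e.\ the inverse image under $\tau$ of the internal commutativity constraint at $S_2 \times S_1$; to compare it with the constraint $c_{S_1 \times S_2}$ evaluated at the $\tau^*$-pullbacks (which is what appears in $c^e_{S_1,S_2}$), you must commute $\tau^*$ past $c$, and this is precisely an instance of axiom ($c$ITS-2) with $f = \tau$, conjugated by the monoidality isomorphism $m_\tau$. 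It cannot be ``absorbed into'' \Cref{lem-coherent_conn-monoint}, as your last paragraph suggests: that lemma concerns only connection isomorphisms and internal monoidality isomorphisms and says nothing whatsoever about $c$. Indeed, the paper's decomposition of the ($c$ETS-1) diagram explicitly invokes ($c$ITS-2) alongside ($c$ITS-1), naturality, and \Cref{lem-coherent_conn-monoint}. Your closing remark that nothing beyond ($c$ITS-1) and ($c$ITS-2) is needed is correct as a global statement, but the decomposition you describe for ($c$ETS-1) would not close as stated; the ($c$ITS-2) piece must be added there.
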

\begin{proof}
	We have to show that the definition satisfies conditions ($c$ETS-1) and ($c$ETS-2).
	
	We start from condition ($c$ETS-1): given $S_1, S_2 \in \Scal$, we have to show that the diagram of functors $\Hbb(S_1) \times \Hbb(S_2) \rightarrow \Hbb(S_1 \times S_2)$
	\begin{equation*}
		\begin{tikzcd}
			A_1 \boxtimes A_2 \arrow{r}{c^e} \arrow[equal]{dr} & \tau^*(A_2 \boxtimes A_1) \arrow{d}{c^e} \\
			& \tau^* \tau^*(A_1 \boxtimes A_2)
		\end{tikzcd}
	\end{equation*}
	is commutative. Expanding the definitions, we obtain the outer part of the diagram
	\begin{equation*}
		\begin{tikzcd}[font=\tiny]
			pr^{12,*}_{1} A_1 \otimes pr^{12,*}_{2} A_2 \arrow[equal]{r} \arrow[equal]{dddrrr} & \tau^* pr^{21,*}_{1} A_1 \otimes \tau^* pr^{21,*}_{2} A_2 \arrow{r}{c} & \tau^* pr^{21,*}_{2} A_2 \otimes \tau^* pr^{21,*}_{1} A_1 \arrow{r}{m} & \tau^*(pr^{21,*}_{2} A_2 \otimes pr^{21,*}_{1} A_1) \arrow[equal]{d} \\
			&&& \tau^* (\tau^* pr^{12,*}_2 A_2 \otimes \tau^* pr^{12,*}_1 A_1) \arrow{d}{c} \\
			&&& \tau^* (\tau^* pr^{12,*}_1 A_1 \otimes \tau^* pr^{12,*}_2 A_2) \arrow{d}{m} \\
			&&& \tau^* \tau^* (pr^{12,*}_{1} A_1 \otimes pr^{12,*}_{2} A_2)
		\end{tikzcd}
	\end{equation*}
    that we decompose as
    \begin{equation*}
    	\begin{tikzcd}[font=\small]
    	    \bullet \arrow[equal]{r} \arrow[dddrrr, bend right=50, equal] & \bullet \arrow{r}{c} \arrow{dr}{\id} & \bullet \arrow{r}{m} \arrow{d}{c} & \bullet \arrow[equal]{d} \arrow[bend left]{ddl}{c} \\
   		    && \tau^* pr^{21,*}_{1} A_1 \otimes \tau^* pr^{21,*}_{2} A_2 \arrow{d}{m} & \bullet \arrow{d}{c} \\
   		    && \tau^*(pr^{21,*}_{1} A_1 \otimes pr^{21,*}_{2} A_2) \arrow[equal]{r} & \bullet \arrow{d}{m} \\
   		    &&& \bullet
   	    \end{tikzcd}        
    \end{equation*}
    Here, the upper central triangle is commutative by axiom ($c$ITS-1) while the remaining pieces are commutative by naturality, by axiom ($c$ITS-2) and by \Cref{lem-coherent_conn-monoint}. Hence the outer part of the diagram is commutative as well.
	
	We now treat condition ($c$ETS-2): given two morphisms $f_i: T_i \rightarrow S_i$ in $\Scal$, $i = 1,2$, we have to show that the diagram
	\begin{equation*}
		\begin{tikzcd}
			f_1^* A_1 \boxtimes f_2^* A_2 \arrow{rr}{c^e} \arrow{d}{m^e} && \tau^*(f_2^* A_2 \boxtimes f_1^* A_1) \arrow{d}{m^e} \\
			(f_1 \times f_2)^*(A_1 \boxtimes A_2)\arrow{r}{c^e} & (f_1 \times f_2)^* \tau^* (A_2 \boxtimes A_1) \arrow[equal]{r} & \tau^* (f_2 \times f_1)^*(A_2 \boxtimes A_1)
		\end{tikzcd}
	\end{equation*}
	is commutative. Expanding the definitions, we obtain the diagram
    \begin{equation*}
		\begin{tikzcd}[font=\tiny]
			\tau^* pr^{21,*}_{1} f_1^* A_1 \otimes \tau^* pr^{21,*}_{2} f_2^* A_2 \arrow{r}{c} \arrow[equal]{d} & \tau^* pr^{21,*}_{2} f_2^* A_2 \otimes \tau^* pr^{21,*}_{1} f_1^* A_1 \arrow{r}{m} & \tau^* (pr^{21,*}_{2} f_2^* A_2 \otimes pr^{21,*}_{1} f_1^* A_1) \arrow[equal]{d} \\
			pr^{12,*}_{1} f_1^* A_1 \otimes pr^{12,*}_{2} f_2^* A_2 \arrow[equal]{d} && \tau^* ((f_2 \times f_1)^* pr^{21,*}_{2} A_2 \otimes (f_2 \times f_1)^* pr^{21,*}_{1} A_1) \arrow{d}{m} \\
			(f_1 \times f_2)^* pr^{12,*}_1 A_1 \otimes (f_1 \times f_2)^* pr^{12,*}_2 A_2 \arrow{d}{m} && \tau^* (f_2 \times f_1)^* (pr^{21,*}_{2} A_2 \otimes pr^{21,*}_{1} A_1) \\
			(f_1 \times f_2)^* (pr^{12,*}_1 A_1 \otimes pr^{12,*}_2 A_2) \arrow[equal]{d} \\
			(f_1 \times f_2)^* (\tau^* pr^{21,*}_{1} A_1 \otimes \tau^* pr^{21,*}_{2} A_2) \arrow{r}{c} & (f_1 \times f_2)^* (\tau^* pr^{21,*}_{2} A_2 \otimes \tau^* pr^{21,*}_{1} A_1) \arrow{r}{m} & (f_1 \times f_2)^* \tau^* (pr^{21,*}_{2} A_2 \otimes pr^{21,*}_{1} A_1) \arrow[equal]{uu}
		\end{tikzcd}
	\end{equation*}
    that we decompose as
    \begin{equation*}
    	\begin{tikzcd}[font=\tiny]
    		\bullet \arrow{rr}{c} \arrow[equal]{d} \arrow[equal]{dr} && \bullet \arrow{rr}{m} \arrow[equal]{dr} && \bullet \arrow[equal]{d} \\
    		\bullet \arrow[equal]{d} & \tau^* (f_1 \times f_2)^* pr^{21,*}_{1} A_1 \otimes \tau^* (f_1 \times f_2)^* pr^{21,*}_{2} A_2 \arrow[equal]{d} \arrow{rr}{c} && \tau^* (f_2 \times f_1)^* pr^{21,*}_{2} A_2 \otimes \tau^* (f_2 \times f_1)^* pr^{21,*}_{1} A_1 \arrow{r}{m} \arrow[equal]{d} & \bullet \arrow{d}{m} \\
    		\bullet \arrow{d}{m} \arrow[equal]{r} & (f_1 \times f_2)^* \tau^* pr^{21,*}_1 A_1 \otimes (f_1 \times f_2)^* \tau^* pr^{21,*}_2 A_2 \arrow{ddl}{m} \arrow{rr}{c} && (f_1 \times f_2)^* \tau^* pr^{21,*}_{2} A_2 \otimes (f_1 \times f_2)^* \tau^* pr^{21,*}_{1} A_1 \arrow{ddl}{m} & \bullet \\
    		\bullet \arrow[equal]{d} \\
    		\bullet \arrow{rr}{c} && \bullet \arrow{rr}{m} && \bullet \arrow[equal]{uu}
    	\end{tikzcd}
    \end{equation*}
    Here, the lower-left piece is commutative by axiom ($c$ITS-2) while the other pieces are commutative by naturality, by axiom ($m$ITS) and by \Cref{lem-coherent_conn}. Hence the outer part of the diagram is commutative as well.
\end{proof}

\begin{lem}\label{lem_symm_ext_to_int}
	Let $(\boxtimes,m)$ be an external tensor structure on $\Hbb$, and let $(\otimes,m^i)$ denote the external tensor structure defined from it via \Cref{lem_ext_to_int}. Suppose that we are given an internal commutativity constraint $c$ on $(\boxtimes,m)$. Then, associating
	\begin{itemize}
		\item to every $S \in \Scal$, the natural isomorphism of functors $\Hbb(S) \times \Hbb(S) \rightarrow \Hbb(S)$
		\begin{equation*}
			c^i = c^i_S: A \otimes B \xrightarrow{\sim} B \otimes A
		\end{equation*}
		defined by taking the composite
		\begin{equation*}
			\begin{tikzcd}[font=\small]
				A \otimes B \arrow[equal]{d} && B \otimes A \arrow[equal]{d} \\
				\Delta_S^*(A \boxtimes B) \arrow{r}{c} & \Delta_S^* \tau^*(B \boxtimes A) \arrow[equal]{r} & \Delta_S^*(B \boxtimes A)
			\end{tikzcd}
		\end{equation*}
	\end{itemize}
	defines an internal commutativity constraint $c^i$ on $(\otimes,m^i)$.
\end{lem}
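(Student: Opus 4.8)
The plan is to verify that $c^i$ satisfies the two defining conditions of an internal commutativity constraint, namely ($c$ITS-1) and ($c$ITS-2), by unwinding the definition of $c^i$ (and, for the second condition, of $m^i$) in terms of the external data and reducing each condition to its external counterpart ($c$ETS-1), respectively ($c$ETS-2). Throughout I would freely use the elementary identities $\tau \circ \Delta_S = \Delta_S$ and $(f \times f) \circ \Delta_T = \Delta_S \circ f$, so that the identifications $\Delta_S^* \tau^* = \Delta_S^*$ and $\Delta_T^*(f \times f)^* = f^* \Delta_S^*$ built into the definitions of $c^i$ and $m^i$ are instances of connection isomorphisms; by \Cref{lem-coherent_conn} these are uniquely determined and may be treated as equalities (\Cref{nota:simple}).

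First I would treat ($c$ITS-1). Expanding $c^i_S \circ c^i_S$ yields the composite $\Delta_S^*(A \boxtimes B) \to \Delta_S^* \tau^*(B \boxtimes A) = \Delta_S^*(B \boxtimes A) \to \Delta_S^* \tau^*(A \boxtimes B) = \Delta_S^*(A \boxtimes B)$, in which both non-identification arrows are $\Delta_S^*$ applied to the external constraint $c$. Using naturality of the identification $\Delta_S^* \tau^* = \Delta_S^*$ with respect to the morphism $c \colon B \boxtimes A \to \tau^*(A \boxtimes B)$, the middle identification can be pushed past the second copy of $c$, producing $\Delta_S^*$ of the composite $c \circ c$ occurring in ($c$ETS-1), followed by a string of connection isomorphisms. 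By ($c$ETS-1) the former equals the canonical identification $A \boxtimes B = \tau^* \tau^*(A \boxtimes B)$, and by \Cref{lem-coherent_conn} the remaining connection isomorphisms compose to the identity of $\Delta_S^*(A \boxtimes B)$. Hence $c^i \circ c^i = \id$, which is precisely ($c$ITS-1).

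Next I would treat ($c$ITS-2). Expanding the definitions of $c^i$ and $m^i$, the square to be proved becomes a large diagram whose vertices are various pullbacks of $A \boxtimes B$ and $B \boxtimes A$ along $\Delta$, $\tau$, and $f \times f$, and whose edges are connection isomorphisms, external monoidality isomorphisms $m$, and external commutativity isomorphisms $c$. Following the model of the second half of the proof of \Cref{lem_symm_int_to_ext}, I would decompose this diagram so that its central piece is exactly axiom ($c$ETS-2) pulled back along $\Delta_T$, while the surrounding regions are commutative by naturality, by axiom ($m$ETS), and by the coherence results \Cref{lem-coherent_conn} and \Cref{lem-coherent_conn-monoext}; the latter guarantee that the auxiliary isomorphisms assembled from connection and external monoidality isomorphisms are independent of the chosen path, which is what lets the lateral pieces close up.

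The main obstacle is the bookkeeping in this second part: tracking the interaction between the diagonal $\Delta$, the swap $\tau$, and the product morphisms $f \times f$, and matching the two instances of $m$ hidden inside the two copies of $m^i$ with the single instance of $m$ appearing in ($c$ETS-2). As in the companion lemmas of this section, once the correct central rectangle has been isolated, commutativity of every remaining region follows formally from \Cref{lem-coherent_conn-monoext}, so that no input beyond ($c$ETS-1), ($c$ETS-2), ($m$ETS) and the coherence lemmas is needed. I would therefore exhibit the decomposition, indicate which axiom closes the central piece, and invoke the coherence results for the lateral pieces, omitting their routine verification.
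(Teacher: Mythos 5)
Your proposal is correct and follows essentially the same route as the paper: both axioms are verified by expanding the definitions, reducing ($c$ITS-1) to ($c$ETS-1) via naturality of the identification $\Delta_S^* \tau^* = \Delta_S^*$ applied to $c$ plus \Cref{lem-coherent_conn}, and reducing ($c$ITS-2) to ($c$ETS-2) (with $f_1 = f_2 = f$) pulled back along $\Delta_T^*$, with the lateral regions closed by naturality and coherence. The only cosmetic difference is that in the paper's decomposition for ($c$ITS-2) the lateral pieces need nothing beyond naturality and \Cref{lem-coherent_conn} — axiom ($m$ETS) and \Cref{lem-coherent_conn-monoext} turn out not to be required, since each copy of $m^i$ contributes exactly one instance of $m$ and these match directly the two instances of $m$ appearing in ($c$ETS-2).
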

\begin{proof}
	We have to show that the definition satisfies conditions ($c$ITS-1) and ($c$ITS-2).
	
	We start from condition ($c$ITS-1): given $S \in \Scal$, we have to show that the diagram of functors $\Hbb(S) \times \Hbb(S) \rightarrow \Hbb(S)$
	\begin{equation*}
		\begin{tikzcd}
			A \otimes B \arrow{r}{c^i} \arrow{dr}{\id} & B \otimes A \arrow{d}{c^i} \\
			& A \otimes B
		\end{tikzcd}
	\end{equation*}
	is commutative. Expanding the definitions, we obtain the outer part of the diagram
	\begin{equation*}
		\begin{tikzcd}[font=\small]
			\Delta_S^*(A \boxtimes B) \arrow{r}{c} \arrow{ddrr}{\id} & \Delta_S^* \tau^*(B \boxtimes A) \arrow[equal]{r} & \Delta_S^*(B \boxtimes A) \arrow{d}{c} \\
			&& \Delta_S^* \tau^*(A \boxtimes B) \arrow[equal]{d} \\
			&& \Delta_S^*(A \boxtimes B)
		\end{tikzcd}
	\end{equation*}
    that we decompose as
    \begin{equation*}
    	\begin{tikzcd}[font=\small]
    		\bullet \arrow{r}{c} \arrow[bend right]{ddrr}{\id} \arrow[equal]{dr} & \bullet \arrow[equal]{r} \arrow{d}{c} & \bullet \arrow{d}{c} \\
    		& \Delta_S^* \tau^* \tau^*(A \boxtimes B) \arrow[equal]{r} & \bullet \arrow[equal]{d} \\
    		&& \bullet
    	\end{tikzcd}
    \end{equation*}
    Here, the upper-right triangle is commutative by axiom ($c$ETS-1) while the remaining pieces are commutative by naturality and by \Cref{lem-coherent_conn}.
	
	We now treat condition ($c$ETS-2): given a morphism $f: T \rightarrow S$ in $\Scal$, we have to show that the diagram of functors $\Hbb(S) \times \Hbb(S) \rightarrow \Hbb(T)$
	\begin{equation*}
		\begin{tikzcd}
			f^* A \otimes f^* B \arrow{r}{c^i} \arrow{d}{m^i} & f^* B \otimes f^* A \arrow{d}{m^i} \\
			f^*(A \otimes B)  \arrow{r}{c^i} & f^*(B \otimes A)
		\end{tikzcd}
	\end{equation*}
	is commutative. Expanding the definitions, we obtain the diagram
	\begin{equation*}
		\begin{tikzcd}[font=\small]
			\Delta_T^* (f^* A \boxtimes f^* B) \arrow{r}{c} \arrow{d}{m} & \Delta_T^* \tau^* (f^* B \boxtimes f^* A) \arrow[equal]{r} & \Delta_T^* (f^* B \boxtimes f^* A) \arrow{d}{m} \\
			\Delta_T^* (f \times f)^* (A \boxtimes B) \arrow[equal]{d} && \Delta_T^* (f \times f)^* (B \boxtimes A) \arrow[equal]{d} \\
			f^* \Delta_S^* (A \boxtimes B) \arrow{r}{c} & f^* \Delta_S^* \tau^* (B \boxtimes A) \arrow[equal]{r} & f^* \Delta_S^* (B \boxtimes A)
		\end{tikzcd}
	\end{equation*}
    that we decompose as
    \begin{equation*}
    	\begin{tikzcd}[font=\small]
    		\bullet \arrow{r}{c} \arrow{dd}{m} & \bullet \arrow[equal]{r} \arrow{d}{m} & \bullet \arrow{d}{m} \\
    		& \Delta_T^* \tau^* (f \times f)^* (B \boxtimes A) \arrow[equal]{r} \arrow[equal]{d} & \bullet \arrow[equal]{dd} \\
    		\bullet \arrow{r}{c} \arrow[equal]{d} & \Delta_T^* (f \times f)^* \tau^* (B \boxtimes A) \arrow[equal]{d} \\
    		\bullet \arrow{r}{c} & \bullet \arrow[equal]{r} & \bullet
    	\end{tikzcd}
    \end{equation*}
    Here, upper-left piece is commutative by axiom ($c$ETS-2) while the other pieces are commutative by naturality and by \Cref{lem-coherent_conn}. This proves the claim.
\end{proof}

\begin{prop}\label{prop_symm}
	The constructions of \Cref{lem_symm_int_to_ext} and \Cref{lem_symm_ext_to_int} canonically define mutually inverse bijections between equivalence classes of symmetric internal and external tensor structures on $\Hbb$.
\end{prop}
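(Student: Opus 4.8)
The plan is to mirror, almost verbatim, the strategy used for \Cref{prop_asso}, since commutativity constraints are governed by exactly the same formal mechanism as associativity constraints, only with smaller diagrams. First I would fix the notation of the proof of \Cref{prop_bij_in_ext}: the canonical equivalences $e: (\otimes,m) \xrightarrow{\sim} (\otimes',m')$ (given on each $\Hbb(S)$ by a single internal monoidality isomorphism after the identifications $\Delta_S^* pr^{12,*}_i = \id$) and $e: (\boxtimes',m') \xrightarrow{\sim} (\boxtimes,m)$. By \Cref{prop_bij_in_ext} the underlying tensor structures already correspond bijectively modulo equivalence, so it remains only to check that these same equivalences are compatible with the doubly-transformed commutativity constraints; concretely this amounts to verifying conditions (eq-$c$ITS) and (eq-$c$ETS), respectively.

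For the first direction, I would start from a symmetric internal tensor structure $(\otimes,m;c)$, form the external commutativity constraint $c^e$ of \Cref{lem_symm_int_to_ext}, and then the internal constraint $c'$ on $(\otimes',m')$ of \Cref{lem_symm_ext_to_int}. Expanding both constructions, $c'$ becomes an explicit composite in which the only genuinely non-formal ingredient is the original constraint $c$; every other arrow is a connection isomorphism or an internal monoidality isomorphism, and the permutation isomorphism $\tau$ is eliminated by the identification $\Delta_S^* \tau^* = \Delta_S^*$. Substituting this composite into the square (eq-$c$ITS) produces a large but controllable diagram. The key point is that, by \Cref{lem-coherent_conn-monoint}, every subdiagram assembled purely out of connection and internal monoidality isomorphisms collapses to a single well-defined isomorphism, so the whole figure reduces to a central square whose commutativity is exactly axiom ($c$ITS-2), surrounded by peripheral pieces that commute by naturality and again by \Cref{lem-coherent_conn-monoint}.

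The converse direction is entirely parallel: starting from a symmetric external tensor structure $(\boxtimes,m;c)$, I would form $c^i$ via \Cref{lem_symm_ext_to_int} and then the external constraint $c'$ on $(\boxtimes',m')$ via \Cref{lem_symm_int_to_ext}, expand, and substitute into the square (eq-$c$ETS). Here the role of the coherence lemma is played by \Cref{lem-coherent_conn-monoext}, the core square commutes by axiom ($c$ETS-2), and one must additionally track the identification $(f_1 \times f_2)^* \tau^* = \tau^* (f_2 \times f_1)^*$ already used in \Cref{defn:cETS}. Once both compatibility conditions are established, the two constructions visibly invert one another on equivalence classes, because they invert the underlying tensor structures by \Cref{prop_bij_in_ext} and respect the extra datum of a commutativity constraint.

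The main obstacle I anticipate is organizational rather than conceptual: laying out the substituted diagrams so that the reduction via the coherence lemmas is transparent, and correctly bookkeeping the permutation isomorphism $\tau$ together with the identifications just mentioned, which are the only feature distinguishing the commutativity case from the associativity case. As the authors themselves emphasise, no new idea is needed once the diagrams are written down; the difficulty lies entirely in packaging the bookkeeping cleanly, for which \Cref{lem-coherent_conn-monoint} and \Cref{lem-coherent_conn-monoext} are the decisive tools.
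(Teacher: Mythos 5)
Your proposal is correct and follows essentially the same route as the paper: both directions are proved by verifying conditions (eq-$c$ITS) and (eq-$c$ETS) for the canonical equivalences of \Cref{prop_bij_in_ext}, unwinding the doubly-transformed constraint into an explicit composite, and decomposing the resulting diagram so that the core piece commutes by axiom ($c$ITS-2) (resp.\ ($c$ETS-2)) while the peripheral pieces commute by naturality and by the monoidality/coherence machinery (the paper cites ($m$ITS)/($m$ETS) directly where you invoke \Cref{lem-coherent_conn-monoint} and \Cref{lem-coherent_conn-monoext}, an interchangeable bookkeeping choice). Your handling of the permutation isomorphism via $\Delta_S^* \tau^* = \Delta_S^*$ and $(f_1 \times f_2)^* \tau^* = \tau^* (f_2 \times f_1)^*$ matches the identifications used in the paper's diagrams.
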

\begin{proof}
	We keep the same notation adopted in the proof of \Cref{prop_bij_in_ext}.
	
	Start from a symmetric internal tensor structure $(\otimes,m;c)$ on $\Hbb$, and let $c'$ denote the internal commutativity constraint on $(\otimes',m)$ obtained from $c$ by applying first \Cref{lem_symm_int_to_ext} and then \Cref{lem_symm_ext_to_int}. We claim that the equivalence of internal tensor structures $e: (\otimes,m) \xrightarrow{\sim} (\otimes',m')$ constructed in \Cref{prop_bij_in_ext} satisfies condition (eq-$c$ITS): given $S \in \Scal$, we have to show that the diagram of functors $\Hbb(S) \times \Hbb(S) \rightarrow \Hbb(S)$
	\begin{equation*}
		\begin{tikzcd}
			A \otimes B \arrow{r}{e} \arrow{d}{c} & A \otimes' B \arrow{d}{c'} \\
			B \otimes A \arrow{r}{e} & B \otimes' A
		\end{tikzcd}
	\end{equation*}
	is commutative. Unwinding the definitions, we obtain the more explicit diagram
	\begin{equation*}
		\begin{tikzcd}[font=\small]
			A \otimes B \arrow{rr}{c} \arrow[equal]{d} && B \otimes A \arrow[equal]{d} \\
			\Delta_S^* pr^{12,*}_1 A \otimes \Delta_S^* pr^{12,*}_2 B \arrow{d}{m} && \Delta_S^* pr^{21,*}_2 B \otimes \Delta_S^* pr^{21,*}_1 A \arrow{d}{m} \\
			\Delta_S^*(pr^{12,*}_1 A \otimes pr^{12,*}_2 B) \arrow[equal]{d} && \Delta_S^*(pr^{21,*}_2 B \otimes pr^{21,*}_1 A) \arrow[equal]{d} \\
			\Delta_S^*(\tau^* pr^{21,*}_1 A \otimes \tau^* pr^{21,*}_2 B) \arrow{r}{c} & \Delta_S^*(\tau^* pr^{21,*}_2 B \otimes \tau^* pr^{21,*}_1 A) \arrow{r}{m} & \Delta_S^* \tau^* (pr^{21,*}_2 B \otimes pr^{21,*}_1 A)
		\end{tikzcd}
	\end{equation*}
    that we decompose as
    \begin{equation*}
    	\begin{tikzcd}[font=\small]
    		\bullet \arrow{rrr}{c} \arrow[equal]{d} &&& \bullet \arrow[equal]{d} \\
    		\bullet \arrow{d}{m} \arrow{rr}{c} && \Delta_S^* pr^{12,*}_2 B \otimes \Delta_S^* pr^{12,*}_1 A \arrow{dl}{m} \arrow[equal]{ur} & \bullet \arrow{d}{m}  \\
    		\bullet \arrow[equal]{d} \arrow{r}{c} & \Delta_S^*(pr^{12*}_2 B \otimes pr^{12,*}_1 A) \arrow[equal]{dr} & \Delta_S^* \tau^* pr^{21,*}_2 B \otimes \Delta_S^* \tau^* pr^{21,*}_1 A \arrow[equal]{ur} \arrow[equal]{u} \arrow{d}{m}  & \bullet \arrow[equal]{d} \\
    		\bullet \arrow{rr}{c} && \bullet \arrow{r}{m} & \bullet
    	\end{tikzcd}
    \end{equation*}
    Here, the left central piece is commutative by axiom ($c$ITS-2) while the other pieces are commutative by axiom ($m$ITS) and by naturality. This proves the claim.

	Conversely, start from a symmetric external tensor structure $(\boxtimes,m;c)$ on $\Hbb$, and let $c'$ denote the commutativity constraint on $(\boxtimes',m')$ obtained from $c$ by applying first \Cref{lem_symm_ext_to_int}  and then \Cref{lem_symm_int_to_ext}. We claim that the equivalence of external tensor structures $e: (\boxtimes',m') \xrightarrow{\sim} (\boxtimes,m)$ constructed in the proof of \Cref{prop_bij_in_ext} satisfies condition (eq-$c$ETS): given $S_1, S_2 \in \Scal$, we have to show that the diagram of functors $\Hbb(S_1) \times \Hbb(S_2) \rightarrow \Hbb(S_1 \times S_2)$
	\begin{equation*}
		\begin{tikzcd}
			A_1 \boxtimes' A_2 \arrow{r}{e} \arrow{d}{c'} & A_1 \boxtimes A_2 \arrow{d}{c} \\
			\tau^* (A_2 \boxtimes' A_1) \arrow{r}{e} & \tau^* (A_2 \boxtimes A_1)
		\end{tikzcd}
	\end{equation*}
	is commutative. Unwinding the definitions, we obtain the more explicit diagram
	\begin{equation*}
		\begin{tikzcd}[font=\tiny]
			A_1 \boxtimes A_2 \arrow[equal]{d} \arrow{r}{c} & \tau^* (A_2 \boxtimes A_1) \arrow[equal]{dr} \\
			\Delta_{S_1 \times S_2}^*(pr^{12}_1 \times pr^{12}_2)^* (A_1 \boxtimes A_2)  && \tau^* \Delta_{S_2 \times S_1}^* (pr^{21}_2 \times pr^{21}_1)^* (A_2 \boxtimes A_1)  \\
			&& \tau^* \Delta_{S_2 \times S_1}^* (pr^{21,*}_2 A_2 \boxtimes pr^{21,*}_1 A_1) \arrow[equal]{d} \arrow{u}{m} \\
			\Delta_{S_1 \times S_2}^* (pr^{12,*}_1 A_1 \boxtimes pr^{12,*}_2 A_2) \arrow[equal]{d} \arrow{uu}{m} && \Delta_{S_1 \times S_2}^* (\tau \times \tau)^* (pr^{21,*}_2 A_2 \boxtimes pr^{21,*}_1 A_1) \\
			\Delta_{S_1 \times S_2}^* (\tau^* pr^{21,*}_1 A_1 \boxtimes \tau^* pr^{21,*}_2 A_2) \arrow{r}{c} & \Delta_{S_1 \times S_2}^* \hat{\tau}^*(\tau^* pr^{21,*}_2 A_2 \boxtimes \tau^* pr^{21,*}_1 A_1) \arrow[equal]{r} & \Delta_{S_1 \times S_2}^* (\tau^* pr^{21,*}_2 A_2 \boxtimes \tau^* pr^{21,*}_1 A_1) \arrow{u}{m}
		\end{tikzcd}
	\end{equation*}
    that we decompose as
    \begin{equation*}
    	\begin{tikzcd}[font=\tiny]
    		\bullet \arrow[equal]{d} \arrow{rr}{c} && \bullet \arrow[equal]{dr} \arrow[equal]{dl} \\
    		\bullet \arrow{r}{c} & \Delta_{S_1 \times S_2}^* (pr^{12}_1 \times pr^{12}_2)^* \hat{\tau}^* (A_2 \boxtimes A_1) \arrow[equal]{r} & \Delta_{S_1 \times S_2}^* (\tau \times \tau)^* (pr^{21}_2 \times pr^{21}_1)^* (A_2 \boxtimes A_1) \arrow[equal]{r} & \bullet \\
    		& \Delta_{S_1 \times S_2}^* \hat{\tau}^* (pr^{12}_2 \times pr^{12}_1)^* (A_2 \boxtimes A_1) \arrow[equal]{u} \arrow[equal]{r} & \Delta_{S_1 \times S_2}^* \hat{\tau}^* (\tau \times \tau)^* (pr^{21}_2 \times pr^{21}_1)^* (A_2 \boxtimes A_1) \arrow[equal]{u} & \bullet \arrow[equal]{d} \arrow{u}{m} \\
    		\bullet \arrow[equal]{d} \arrow{uu}{m} \arrow{r}{c} & \Delta_{S_1 \times S_2}^* \hat{\tau}^* (pr^{12,*}_2 A_2 \boxtimes pr^{12,*}_1 A_1) \arrow{u}{m} & \Delta_{S_1 \times S_2}^* \hat{\tau}^* (\tau \times \tau)^* (pr^{21,*}_2 A_2 \boxtimes pr^{21,*}_1 A_1) \arrow{u}{m} & \bullet \arrow[equal]{l} \arrow[bend right]{uul}{m} \\
    		\bullet \arrow{r}{c} & \bullet \arrow[equal]{rr} \arrow[equal]{u} \arrow{ur}{m} && \bullet \arrow{u}{m}
    	\end{tikzcd}
    \end{equation*}
    Here, the central-left piece is commutative by axiom ($c$ETS-2) while the other pieces are commutative by axiom ($m$ETS) and by naturality. This proves the claim and concludes the proof.
\end{proof}

\section{Unit constraints}\label{sect:unit}

The goal of this section is to introduce unit constraints in the setting of internal and external tensor structures and to compare them; in particular, we introduce unit sections following the general notions fixed in \Cref{sect:rec-fib-cats}.

The notion of unit constraint for a single monoidal category comes from \cite[\S~5]{Mac63}; in the case of fibered categories, one needs to require inverse image functors to respect unit constraints as in \cite[Defn.~2.1.85(2)]{Ayo07a}. The latter definition can be formulated in the language of internal tensor structures as follows:

\begin{defn}\label{defn:uITS}
	Let $(\otimes,m)$ be an internal tensor structure on $\Hbb$.
	\begin{enumerate}
		\item An \textit{internal unit constraint} $u$ on $(\otimes,m)$ is the datum of
		\begin{itemize}
			\item a section $\unit$ of the $\Scal$-fibered category $\Hbb$, called the \textit{unit section},
			\item for every $S \in \Scal$, two natural isomorphisms of functors $\Hbb(S) \rightarrow \Hbb(S)$
			\begin{equation*}
				u_r = u_{r,S}: A \otimes \unit_S \xrightarrow{\sim} A, \qquad \qquad u_l = u_{l,S}: \unit_S \otimes B \xrightarrow{\sim} B
			\end{equation*}
		\end{itemize}
		satisfying the following conditions:
		\begin{enumerate}
			\item[($u$ITS-0)] For every $S \in \Scal$, the diagram in $\Hbb(S)$
			\begin{equation*}
				\begin{tikzcd}
					\unit_S \otimes \unit_S \arrow[bend left=50]{d}{u_l} \arrow[bend right=50]{d}{u_r} \\
					\unit_S
				\end{tikzcd}
			\end{equation*}
			is commutative.
			\item[($u$ITS-1)] For every morphism $f: T \rightarrow S$ in $\Scal$, the two diagrams of functors $\Hbb(S) \rightarrow \Hbb(T)$
			\begin{equation*}
				\begin{tikzcd}
					f^* A \otimes f^* \unit_S \arrow{r}{\unit^*} \arrow{d}{m} & f^* A \otimes \unit_T \arrow{d}{u_r} \\
					f^*(A \otimes \unit_S) \arrow{r}{u_r}  & f^* A
				\end{tikzcd}
				\qquad
				\begin{tikzcd}
					f^* \unit_S \otimes f^* B  \arrow{d}{m} \arrow{r}{\unit^*} & \unit_T \otimes f^* B \arrow{d}{u_l} \\
					f^*(\unit_S \otimes B)  \arrow{r}{u_l} & f^* B
				\end{tikzcd}
			\end{equation*}
			are commutative.
		\end{enumerate}
		We say that $(\otimes,m;u)$ is a \textit{unitary internal tensor structure} on $\Hbb$.
		\item Let $(\otimes,m;u)$ and $(\otimes',m';u')$ be unitary internal tensor structures on $\Hbb$. An \textit{equivalence of unitary internal tensor structures} $e: (\otimes,m;u) \xrightarrow{\sim} (\otimes',m';u')$ is the datum of
		\begin{itemize}
			\item an equivalence of internal tensor structures $e: (\otimes,m) \xrightarrow{\sim} (\otimes',m')$,
			\item an isomorphism of unit sections $w: \unit \xrightarrow{\sim} \unit'$
		\end{itemize}
		satisfying the following additional condition:
		\begin{enumerate}
			\item[(eq-$u$ITS)] For every $S \in \Scal$, the two diagrams of functors $\Hbb(S) \rightarrow \Hbb(S)$
			\begin{equation*}
				\begin{tikzcd}
					A \otimes \unit_S \arrow{r}{u_r} \arrow{d}{w} & A \\
					A \otimes \unit'_S \arrow{r}{e} & A \otimes' \unit'_S \arrow{u}{u'_r}
				\end{tikzcd}
				\qquad
				\begin{tikzcd}
					\unit_S \otimes B \arrow{r}{u_l} \arrow{d}{w} & B \\
					\unit'_S \otimes B \arrow{r}{e} & \unit'_S \otimes' B \arrow{u}{u'_l}
				\end{tikzcd}
			\end{equation*}
			are commutative.
		\end{enumerate}
	\end{enumerate}
\end{defn}

In order to obtain a similar notion of the setting of external tensor structures, we need to reformulate the action of unit objects with the help of suitable projection morphisms, as follows:

\begin{defn}\label{defn:uETS}
	Let $(\boxtimes,m)$ be an external tensor structure on $\Hbb$.
	\begin{enumerate}
		\item An \textit{external unit constraint} $u$ on $(\boxtimes,m)$ is the datum of
		\begin{itemize}
			\item a section $\unit$ of the $\Scal$-fibered category $\Hbb$, called the \textit{unit section},
			\item for every $S_1, S_2 \in \Scal$, a natural isomorphism of functors $\Hbb(S_1) \rightarrow \Hbb(S_1 \times S_2)$
			\begin{equation*}
				u_r = u_{r,S_1,S_2}: A_1 \boxtimes \unit_{S_2} \xrightarrow{\sim} pr^{12,*}_1 A_1
			\end{equation*}
		    as well as a natural isomorphism of functors $\Hbb(S_2) \rightarrow \Hbb(S_1 \times S_2)$
		    \begin{equation*}
		    	u_l = u_{l,S_1,S_2}: \unit_{S_1} \boxtimes A_2 \xrightarrow{\sim} pr^{12,*}_2 A_2
			\end{equation*}
		\end{itemize}
		satisfying the following conditions:
		\begin{enumerate}
			\item[($u$ETS-0)] For every $S \in \Scal$, the diagram in $\Hbb(S)$
			\begin{equation*}
				\begin{tikzcd}
					pr^{12,*}_1 \unit_{S_1} \arrow{dr}{\unit^*} & \unit_{S_1} \boxtimes \unit_{S_2} \arrow{l}{u_l} \arrow{r}{u_r} & pr^{12,*}_2 \unit_{S_2} \arrow{dl}{\unit^*} \\
					& \unit_{S_1 \times S_2}
				\end{tikzcd}
			\end{equation*}
			is commutative.
			\item[($u$ETS-1)] For every choice of morphisms $f_i: T_i \rightarrow S_i$ in $\Scal$, $i = 1,2$, the diagram of functors $\Hbb(S_1) \rightarrow \Hbb(T_1 \times T_2)$
			\begin{equation*}
				\begin{tikzcd}
					f_1^* A_1 \boxtimes f_2^* \unit_{S_2} \arrow{r}{\unit^*} \arrow{d}{m} & f_1^* A_1 \boxtimes \unit_{T_2} \arrow{r}{u_r} & pr^{12,*}_1 f_1^* A_1 \arrow[equal]{d} \\
					(f_1 \times f_2)^*(A_1 \boxtimes \unit_{S_2}) \arrow{rr}{u_r} && (f_1 \times f_2)^* pr^{12,*}_1 A_1
				\end{tikzcd}
			\end{equation*}
		and the diagram of functors $\Hbb(S_2) \times \Hbb(T_1 \times T_2)$
		\begin{equation*}
			\begin{tikzcd}
				f_1^* \unit_{S_1} \boxtimes f_2^* A_2 \arrow{r}{\unit^*} \arrow{d}{m} & \unit_{T_1} \boxtimes f_2^* A_2 \arrow{r}{u_r} & pr^{12,*}_2 f_2^* A_2 \arrow[equal]{d} \\
				(f_1 \times f_2)^*(\unit_{T_1} \boxtimes A_2) \arrow{rr}{u_r} && (f_1 \times f_2)^* pr^{12,*}_2 A_2
			\end{tikzcd}
		\end{equation*}
			are commutative.
		\end{enumerate}
		We say that $(\boxtimes,m;u)$ is a \textit{unitary external tensor structure} on $\Hbb$.
		\item Let $(\boxtimes,m;u)$ and $(\boxtimes',m';u')$ be unitary external tensor structures on $\Hbb$. An \textit{equivalence of unitary external tensor structures} $e: (\boxtimes,m;u) \xrightarrow{\sim} (\boxtimes',m';u')$ is the datum of
		\begin{itemize}
			\item an equivalence of internal tensor structures $e: (\otimes,m) \xrightarrow{\sim} (\otimes',m')$,
			\item an isomorphism of unit sections $w: \unit \xrightarrow{\sim} \unit'$
		\end{itemize}
		satisfying the following additional condition:
		\begin{enumerate}
			\item[(eq-$u$ETS)] For every $S_1, S_2 \in \Scal$, the diagram of functors $\Hbb(S_1) \rightarrow \Hbb(S_1 \times S_2)$
			\begin{equation*}
				\begin{tikzcd}
					A_1 \boxtimes' \unit'_{S_2} \arrow{r}{u'_r} \arrow{d}{w} & pr^{12,*}_1 A_1 \\
					A_1 \boxtimes \unit_{S_2} \arrow{r}{e} & A_1 \boxtimes \unit_{S_2} \arrow{u}{u'_r}
				\end{tikzcd}
			\end{equation*}
		    and the diagram of functors $\Hbb(S_2) \rightarrow \Hbb(S_1 \times S_2)$ 
		    \begin{equation*}
		    	\begin{tikzcd}
		    		\unit_{S_1} \boxtimes' A_2 \arrow{r}{u'_l} \arrow{d}{w} & pr^{12,*}_2 A_2 \\
		    		\unit_{S_1} \boxtimes A_2 \arrow{r}{e} & \unit_{S_2} \boxtimes A_2 \arrow{u}{u_l}
		    	\end{tikzcd}
		    \end{equation*}
			are commutative.
		\end{enumerate}
	\end{enumerate}
\end{defn}

In the same spirit as for the previous two sections, we now describe how unit constraints can be translated between the internal and the external setting along the constructions of \Cref{lem_int_to_ext} and \Cref{lem_ext_to_int}. This is the content of the following two results:

\begin{lem}\label{lem:unit-int_to_ext}
	Let $(\otimes,m)$ be an internal tensor structure on $\Hbb$, and let $(\boxtimes,m^e)$ denote the external tensor structure defined from it as in \Cref{lem_int_to_ext}. Suppose that we are given an internal unit constraint $u$ (with unit section $\unit$) on $(\otimes,m)$. Then, associating
	\begin{itemize}
		\item to every $S_1, S_2 \in \Scal$, the natural isomorphism of functors $\Hbb(S_1) \rightarrow \Hbb(S_1 \times S_2)$
		\begin{equation*}
			u_r^e: A_1 \boxtimes \unit_{S_2} := pr^{12,*}_1 A_1 \otimes pr^{12,*}_2 \unit_{S_2} \xrightarrow{\unit^*} pr^{12,*}_1 A_1 \otimes \unit_{S_1 \times S_2} \xrightarrow{u_r} pr^{12,*}_1 A_1 
		\end{equation*}
	    and the natural isomorphism of functors $\Hbb(S_2) \rightarrow \Hbb(S_1 \times S_2)$
	    \begin{equation*}
	    	u_l^e: \unit_{S_2} \boxtimes A_2 := pr^{12,*}_1 \unit_{S_1} \otimes pr^{12,*}_2 A_2 \xrightarrow{\unit^*} \unit_{S_1 \times S_2} \otimes pr^{12,*}_2 A_2 \xrightarrow{u_l} pr^{12,*}_2 A_2 
	    \end{equation*}
	\end{itemize}
    defines an external unit constraint $u^e$ with unit section $\unit$ on $(\boxtimes,m^e)$.
\end{lem}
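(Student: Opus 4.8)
The plan is to verify the two axioms ($u$ETS-0) and ($u$ETS-1) defining an external unit constraint, exactly as in the proofs of \Cref{lem_symm_int_to_ext} and \Cref{lem:unit-int_to_ext} proper for commutativity: unwind the definitions of $u_r^e$ and $u_l^e$ in terms of the internal data, and reduce each external axiom to the corresponding internal axiom together with naturality and the coherence statement of \Cref{lem-coherent_conn-monoint}. First I would address ($u$ETS-0): after substituting the definitions, the triangle with vertices $pr^{12,*}_1 \unit_{S_1}$, $\unit_{S_1} \boxtimes \unit_{S_2}$ and $pr^{12,*}_2 \unit_{S_2}$ expands into a hexagonal diagram in which the two sides $u_l^e$ and $u_r^e$ each factor through $\unit_{S_1 \times S_2}$ via the section isomorphism $\unit^*$ followed by $u_l$ or $u_r$. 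The commutativity of this expanded diagram follows from axiom ($u$ITS-0) for the internal structure (which says $u_l$ and $u_r$ agree on $\unit_{S_1 \times S_2} \otimes \unit_{S_1 \times S_2}$), together with the fact that $\unit$ is a section, whose axiom ($\Scal$-sect) guarantees the compatibility of the section isomorphisms $\unit^*$ along the projection morphisms.

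Next I would treat ($u$ETS-1), which involves two diagrams, one for $u_r^e$ and one for $u_l^e$; by the evident left–right symmetry of the construction it suffices to treat the $u_r^e$ case in detail. After unwinding the definition of both $m^e$ (via \Cref{lem_int_to_ext}) and $u_r^e$, the square expands into a larger diagram built out of connection isomorphisms, internal monoidality isomorphisms $m$, section isomorphisms $\unit^*$, and the internal right unitor $u_r$. I would decompose this diagram into three regions: one region governed by naturality of $m$ with respect to the morphism $\unit^*: f_2^* \unit_{S_2} \xrightarrow{\sim} \unit_{T_2}$, one region governed by axiom ($u$ITS-1) for the internal unit constraint (the left-hand square there, relating $m$ and $u_r$ along $f$), and one purely formal region whose commutativity follows from \Cref{lem-coherent_conn-monoint}, since it consists only of connection and internal monoidality isomorphisms between iterated pullbacks.

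The main obstacle I anticipate is purely bookkeeping rather than conceptual: keeping track of the interplay between the projection morphisms $pr^{12}_i$, $pr^{12}_i \circ (f_1 \times f_2) = f_i \circ pr^{12}_i$, and the diagonal-free unit objects. Unlike the commutativity case, here the section isomorphisms $\unit^*$ enter the picture, so one must ensure that the coherence data of the section $\unit$ (namely axiom ($\Scal$-sect)) interact correctly with the monoidality isomorphisms; concretely, the identity $pr^{12}_i \circ (f_1 \times f_2) = f_i \circ pr^{12}_i$ and the corresponding connection isomorphisms must be silently absorbed via \Cref{lem-coherent_conn}. Once the diagram is subdivided as above, each individual region is commutative for one of the three stated reasons, so the outer rectangle commutes; I would then remark that the analogous verification for $u_l^e$ is identical after exchanging the roles of the two factors, and omit those routine details in the same style as the preceding proofs.
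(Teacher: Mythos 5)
Your proposal is correct and follows essentially the same route as the paper's proof: unwind $u_r^e$, $u_l^e$ and $m^e$, treat only one of the two diagrams in ($u$ETS-1) by symmetry, decompose the expanded diagrams into regions, and conclude via ($u$ITS-0), ($u$ITS-1), naturality, axiom ($\Scal$-sect) and the coherence lemmas. One bookkeeping correction: in the ($u$ETS-1) verification the region you call purely formal cannot be dispatched by \Cref{lem-coherent_conn-monoint} alone, since it must compare the two composites of section isomorphisms along $f_2 \circ pr^{12}_2 = pr^{12}_2 \circ (f_1 \times f_2)$, which is precisely axiom ($\Scal$-sect); you do invoke this axiom in your final paragraph, so the ingredient is present, but it belongs to that region of the decomposition rather than to the coherence lemma.
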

\begin{proof}
	We have to show that the definition satisfies conditions ($u$ETS-0) and ($u$ETS-1).
	
	We start from condition ($u$ETS-0): given $S_1, S_2 \in \Scal$, we have to show that the diagram in $\Hbb(S)$
	\begin{equation*}
		\begin{tikzcd}
			pr^{12,*}_1 \unit_{S_1} \arrow{dr}{\unit^*} & \unit_{S_1} \boxtimes \unit_{S_2} \arrow{l}{u_l} \arrow{r}{u_r} & pr^{12,*}_2 \unit_{S_2} \arrow{dl}{\unit^*} \\
			& \unit_{S_1 \times S_2}
		\end{tikzcd}
	\end{equation*}
	is commutative. Unwinding the various definitions, we obtain the more explicit diagram
	\begin{equation*}
		\begin{tikzcd}[font=\small]
			pr^{12,*}_1 \unit_{S_1} \otimes \unit_{S_1 \times S_2} \arrow{d}{u_r} & pr^{12,*}_1 \unit_{S_1} \otimes pr^{12,*}_2 \unit_{S_2} \arrow{l}{\unit^*} \arrow{r}{\unit^*} & \unit_{S_1 \times S_2} \otimes pr^{12,*}_2 \unit_{S_2} \arrow{d}{u_l} \\
			pr^{12,*}_1 \unit_{S_1} \arrow{dr}{\unit^*} && pr^{12,*}_2 \unit_{S_2} \arrow{dl}{\unit^*} \\
			& \unit_{S_1 \times S_2}
		\end{tikzcd}
	\end{equation*}
    that we decompose as
    \begin{equation*}
    	\begin{tikzcd}[font=\small]
    		\bullet \arrow{d}{u_r} \arrow[bend right]{drr}{\unit^*} && \bullet \arrow{ll}{\unit^*} \arrow{rr}{\unit^*} && \bullet \arrow{d}{u_l} \arrow[bend left]{dll}{\unit^*} \\
    		\bullet \arrow[bend right]{drr}{\unit^*} && \unit_{S_1 \times S_2} \otimes \unit_{S_1 \times S_2} \arrow[bend right=50]{d}{u_r} \arrow[bend left=50]{d}{u_l} && \bullet \arrow[bend left]{dll}{\unit^*} \\
    		&& \bullet
    	\end{tikzcd}
    \end{equation*}
    Here, the central piece is commutative by axiom ($u$ITS-0) while the other pieces are commutative by naturality and by axiom ($\Scal$-sect). Hence the outer part of the diagram is commutative as well.
    
    We now treat condition ($u$EST-1): given two morphisms $f_i: T_i \rightarrow S_i$ in $\Scal$, $i = 1,2$, we have to show that the diagram of functors $\Hbb(S_1) \times \Hbb(T_1 \times T_2)$
    \begin{equation*}
    	\begin{tikzcd}
    		f_1^* A_1 \boxtimes f_2^* \unit_{S_2} \arrow{r}{\unit^*} \arrow{d}{m} & f_1^* A_1 \boxtimes \unit_{T_2} \arrow{r}{u_r} & pr^{12,*}_1 f_1^* A_1 \arrow[equal]{d} \\
    		(f_1 \times f_2)^*(A_1 \boxtimes \unit_{S_2}) \arrow{rr}{u_r} && (f_1 \times f_2)^* pr^{12,*}_1 A_1
    	\end{tikzcd}
    \end{equation*}
    and the diagram of functors $\Hbb(S_2) \times \Hbb(T_1 \times T_2)$
    \begin{equation*}
    	\begin{tikzcd}
    		f_1^* \unit_{S_1} \boxtimes f_2^* A_2 \arrow{r}{\unit^*} \arrow{d}{m} & \unit_{T_1} \boxtimes f_2^* A_2 \arrow{r}{u_r} & pr^{12,*}_2 f_2^* A_2 \arrow[equal]{d} \\
    		(f_1 \times f_2)^*(\unit_{T_1} \boxtimes A_2) \arrow{rr}{u_r} && (f_1 \times f_2)^* pr^{12,*}_2 A_2
    	\end{tikzcd}
    \end{equation*}
    are commutative. We only check the commutativity of the first diagram; the case of the second diagram is analogous. Unwinding the various definitions, we obtain the more explicit diagram
    \begin{equation*}
    	\begin{tikzcd}[font=\small]
    		pr^{12,*}_1 f_1^* A_1 \otimes pr^{12,*}_2 f_2^* \unit_{S_2} \arrow{r}{\unit^*} \arrow[equal]{d} & pr^{12,*}_1 f_1^* A_1 \otimes pr^{12,*}_2 \unit_{T_2} \arrow{r}{\unit^*} & pr^{12,*}_1 f_1^* A_1 \otimes \unit_{T_1 \times T_2} \arrow{r}{u_r} & pr^{12,*}_1 f_1^* A_1 \arrow[equal]{dd} \\
    		f_{12}^* pr^{12,*}_1 A_1 \otimes f_{12}^* pr^{12,*}_2 \unit_{S_2} \arrow{d}{m} \\
    		f_{12}^* (pr^{12,*}_1 A_1 \otimes pr^{12,*}_2 \unit_{S_2}) \arrow{r}{\unit^*} & f_{12}^* (pr^{12,*}_1 A_1 \otimes \unit_{S_1 \times S_2}) \arrow{rr}{u_r} && f_{12}^* pr^{12,*}_1 A_1
    	\end{tikzcd}
    \end{equation*}
    that we decompose as
    \begin{equation*}
    	\begin{tikzcd}[font=\small]
    		\bullet \arrow{r}{\unit^*} \arrow[equal]{d} & \bullet \arrow{r}{\unit^*} & \bullet \arrow{r}{u_r} \arrow[equal]{d} & \bullet \arrow[equal]{dd} \\
    		\bullet \arrow{d}{m} \arrow{r}{\unit^*} & f_{12}^* pr^{12}_1 A_1 \otimes f_{12}^* \unit_{S_1 \times S_2} \arrow{d}{m} \arrow{r}{\unit^*} & f_{12}^* pr^{12,*}_1 A_1 \otimes \unit_{T_1 \times T_2} \arrow{dr}{u_r} \\
    		\bullet \arrow{r}{\unit^*} & \bullet \arrow{rr}{u_r} && \bullet
    	\end{tikzcd}
    \end{equation*}
    Here, the lower-right piece is commutative by axiom ($u$ITS-1) while the other pieces are commutative by naturality and by axiom ($\Scal$-sect). Hence the outer part of the diagram is commutative as well.
\end{proof}

\begin{lem}\label{lem:unit-ext_to_int}
	Let $(\boxtimes,m)$ be an external tensor structure on $\Hbb$, and let $(\otimes,m^i)$ denote the internal tensor structure defined from it as in \Cref{lem_ext_to_int}. Suppose that we are given an external unit constraint $u$ (with unit section $\unit$) on $(\boxtimes,m)$. Then, associating
	\begin{itemize}
		\item to every $S \in \Scal$, the two natural isomorphisms of functors $\Hbb(S) \rightarrow \Hbb(S)$
		\begin{equation*}
			u_r^i: A \otimes \unit_S := \Delta_S^* (A \boxtimes \unit_S) \xrightarrow{u_r} \Delta_S^* pr^{12,*}_1 A = A, \quad u_l^i: \unit_S \otimes A := \Delta_S^* (\unit_S \boxtimes A) \xrightarrow{u_l} \Delta_S^* pr^{12,*}_2 A = A
		\end{equation*}
	\end{itemize}
    defines an external unit constraint $u^i$ (with unit section $\unit$) on $(\otimes,m^i)$.
\end{lem}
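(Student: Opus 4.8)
The plan is to verify the two internal axioms ($u$ITS-0) and ($u$ITS-1) by pulling back the external axioms ($u$ETS-0) and ($u$ETS-1) along the diagonal embedding $\Delta_S$, exactly in the spirit of the proofs of \Cref{lem_ass_ext_to_int} and \Cref{lem_symm_ext_to_int}. Throughout I will freely identify $\Delta_S^* pr^{12,*}_1 = \id$ and $\Delta_S^* pr^{12,*}_2 = \id$ via connection isomorphisms (legitimate by \Cref{lem-coherent_conn}, since $pr^{12}_1 \circ \Delta_S = pr^{12}_2 \circ \Delta_S = \id_S$); these identifications are exactly what turn $u_r$ and $u_l$ into $u_r^i$ and $u_l^i$.

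For ($u$ITS-0) I would apply $\Delta_S^*$ to the external triangle ($u$ETS-0) in the case $S_1 = S_2 = S$. This directly yields the equality $\unit^* \circ u_r^i = \unit^* \circ u_l^i$ of morphisms $\unit_S \otimes \unit_S \to \Delta_S^* \unit_{S \times S}$, where the two arrows labelled $\unit^*$ are the pullbacks along $\Delta_S$ of the section isomorphisms $\unit^*_{pr^{12}_1}$ and $\unit^*_{pr^{12}_2}$. The crucial point is that both of these pullbacks coincide with the single isomorphism $(\unit^*_{\Delta_S})^{-1}$: applying the section axiom ($\Scal$-sect) to each of the composable pairs $(\Delta_S, pr^{12}_1)$ and $(\Delta_S, pr^{12}_2)$ — and using that $\unit^*_{\id_S} = \id$, which follows from ($\Scal$-sect) by the same idempotency argument as in \Cref{rem:theta_id} — identifies $\Delta_S^* \unit^*_{pr^{12}_i}$ with $(\unit^*_{\Delta_S})^{-1}$ for $i = 1, 2$. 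As this common map is invertible, it can be cancelled, leaving $u_r^i = u_l^i$.

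For ($u$ITS-1) I would treat the right unitor square, the left one being entirely analogous. Specializing the first diagram of ($u$ETS-1) to $f_1 = f_2 = f$ and $A_1 = A$ and applying $\Delta_T^*$ produces the required internal square once the corners are rewritten: the source and target become $f^* A \otimes f^* \unit_S = \Delta_T^*(f^* A \boxtimes f^* \unit_S)$ and $f^*(A \otimes \unit_S) = \Delta_T^*(f \times f)^*(A \boxtimes \unit_S)$ — the latter being precisely the defining identity of $m^i$ from \Cref{lem_ext_to_int}, since $(f \times f) \circ \Delta_T = \Delta_S \circ f$ — whereas the right-hand equality $pr^{12,*}_1 f^* A = (f \times f)^* pr^{12,*}_1 A$ of ($u$ETS-1) collapses, after $\Delta_T^*$, to a connection identification between two copies of $f^* A$. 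I would then decompose the resulting figure into the $\Delta_T^*$-image of the ($u$ETS-1) square together with peripheral cells commuting by naturality, by the section axiom ($\Scal$-sect) for the $\unit^*$ edges, and by \Cref{lem-coherent_conn} for the connection identifications.

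The main obstacle is the cancellation step in ($u$ITS-0): in contrast with the internal axiom, the external axiom ($u$ETS-0) only records an equality \emph{after} post-composition with the section maps $\unit^*$, so one must first establish that the two pulled-back section maps genuinely agree and are invertible before stripping them off. Once this identification is secured through ($\Scal$-sect), the rest is routine diagram bookkeeping of the sort already handled abstractly by \Cref{lem-coherent_conn}.
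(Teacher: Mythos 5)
Your proposal is correct and takes essentially the same approach as the paper's proof: both parts proceed by pulling the external axioms ($u$ETS-0) and ($u$ETS-1) back along the diagonal and then handling the peripheral identifications via axiom ($\Scal$-sect), naturality, and \Cref{lem-coherent_conn}. Your cancellation phrasing of ($u$ITS-0) is just a rearrangement of the paper's decomposition through the intermediate object $\Delta_S^* \unit_{S \times S}$, and your explicit observation that $\unit^*_{\id_S} = \id$ (by the idempotency argument) makes precise a point the paper leaves implicit in its equality conventions.
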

\begin{proof}
	We have to show that the definition satisfies conditions ($u$ITS-0) and ($u$ITS-1).
	
	We start from condition ($u$ITS-0): given $S \in \Scal$, we have to show that the diagram in $\Hbb(S)$
	\begin{equation*}
		\begin{tikzcd}
			\unit_S \otimes \unit_S \arrow[bend left=50]{d}{u_l} \arrow[bend right=50]{d}{u_r} \\
			\unit_S
		\end{tikzcd}
	\end{equation*}
	is commutative. Unwinding the various definitions, we obtain the more explicit diagram
	\begin{equation*}
		\begin{tikzcd}[font=\small]
			\Delta_S^* pr^{12,*}_1 \unit_S \arrow[equal]{dr} & \Delta_S^* (\unit_S \boxtimes \unit_S) \arrow{l}{u_r} \arrow{r}{u_l} & \Delta_S^* pr^{12,*}_2 \unit_S \arrow[equal]{dl} \\
			& \unit_S
		\end{tikzcd}
	\end{equation*}
    that we decompose as
    \begin{equation*}
    	\begin{tikzcd}
    		\bullet \arrow[equal,bend right]{ddr} \arrow{dr}{\unit^*} & \bullet \arrow{l}{u_r} \arrow{r}{u_l} & \bullet \arrow[equal,bend left]{ddl} \arrow{dl}{\unit^*} \\
    		& \Delta_S^* \unit_{S \times S} \arrow{d}{\unit^*} \\
    		& \bullet
    	\end{tikzcd}
    \end{equation*}
    Here, the upper piece is commutative by axiom ($u$ITS-0) while the other pieces are commutative by axiom ($\Scal$-sect). Hence the outer part of the diagram is commutative as well.
    
    We now treat condition ($u$ITS-1): given a morphism $f:T \rightarrow S$ in $\Scal$, we have to show that the two diagrams of functors $\Hbb(S) \rightarrow \Hbb(T)$
    \begin{equation*}
    	\begin{tikzcd}
    		f^* A \otimes f^* \unit_S \arrow{r}{\unit^*} \arrow{d}{m} & f^* A \otimes \unit_T \arrow{d}{u_r} \\
    		f^*(A \otimes \unit_S) \arrow{r}{u_r}  & f^* A
    	\end{tikzcd}
    	\qquad
    	\begin{tikzcd}
    		f^* \unit_S \otimes f^* B  \arrow{d}{m} \arrow{r}{\unit^*} & \unit_T \otimes f^* B \arrow{d}{u_l} \\
    		f^*(\unit_S \otimes B)  \arrow{r}{u_l} & f^* B
    	\end{tikzcd}
    \end{equation*}
    are commutative. We only check the commutativity of the first diagram; the case of the second diagram is analogous. Unwinding the various definitions, we obtain the more explicit diagram
    \begin{equation*}
    	\begin{tikzcd}[font=\small]
    		\Delta_T^* (f^* A \boxtimes f^* \unit_S) \arrow{r}{\unit^*} \arrow{d}{m} & \Delta_T^* (f^* A \boxtimes \unit_S) \arrow{r}{u_r} & \Delta_T^* pr^{12,*}_1 f^* A \arrow[equal]{dd} \\
    		\Delta_T^* (f \times f)^* (A \boxtimes \unit_S) \arrow[equal]{d} \\
    		f^* \Delta_S^* (A \boxtimes \unit_S) \arrow{r}{u_r} & f^* \Delta_S^* pr^{12,*}_1 A \arrow[equal]{r} & f^* A   
    	\end{tikzcd}
    \end{equation*}
    that we decompose as
    \begin{equation*}
    	\begin{tikzcd}[font=\small]
    		\bullet \arrow{r}{\unit^*} \arrow{d}{m} & \bullet \arrow{r}{u_r} & \bullet \arrow[equal]{dd} \arrow[equal]{dl} \\
    		\bullet \arrow[equal]{d} \arrow{r}{u_r} & \Delta_T^* (f \times f)^* pr^{12,*}_1 A \arrow[equal]{d} \\
    		\bullet \arrow{r}{u_r} & \bullet \arrow[equal]{r} & \bullet  
    	\end{tikzcd}
    \end{equation*}
    Here, the upper piece is commutative by axiom ($u$EST-1) while the other pieces are commutative by naturality and by \Cref{lem-coherent_conn}. Hence the outer part of the diagram is commutative as well.
\end{proof}

\begin{prop}\label{prop:unit}
	The constructions of \Cref{lem:unit-int_to_ext} and \Cref{lem:unit-ext_to_int} canonically define mutually inverse bijections between equivalence classes of unitary internal and external tensor structures with a given unit section $\unit$ on $\Hbb$.
\end{prop}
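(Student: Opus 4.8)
The plan is to follow verbatim the three-step strategy used in the proofs of \Cref{prop_asso} and \Cref{prop_symm}, keeping the notation of \Cref{prop_bij_in_ext}. The decisive simplification here is that both \Cref{lem:unit-int_to_ext} and \Cref{lem:unit-ext_to_int} produce a unit constraint carrying the \emph{same} unit section $\unit$. Hence, when verifying the refined equivalences, the isomorphism of unit sections $w$ may be taken to be the identity, and conditions (eq-$u$ITS) and (eq-$u$ETS) collapse to the single requirement that the canonical equivalence $e$ of \Cref{prop_bij_in_ext} intertwines the two unit isomorphisms $u_r$ (resp. $u_l$) on the two sides. Thus it suffices to re-examine the equivalence $e$ already constructed there and to check that it is additionally compatible with the unit constraints; this automatically yields both well-definedness on equivalence classes and the mutual inverseness.

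For the first round trip I would start from a unitary internal tensor structure $(\otimes,m;u)$, form the external unit constraint $u^e$ via \Cref{lem:unit-int_to_ext}, and then the internal unit constraint $u'$ on $(\otimes',m')$ via \Cref{lem:unit-ext_to_int}. Unwinding the two lemmas, $u'_r$ is the composite $\Delta_S^*(pr^{12,*}_1 A \otimes pr^{12,*}_2 \unit_S) \xrightarrow{\unit^*} \Delta_S^*(pr^{12,*}_1 A \otimes \unit_{S \times S}) \xrightarrow{u_r} A$, while $e$ is $\Delta_S^* pr^{12,*}_1 A \otimes \Delta_S^* pr^{12,*}_2 \unit_S \xrightarrow{m} \Delta_S^*(pr^{12,*}_1 A \otimes pr^{12,*}_2 \unit_S)$. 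I would then expand the square expressing $u_r = u'_r \circ e$ into the outer part of a larger diagram and decompose it as in the earlier propositions: the piece reassembling $m$ with $u_r$ is an instance of axiom ($u$ITS-1) along $\Delta_S$, the identifications of the section isomorphisms $\unit^*$ coming from $pr^{12}_2 \circ \Delta_S = \id_S$ are handled by axiom ($\Scal$-sect), and all remaining pieces commute by naturality and by \Cref{lem-coherent_conn-monoint} (which makes the result independent of the chosen composites of monoidality and connection isomorphisms). The case of $u_l$ is entirely symmetric.

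For the second round trip I would start from a unitary external tensor structure $(\boxtimes,m;u)$, pass to $u^i$ via \Cref{lem:unit-ext_to_int} and back to $u'$ via \Cref{lem:unit-int_to_ext}, and verify (eq-$u$ETS) for the equivalence $e:(\boxtimes',m')\xrightarrow{\sim}(\boxtimes,m)$ of \Cref{prop_bij_in_ext}. The verification is formally dual: after expanding the definitions one decomposes the relevant square, using axiom ($u$ETS-1) along the diagonal and the projections to recombine $m$ with $u_r$, axiom ($\Scal$-sect) to reconcile the section isomorphisms, and \Cref{lem-coherent_conn} together with \Cref{lem-coherent_conn-monoext} to dispose of all auxiliary monoidality and connection composites. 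Again only the $u_r$-square needs to be treated explicitly, the $u_l$-square being analogous.

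The genuinely new ingredient compared with \Cref{prop_asso} and \Cref{prop_symm} is the bookkeeping of the section isomorphisms $\unit^*$: one must track them through the pullbacks along $\Delta_S$ and along the projections and verify that they compose exactly as dictated by the section axiom ($\Scal$-sect). I expect this to be the only delicate point; once it is isolated, the coherence lemmas reduce everything else to the earlier pattern of naturality squares, and the fact that both constructions preserve $\unit$ guarantees that the bijection of \Cref{prop_bij_in_ext} indeed refines to one between equivalence classes of unitary structures with the fixed unit section $\unit$.
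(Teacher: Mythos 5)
Your proposal is correct and takes essentially the same route as the paper's own proof: you fix the identity isomorphism on the unit section and verify that the canonical equivalences of \Cref{prop_bij_in_ext} satisfy (eq-$u$ITS) and (eq-$u$ETS), with the decisive squares handled exactly as in the paper by axiom ($u$ITS-1) applied along $\Delta_S$ in one direction and axiom ($u$ETS-1) in the other, the remaining pieces being disposed of by ($\Scal$-sect), naturality, and the coherence lemmas. Your explicit unwinding of $u'_r$ as $\unit^*$ followed by $u_r$, and your reduction to the right-unit diagram only (the left-unit case being symmetric), coincide with the paper's argument.
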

\begin{proof}
	We keep the same notation as in the proof of \Cref{prop_bij_in_ext}. Throughout the proof, we fix a section $\unit$ of the $\Scal$-fibered category $\Hbb$; the only isomorphism of unit section involved in the proof is the identity isomorphism $\id_{\unit}: \unit \xrightarrow{\sim} \unit$.
	
	Start from a unitary internal tensor structure $(\otimes,m;u)$, and let $u'$ denote the internal unit constraint on $(\otimes',m')$ obtained from $u$ by applying first \Cref{lem:unit-int_to_ext} and then \Cref{lem:unit-ext_to_int}. We claim that the equivalence of internal tensor structures $e: (\otimes,m) \xrightarrow{\sim} (\otimes',m')$ constructed in the first part of the proof of \Cref{prop_bij_in_ext} satisfies condition (eq-$u$ITS): given $S \in \Scal$, we have to show that the two diagrams of functors $\Hbb(S) \rightarrow \Hbb(S)$
	\begin{equation*}
		\begin{tikzcd}
			A \otimes \unit_S \arrow{r}{u_r} \arrow{d}{e} & A \\
			A \otimes' \unit_S \arrow{ur}{u'_r}
		\end{tikzcd}
	    \qquad
	    \begin{tikzcd}
	    	\unit_S \otimes A \arrow{r}{u_l} \arrow{d}{e} & A \\
	    	A \otimes' \unit_S \arrow{ur}{u'_l}
	    \end{tikzcd}
	\end{equation*}
    are commutative. We only check the commutativity of the diagram on the left; the case of the diagram on the right is completely analogous. Unwinding the various definitions, we obtain the more explicit diagram
	\begin{equation*}
		\begin{tikzcd}[font=\small]
			A \otimes \unit_S \arrow[equal]{r} \arrow{d}{u_r} & \Delta_S^* pr^{12,*}_1 A \otimes \Delta_S^* pr^{12,*}_2 \unit_S \arrow{r}{m} & \Delta_S^* (pr^{12,*}_1 A \otimes pr^{12,*}_2 \unit_S) \arrow{r}{\unit^*} & \Delta_S^* (pr^{12,*}_1 A \otimes \unit_{S \times S}) \arrow{d}{u_r} \\
			A \arrow[equal]{rrr} &&& \Delta_S^* pr^{12,*}_1 A
		\end{tikzcd}
	\end{equation*}
    that we decompose as
    \begin{equation*}
    	\begin{tikzcd}[font=\small]
    		\bullet \arrow[equal]{rr} \arrow{ddd}{u_r} && \bullet \arrow{r}{m} \arrow{dr}{\unit^*} & \bullet \arrow{r}{\unit^*} & \bullet \arrow{ddd}{u_r} \\
    		& A \otimes \Delta_S^* pr^{12,*}_2 \unit_S \arrow[equal]{ul} \arrow[equal]{ur} \arrow{r}{\unit^*} \arrow[equal]{d} & A \otimes \Delta_S^* \unit_{S \times S} \arrow[equal]{r} \arrow{dl}{\unit^*} & \Delta_S^* pr^{12,*}_1 A \otimes \Delta_S^* \unit_{S \times S} \arrow{ur}{m} \arrow{d}{\unit^*} \\
    		& A \otimes \unit_S \arrow{dl}{u_r} \arrow[equal]{rr} && \Delta_S^* pr^{12,*}_1 A \otimes \unit_S \arrow{dr}{u_r} \\
    		\bullet \arrow[equal]{rrrr} &&&& \bullet
    	\end{tikzcd}
    \end{equation*}
    Here, the right-most piece is commutative by axiom ($u$ITS-1) while the other pieces are commutative by naturality and by construction. This proves the claim.
    
    Conversely, start from a unitary external tensor structure $(\boxtimes,m;u)$ on $\Hbb$, and let $u'$ denote the external unit constraint on $(\boxtimes',m')$ obtained from $u$ by applying first \Cref{lem:unit-ext_to_int} and then \Cref{lem:unit-int_to_ext}. We claim that the equivalence of external tensor structures $e: (\boxtimes',m') \xrightarrow{\sim} (\boxtimes,m)$ constructed in the second part of the proof of \Cref{prop_bij_in_ext} satisfies condition (eq-$u$ETS): given $S_1, S_2 \in \Scal$, we have to show that the diagram of functors $\Hbb(S_1) \rightarrow \Hbb(S_1 \times S_2)$
    \begin{equation*}
    	\begin{tikzcd}
    		A_1 \boxtimes' \unit'_{S_2} \arrow{dr}{u'_r} \arrow{d}{e} \\
    		A_1 \boxtimes \unit_{S_2} \arrow{r}{u'_r} & pr^{12,*}_1 A_1
    	\end{tikzcd}
    \end{equation*}
    and the diagram of functors $\Hbb(S_2) \rightarrow \Hbb(S_1 \times S_2)$ 
    \begin{equation*}
    	\begin{tikzcd}
    		\unit_{S_1} \boxtimes' A_2 \arrow{dr}{u'_l} \arrow{d}{e} \\
    		\unit_{S_1} \boxtimes A_2 \arrow{r}{u_l} & pr^{12,*}_2 A_2
    	\end{tikzcd}
    \end{equation*}
    are commutative. We only show the commutativity of the first diagram; the case of the second diagram is completely analogous. Unwinding the various definitions, we obtain the more explicit diagram
    \begin{equation*}
    	\begin{tikzcd}[font=\tiny]
    		\Delta_{S_1 \times S_2}^* (pr^{12,*}_1 A_1 \boxtimes \unit_{S_1 \boxtimes S_2}) \arrow{d}{u_r} & \Delta_{S_1 \times S_2}^* (pr^{12,*}_1 A_1 \boxtimes pr^{12,*}_2 \unit_{S_2}) \arrow{l}{\unit^*} \arrow{r}{m} & \Delta_{S_1 \times S_2}^* (pr^{12}_1 \times pr^{12}_2)^* (A_1 \boxtimes \unit_{S_2}) \arrow[equal]{r} & A_1 \boxtimes \unit_{S_2} \arrow{d}{u_r} \\
    		\Delta_{S_1 \times S_2}^* pr^{1212,*}_{12} pr^{12,*}_1 A_1 \arrow[equal]{rrr} &&& pr^{12,*}_1 A_1
    	\end{tikzcd}
    \end{equation*}
    that we decompose as
    \begin{equation*}
    	\begin{tikzcd}[font=\small]
    		\bullet \arrow{dd}{u_r} & \bullet \arrow{l}{\unit^*} \arrow{r}{m} & \bullet \arrow[equal]{r} \arrow{dl}{u_r} & \bullet \arrow{dd}{u_r} \\
    		& \Delta_{S_1 \times S_2}^* (pr^{12}_1 \times pr^{12}_2)^* pr^{12,*}_1 A_1 \arrow[equal]{dl} \arrow[equal]{drr} \\
    		\bullet \arrow[equal]{rrr} &&& \bullet
    	\end{tikzcd}
    \end{equation*}
    Here, the upper-left triangle is commutative by axiom ($u$ETS-1) while the other pieces are commutative by naturality and by \Cref{lem-coherent_conn-monoext}. This proves the claim and concludes the proof.
\end{proof}

\section{Compatibility conditions}\label{sect:comp}

After the discussion about associativity, commutativity and unit constraints developed in the previous sections, we are ready to state our first main result:

\begin{thm}\label{thm_bij}
	There exist canonical mutually inverse bijections between equivalence classes of (unitary, symmetric, associative) internal and external tensor structures on $\Hbb$.
\end{thm}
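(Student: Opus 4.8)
The plan is to assemble the theorem from the four bijection results already established. By \Cref{prop_bij_in_ext} we have a canonical bijection between equivalence classes of bare internal and external tensor structures, realized by an explicit equivalence $e$; by \Cref{prop_asso}, \Cref{prop_symm} and \Cref{prop:unit} this bijection refines compatibly with associativity, commutativity and unit constraints, respectively. The decisive point is that all three refinements are witnessed by the \emph{same} canonical equivalence $e$ (together with the identity isomorphism of unit sections in the unitary case, as fixed in the proof of \Cref{prop:unit}). Consequently the three refinements can be superposed: given a structure carrying all three constraints at once, \Cref{prop_asso}, \Cref{prop_symm} and \Cref{prop:unit} simultaneously guarantee that $e$ satisfies conditions (eq-$a$ITS), (eq-$c$ITS) and (eq-$u$ITS), together with their external analogues, so that $e$ is an equivalence of structures carrying the full package of constraints.

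First I would introduce, in both the internal and the external setting, the \emph{mutual compatibility conditions} relating pairs of constraints: the hexagon axiom coupling the associativity constraint with the commutativity constraint, the triangle axiom coupling the associativity constraint with the unit constraint, and the analogous compatibility between the commutativity and unit constraints. These are not part of \Cref{defn:aITS}, \Cref{defn:cITS} or \Cref{defn:uITS}, and so must be added here. In each case the external version is the verbatim transliteration of the internal one, with the operation of switching tensor factors replaced by the appropriate permutation isomorphisms $\tau_\sigma$, exactly as was done for the individual constraints in \Cref{sect:comm} and \Cref{sect:unit}. A structure is then declared \emph{unitary, symmetric, associative} precisely when it satisfies these compatibility conditions on top of the three separate constraints.

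Next I would check that the translation procedures of \Cref{lem_ass_int_to_ext}, \Cref{lem_symm_int_to_ext} and \Cref{lem:unit-int_to_ext}, as well as their inverses \Cref{lem_ass_ext_to_int}, \Cref{lem_symm_ext_to_int} and \Cref{lem:unit-ext_to_int}, send internal compatibility conditions to external ones and conversely. Each such verification follows the now-standard template used throughout the paper: unwind the definitions of the transliterated constraints, decompose the resulting large diagram into a central copy of the corresponding internal (resp. external) compatibility diagram surrounded by lateral pieces built out of connection isomorphisms and monoidality isomorphisms, and invoke \Cref{lem-coherent_conn-monoint} or \Cref{lem-coherent_conn-monoext} to conclude that those lateral pieces commute. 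Granting these transfers, the theorem follows at once: the bijection of \Cref{prop_bij_in_ext}, already known to respect the three constraints individually and witnessed throughout by the single equivalence $e$, now also respects the compatibility conditions, and hence restricts to a bijection between equivalence classes of unitary, symmetric, associative structures on each side.

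The step I expect to be the main obstacle is the transfer of the hexagon axiom. Unlike the pentagon and the triangle, the hexagon simultaneously involves the associativity constraint and the commutativity constraint, so in the external setting both the three-fold permutation isomorphisms $\tau_\sigma$ (entering through $c^e$ and $a^e$) and the projection/diagonal bookkeeping appear at once; keeping track of which permutation acts on which factor, and matching it against the internal hexagon after applying $\Delta_S^{(3),*}$, is where the combinatorics become delicate. As elsewhere in the paper, the coherence results \Cref{lem-coherent_conn-monoint} and \Cref{lem-coherent_conn-monoext} are exactly what make this manageable, since they collapse all the auxiliary reindexing isomorphisms into unique canonical arrows and thereby isolate the single genuinely new commuting cell, namely the transliterated hexagon itself.
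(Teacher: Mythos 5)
Your proposal is correct and follows essentially the same route as the paper: \Cref{sect:comp} assembles the theorem exactly as you describe, by invoking the bijection of \Cref{prop_bij_in_ext} together with its refinements \Cref{prop_asso}, \Cref{prop_symm} and \Cref{prop:unit} (all witnessed by the same canonical equivalence), and then introducing the pairwise compatibility conditions (associativity--commutativity, associativity--unit, commutativity--unit) in both the internal and external settings and proving their transfer (\Cref{comp_int_to_ext}, \Cref{comp_ext_to_int} and the analogous lemmata for the other two pairs) via the decomposition-around-a-central-cell template backed by \Cref{lem-coherent_conn-monoint} and \Cref{lem-coherent_conn-monoext}. Your anticipated main obstacle, the hexagon transfer, is indeed handled in the paper precisely by collapsing the auxiliary reindexing isomorphisms with these coherence lemmata.
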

\noindent
The bijections in the statement, of course, are those of \Cref{prop_bij_in_ext}; the results of \Cref{prop_asso}, \Cref{prop_symm} and \Cref{prop:unit} refine them so as to include associativity, symmetry, and unitarity, respectively. In order to complete the picture, we need to study the mutual compatibility conditions between any two of these three types of constraints: this is the main goal of the present section.

The notion of compatibility just mentioned refers to a property rather than to an additional structure. Therefore, its formulation in the setting of monoidal fibered categories is the same as in the setting of usual monoidal categories, which can be found in \cite[\S\S~4,5]{Mac63}.

\subsection{Associativity versus commutativity}

\begin{defn}\label{defn:acITS}
	Let $(\otimes,m)$ be an internal tensor structure on $\Hbb$, endowed with an internal associativity constraint $a$ and an internal commutativity constraint $c$.
	
	We say that the constraints $a$ and $c$ are \textit{compatible} if they satisfy the following condition:
	\begin{enumerate}
		\item[($ac$ITS)] For every $S \in \Scal$, the diagram of functors $\Hbb(S) \times \Hbb(S) \times \Hbb(S) \rightarrow \Hbb(S)$
		\begin{equation*}
			\begin{tikzcd}
				(A \otimes B) \otimes C \arrow{r}{c} \arrow{d}{a} & (B \otimes A) \otimes C \arrow{r}{a} & B \otimes (A \otimes C) \arrow{d}{c} \\
				A \otimes (B \otimes C) \arrow{r}{c} & (B \otimes C) \otimes A \arrow{r}{a} & B \otimes (C \otimes A)
			\end{tikzcd}
		\end{equation*}
		is commutative.
	\end{enumerate}
	In this case, we say that $(\otimes,m;a,c)$ is a \textit{symmetric associative internal tensor structure} on $\Hbb$. 
\end{defn}

\begin{defn}\label{defn:acETS}
	Let $(\boxtimes,m)$ be an external tensor structure on $\Hbb$, endowed with an external associativity constraint $a$ and an external commutativity constraint $c$.
	
	We say that the constraints $a$ and $c$ are \textit{compatible} if they satisfy the following condition:
	\begin{enumerate}
		\item[($ac$ETS)] For every $S_1, S_2, S_3 \in \Scal$, the diagram of functors $\Hbb(S_1) \times \Hbb(S_2) \times \Hbb(S_3) \rightarrow \Hbb(S_1 \times S_2 \times S_3)$
		\begin{equation*}
			\begin{tikzcd}[font=\small]
				(A_1 \boxtimes A_2) \boxtimes A_3 \arrow{r}{c} \arrow{dd}{a} & \tau_{(12)}^*(A_2 \boxtimes A_1) \boxtimes A_3 \arrow{r}{m} & \tau_{(12)(3)}^*((A_2 \boxtimes A_1) \boxtimes A_3) \arrow{r}{a} & \tau_{(12)(3)}^* (A_2 \boxtimes (A_1 \boxtimes A_3)) \arrow{d}{c} \\
				&&& \tau_{(12)(3)}^* (A_2 \boxtimes \tau_{(13)}^*(A_3 \boxtimes A_1)) \arrow{d}{m} \\
				A_1 \boxtimes (A_2 \boxtimes A_3) \arrow{r}{c} & \tau_{(132)}^*((A_2 \boxtimes A_3) \boxtimes A_1) \arrow{r}{a} & \tau_{(132)}^*(A_2 \boxtimes (A_3 \boxtimes A_1)) \arrow[equal]{r} & \tau_{(12)(3)}^* \tau_{(2)(13)}^*(A_2 \boxtimes (A_3 \boxtimes A_1))
			\end{tikzcd}
		\end{equation*}
		is commutative.
	\end{enumerate}
	In this case, we say that $(\boxtimes,m;a,c)$ is a \textit{symmetric associative external tensor structure} on $\Hbb$.
\end{defn}

\begin{lem}\label{comp_int_to_ext}
	Let $(\otimes,m)$ be an internal tensor structure on $\Hbb$, and let $(\boxtimes,m^e)$ denote the external tensor structure obtained from it via \Cref{lem_int_to_ext}. 
	Suppose that $(\otimes,m)$ is endowed with compatible internal associativity and commutativity constraints $a$ and $c$; let $a^e$ and $c^e$ denote the external associativity and commutativity constraints on $(\boxtimes,m)$ obtained from them via \Cref{lem_ass_int_to_ext} and \Cref{lem_ass_ext_to_int}, respectively.
	
	Then the external associativity and commutativity constraints $a^e$ and $c^e$ are compatible as well.
\end{lem}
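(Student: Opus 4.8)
The plan is to reduce condition ($ac$ETS) for $(\boxtimes, m^e; a^e, c^e)$ to condition ($ac$ITS) for $(\otimes, m; a, c)$, following the same strategy used in the proofs of \Cref{prop_asso} and \Cref{prop_symm}. Fix $S_1, S_2, S_3 \in \Scal$, write $S_{123} := S_1 \times S_2 \times S_3$, and set $B_i := pr^{123,*}_i A_i$ for $i = 1,2,3$. Using the explicit formulae for $a^e$ and $c^e$ from \Cref{lem_ass_int_to_ext} and \Cref{lem_symm_int_to_ext}, I would first rewrite every vertex of the external hexagon in \Cref{defn:acETS} in terms of the internal tensor product of suitable pullbacks of the $A_i$. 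By \Cref{lem-coherent_conn-monoint}, each such vertex is then canonically identified with the corresponding vertex of the internal hexagon in ($ac$ITS) with arguments $(B_1, B_2, B_3)$, the identifying isomorphisms being composites of connection and internal monoidality isomorphisms.

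Next I would decompose the unwound external diagram so that the internal hexagon ($ac$ITS) for $(B_1, B_2, B_3)$ sits as its central region, surrounded by lateral pieces. Each edge of the external hexagon — whether an instance of $a^e$, $c^e$, or $m^e$ — unwinds into a composite of the internal $a$, $c$, and $m$ together with connection and monoidality isomorphisms; under the canonical identifications of the previous step these composites reproduce exactly the edges of the internal hexagon. The central region then commutes by hypothesis, that is, by axiom ($ac$ITS), and it remains only to check that the lateral pieces commute. As in the earlier proofs, these lateral pieces are combinations of connection and monoidality isomorphisms, so their commutativity follows from \Cref{lem-coherent_conn-monoint} together with the naturality of all the isomorphisms involved; I would omit the explicit verification.

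The main obstacle I anticipate is the bookkeeping of the permutation isomorphisms $\tau_\sigma$ that appear in the external commutativity constraint but have no counterpart in the internal setting. In particular, matching the two endpoints of the hexagon requires the identity $\tau_{(132)}^* = \tau_{(12)(3)}^* \circ \tau_{(2)(13)}^*$ of functors, as well as the compatibility of each $\tau_\sigma^*$ with the projection pullbacks $pr^{123,*}_i$ (for instance, that $\tau^*_{(12)(3)} \, pr^{213,*}_j$ agrees with the appropriate $pr^{123,*}_{\sigma(j)}$). All of these are instances of the coherence of connection isomorphisms in $\Scal$, and hence are subsumed by \Cref{lem-coherent_conn} and \Cref{lem-coherent_conn-monoint}; the role of the latter is precisely to guarantee that, no matter how the permutations and monoidality isomorphisms are threaded through the lateral regions, the resulting comparison isomorphisms are unambiguous. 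Once this is granted, the reduction to ($ac$ITS) is clean, and the external constraints $a^e$ and $c^e$ are compatible.
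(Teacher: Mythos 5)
Your proposal matches the paper's proof: the paper likewise sets $B_i := pr^{123,*}_i A_i$, decomposes the external ($ac$ETS) diagram so that the internal ($ac$ITS) hexagon for $(B_1,B_2,B_3)$ forms the central region, invokes \Cref{lem-coherent_conn-monoint} to make the six comparison isomorphisms path-independent, and disposes of the lateral pieces by naturality and the same coherence lemma, omitting the details. Your explicit remarks about the permutation isomorphisms $\tau_\sigma$ are handled implicitly in the paper via \Cref{lem-coherent_conn} and the convention of writing connection isomorphisms as equalities, so nothing is missing.
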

\begin{proof}
	Let us check that the external constraints $a^e$ and $c^e$ satisfy condition ($ac$ETS): given $S_1, S_2, S_3 \in \Scal$, we have to show that the diagram of functors $\Hbb(S_1) \times \Hbb(S_2) \times \Hbb(S_3) \rightarrow \Hbb(S_1 \times S_2 \times S_3)$
	\begin{equation*}
		\begin{tikzcd}[font=\small]
			(A_1 \boxtimes A_2) \boxtimes A_3 \arrow{r}{c^e} \arrow{dd}{a^e} & \tau_{(12)}^*(A_2 \boxtimes A_1) \boxtimes A_3 \arrow{r}{m^e} & \tau_{(12)(3)}^*((A_2 \boxtimes A_1) \boxtimes A_3) \arrow{r}{a^e} & \tau_{(12)(3)}^* (A_2 \boxtimes (A_1 \boxtimes A_3)) \arrow{d}{c^e} \\
			&&& \tau_{(12)(3)}^* (A_2 \boxtimes \tau_{(13)}^*(A_3 \boxtimes A_1)) \arrow{d}{m^e} \\
			A_1 \boxtimes (A_2 \boxtimes A_3) \arrow{r}{c^e} & \tau_{(132)}^*((A_2 \boxtimes A_3) \boxtimes A_1) \arrow{r}{a^e} & \tau_{(132)}^*(A_2 \boxtimes (A_3 \boxtimes A_1)) \arrow[equal]{r} & \tau_{(12)(3)}^* \tau_{(2)(13)}^*(A_2 \boxtimes (A_3 \boxtimes A_1))
		\end{tikzcd}
	\end{equation*}
	is commutative. To this end, we decompose it as
	\begin{equation*}
		\begin{tikzcd}[font=\small]
			\bullet \arrow{r}{c^e} \arrow{ddd}{a^e} \arrow{dr}{\sim} & \bullet \arrow{rr}{m^e} \arrow{dr}{\sim} && \bullet \arrow{r}{a} & \bullet \arrow{dd}{c^e} \arrow{dl}{\sim} \\
			& (B_1 \otimes B_2) \otimes B_3 \arrow{r}{c} \arrow{d}{a} & (B_2 \otimes B_1) \otimes B_3 \arrow{r}{a} & B_2 \otimes (B_1 \otimes B_3) \arrow{d}{c} \\
			& B_1 \otimes (B_2 \otimes B_3) \arrow{r}{c} & (B_2 \otimes B_3) \otimes B_1 \arrow{r}{a} & B_2 \otimes (B_3 \otimes B_1) & \bullet \arrow{d}{m^e} \\
			\bullet \arrow{r}{c^e} \arrow{ur}{\sim} & \bullet \arrow{rr}{a^e} \arrow{ur}{\sim} && \bullet \arrow[equal]{r} \isoarrow{u} & \bullet
		\end{tikzcd}
	\end{equation*} 
    where, for notational convenience, we have set $B_i := pr^{123,*}_i A_i$, $i = 1,2,3$. The six unnamed natural isomorphisms are obtained by combining connection isomorphisms and internal monoidality isomorphisms along suitable paths; by \Cref{lem-coherent_conn-monoint}, we know that the definitions do not depend on the chosen paths. Since the central rectangle in the latter diagram is commutative by axiom ($ac$ITS), it now suffices to show that the six lateral pieces are commutative as well. This follows by the usual trick; we omit the details.
\end{proof}

\begin{lem}\label{comp_ext_to_int}
	Let $(\boxtimes,m)$ be an external tensor structure on $\Hbb$, and let $(\otimes,m^i)$ denote the internal tensor structure obtained from it via \Cref{lem_ext_to_int}. 
	Suppose that $(\boxtimes,m)$ is endowed with compatible external associativity and commutativity constraints $a$ and $c$; let $a^i$ and $c^i$ denote the internal associativity and commutativity constraints on $(\otimes,m^i)$ obtained from them via \Cref{lem_ass_ext_to_int} and \Cref{lem_symm_ext_to_int}, respectively.
	
	Then the internal associativity and commutativity constraints $a^i$ and $c^i$ are compatible as well.
\end{lem}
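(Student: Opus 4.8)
The plan is to reproduce, in the opposite direction, the decomposition used in the proof of \Cref{comp_int_to_ext}. First I would write out condition ($ac$ITS) for the induced constraints $a^i$ and $c^i$, that is, the hexagon of functors $\Hbb(S) \times \Hbb(S) \times \Hbb(S) \rightarrow \Hbb(S)$
\begin{equation*}
	\begin{tikzcd}[font=\small]
		(A \otimes B) \otimes C \arrow{r}{c^i} \arrow{d}{a^i} & (B \otimes A) \otimes C \arrow{r}{a^i} & B \otimes (A \otimes C) \arrow{d}{c^i} \\
		A \otimes (B \otimes C) \arrow{r}{c^i} & (B \otimes C) \otimes A \arrow{r}{a^i} & B \otimes (C \otimes A)
	\end{tikzcd}
\end{equation*}
which must be shown commutative for every $S \in \Scal$.

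Next I would substitute the formulae defining $a^i$ and $c^i$ from \Cref{lem_ass_ext_to_int} and \Cref{lem_symm_ext_to_int}. Each edge of the hexagon then expands into a path consisting of external monoidality isomorphisms $m$, connection isomorphisms, the canonical identifications $\Delta_S^{(n),*} \tau_\sigma^* = \Delta_S^{(n),*}$, and a single external constraint $a$ or $c$. I would then decompose the expanded figure so that its central piece is the image under $\Delta_S^{(3),*}$ of the ($ac$ETS) diagram for the objects $A_1 = A$, $A_2 = B$, $A_3 = C$ over $S$, while the six remaining lateral pieces are natural isomorphisms assembled out of connection isomorphisms and external monoidality isomorphisms along suitable paths. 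By \Cref{lem-coherent_conn-monoext} these lateral isomorphisms are independent of the chosen paths, so each lateral piece may be expanded in the most convenient way.

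The central piece commutes because $\Delta_S^{(3),*}$ is a functor and ($ac$ETS) holds by hypothesis for the compatible external constraints $a$ and $c$. It therefore remains only to verify the commutativity of the lateral pieces, which---exactly as in \Cref{comp_int_to_ext}---reduces to naturality together with the independence statement of \Cref{lem-coherent_conn-monoext}; these routine checks may be omitted.

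The step I expect to be the main obstacle, and the genuine difference with respect to \Cref{comp_int_to_ext}, is the bookkeeping of the permutation isomorphisms $\tau_\sigma$ occurring in ($ac$ETS). Since $\tau_\sigma \circ \Delta_S^{(3)} = \Delta_S^{(3)}$ for every $\sigma$, each $\tau_\sigma^*$ collapses to the identity after pullback along the triple diagonal; I would need to track these collapses carefully, so that the pulled-back external hexagon aligns with the internal hexagon and the lateral pieces absorb the residual permutation and monoidality data coherently. Once this alignment is set up correctly, no new idea is required and the argument closes.
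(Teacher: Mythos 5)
Your proposal is correct and follows essentially the same route as the paper: the internal hexagon is decomposed around a central piece given by the ($ac$ETS) diagram pulled back along $\Delta_S^{(3)}$, with six lateral pieces built from connection and external monoidality isomorphisms that commute by naturality and the path-independence result of \Cref{lem-coherent_conn-monoext}. Your explicit tracking of the collapses $\Delta_S^{(3),*}\tau_\sigma^* = \Delta_S^{(3),*}$ is precisely the bookkeeping the paper handles implicitly via the equality arrows in its central rectangle, so no new difficulty arises.
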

\begin{proof}
	Let us check that the internal constraints $a^i$ and $c^i$ satisfy condition ($ac$ITS): for every $S \in \Scal$, we have to show that the diagram of functors $\Hbb(S) \times \Hbb(S) \times \Hbb(S) \rightarrow \Hbb(S)$
	\begin{equation*}
		\begin{tikzcd}
			(A \otimes B) \otimes C \arrow{r}{c^i} \arrow{d}{a^i} & (B \otimes A) \otimes C \arrow{r}{a^i} & B \otimes (A \otimes C) \arrow{d}{c^i} \\
			A \otimes (B \otimes C) \arrow{r}{c^i} & (B \otimes C) \otimes A \arrow{r}{a^i} & B \otimes (C \otimes A)
		\end{tikzcd}
	\end{equation*}
	is commutative. To this end, we decompose it as
	\begin{equation*}
		\begin{tikzcd}[font=\tiny]
			\bullet \arrow{rrr}{c^i} \arrow{dddd}{a^i} \arrow{dr}{\sim} &&& \bullet \arrow{rr}{a^i} \isoarrow{d} && \bullet \arrow{dddd}{c^i} \arrow{dl}{\sim} \\
			& (A \boxtimes B) \boxtimes C \arrow{r}{c} \arrow{dd}{a} & \tau_{12}^*(B \boxtimes A) \boxtimes C \arrow{r}{m} & \tau_{(12)(3)}^* (B \boxtimes (A \boxtimes C)) \arrow{r}{a} & \bullet \arrow{d}{c} \\
			&&&& \tau_{(12)(3)}^* (B \boxtimes \tau_{(13)}^*(C \boxtimes A)) \arrow{d}{m} \\
			& A \boxtimes (B \boxtimes C) \arrow{r}{c} & \tau_{(132)}^*((B \boxtimes C) \boxtimes A) \arrow{r}{a} & \tau_{(132)}^*(B \boxtimes (C \boxtimes A)) \arrow[equal]{r} & \tau_{(12)(3)}^* \tau_{(2)(13)}^*(B \boxtimes (C \boxtimes A)) \\
			\bullet \arrow{rr}{c^i} \arrow{ur}{\sim} && \bullet \arrow{rrr}{a^i} \isoarrow{u} &&& \bullet \arrow{ul}{\sim}
		\end{tikzcd}
	\end{equation*}
    The six unnamed natural isomorphisms are obtained by combining connection isomorphisms and external monoidality isomorphisms along suitable paths; by \Cref{lem-coherent_conn-monoext}, we know that the definitions do not depend on the chosen paths. Since the central rectangle in the latter diagram is commutative by axiom ($ac$ETS), it now suffices to show that the six lateral pieces are commutative as well. This follows by the usual trick; we omit the details.
\end{proof}

\subsection{Associativity versus unit}

\begin{defn}
	Let $(\otimes,m)$ be an internal tensor structure on $\Hbb$, endowed with an internal associativity constraint $a$ and an internal unit constraint $u$ (with unit section $\unit$). 
	
	We say that the constraints $a$ and $u$ are \textit{compatible} if they satisfy the following condition:
	\begin{enumerate}
		\item[($au$ITS)] For every $S \in \Scal$, the three diagrams of functors $\Hbb(S) \times \Hbb(S) \rightarrow \Hbb(S)$
		\begin{equation*}
			\begin{tikzcd}
				(\unit_S \otimes B) \otimes C \arrow{rr}{a} \arrow{dr}{u_l} && \unit_S \otimes (B \otimes C) \arrow{dl}{u_l} \\
				& B \otimes C
			\end{tikzcd}
		    \qquad
		    \begin{tikzcd}
		    	(A \otimes \unit_S) \otimes C \arrow{rr}{a} \arrow{dr}{u_r} && A \otimes (\unit_S \otimes C) \arrow{dl}{u_l} \\
		    	& A \otimes C
		    \end{tikzcd}
		\end{equation*}
		\begin{equation*}
			\begin{tikzcd}
				(A \otimes B) \otimes \unit_S \arrow{rr}{a} \arrow{dr}{u_r} && A \otimes (B \otimes \unit_S) \arrow{dl}{u_r} \\
				& A \otimes B
			\end{tikzcd}
		\end{equation*}
		are commutative.
	\end{enumerate}
	In this case, we say that $(\otimes,m;a,u)$ is a \textit{unitary associative internal tensor structure} on $\Hbb$.
\end{defn}

\begin{defn}
	Let $(\boxtimes,m)$ be an external tensor structure on $\Hbb$, endowed with an external associativity constraint $a$ and an external unit constraint $u$ (with unit section $\unit$). 
	
	We say that the constraints $a$ and $u$ are \textit{compatible} if they satisfy the following condition:
	\begin{enumerate}
		\item[($au$ETS)] For every $S_1, S_2, S_3 \in \Scal$, the diagram of functors $\Hbb(S_2) \times \Hbb(S_3) \rightarrow \Hbb(S_2 \times S_3)$
		\begin{equation*}
			\begin{tikzcd}
				(\unit_{S_1} \boxtimes A_2) \boxtimes A_3 \arrow{r}{a} \arrow{d}{u_l} & \unit_{S_1} \boxtimes (A_2 \boxtimes A_3) \arrow{d}{u_l} \\
				pr^{12,*}_2 A_2 \boxtimes A_3 \arrow{r}{m} & pr^{123,*}_{23} (A_2 \boxtimes A_3),
			\end{tikzcd}
		\end{equation*}
	    the diagram of functors $\Hbb(S_1) \times \Hbb(S_3) \rightarrow \Hbb(S_1 \times S_3)$
		\begin{equation*}
			\begin{tikzcd}
				(A_1 \boxtimes \unit_{S_2}) \boxtimes A_3 \arrow{rr}{a} \arrow{d}{u_r} && A_1 \boxtimes (\unit_{S_2} \boxtimes A_3) \arrow{d}{u_l} \\
				pr^{12,*}_1 A_1 \boxtimes A_3 \arrow{r}{m} & pr^{123,*}_{13} (A_1 \boxtimes A_3) & A_1 \boxtimes pr^{23,*}_3 A_3 \arrow{l}{m}
			\end{tikzcd}
		\end{equation*}
	    and the diagram of functors $\Hbb(S_1) \times \Hbb(S_2) \rightarrow \Hbb(S_1 \times S_2)$
		\begin{equation*}
			\begin{tikzcd}
				(A_1 \boxtimes A_2) \boxtimes \unit_{S_3} \arrow{r}{a} \arrow{d}{u_r} & A_1 \boxtimes (A_2 \boxtimes \unit_{S_3}) \arrow{d}{u_r} \\
				pr^{123,*}_{23} (A_1 \boxtimes A_2) & A_1 \boxtimes pr^{12,*}_{2} A_2 \arrow{l}{m}
			\end{tikzcd}
		\end{equation*}
		are commutative.
	\end{enumerate}
	In this case, we say that $(\boxtimes,m;a,u)$ is a \textit{unitary associative external tensor structure} on $\Hbb$.
\end{defn}

\begin{lem}
	Let $(\otimes,m)$ be an internal tensor structure on $\Hbb$, and let $(\boxtimes,m)$ denote the external tensor structure obtained from it via \Cref{lem_int_to_ext}. Suppose that $(\otimes,m)$ is endowed with compatible internal associativity and unit constraints $a$ and $u$, and let $a^e$ and $u^e$ denote the external associativity and unit constraints on $(\boxtimes,m)$ obtained from them via \Cref{lem_ass_int_to_ext} and \Cref{lem:unit-int_to_ext}, respectively. 
	
	Then the external associativity and unit constraints $a^e$ and $u^e$ are compatible as well.
\end{lem}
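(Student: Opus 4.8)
The plan is to follow verbatim the template established in the proof of \Cref{comp_int_to_ext}, treating the three constituent diagrams of condition ($au$ETS) one at a time. Because the unit section $\unit$ is transported unchanged under the internal-to-external passage, while $u^e_r$ and $u^e_l$ are defined by precomposing the internal unit isomorphisms with the section comparison $\unit^*$ (see \Cref{lem:unit-int_to_ext}), I expect each of the three external diagrams to collapse onto one of the three triangles of the internal condition ($au$ITS). The three cases correspond to placing the unit in the first, second, or third slot; they are genuinely distinct and cannot be obtained from one another by symmetry, but each should reduce to a single application of internal compatibility.

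First I would unwind every external structure occurring in a given diagram into its internal definition: expand $a^e$ via \Cref{lem_ass_int_to_ext}, expand $u^e_r$ and $u^e_l$ via \Cref{lem:unit-int_to_ext}, and expand $m^e$ via \Cref{lem_int_to_ext}. Abbreviating $B_i := pr^{123,*}_i A_i$, I anticipate the expanded figure to decompose as a central triangle---namely the corresponding triangle of ($au$ITS) evaluated on the objects $B_i$---surrounded by lateral pieces built entirely out of connection isomorphisms, internal monoidality isomorphisms $m$, and unit-section isomorphisms $\unit^*$. The central triangle then commutes by hypothesis, i.e. by ($au$ITS), so the whole burden falls on the lateral pieces.

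For the lateral pieces, those assembled solely from connection and monoidality isomorphisms commute by \Cref{lem-coherent_conn-monoint}, which guarantees path-independence of all such composites; this is the same coherence input exploited in \Cref{comp_int_to_ext}. The step I expect to be the main obstacle is the interaction of the unit-section isomorphisms $\unit^*$ with the connection isomorphisms: here one cannot simply appeal to the monoidality coherence lemma, but must instead invoke the section axiom ($\Scal$-sect) to slide $\unit^*$ coherently past the pullbacks, exactly as in the proof of \Cref{lem:unit-int_to_ext}. Once this sliding is justified, each lateral piece commutes by naturality, by ($\Scal$-sect), and by \Cref{lem-coherent_conn-monoint}, whence the outer diagram commutes as well. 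As in the analogous lemmata of this section, I would then state that the remaining routine diagram-by-diagram verification is omitted.
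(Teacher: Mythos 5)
Your overall architecture matches the paper's proof: expand the first diagram of ($au$ETS) via the definitions of $a^e$, $u^e_l$ and $m^e$, isolate a central triangle which is an instance of ($au$ITS), and dispose of the lateral pieces, treating the remaining two diagrams as analogous. However, your list of ingredients for the lateral pieces --- naturality, axiom ($\Scal$-sect), and \Cref{lem-coherent_conn-monoint} --- is missing the one axiom that actually carries the argument, namely ($u$ITS-1). The left vertical arrow of the diagram is $u^e_l \boxtimes \id_{A_3}$, and unwinding it produces $pr^{123,*}_{12}\bigl(u_{l,S_1 \times S_2}\bigr)$, i.e.\ the \emph{pullback along $pr^{123}_{12}$ of the internal unit constraint over $S_1 \times S_2$}. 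The central triangle, by contrast, involves $u_{l,S_1 \times S_2 \times S_3}$ applied at the unit object $\unit_{S_{123}}$. Bridging these two is exactly the content of ($u$ITS-1) applied to $f = pr^{123}_{12}$: it is the only axiom relating $f^*(u_l)$ composed with $m$ to $u_l$ upstairs composed with $\unit^*$. None of your cited tools can substitute for it: naturality of $u_l$ is naturality in the object variable only, not in the base; ($\Scal$-sect) governs the section isomorphisms $\unit^*$ against connection isomorphisms and says nothing about $u_l$ or $u_r$; and \Cref{lem-coherent_conn-monoint} concerns composites of connection and internal monoidality isomorphisms exclusively, with no unit data at all. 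So the step ``each lateral piece commutes by naturality, by ($\Scal$-sect), and by \Cref{lem-coherent_conn-monoint}'' fails for precisely the pieces where the pulled-back unit constraint must be traded for the unit constraint over the product base; the paper's proof cites ($u$ITS-1) at exactly this point.

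A secondary, related imprecision: the central triangle is \emph{not} ($au$ITS) evaluated at $B_i = pr^{123,*}_i A_i$ as you state, since ($au$ITS) requires the genuine unit object $\unit_{S_{123}}$ in the first slot, not the pullback $pr^{123,*}_1 \unit_{S_1}$. The conversion from the latter to the former is effected by $\unit^*$, and it is precisely this conversion, propagated through the unit constraints, that forces the appeal to ($u$ITS-1) described above. With that axiom added to your toolkit the proof goes through along the lines you describe.
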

\begin{proof}
	Let us check that the constraints $a^e$ and $u^e$ satisfy condition ($au$ETS): for every $S_1, S_2, S_3 \in \Scal$, we have to show that the diagram of functors $\Hbb(S_2) \times \Hbb(S_3) \rightarrow \Hbb(S_2 \times S_3)$
	\begin{equation*}
		\begin{tikzcd}
			(\unit_{S_1} \boxtimes A_2) \boxtimes A_3 \arrow{r}{a^e} \arrow{d}{u^e_l} & \unit_{S_1} \boxtimes (A_2 \boxtimes A_3) \arrow{d}{u^e_l} \\
			pr^{12,*}_2 A_2 \boxtimes A_3 \arrow{r}{m^e} & pr^{123,*}_{23} (A_2 \boxtimes A_3),
		\end{tikzcd}
	\end{equation*}
	the diagram of functors $\Hbb(S_1) \times \Hbb(S_3) \rightarrow \Hbb(S_1 \times S_3)$
	\begin{equation*}
		\begin{tikzcd}
			(A_1 \boxtimes \unit_{S_2}) \boxtimes A_3 \arrow{rr}{a^e} \arrow{d}{u^e_r} && A_1 \boxtimes (\unit_{S_2} \boxtimes A_3) \arrow{d}{u^e_l} \\
			pr^{12,*}_1 A_1 \boxtimes A_3 \arrow{r}{m^e} & pr^{123,*}_{13} (A_1 \boxtimes A_3) & A_1 \boxtimes pr^{23,*}_3 A_3 \arrow{l}{m^e}
		\end{tikzcd}
	\end{equation*}
	and the diagram of functors $\Hbb(S_1) \times \Hbb(S_2) \rightarrow \Hbb(S_1 \times S_2)$
	\begin{equation*}
		\begin{tikzcd}
			(A_1 \boxtimes A_2) \boxtimes \unit_{S_3} \arrow{r}{a^e} \arrow{d}{u^e_r} & A_1 \boxtimes (A_2 \boxtimes \unit_{S_3}) \arrow{d}{u^e_r} \\
			pr^{123,*}_{23} (A_1 \boxtimes A_2) & A_1 \boxtimes pr^{12,*}_{2} A_2 \arrow{l}{m^e}
		\end{tikzcd}
	\end{equation*}
	are commutative. We only show the commutativity of the first diagram; the cases of the second and third diagrams are completely analogous.
	Unwinding the various definitions, we obtain the more explicit diagram
	\begin{equation*}
		\begin{tikzcd}[font=\small]
			(pr^{123,*}_1 \unit_{S_1} \otimes pr^{123,*}_2 A_2) \otimes pr^{123,*}_3 A_3 \arrow{r}{a} \arrow[equal]{d} & pr^{123,*}_1 \unit_{S_1} \otimes (pr^{123,*}_2 A_2 \otimes pr^{123,*}_3 A_3) \arrow[equal]{d} \\
			(pr^{123,*}_{12} pr^{12,*}_1 \unit_{S_1} \otimes pr^{123,*}_{23} pr^{23,*}_2 A_2) \otimes pr^{123,*}_3 A_3 \arrow{d}{m} & pr^{123,*}_1 \unit_{S_1} \otimes (pr^{123,*}_{23} pr^{23,*}_2 A_2 \otimes pr^{123,*}_{23} pr^{23,*}_3 A_3) \arrow{d}{m} \\
			pr^{123,*}_{12} (pr^{12,*}_1 \unit_{S_1} \otimes pr^{23,*}_2 A_2) \otimes pr^{123,*}_3 A_3 \arrow{d}{\unit^*} & pr^{123,*}_1 \unit_{S_1} \otimes pr^{123,*}_{23} (pr^{23,*}_2 A_2 \otimes pr^{23,*}_3 A_3) \arrow{d}{\unit^*} \\
			pr^{123,*}_{12} (\unit_{S_1 \times S_2} \otimes pr^{23,*}_2 A_2) \otimes pr^{123,*}_3 A_3 \arrow{d}{u_l} & \unit_{S_1 \times S_2 \times S_3} \otimes pr^{123,*}_{23} (pr^{23,*}_2 A_2 \otimes pr^{23,*}_3 A_3) \arrow{d}{u_l} \\
			pr^{123,*}_{12} pr^{23,*}_2 A_2 \otimes pr^{123,*}_3 A_3 \arrow[equal]{d} & pr^{123,*}_{23} (pr^{23,*}_2 A_2 \otimes pr^{23,*}_3 A_3) \arrow[equal]{d} \\
			pr^{123,*}_{2} A_2 \otimes pr^{123,*}_3 A_3 \arrow[equal]{r} & pr^{123,*}_{23} pr^{23,*}_2 A_2 \otimes pr^{123,*}_{23} pr^{23,*}_3 A_3
		\end{tikzcd}
	\end{equation*}
    that we decompose as
    \begin{equation*}
    	\begin{tikzcd}[font=\tiny]
    		& \bullet \arrow{rr}{a} \arrow[equal]{dl} \arrow{ddr}{\unit^*} && \bullet \arrow[equal]{d} \arrow[bend left]{dddl}{\unit^*} \\
   			\bullet \arrow{d}{m} \arrow{dr}{\unit^*} &&& \bullet \arrow{d}{m} \arrow[bend left]{dddl}{\unit^*} \\
   			\bullet \arrow{d}{\unit^*} & (pr^{123,*}_{12} \unit_{S_{12}} \otimes pr^{123,*}_{23} pr^{23,*}_2 A_2) \otimes pr^{123,*}_3 A_3 \arrow{d}{\unit^*} \arrow{dl}{m} & (\unit_{S_{123}} \otimes pr^{123,*}_2 A_2) \otimes pr^{123,*}_3 A_3 \arrow{dddl}{u_l} \arrow{d}{a} & \bullet \arrow{d}{\unit^*} \\
   			\bullet \arrow{d}{u_l} & (\unit_{S_{123}} \otimes pr^{123,*}_{23} pr^{23,*}_2 A_2) \otimes pr^{123,*}_3 A_3 \arrow[equal]{ur} \arrow{dl}{u_l} & \unit_{S_{123}} \otimes (pr^{123,*}_2 A_2 \otimes pr^{123,*}_3 A_3) \arrow[equal]{d} \arrow{ddl}{u_l} & \bullet \arrow{d}{u_l} \\
   			\bullet \arrow[equal]{dr} && \unit_{S_{123}} \otimes (pr^{123,*}_{23} pr^{23,*}_2 A_2 \otimes pr^{123,*}_{23} pr^{23,*}_3 A_3) \arrow{dr}{u_l} \arrow[bend right]{ur}{m} & \bullet \arrow[equal]{d} \\
   			& \bullet \arrow[equal]{rr} && \bullet
    	\end{tikzcd}
    \end{equation*}
    Here, the central triangle is commutative by axiom ($au$ITS) while the remaining pieces are commutative by naturality and by axioms ($\Scal$-sect) and ($u$ITS-1). This proves the claim.
\end{proof}

\begin{lem}
	Let $(\boxtimes,m)$ be an external tensor structure on $\Hbb$, and let $(\otimes,m)$ denote the internal tensor structure obtained from it via \Cref{lem_ext_to_int}. Suppose that $(\boxtimes,m)$ is endowed with compatible external associativity and unit constraints $a$ and $u$, and let $a^i$ and $u^i$ denote the internal associativity and unit constraints on $(\otimes,m)$ obtained from them via \Cref{lem_ass_ext_to_int} and \Cref{lem:unit-ext_to_int}, respectively. 
	
	Then the internal associativity and unit constraints $a^i$ and $u^i$ are compatible as well.
\end{lem}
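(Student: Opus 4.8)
The plan is to verify that the pair $(a^i, u^i)$ satisfies condition ($au$ITS). As in all the preceding comparison results, condition ($au$ITS) consists of three diagrams; since the arguments are entirely parallel, I would treat only the first one in detail---the diagram asserting that $a^i$ followed by $u_l^i$ agrees with $u_l^i$ on $(\unit_S \otimes B) \otimes C$---and indicate that the remaining two are handled analogously.

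First I would unwind all the definitions in terms of the external tensor product. Recall that $A \otimes B = \Delta_S^*(A \boxtimes B)$; that $a^i$ is the composite of \Cref{lem_ass_ext_to_int}, which raises the two nested diagonal pullbacks to a single $\Delta_S^{(3),*}$ and then applies the external constraint $a$; and that $u_l^i$ is the composite of \Cref{lem:unit-ext_to_int}, which applies $u_l$ after identifying the relevant diagonal pullback with the appropriate projection. In addition, the unit section $\unit$ contributes its section isomorphisms $\unit^*$ whenever an inverse image of $\unit_S$ must be identified with $\unit_T$. Unwinding in this way turns the internal ($au$ITS) diagram into a large diagram whose objects are built from $\Delta_S^{(3),*}$ applied to external tensor products of the projection pullbacks $pr^{123,*}_i$ of the arguments.

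The core of the argument is then to decompose this enlarged diagram so that a single central cell is precisely the image under $\Delta_S^{(3),*}$ of the external compatibility diagram ($au$ETS), specialized to $S_1 = S_2 = S_3 = S$. All of the surrounding lateral cells are of the four standard types: they commute by naturality, by axiom ($\Scal$-sect) (which governs how the section isomorphisms $\unit^*$ interact with connection isomorphisms and diagonal pullbacks), by axiom ($u$ETS-1) (which encodes compatibility of the external unit constraint with inverse images), and by \Cref{lem-coherent_conn-monoext}, which guarantees that any two composites of connection and external monoidality isomorphisms between the same source and target coincide. It is exactly this last coherence result that lets me fill in the auxiliary isomorphisms along whichever paths are most convenient without ambiguity, reducing the whole verification to the single application of ($au$ETS) in the center.

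The main obstacle I anticipate is bookkeeping rather than conceptual: the external condition ($au$ETS) is stated over genuine products (e.g.\ over $S_2 \times S_3$, with projections such as $pr^{123,*}_{23}$ and $pr^{23,*}_{i}$), whereas the internal condition lives over a single object $S$. Matching the two requires specializing all three indices to $S$, pulling everything back along the triple diagonal $\Delta_S^{(3)}$, and then using ($\Scal$-sect) to collapse pullbacks of the unit section down to $\unit_S$ coherently. Keeping these diagonal pullbacks and unit identifications aligned through the decomposition is the delicate part; once the central cell has been correctly identified with $\Delta_S^{(3),*}$ of ($au$ETS), the remaining cells close up by the usual trick, and I would omit their routine verification as the paper does elsewhere.
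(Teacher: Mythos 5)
Your proposal matches the paper's proof essentially step for step: treat only the first of the three diagrams, unwind everything in terms of $\boxtimes$ using the definitions of $a^i$ and $u^i$, and decompose the resulting diagram so that the central cell is $\Delta_S^{(3),*}$ applied to ($au$ETS) with $S_1=S_2=S_3=S$, closing the lateral cells by naturality, axiom ($u$ETS-1), and \Cref{lem-coherent_conn-monoext}. One small correction: in this external-to-internal direction no section isomorphisms $\unit^*$ arise at all, since $u^i$ is defined purely by applying $u$ under $\Delta_S^*$, so axiom ($\Scal$-sect) plays no role here (it is needed only in the opposite, internal-to-external direction).
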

\begin{proof}
	Let us check that the constraints $a^i$ and $u^i$ in the statement satisfy condition ($au$ITS): for every $S \in \Scal$, we have to show that the three diagrams of functors $\Hbb(S) \times \Hbb(S) \rightarrow \Hbb(S)$
	\begin{equation*}
		\begin{tikzcd}
			(\unit_S \otimes B) \otimes C \arrow{rr}{a^i} \arrow{dr}{u^i_l} && \unit_S \otimes (B \otimes C) \arrow{dl}{u^i_l} \\
			& B \otimes C
		\end{tikzcd}
	    \qquad
	    \begin{tikzcd}
	    	(A \otimes \unit_S) \otimes C \arrow{rr}{a^i} \arrow{dr}{u^i_r} && A \otimes (\unit_S \otimes C) \arrow{dl}{u^i_l} \\
	    	& A \otimes C
	    \end{tikzcd}
	\end{equation*}
	\begin{equation*}
		\begin{tikzcd}
			(A \otimes B) \otimes \unit_S \arrow{rr}{a^i} \arrow{dr}{u^i_r} && A \otimes (B \otimes \unit_S) \arrow{dl}{u^i_r} \\
			& A \otimes B
		\end{tikzcd}
	\end{equation*}
	are commutative. We only show the commutativity of the first diagram; the cases of the second and third diagram are completely analogous.
	Unwinding the various definitions, we obtain the more explicit diagram
	\begin{equation*}
		\begin{tikzcd}[font=\small]
			\Delta_S^{(3),*} ((\unit_S \boxtimes B) \boxtimes C) \arrow{rr}{a} \arrow[equal]{d} && \Delta_S^{(3),*} (\unit_S \boxtimes (B \boxtimes C)) \arrow[equal]{d} \\
			\Delta_S^* (\Delta_S \times \id_S)^* ((\unit_S \boxtimes B) \boxtimes C) && \Delta_S^* (\id_S \times \Delta_S)^* (\unit_S \boxtimes (B \boxtimes C)) \\
			\Delta_S^* (\Delta_S^* (\unit_S \boxtimes B) \boxtimes C) \arrow{u}{m} \arrow{d}{u_m} && \Delta_S^* (\unit_S \boxtimes \Delta^* (B \boxtimes C)) \arrow{u}{m} \arrow{d}{u_l} \\
			\Delta_S^* (\Delta^* pr^{12,*}_2 B \boxtimes C) \arrow[equal]{r} & \Delta_S^* (B \boxtimes C) & \Delta_S^* pr^{12,*}_2 \Delta_S^* (B \boxtimes C) \arrow[equal]{l}
		\end{tikzcd}
	\end{equation*}
    that we decompose as
    \begin{equation*}
    	\begin{tikzcd}[font=\tiny]
    		\bullet \arrow{rrrr}{a} \arrow[equal]{d} \arrow{dr}{u_l} &&&& \bullet \arrow[equal]{d} \arrow{dl}{u_l} \\
    		\bullet \arrow{dr}{u_l} & \Delta_S^{(3),*} (pr^{12,*}_2 B \boxtimes C) \arrow{rr}{m} \arrow[equal]{d} && \Delta_S^{(3),*} pr^{123,*}_{23} (B \boxtimes C) \arrow[equal]{d} & \bullet \arrow{dl}{u_l} \\
    		\bullet \arrow{u}{m} \arrow{d}{u_l} & \Delta_S^* (\Delta_S \times \id_S)^* (pr^{12,*}_2 B \boxtimes C) \arrow{r}{m} & \Delta_S^* (\Delta_S \times \id_S)^* pr^{123,*}_{23} (B \boxtimes C) \arrow[equal]{ur} \arrow[equal]{d} & \Delta_S^* (\id_S \times \Delta_S)^* pr^{123,*}_{23} (B \boxtimes C) \arrow[equal]{dr} & \bullet \arrow{u}{m} \arrow{d}{u_l} \\
    		\bullet \arrow[equal]{rr} \arrow{ur}{m} && \bullet && \bullet \arrow[equal]{ll}
    	\end{tikzcd}
    \end{equation*}
    Here, the upper piece is commutative by axiom ($au$ETS) while the remaining pieces are commutative by naturality, by \Cref{lem-coherent_conn-monoext} and by axiom ($u$ETS-1). This proves the claim.
\end{proof}

\subsection{Commutativity versus unit}

\begin{defn}
	Let $(\otimes,m)$ be an internal tensor structure on $\Hbb$, endowed with an internal commutativity constraint $c$ and an internal unit constraint $u$ (with unit section $\unit$).
	
	We say that the constraints $c$ and $u$ are \textit{compatible} if they satisfy the following condition:
	\begin{enumerate}
		\item[($cu$ITS)] For every $S \in \Scal$, the diagram of functors $\Hbb(S) \rightarrow \Hbb(S)$
		\begin{equation*}
			\begin{tikzcd}
				\unit_S \otimes B \arrow{rr}{c} \arrow{dr}{u_l} && B \otimes \unit_S \arrow{dl}{u_r} \\
				& B
			\end{tikzcd}
		\end{equation*}
		is commutative.
	\end{enumerate}
	In this case, we say that $(\otimes,m;c,u)$ is a \textit{unitary symmetric internal tensor structure} on $\Hbb$.
\end{defn}

\begin{defn}
	Let $(\boxtimes,m)$ be an external tensor structure on $\Hbb$, endowed with an external commutativity constraint $c$ and an external unit constraint $u$ (with unit section $\unit$).
	
	We say that the constraints $c$ and $u$ are \textit{compatible} if they satisfy the following condition:
	\begin{enumerate}
		\item[($cu$ETS)] For every $S_1, S_2 \in \Scal$, the diagram of functors $\Hbb(S_2) \rightarrow \Hbb(S_1 \times S_2)$
		\begin{equation*}
			\begin{tikzcd}
				\unit_{S_1} \otimes B_2 \arrow{r}{c} \arrow{d}{u_l} & \tau^* (B_2 \otimes \unit_{S_1}) \arrow{d}{u_r} \\
				pr^{12,*}_2 B_2 \arrow[equal]{r} & \tau^* pr^{21,*}_2 B_2
			\end{tikzcd}
		\end{equation*}
		is commutative.
	\end{enumerate}
	In this case, we say that $(\boxtimes,m;c,u)$ is a \textit{unitary symmetric external tensor structure} on $\Hbb$.
\end{defn}

\begin{lem}
	Let $(\otimes,m)$ be an internal tensor structure on $\Hbb$, and let $(\boxtimes,m)$ denote the external tensor structure obtained from it via \Cref{lem_int_to_ext}. Suppose that $(\otimes,m)$ is endowed with compatible internal commutativity and unit constraints $c$ and $u$, and let $c^e$ and $u^e$ denote the external commutativity and unit constraints on $(\boxtimes,m)$ obtained from them via \Cref{lem_symm_int_to_ext} and \Cref{lem:unit-int_to_ext}, respectively. 
	
	Then the external commutativity and unit constraints $c^e$ and $u^e$ are compatible as well.
\end{lem}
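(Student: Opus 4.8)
The plan is to verify that the external constraints $c^e$ and $u^e$ satisfy condition ($cu$ETS), following the same template used in the preceding compatibility lemmas. Fix $S_1, S_2 \in \Scal$. First I would unwind the defining composites of $c^e$ (from \Cref{lem_symm_int_to_ext}) and of $u^e_l$, $u^e_r$ (from \Cref{lem:unit-int_to_ext}) so as to rewrite the square of ($cu$ETS) as an explicit diagram of functors $\Hbb(S_2) \rightarrow \Hbb(S_1 \times S_2)$, built out of the internal tensor product, the internal constraints $c$, $u_l$, $u_r$, the monoidality isomorphisms $m$, the unit-section isomorphisms $\unit^*$, and the permutation $\tau$. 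Concretely, starting from $\unit_{S_1} \boxtimes B_2 = pr^{12,*}_1 \unit_{S_1} \otimes pr^{12,*}_2 B_2$, the left edge $u^e_l$ expands as $\unit^*$ on the first factor followed by $u_l$, while the top edge $c^e$ rewrites both factors through $\tau^*$, applies the internal $c$, and closes with $m$; the right edge $u^e_r$, read inside $\tau^*$, expands again as $\unit^*$ followed by $u_r$.

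Next I would decompose the resulting explicit diagram into elementary cells. The essential point is that, once the unit-section isomorphism has replaced $pr^{12,*}_1 \unit_{S_1}$ by $\unit_{S_1 \times S_2}$, the core of the diagram is precisely an instance of the internal compatibility axiom ($cu$ITS) over the base $S_1 \times S_2$, applied to the unit object $\unit_{S_1 \times S_2}$ and to the object $pr^{12,*}_2 B_2$. All the surrounding cells should close for formal reasons: the naturality of the internal commutativity constraint $c$ and of the monoidality isomorphisms $m$ (which lets me commute the $\unit^*$ maps past $c$ and $m$ so as to isolate the central triangle), the coherence of connection and monoidality isomorphisms guaranteed by \Cref{lem-coherent_conn-monoint}, and axiom ($\Scal$-sect). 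The latter is what forces the two occurrences of $\unit^*$ — the one attached to $pr^{12}_1$ along the left branch and the one attached to $pr^{21}_1$ and transported through $\tau$ along the right branch — to agree, by way of the identity $pr^{12}_1 = pr^{21}_1 \circ \tau$.

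I expect the main obstacle to be the bookkeeping imposed by the permutation $\tau$. In contrast to the associativity-versus-unit situation, here the target $\tau^*(B_2 \boxtimes \unit_{S_1})$ lands over $S_1 \times S_2$ only after transport through $\tau$, so every factor appearing in the right-hand branch carries a $\tau^*$ that must be reconciled with the $\tau$-free left-hand branch. The delicate part is to keep the projections $pr^{12}_i$ and $pr^{21}_i$ separated while tracking how they interact with $\tau$; once this is handled with care, \Cref{lem-coherent_conn-monoint} removes any ambiguity in the auxiliary isomorphisms and the identification of the central cell with ($cu$ITS) becomes transparent. I would therefore leave the routine cell-by-cell verification implicit, exactly as in the analogous lemmas above.
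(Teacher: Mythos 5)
Your overall strategy coincides with the paper's: unwind $c^e$, $u^e_l$, $u^e_r$ into their defining composites, decompose the resulting explicit diagram into elementary cells, and reduce everything to a single instance of ($cu$ITS) surrounded by formally commuting cells. The one real difference is where the central cell sits: the paper applies ($cu$ITS) over the base $S_2 \times S_1$, to the object $pr^{21,*}_2 B_2$, and transports the resulting commutative triangle by the functor $\tau^*$, whereas you first push both branches down to the base $S_1 \times S_2$ and apply ($cu$ITS) there, to $\unit_{S_1 \times S_2}$ and $pr^{12,*}_2 B_2$. Both placements work; yours is even marginally more economical, since the internal $c$ occurring in the definition of $c^e$ already lives over $S_1 \times S_2$, so you never need to compare it with $\tau^*$ of the constraint over $S_2 \times S_1$ (i.e.\ you avoid the application of ($c$ITS-2) along $\tau$ that the paper's decomposition implicitly uses).

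There is, however, one inaccuracy in your list of ``formal reasons'', and it concerns exactly the cell your route cannot avoid. The right-hand edge of ($cu$ETS) is $\tau^*(u^e_r)$, whose internal unit constraint $u_r$ lives over $S_2 \times S_1$ and is whiskered by $\tau^*$; to feed it into your central cell you must replace the composite $\tau^*(u_r) \circ m_{\tau}$ by $u_r \circ (\id \otimes \unit^*_{\tau})$, where now $u_r$ is the constraint over $S_1 \times S_2$ and $\unit^*_{\tau}: \tau^* \unit_{S_2 \times S_1} \xrightarrow{\sim} \unit_{S_1 \times S_2}$ is the section isomorphism along $\tau$. This identity is precisely axiom ($u$ITS-1) applied to $f = \tau$. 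It is not an instance of naturality of $c$ or of $m$, nor of ($\Scal$-sect), and \Cref{lem-coherent_conn-monoint} says nothing about unit constraints, so none of the tools you name will close that cell; followed literally, your recipe stalls there. The fix is immediate --- ($u$ITS-1) is part of the data of the given internal unit constraint --- but it must be invoked explicitly. With it, together with naturality of $m_{\tau}$, axiom ($\Scal$-sect) applied to the factorization $pr^{12}_1 = pr^{21}_1 \circ \tau$, naturality of $c$, and the central ($cu$ITS), your sketch is complete and correct.
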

\begin{proof}
	Let us check that the constraints $c^e$ and $u^e$ in the statement satisfy condition ($cu$ETS): for every $S_1, S_2 \in \Scal$, we have to show that the diagram of functors $\Hbb(S_2) \rightarrow \Hbb(S_1 \times S_2)$
	\begin{equation*}
		\begin{tikzcd}
			\unit_{S_1} \otimes B_2 \arrow{r}{c^e} \arrow{d}{u^e_l} & \tau^* (B_2 \otimes \unit_{S_1}) \arrow{d}{u^e_r} \\
			pr^{12,*}_2 B_2 \arrow[equal]{r} & \tau^* pr^{21,*}_2 B_2
		\end{tikzcd}
	\end{equation*}
	is commutative.
	Unwinding the various definitions, we obtain the more explicit diagram
	\begin{equation*}
		\begin{tikzcd}[font=\small]
			& \tau^* pr^{21,*}_1 \unit_{S_1} \otimes \tau^* pr^{21,*}_2 A_2 \arrow{r}{c} \arrow[equal]{dl} & \tau^* pr^{21,*}_2 A_2 \otimes \tau^* pr^{21,*}_1 \unit_{S_1} \arrow{d}{m} \\
			pr^{12,*}_1 \unit_{S_1} \otimes pr^{12,*}_2 A_2 \arrow{d}{\unit^*} && \tau^* (pr^{21,*}_2 A_2 \otimes pr^{21,*}_1 \unit_{S_1}) \arrow{d}{\unit^*} \\
			\unit_{S_1 \times S_2} \otimes pr^{12,*}_2 A_2 \arrow{d}{u_l} && \tau^* (pr^{21,*}_2 A_2 \otimes \unit_{S_2 \times S_1}) \arrow{dl}{u_r} \\
			pr^{12,*}_2 A_2 \arrow[equal]{r} & \tau^* pr^{21,*}_2 A_2
		\end{tikzcd}
	\end{equation*}
    that we decompose as
    \begin{equation*}
    	\begin{tikzcd}[font=\small]
    		& \bullet \arrow{rrr}{c} \arrow[equal]{dl} \arrow{dr}{\unit^*} &&& \bullet \arrow{d}{m} \arrow{dl}{\unit^*} \\
    		\bullet \arrow{d}{\unit^*} & \tau^* \unit_{S_2 \times S_1} \otimes pr^{12,*}_2 A_2 \arrow[equal]{r} \arrow{dl}{\unit^*} & \tau^* \unit_{S_2 \times S_1} \otimes \tau^* pr^{21,*}_2 A_2 \arrow{r}{c} \arrow{d}{\unit^*} \arrow{dr}{m} & \tau^* pr^{21,*}_2 A_2 \otimes \tau^* \unit_{S_2 \times S_1} \arrow{dr}{m} & \bullet \arrow{d}{\unit^*} \\
    		\bullet \arrow{d}{u_l} \arrow[equal]{rr} && \unit_{S_1 \times S_2} \otimes \tau^* pr^{21,*}_2 A_2 \arrow{dr}{u_l} & \tau^* (\unit_{S_2 \times S_1} \otimes pr^{21,*}_2 A_2) \arrow{r}{c} \arrow{d}{u_l} & \bullet \arrow{dl}{u_r} \\
    		\bullet \arrow[equal]{rrr} &&& \bullet
    	\end{tikzcd}
    \end{equation*}
    Here, the lower-right piece is commutative by axiom ($cu$ITS) while the remaining pieces are commutative by naturality and by axiom ($\Scal$-sect). This concludes the proof.
\end{proof}

\begin{lem}
	Let $(\boxtimes,m)$ be an external tensor structure on $\Hbb$, and let $(\otimes,m)$ denote the internal tensor structure obtained from it via \Cref{lem_ext_to_int}. Suppose that $(\boxtimes,m)$ is endowed with compatible external commutativity and unit constraints $c$ and $u$, and let $c^i$ and $u^i$ denote the internal commutativity and unit constraints on $(\otimes,m)$ obtained from them via \Cref{lem_symm_ext_to_int} and \Cref{lem:unit-ext_to_int}, respectively. 
	
	Then the internal commutativity and unit constraints $c^i$ and $u^i$ are compatible as well.
\end{lem}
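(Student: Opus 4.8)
The plan is to verify that the internal constraints $c^i$ and $u^i$ satisfy condition ($cu$ITS): for every $S \in \Scal$ one must show that the triangle relating $\unit_S \otimes B \xrightarrow{c^i} B \otimes \unit_S$ to the two unit isomorphisms $u^i_l$ and $u^i_r$ is commutative. Following the same strategy as in the companion lemma treating the internal-to-external direction, I would transport the external compatibility ($cu$ETS) through the functor $\Delta_S^*$. Concretely, I would first recall that, by \Cref{lem_symm_ext_to_int} and \Cref{lem:unit-ext_to_int}, the isomorphisms $c^i$, $u^i_l$ and $u^i_r$ are obtained by applying $\Delta_S^*$ to the external commutativity isomorphism $c$ and to the external unit isomorphisms $u_l$, $u_r$, respectively, together with the identifications $\Delta_S^* \tau^* = \Delta_S^*$, $\Delta_S^* pr^{12,*}_2 B = B$ and $\Delta_S^* pr^{12,*}_1 B = B$ coming from connection isomorphisms (which hold because every projection composed with the diagonal is the identity on $S$, and $\tau \circ \Delta_S = \Delta_S$).

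The main step is then to observe that the whole ($cu$ITS) triangle is nothing but the image under $\Delta_S^*$ of the external square ($cu$ETS), specialised to the case $S_1 = S_2 = S$ and $B_2 = B$, once its four corners have been collapsed by means of the connection isomorphisms just mentioned. Indeed, applying $\Delta_S^*$ to ($cu$ETS) produces a square whose top edge is $\Delta_S^*(c)$, whose vertical edges are $\Delta_S^*(u_l)$ and $\Delta_S^*(u_r)$, and whose bottom edge is the image of the equality $pr^{12,*}_2 B = \tau^* pr^{21,*}_2 B$; since both bottom corners become $B$ after applying $\Delta_S^*$, the square degenerates into precisely the desired triangle. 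Unwinding the definitions of $c^i$, $u^i_l$, $u^i_r$ in this way, I would display the resulting diagram and decompose it into a central piece, commutative by axiom ($cu$ETS), surrounded by peripheral pieces commutative by naturality and by \Cref{lem-coherent_conn}.

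The one point that requires a little care — and which I expect to be the main, though entirely routine, obstacle — is matching the two incarnations of the external isomorphism $u_r$ that appear: the one occurring in ($cu$ETS) is $\tau^*(u_r)$, acting on $\tau^*(B \boxtimes \unit_S)$, whereas the definition of $u^i_r$ in \Cref{lem:unit-ext_to_int} uses $u_r$ directly on $B \boxtimes \unit_S$. These two agree after applying $\Delta_S^*$, by naturality of the connection isomorphism $\Delta_S^* \tau^* = \Delta_S^*$ with respect to the morphism $u_r$, which is again covered by \Cref{lem-coherent_conn}. Apart from this bookkeeping with projection and permutation labels, no genuine difficulty arises: the coherence results of \Cref{lem-coherent_conn} and \Cref{lem-coherent_conn-monoext} guarantee that all the auxiliary identifications are canonical, so that the commutativity of the triangle reduces cleanly to axiom ($cu$ETS).
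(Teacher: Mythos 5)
Your proposal is correct and follows essentially the same route as the paper: unwind $c^i$, $u^i_l$, $u^i_r$ via the definitions from \Cref{lem_symm_ext_to_int} and \Cref{lem:unit-ext_to_int}, recognise the resulting triangle as the $\Delta_S^*$-image of the ($cu$ETS) square with corners collapsed by connection isomorphisms, and dispose of the peripheral pieces by naturality. The bookkeeping point you single out --- reconciling $\tau^*(u_r)$ in ($cu$ETS) with the $u_r$ used in defining $u^i_r$ via naturality of the identification $\Delta_S^*\tau^* = \Delta_S^*$ --- is precisely the square the paper labels as commutative by naturality in its decomposition.
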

\begin{proof}
	Let us check that the constraints $c^i$ and $u^i$ in the statement satisfy condition ($cu$ITS): for every $S \in \Scal$, we have to show that the diagram of functors $\Hbb(S) \rightarrow \Hbb(S)$
	\begin{equation*}
		\begin{tikzcd}
			\unit_S \otimes B \arrow{rr}{c^i} \arrow{dr}{u^i_l} && B \otimes \unit_S \arrow{dl}{u^i_r} \\
			& B
		\end{tikzcd}
	\end{equation*}
	is commutative.
	Unwinding the various definitions, we obtain the more explicit diagram
	\begin{equation*}
		\begin{tikzcd}[font=\small]
			\Delta_S^* (\unit_S \boxtimes B) \arrow{r}{c} \arrow{d}{u_l} & \Delta_S^* \tau^* (B \boxtimes \unit_S) \arrow[equal]{r} & \Delta_S^* (B \boxtimes \unit_S) \arrow{d}{u_r} \\
			\Delta_S^* pr^{12,*}_2 B \arrow[equal]{dr} && \Delta_S^* pr^{21,*}_1 B \arrow[equal]{dl} \\
			& B
		\end{tikzcd}
	\end{equation*}
    that we decompose as
    \begin{equation*}
    	\begin{tikzcd}[font=\small]
    		\bullet \arrow{r}{c} \arrow{d}{u_l} & \bullet \arrow[equal]{r} \arrow{d}{u_r} & \bullet \arrow{d}{u_r} \\
    		\bullet \arrow[equal]{dr} & \Delta_S^* \tau^* pr^{21,*}_2 B \arrow[equal]{l} \arrow[equal]{r} & \bullet \arrow[equal]{dl} \\
    		& \bullet
    	\end{tikzcd}
    \end{equation*}
    Here, the upper-left piece is commutative by axiom ($cu$ETS) while the remaining pieces are commutative by naturality. This concludes the proof.
\end{proof}

\section{Projection formulae}\label{sect:proj}

In many concrete geometric situations, one is interested in monoidal fibered categories satisfying the so-called projection formulae. The example that we have in mind is that of stable homotopic $2$-functors studied in \cite{Ayo07a} and \cite{Ayo07b}: in this case, the relevant definition is \cite[Defn.~2.3.1]{Ayo07a}.

In this short section, we study the validity of projection formulae in the abstract setting of internal and external tensor structures. To this end, we need to introduce more structure on the base category $\Scal$.

\begin{hyp}\label{hyp:Ssm}
	We assume that the category $\Scal$ is endowed with a distinguished subcategory $\Scal^{sm}$ with the following property: For every Cartesian square in $\Scal$
	\begin{equation*}
		\begin{tikzcd}
			P' \arrow{r}{g'} \arrow{d}{p'} & P \arrow{d}{p} \\
			S' \arrow{r}{g} & S
		\end{tikzcd}
	\end{equation*} 
	with $p$ in $\Scal^{sm}$, the morphism $p'$ belongs to $\Scal^{sm}$ as well. 
	We call \textit{smooth morphisms} the morphisms in $\Scal^{sm}$.
\end{hyp}

\begin{ex}
	The motivating examples are when $\Scal = \Var_k$ is the category of (quasi-projective) algebraic varieties over a field $k$ and $\Scal^{sm}$ denotes its subcategory of smooth morphisms. The same discussion also applies when $\Scal = \Sm_k$ is the category of smooth (quasi-projective) $k$-varieties and $\Scal^{sm}$ denotes again its subcategory of smooth morphisms.
\end{ex}

\begin{defn}\label{defn:geometric-fibcat}
	We say that the $\Scal$-fibered category $\Hbb$ is \textit{geometric} if it satisfies the following two conditions:
	\begin{enumerate}
		\item[($\Scal^{sm}$-0)] For every smooth morphism $p: P \rightarrow S$ in $\Scal$, the functor $p^*: \Hbb(S) \rightarrow \Hbb(P)$ admits a left-adjoint $p_{\#}: \Hbb(P) \rightarrow \Hbb(S)$.
		\item[($\Scal^{sm}$-1)] For every Cartesian square in $\Scal$
		\begin{equation*}
			\begin{tikzcd}
				P' \arrow{r}{g'} \arrow{d}{p'} & P \arrow{d}{p} \\
				S' \arrow{r}{g} & S
			\end{tikzcd}
		\end{equation*}
		where $p$ (and hence $p'$) is a smooth morphism, the natural transformation of functors $\Hbb(P) \rightarrow \Hbb(S')$
		\begin{equation*}
			{p'}_{\#} {g'}^* A \xrightarrow{\eta} {p'}_{\#} {g'}^* p^* p_{\#} A = {p'}_{\#} {p'}^* g^* p_{\#} A \xrightarrow{\epsilon} g^* p_{\#} A
		\end{equation*}
		is invertible.
	\end{enumerate}
\end{defn}

Throughout this section, we fix a geometric $\Scal$-fibered category $\Hbb$ with respect to $\Scal^{sm}$.

Following \cite[Defn.~2.3.1]{Ayo07a}, we introduce the projection formulae in the setting of internal tensor structures:

\begin{defn}\label{defn:proj-form}
	We say that an internal tensor structure $(\otimes,m)$ on $\Hbb$ is \textit{geometric} if the following condition holds:
	\begin{enumerate}
		\item[(ITS-pf)] For every smooth morphism $p: P \rightarrow S$, the natural transformation of functors $\Hbb(P) \times \Hbb(S) \rightarrow \Hbb(S)$
		\begin{equation}\label{pf-otimes}
			p_{\#}(A \otimes p^* B) \xrightarrow{\eta} p_{\#}(p^* p_{\#} A \otimes p^* B) \xrightarrow{m} p_{\#} p^*(p_{\#} A \otimes B) \xrightarrow{\epsilon} p_{\#} A \otimes B
		\end{equation}
		and the natural transformation of functors $\Hbb(S) \times \Hbb(P) \rightarrow \Hbb(S)$
		\begin{equation*}
			p_{\#}(p^* A \otimes B) \xrightarrow{\eta} p_{\#}(p^* A \otimes p^* p_{\#} B) \xrightarrow{m} p_{\#} p^*(A \otimes p_{\#} B) \xrightarrow{\epsilon} A \otimes p_{\#} B
		\end{equation*}
		are invertible.
	\end{enumerate}
\end{defn}

With some minor modifications, we obtain the analogous definition in the setting of external tensor structures:

\begin{defn}\label{defn:geoETS}
	We say that an external tensor structure $(\boxtimes,m)$ on $\Hbb$ is \textit{geometric} if the the following condition holds:
	\begin{enumerate}
		\item[(ETS-pf)] For every choice of smooth morphisms $p_i: P_i \rightarrow S_i$, $i = 1,2$, the natural transformation of functors $\Hbb(P_1) \times \Hbb(S_2) \rightarrow \Hbb(S_1 \times S_2)$
		\begin{equation}\label{pf-boxtimes}
			\begin{tikzcd}[font=\small]
				(p_1 \times \id_{S_2})_{\#}(A_1 \boxtimes A_2) \arrow{d}{\eta} & p_{1,\#} A_1 \boxtimes A_2 \\
				(p_1 \times \id_{S_2})_{\#}(p_1^* p_{1,\#} A_1 \boxtimes A_2) \arrow{r}{m} & (p_1 \times \id_{S_2})_{\#} (p_1 \times \id_{S_2})^*(p_{1,\#} A_1 \boxtimes A_2) \arrow{u}{\epsilon}
			\end{tikzcd}
		\end{equation}
		and the natural transformation of functors $\Hbb(S_1) \times \Hbb(P_2) \rightarrow \Hbb(S_1 \times S_2)$
		\begin{equation*}
			\begin{tikzcd}[font=\small]
				(\id_{S_1} \times p_2)_{\#}(A_1 \boxtimes A_2) \arrow{d}{\eta} & A_1 \boxtimes p_{2,\#} A_2 \\ 
				(\id_{S_1} \times p_2)_{\#}(A_1 \boxtimes p_2^* p_{2,\#} A_2) \arrow{r}{m} & (\id_{S_1} \times p_2)_{\#} (\id_{S_1} \times p_2)^*(A_1 \boxtimes p_{2,\#} A_2) \arrow{u}{\epsilon} 
			\end{tikzcd}
		\end{equation*}
		are invertible.
	\end{enumerate}
\end{defn}

\begin{rem}\label{rem_sh2f_eq_proj}
	Both in the setting of internal and external tensor structures, the property of being geometric is clearly stable under equivalence.
\end{rem}

Here is the only result of the present section:

\begin{prop}\label{lem_pf}
	Let $\Hbb$ be a geometric $\Scal$-fibered category. Let $(\otimes,m^i)$ be an internal tensor structure on $\Hbb$, and let $(\boxtimes,m^e)$ denote the external tensor structure corresponding to it under the bijection of \Cref{prop_bij_in_ext}. Then $(\otimes,m^i)$ is geometric (in the sense of \Cref{defn:proj-form})if and only if $(\boxtimes,m^e)$ is geometric (in the sense of \Cref{defn:geoETS}).
\end{prop}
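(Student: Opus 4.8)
The plan is to compare the two projection-formula maps by means of two base-change isomorphisms furnished by \Cref{hyp:Ssm}, using the dictionary of \Cref{lem_int_to_ext} and \Cref{lem_ext_to_int} to rewrite $\boxtimes$ and $\otimes$ in terms of one another. Throughout, I treat all composites of connection and monoidality isomorphisms as canonical, invoking \Cref{lem-coherent_conn}, \Cref{lem-coherent_conn-monoint} and \Cref{lem-coherent_conn-monoext} to guarantee that the various identifications are unambiguous. I prove the two implications separately, each relying on a different Cartesian square, and I treat only the first-variable projection formulae, the second-variable ones being entirely analogous by symmetry.

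For the implication from internal to external, fix a smooth morphism $p_1 \colon P_1 \to S_1$ and an object $S_2 \in \Scal$, and set $q := p_1 \times \id_{S_2} \colon P_1 \times S_2 \to S_1 \times S_2$. Then $q$ is smooth by \Cref{hyp:Ssm}, since it is the base change of $p_1$ along the projection $pr^{12}_1 \colon S_1 \times S_2 \to S_1$; the associated Cartesian square has base-change isomorphism $q_{\#}\, pr^{12,*}_1 \cong pr^{12,*}_1\, p_{1,\#}$ by the base-change property of \Cref{defn:geometric-fibcat}. Expanding $A_1 \boxtimes A_2 = pr^{12,*}_1 A_1 \otimes pr^{12,*}_2 A_2$ and using $q^* pr^{12,*}_2 A_2 = pr^{12,*}_2 A_2$, one checks that the external projection-formula map \eqref{pf-boxtimes} for $p_1$ is identified, via this base-change isomorphism, with the internal projection-formula map \eqref{pf-otimes} for the smooth morphism $q$ applied to the pair $(pr^{12,*}_1 A_1, pr^{12,*}_2 A_2)$. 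Granting this identification, the assumed invertibility of \eqref{pf-otimes} for $q$ yields the invertibility of \eqref{pf-boxtimes} for $p_1$.

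For the converse, fix a smooth morphism $p \colon P \to S$ and consider the graph morphism $\gamma_p := (\id_P \times p) \circ \Delta_P \colon P \to P \times S$. Using external monoidality together with \Cref{lem-coherent_conn-monoext}, one obtains a canonical identification $A \otimes p^* B \cong \gamma_p^*(A \boxtimes B)$ of functors $\Hbb(P) \times \Hbb(S) \to \Hbb(P)$. The square with top edge $\gamma_p$, bottom edge $\Delta_S$, left edge $p$ and right edge $p \times \id_S$ is Cartesian, and both vertical edges are smooth (again by \Cref{hyp:Ssm}); its base-change isomorphism reads $p_{\#}\, \gamma_p^* \cong \Delta_S^* (p \times \id_S)_{\#}$. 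Combining these with the identity $p_{\#} A \otimes B = \Delta_S^*(p_{\#} A \boxtimes B)$ coming from \Cref{lem_ext_to_int}, one checks that the internal projection-formula map \eqref{pf-otimes} for $p$ is identified with $\Delta_S^*$ applied to the external projection-formula map \eqref{pf-boxtimes} for $p$. Since $\Delta_S^*$ preserves isomorphisms, the invertibility of \eqref{pf-boxtimes} implies that of \eqref{pf-otimes}. Together with \Cref{rem_sh2f_eq_proj}, which ensures that the geometric property descends to equivalence classes, this completes both directions.

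The main obstacle is the verification, in each direction, that the two projection-formula maps genuinely correspond under the relevant base-change isomorphism. This amounts to a compatibility between the base-change isomorphism of \Cref{defn:geometric-fibcat} — itself assembled from the units and counits of the adjunctions $p_{\#} \dashv p^*$ — and the units $\eta$ and counits $\epsilon$ occurring in \eqref{pf-otimes} and \eqref{pf-boxtimes}, that is, a standard Beck--Chevalley \emph{mate} compatibility, to be combined with the matching of the monoidality isomorphisms $m^e$ and $m^i$ against $m$ dictated by \Cref{lem_int_to_ext} and \Cref{lem_ext_to_int}. Each such verification decomposes into naturality squares, the triangle identities for the adjunctions, and instances of the coherence lemmas; none of these steps requires a new idea, so the only genuinely laborious part is writing the resulting diagrams out in full.
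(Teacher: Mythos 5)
Your proposal is correct and follows essentially the same route as the paper: the same reduction via \Cref{rem_sh2f_eq_proj} and symmetry, the same Cartesian square $P_1 \times S_2 \to P_1$, $S_1 \times S_2 \to S_1$ (identifying the external map \eqref{pf-boxtimes} with the internal map \eqref{pf-otimes} for $p_1 \times \id_{S_2}$ at the pair $(pr^{12,*}_1 A_1, pr^{12,*}_2 A_2)$), and the same graph-morphism square $(\id_P,p)$, $p \times \id_S$, $\Delta_S$ identifying \eqref{pf-otimes} with $\Delta_S^*$ applied to \eqref{pf-boxtimes}. The compatibility you defer to mate/naturality/triangle-identity checks is exactly the content of the two large commutative diagrams the paper displays, so nothing essential is missing.
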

\begin{proof}
	In view of \Cref{rem_sh2f_eq_proj} the statement of the proposition makes sense, and it suffices to check that the constructions of \Cref{lem_int_to_ext} and \Cref{lem_ext_to_int} between internal and external tensor structures preserve the property of being geometric. By symmetry, it suffices to show that the invertibility of \eqref{pf-otimes} implies that of \eqref{pf-boxtimes} and conversely.
	
	Let $(\otimes,m)$ be a geometric internal tensor structure on $\Hbb$, and let $(\boxtimes,m^e)$ denote the external tensor structure obtained from it via \Cref{lem_int_to_ext}. By construction, the natural transformation \eqref{pf-boxtimes} associated to $(\boxtimes,m^e)$ is the left vertical composite in the commutative diagram
	\begin{equation*}
		\begin{tikzcd}[font=\tiny]
			(p_1 \times \id_{S_2})_{\#}(pr^{12*}_1 A_1 \otimes pr^{12*}_2 A_2) \arrow[equal]{r} \arrow{d}{\eta} & (p_1 \times \id_{S_2})_{\#}(pr^{12*}_1 A_1 \otimes (p_1 \times \id_{S_2})^* pr^{12*}_2 A_2) \arrow{d}{\eta} \\
			(p_1 \times \id_{S_2})_{\#} (pr^{12*}_1 p_1^* p_{1,\#} A_1 \otimes pr^{12*}_2 A_2) \arrow[equal]{d} & (p_1 \times \id_{S_2})_{\#} ((p_1 \times \id_{S_2})^* (p_1 \times \id_{S_2})_{\#} pr^{12*}_1 A_1 \otimes (p_1 \times \id_{S_2})^* pr^{12*}_2 A_2) \arrow{d}{m} \arrow{dl}{\sim} \\
			(p_1 \times \id_{S_2})_{\#} ((p_1 \times \id_{S_2})^* pr^{12*}_1 p_{1,\#} A_1 \otimes (p_1 \times \id_{S_2})^* pr^{12*}_2 A_2) \arrow{d}{m} & (p_1 \times \id_{S_2})_{\#} (p_1 \times \id_{S_2})^* ((p_1 \times \id_{S_2})_{\#} pr^{12*}_1 A_1 \otimes (p_1 \times \id_{S_2})^* pr^{12*}_2 A_2) \arrow{d}{\epsilon} \arrow{dl}{\sim} \\
			(p_1 \times \id_{S_2})_{\#} (p_1 \times \id_{S_2})^* (pr^{12*}_1 p_{1,\#} A_1 \otimes pr^{12*}_2 A_2) \arrow{d}{\epsilon} & ((p_1 \times \id_{S_2})_{\#} pr^{12*}_1 A_1 \otimes (p_1 \times \id_{S_2})^* pr^{12*}_2 A_2) \arrow{dl}{\sim} \\
			(pr^{12*}_1 p_{1,\#} A_1 \otimes pr^{12*}_2 A_2)
		\end{tikzcd}
	\end{equation*}
    where the three diagonal natural isomorphisms are induced by the natural isomorphism between functors $\Hbb(P_1) \rightarrow \Hbb(S_1 \times S_2)$
	\begin{equation*}
		(p_1 \times \id_{S_2})_{\#} pr^{12*}_1 A_1 \xrightarrow{\sim} pr^{12*}_1 p_{1,\#} A_1
	\end{equation*}  
	obtained by applying axiom ($\Scal^{sm}$-1) to the Cartesian square in $\Scal$
	\begin{equation*}
		\begin{tikzcd}
			P_1 \times S_2 \arrow{rr}{pr^{12}_1} \arrow{d}{p_1 \times \id_{S_2}} && P_1 \arrow{d}{p_1} \\
			S_1 \times S_2 \arrow{rr}{pr^{12}_1} && S_1.
		\end{tikzcd}
	\end{equation*}
	Since the right vertical composite in the above diagram is invertible by definition (as $(\otimes,m)$ is assumed to be geometric), the left vertical composite must be invertible as well.
	
	Conversely, let $(\boxtimes,m)$ be a geometric external tensor structure on $\Hbb$, and let $(\otimes,m^i)$ denote the internal tensor structure obtained from it via \Cref{lem_ext_to_int}. By construction, the composite natural transformation \eqref{pf-otimes} associated to $(\otimes,m^i)$ is the left side of the commutative diagram
	\begin{equation*}
		\begin{tikzcd}[font=\tiny]
			p_{\#} \Delta_P^* (A \boxtimes p^* B) \arrow{r}{m} \arrow{d}{\eta} & p_{\#} \Delta_P^* (\id_P \times p)^*(A \boxtimes B) \arrow{d}{\eta} \arrow[equal]{r} & p_{\#}(\id_P,p)^* (A \boxtimes B) \arrow{dr}{\sim} \arrow{d}{\eta}  \\
			p_{\#} \Delta_P^* (p^* p_{\#} A \boxtimes p^* B) \arrow{r}{m} \arrow{dr}{m} & p_{\#} \Delta_P^* (\id_P \times p)^* (p^*p_{\#} A \boxtimes B) \arrow{d}{m} \arrow[equal]{r} & p_{\#} (\id_P,p)^* (p^* p_{\#} A \boxtimes B) \arrow{dr}{\sim} \arrow{d}{m} & \Delta_S^* (p \times \id_S)_{\#} (A \boxtimes B) \arrow{d}{\eta} \\
			& p_{\#} \Delta_P^* (p \times p)^* (p_{\#} A \boxtimes B) \arrow[equal]{r} \arrow[equal]{dr} & p_{\#} (\id_P,p)^* (p \times \id_S)^* (p_{\#} A \boxtimes B) \arrow[equal]{d} \arrow{dr}{\sim} & \Delta_S^* (p \times \id_S)_{\#}(p^* p_{\#} A \boxtimes B) \arrow{d}{m} \\
			&& p_{\#} p^* \Delta_S^* (p_{\#} A \boxtimes B) \arrow{dr}{\epsilon} & \Delta_S^* (p \times \id_S)_{\#} (p \times \id_S)^* (p_{\#} A \boxtimes B) \arrow{d}{\epsilon} \\
			&&& \Delta_S^* (p_{\#} A \boxtimes B)
	    \end{tikzcd}
	\end{equation*}
	where the three diagonal natural isomorphisms are induced by the natural isomorphism between functors $\Hbb(P \times S) \rightarrow \Hbb(S)$
	\begin{equation*}
		p_{\#} (\id_P,p)^* A \xrightarrow{\sim} \Delta_S^* (p \times \id_S)_{\#} A
	\end{equation*} 
	obtained by applying axiom ($\Scal^{sm}$-1) to the Cartesian square
	\begin{equation*}
		\begin{tikzcd}
			P \arrow{rr}{(\id_P,p)} \arrow{d}{p} && P \times S \arrow{d}{p \times \id_S} \\
			S \arrow{rr}{\Delta_S} && S \times S.
		\end{tikzcd}
	\end{equation*}
	Since the right vertical composite in the above diagram is invertible by definition (as $(\boxtimes,m)$ is assumed to be geometric), the composite on the left side must be invertible as well.
\end{proof}

\section{Tensor structures on morphisms of fibered categories}\label{sect:tens-mor}

Throughout this section, we fix two $\Scal$-fibered categories $\Hbb_1$ and $\Hbb_2$ as well as a morphism of $\Scal$-fibered categories $R: \Hbb_1 \rightarrow \Hbb_2$; following the notation of \Cref{sect:rec-fib-cats}, for every morphism $f: T \rightarrow S$ in $\Scal$ we let $\theta = \theta_f: f^* \circ R_S \xrightarrow{\sim} R_T \circ f^*$ denote the corresponding $R$-transition isomorphism. 

We are interested in studying how the notions introduced in the previous sections behave under the morphism $R$. To this end, we introduce analogues of tensor structures and their constraints for morphisms, both in the internal and in the external setting, and we compare them. The final outcome will be our second main result:

\begin{thm}\label{thm_bij_morph}
	For $j = 1,2$, let $(\otimes_j,m_j^i)$ be a (unitary, symmetric, associative) internal tensor structure on $\Hbb_j$, and let $(\boxtimes_j,m_j^e)$ denote the (unitary, symmetric, associative) external tensor structure corresponding to it under the bijection of \Cref{thm_bij}. 
	
	Then there exist canonical bijections between (unitary, symmetric, associative) internal tensor structures on $R$ (with respect to $(\otimes_1,m_1^i)$ and $(\otimes_2,m_2^i)$) and (unitary, symmetric, associative) external tensor structures on $R$ (with respect to $(\boxtimes_1,m_1^e)$ and $(\boxtimes_2,m_2^e)$).
\end{thm}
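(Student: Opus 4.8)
The plan is to follow the same template that produced \Cref{thm_bij}: first establish the underlying bijection ignoring the three properties, then check separately that this bijection is compatible with associativity, with symmetry, and with unitarity, and finally assemble these pieces. For the underlying bijection I would invoke \Cref{prop_int_ext_morph}: the constructions of \Cref{lem_mor_int_to_ext} and \Cref{lem_mor_ext_to_int} turn an internal tensor structure on $R$ (with respect to $(\otimes_1,m_1^i)$ and $(\otimes_2,m_2^i)$) into an external one (with respect to $(\boxtimes_1,m_1^e)$ and $(\boxtimes_2,m_2^e)$) and back. Composing the two passages returns not the original structure on $R$ but the one transported along the round-trip on $\Hbb_1$ and $\Hbb_2$; the canonical equivalences $e_1,e_2$ produced in the proof of \Cref{prop_bij_in_ext} are exactly what is needed to identify these two structures, so that the composite becomes the identity and the correspondence becomes an honest bijection of sets. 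This is the content of the parenthetical remark following the statement, and it explains why the explicit equivalences — not merely the equivalence classes — must enter the picture in order for the statement even to make sense.

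The heart of the argument is the second step. Since associativity, symmetry and unitarity of a tensor structure on $R$ are \emph{properties} — each asserting the commutativity of a fixed family of natural diagrams — rather than additional data, for each property it suffices to show that its internal version holds if and only if its external version does. Concretely, for each property I would expand the external defining diagram along the formula of \Cref{lem_mor_int_to_ext} (and, in the other direction, the internal one along \Cref{lem_mor_ext_to_int}), and then decompose the resulting figure into a central rectangle flanked by several lateral pieces. The central rectangle will be precisely the defining diagram of the same property in the source setting, hence commutative by hypothesis; the lateral pieces are built purely from connection isomorphisms, $R$-transition isomorphisms and the relevant monoidality isomorphisms, so their commutativity is automatic from the coherence results \Cref{lem-coherent_conn-mor-monoint} and \Cref{lem-coherent_conn-mor-monoext}. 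This is the same "central piece plus coherence-controlled lateral pieces" device used throughout \Cref{sect:asso}, \Cref{sect:comm} and \Cref{sect:unit}, now applied one level higher to accommodate the extra functor $R$ and its transition isomorphisms.

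The main obstacle I anticipate is purely combinatorial rather than conceptual. The associativity property involves three arguments together with $R$, so the expanded figure is large, and identifying the central rectangle together with the correct lateral decomposition requires careful bookkeeping of the paths along which the various isomorphisms are composed. However, once the morphism coherence lemmas are in place, every lateral piece is a formal consequence of \Cref{lem-coherent_conn-mor-monoint} or \Cref{lem-coherent_conn-mor-monoext}, and the independence of the composites from the chosen paths removes any genuine difficulty. I would therefore state three property-preservation lemmas (one each for the associative, symmetric, and unitary cases, in both directions), exhibit the central rectangle in each, and relegate the verification of the lateral pieces to the usual routine. Combining the underlying bijection of \Cref{prop_int_ext_morph} with these three compatibilities then yields \Cref{thm_bij_morph}.
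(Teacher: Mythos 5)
Your proposal is correct and follows essentially the same route as the paper: \Cref{prop_int_ext_morph} supplies the underlying bijection (with the equivalences $e_1,e_2$ of \Cref{prop_bij_in_ext} making the correspondence well posed), and then six property-preservation lemmas (\Cref{lem:mor-asso_int_to_ext}, \Cref{lem:mor-asso_ext_to_int}, \Cref{lem:mor-symm_int_to_ext}, \Cref{lem:mor-symm_ext_to_int}, \Cref{lem:mor-unit_int_to_ext}, \Cref{lem:mor-unit_ext_to_int}) are proved exactly by your ``central rectangle plus coherence-controlled lateral pieces'' device, using \Cref{lem-coherent_conn-mor-monoint} and \Cref{lem-coherent_conn-mor-monoext}. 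The only minor deviation is in the unitary case, where the paper handles the lateral pieces by naturality and the section axioms (mor-$\Scal$-sect)/($\Scal$-sect) rather than by the coherence lemmas, since the unit section falls outside their scope; this does not affect the validity of your plan.
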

\noindent
By analogy with the case of tensor structures on single fibered categories, the first step of the proof consists in constructing explicit mutually inverse bijections between internal and external tensor structures on $R$: this is achieved in the first subsection. The second step is to show that such a bijection respects each possible additional constraint: this is achieved in the next three subsections.
As the reader will easily recognize, both the definitions and the arguments collected below are formally analogous to those of the previous sections.

\subsection{Internal and external tensor structures on morphisms}

We start by introducing the notion of internal tensor structure on the morphism $R$. This is the natural extension of the usual notion of monoidal morphism between monoidal categories to the fibered setting.

\begin{defn}\label{defn:mor-ITS}
	Let $(\otimes_1,m_1)$ and $(\otimes_2,m_2)$ be internal tensor structures on $\Hbb_1$ and $\Hbb_2$, respectively.
	An \textit{internal tensor structure} $\rho$ on $R$ (with respect to $(\otimes_1,m_1)$ and $(\otimes_2,m_2)$) is the datum of
	\begin{itemize}
		\item for every $S \in \Scal$, a natural isomorphism of functors $\Hbb_1(S) \times \Hbb_1(S) \rightarrow \Hbb_2(S)$
		\begin{equation*}
			\rho = \rho_S: R_S(A) \otimes_2 R_S(B) \xrightarrow{\sim} R_S(A \otimes_1 B),
		\end{equation*}
		called the \textit{internal $R$-monoidality isomorphism} at $S$
	\end{itemize}
	satisfying the following condition:
	\begin{enumerate}
		\item[(mor-ITS)] For every morphism $f: T \rightarrow S$ in $\Scal$, the natural diagram of functors $\Hbb_1(S) \times \Hbb_1(S) \rightarrow \Hbb_2(T)$
		\begin{equation*}
			\begin{tikzcd}
				f^* R_S(A) \otimes_2 f^* R_S(B) \arrow{r}{m_2} \arrow{d}{\theta} & f^* (R_S(A) \otimes_2 R_S(B)) \arrow{r}{\rho} & f^* R_S(A \otimes_1 B) \arrow{d}{\theta} \\
				R_T(f^* A) \otimes_2 R_T(f^* B) \arrow{r}{\rho} & R_T(f^* A \otimes_1 f^* B) \arrow{r}{m_1} & R_T(f^*(A \otimes_1 B))
			\end{tikzcd}
		\end{equation*}
		is commutative.
	\end{enumerate}
\end{defn}

The meaning of axiom (mor-ITS) is explained by the following result:

\begin{lem}\label{lem-coherent_conn-mor-monoint}
	For $j = 1,2$ let $(\otimes_j,m_j)$ be an internal tensor structure on $\Hbb_j$; let $f: T \rightarrow S$ be a morphism in $\Scal$. For every composable triple of composable tuples in $\Scal$
	\begin{equation*}
		\underline{g} = (g_1, \dots, g_r), \quad \underline{h} = (h_1, \dots, h_s), \quad \underline{l} = (l_i,\dots,l_t)
	\end{equation*}
	satisfying $l_t \circ \dots \circ l_1 \circ h_s \circ \dots \circ h_1 \circ g_r \circ \dots \circ g_1 = f$, consider the two functors
	\begin{equation*}
		F^{\otimes_1,R}_{(\underline{g}, \underline{h}, \underline{l})}, \: F^{\otimes_2,R}_{(\underline{g}, \underline{h}, \underline{l})}: \Hbb_1(S) \times \Hbb_1(S) \rightarrow \Hbb_2(T)
	\end{equation*} 
	defined by the formulas
	\begin{equation*}
		F^{\otimes_1,R}_{(\underline{g}, \underline{h}, \underline{l})}(A,B) := \underline{g}^* R_{T'} (\underline{h}^* (\underline{l}^* A \otimes_1 \underline{l}^* B))
	\end{equation*}
	and
	\begin{equation*}
		F^{\otimes_2,R}_{(\underline{g}, \underline{h}, \underline{l})}(A,B) := \underline{g}^* (\underline{h}^* R_{T''}(\underline{l}^* A) \otimes_2 \underline{h}^* R_{T''}(\underline{l}^* B)),
	\end{equation*}
	where $T'$ and $T''$ denote the domains of $h_1$ and $l_1$, respectively.
	
	Then, for any two given composable triples of composable tuples $(\underline{g}^{(1)},\underline{h}^{(1)}, \underline{l}^{(1)})$ and $(\underline{g}^{(2)},\underline{h}^{(2)}, \underline{l}^{(2)})$ as above, and for each choice of $\epsilon_1, \epsilon_2 \in \left\{1,2\right\}$, all possible isomorphisms of functors $\Hbb_1(S) \times \Hbb_1(S) \rightarrow \Hbb_2(T)$
	\begin{equation*}
		F^{\otimes_{\epsilon_1},R}_{(\underline{g}^{(1)}, \underline{h}^{(1)}, \underline{l}^{(1)})} \xrightarrow{\sim} F^{\otimes_{\epsilon_2},R}_{(\underline{g}^{(2)}, \underline{h}^{(2)}, \underline{l}^{(2)})}
	\end{equation*}
	obtained by composing connection isomorphisms, internal monoidality isomorphisms, connection $R$-isomorphisms and internal $R$-monoidality isomorphisms (and inverses thereof) coincide.
\end{lem}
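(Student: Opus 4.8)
The plan is to deduce the statement from \Cref{lem-coherent_conn-mor} over a suitably enlarged base category, in exactly the way that \Cref{lem-coherent_conn-monoint} was deduced from \Cref{lem-coherent_conn} via \Cref{constr_Hotimes}. The present situation involves two independent ``modalities'': the tensor product (passing from a pair of objects to a single one) and the morphism $R$ (passing from $\Hbb_1$ to $\Hbb_2$). Each is encoded by one copy of $\Ical$, so the natural home for a reduction is the base $\Scal \times \Ical \times \Ical$. Rather than build a fibered category over this base by hand, I would proceed in two steps so as to reuse the constructions already available.

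First, applying \Cref{constr_Hotimes} to $(\otimes_1,m_1)$ and to $(\otimes_2,m_2)$ yields $(\Scal\times\Ical)$-fibered categories $\Hbb_1^{\otimes_1}$ and $\Hbb_2^{\otimes_2}$, whose fibres at the ``pair'' level are the categories $\Hbb_j(S)\times\Hbb_j(S)$ and at the ``single'' level are the categories $\Hbb_j(S)$, the tensor modality $r_S$ acting as the functor $\otimes_j$. Second, I claim that the datum $(R,\theta,\rho)$ assembles into a morphism of $(\Scal\times\Ical)$-fibered categories $\tilde R \colon \Hbb_1^{\otimes_1} \to \Hbb_2^{\otimes_2}$, given by $R_S\times R_S$ at the pair level and by $R_S$ at the single level. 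Its transition isomorphisms along the $\Scal$-directions are $\theta$ and $(\theta,\theta)$, while its transition isomorphism along the pure tensor morphism $r_S$ is exactly the internal $R$-monoidality isomorphism $\rho_S \colon R_S(A)\otimes_2 R_S(B) \xrightarrow{\sim} R_S(A\otimes_1 B)$ (and along a general morphism $r_S\circ(f;2)$ it is the evident composite of $\rho$ and $\theta$). The content of this claim is precisely that $\tilde R$ satisfies the coherence condition (mor-$\Scal$-fib) over the base $\Scal\times\Ical$: unwinding it according to the type of the composable pair of morphisms of $\Scal\times\Ical$ involved, one finds that the cases not involving the tensor modality reduce to (mor-$\Scal$-fib) for $\theta$, that those involving only the tensor modality reduce to axiom ($m$ITS) applied to $(\otimes_1,m_1)$ and to $(\otimes_2,m_2)$, and that the single remaining case — in which the tensor morphism meets $R$ — is nothing but axiom (mor-ITS) from \Cref{defn:mor-ITS}.

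Granting the claim, \Cref{lem-coherent_conn-mor} applies verbatim to the morphism $\tilde R$ over the base $\Scal\times\Ical$. The functors $F^{\otimes_1,R}_{(\underline{g},\underline{h},\underline{l})}$ and $F^{\otimes_2,R}_{(\underline{g},\underline{h},\underline{l})}$ of the statement are exactly the functors $F^{\tilde R}_{(\underline{g}',\underline{h}')}$ attached to $\tilde R$, for composable tuples $\underline{g}',\underline{h}'$ in $\Scal\times\Ical$ obtained by redistributing $\underline{g},\underline{h},\underline{l}$: the superscript $\epsilon=1$ corresponds to placing the tensor step $r$ inside $\underline{h}'$ (so that one tensors in $\Hbb_1$ and then applies $R$), while $\epsilon=2$ corresponds to placing it inside $\underline{g}'$ (so that one applies $R$ factorwise and then tensors in $\Hbb_2$). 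Under this identification, composites of connection isomorphisms, internal monoidality isomorphisms, transition isomorphisms and internal $R$-monoidality isomorphisms become composites of connection isomorphisms of $\Hbb_1^{\otimes_1}$ and $\Hbb_2^{\otimes_2}$ together with $\tilde R$-transition isomorphisms, so \Cref{lem-coherent_conn-mor} forces them all to coincide.

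The main obstacle is the verification of the claim in the second step, namely that $(R,\theta,\rho)$ really is a morphism of $(\Scal\times\Ical)$-fibered categories; this is where axiom (mor-ITS) enters, and where the bulk of the (entirely routine) case analysis lives. Everything else is formal, being a direct appeal to the constructions and coherence results of \Cref{sect:rec-fib-cats} and \Cref{sect:int-ext-tens}. Alternatively, one may bypass the intermediate morphism $\tilde R$ and build the $(\Scal\times\Ical\times\Ical)$-fibered category directly, as a common generalisation of \Cref{constr_Scal_mor} and \Cref{constr_Hotimes}, after which the statement is immediate from \Cref{lem-coherent_conn}; the two routes require the same essential input.
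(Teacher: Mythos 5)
Your proposal is correct and is essentially the paper's own proof: the morphism $\tilde R$ you construct is precisely the paper's $R^{\rho}$ from \Cref{constr_HotimesR}, verified by the same case analysis whose only non-routine input is axiom (mor-ITS) (the remaining cases being naturality and axiom (mor-$\Scal$-fib)), after which the statement follows, exactly as you say, by specialising \Cref{lem-coherent_conn-mor} to the base category $\Scal \times \Ical$. Incidentally, your description of the transition isomorphism along $r_S \circ (f;2)$ as the evident composite of $\rho$ and $\theta$ is the fully correct one: the displayed formula in \Cref{constr_HotimesR} stops at $f^* R_S(A \otimes_1 B)$ and tacitly omits the final factor $\theta_f$.
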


Proving this result directly would be extremely intricate, due to the presence of four different types of natural isomorphisms. To circumvent the difficulty, we use a variant of \Cref{constr_Hotimes} based on \Cref{constr_Scal_mor}:

\begin{constr}\label{constr_HotimesR}
	Keep the notation of \Cref{lem-coherent_conn-monoint}. Consider the two $(\Scal \times \Ical)$-fibered categories $\Hbb_1^{\otimes_1}$ and $\Hbb_2^{\otimes_2}$ obtained via \Cref{constr_Hotimes}. Then, following \Cref{nota:Ical}, we can define a morphism of $(\Scal \times \Ical)$-fibered categories $R^{\rho}: \Hbb_1^{\otimes_1} \rightarrow \Hbb_2^{\otimes_2}$ as follows:
	\begin{itemize}
		\item For every $S \in \Scal$, we set $R^{\rho}_{(S;1)}(A,B) := (R_S(A),R_S(B))$ and $R^{\rho}_{(S;2)}(A) := R_S(A)$.
		\item Given a morphism $\phi$ in $\Scal \times \Ical$, we define the $R^{\rho}$-transition isomorphism $\theta_{\phi}$ according to the type of $\phi$:
		\begin{itemize}
			\item if $\phi = (f;1): (T;1) \rightarrow (S;1)$, we set
			\begin{equation*}
				\theta_{\phi}(A): (f^* R_S(A), f^* R_S(B)) \xrightarrow{(\theta_f, \theta_f)} (R_T(f^* A), R_T(f^* B));
			\end{equation*}
			\item if $\phi = (f;2): (T;2) \rightarrow (S;2)$, we set
			\begin{equation*}
				\theta_{\phi}(A): f^* R_S(A) \xrightarrow{\theta_f} R_T(f^* A);
			\end{equation*}
			\item if $\phi = r_S \circ (f;2)$, we set
			\begin{equation*}
				\theta_{\phi}(A,B): f^*(R_S(A) \otimes_2 R_S(B)) \xrightarrow{\rho_S} f^* R_S(A \otimes_1 B).
			\end{equation*}
		\end{itemize}
	\end{itemize}
	Let us check that this construction indeed defines a morphism of $(\Scal \times \Ical)$-fibered categories: given two composable morphisms $\phi: \Tsf \rightarrow \Ssf$ and $\psi: \Ssf \rightarrow \Vsf$ in $\Scal \times \Ical$, we have to show that the diagram
	\begin{equation*}
		\begin{tikzcd}
			(\psi \phi)^* R^{\rho}_{\Vsf}(\Asf) \arrow{rr}{\theta^{\rho}_{\psi \phi}} \arrow[equal]{d} && R^{\rho}_{\Tsf}((\psi \phi)^* \Asf) \arrow[equal]{d} \\
			\phi^* \psi^* R^{\rho}_{\Vsf}(\Asf) \arrow{r}{\theta^{\rho}_{\phi}} & \phi^* R^{\rho}_{\Ssf}(\psi^* \Asf) \arrow{r}{\theta^{\rho}_{\psi}} & R^{\rho}_{\Tsf}(\phi^* \psi^* \Asf)
		\end{tikzcd}
	\end{equation*}
	is commutative. We have the following possibilities, indexed by pairs of composable morphisms $f: T \rightarrow S$ and $g: S \rightarrow V$ in $\Scal$:
	\begin{enumerate}
		\item If $\phi = (f;1)$ and $\psi = (g;1)$, we obtain the diagram
		\begin{equation*}
			\begin{tikzcd}[font=\small]
				((gf)^* R_V(A), (gf)^* R_V(B)) \arrow{rr}{(\theta, \theta)} \arrow[equal]{d} && (R_T((gf)^* A), R_T((gf)^* B)) \arrow[equal]{d} \\
				(f^* g^* R_V(A), f^* g^* R_V(B)) \arrow{r}{(\theta, \theta)} & (f^* R_S(g^* A), f^* R_S(g^* B)) \arrow{r}{(\theta, \theta)} & (R_T(f^* g^* A), R_T(f^* g^* B))
			\end{tikzcd}
		\end{equation*}
		which is commutative by axiom (mor-$\Scal$-fib).
		\item Similarly, if $\phi = (f;2)$ and $\psi = (g;2)$, we obtain the diagram
		\begin{equation*}
			\begin{tikzcd}[font=\small]
				(gf)^* R_V(A) \arrow{rr}{\theta} \arrow[equal]{d} && R_T((gf)^* A) \arrow[equal]{d} \\
				f^* g^* R_V(A) \arrow{r}{\theta} & f^* R_S(g^* A) \arrow{r}{\theta} & R_T(f^* g^* A)
			\end{tikzcd}
		\end{equation*}
		which is commutative by axiom (mor-$\Scal$-fib).
		\item If $\phi = (f;2)$ and $\psi = r_V \circ (g;2)$, we obtain the diagram
		\begin{equation*}
			\begin{tikzcd}[font=\small]
				(gf)^* (R_V(A) \otimes_2 R_V(B)) \arrow{r}{\rho} \arrow[equal]{d} & (gf)^* R_V(A \otimes_1 B) \arrow{r}{\theta} & R_T((gf)^*(A \otimes_1 B)) \arrow[equal]{dd} \\
				f^* g^*(R_V(A) \otimes_2 R_V(B)) \arrow{d}{\rho} \\
				f^* g^* R_V(A \otimes_1 B) \arrow[equal]{uur} \arrow{r}{\theta} & f^* R_S(g^*(A \otimes_1 B)) \arrow{r}{\theta} & R_T(f^* g^* (A \otimes_1 B))
			\end{tikzcd}
		\end{equation*}
		where the left-most piece is commutative by naturality while the right-most piece is commutative by axiom (mor-$\Scal$-fib).
		\item Finally, if $\phi = r_S \circ (f;2)$ and $\psi = (g;1)$, we obtain the diagram
		\begin{equation*}
			\begin{tikzcd}[font=\tiny]
				(gf)^* (R_V(A) \otimes_2 R_V(B)) \arrow[equal]{d} \arrow{r}{\rho} & (gf)^* R_V(A \otimes_1 B) \arrow{rr}{\theta} \arrow[equal]{d} && R_T((gf)^* (A \otimes_1 B)) \arrow[equal]{d} \\
				f^* g^*(R_V(A) \otimes_2 R_V(B)) \arrow{r}{\rho}  & f^* g^* R_V(A \otimes_1 B) \arrow{r}{\theta} & f^* R_S(g^*(A \otimes_1 B)) \arrow{r}{\theta} & R_T(f^* g^*(A \otimes_1 B)) \\
				f^*(g^* R_V(A) \otimes_2 g^* R_V(B)) \arrow{u}{m_2} \arrow{r}{\rho} & f^*(R_S(g^* A) \otimes_2 R_S(g^* B)) \arrow{r}{\rho} & f^* R_S(g^* A \otimes_1 g^* B) \arrow{r}{\theta} \arrow{u}{m_1} & R_T(f^*(g^* A \otimes_1 g^* B)) \arrow{u}{m_1}
			\end{tikzcd}
		\end{equation*}
		where the two squares are commutative by naturality, the upper-right rectangle is commutative by axiom (mor-$\Scal$-fib), while the lower-left rectangle is commutative by axiom (mor-ITS).
	\end{enumerate}
\end{constr}

\begin{proof}[Proof of \Cref{lem-coherent_conn-mor-monoint}]
	As a consequence of \Cref{constr_HotimesR}, this is just a particular case of \Cref{lem-coherent_conn-mor}.
\end{proof}

In a similar way, we introduce the notion of external tensor structure on the morphism $R$.

\begin{defn}\label{defn:mor-ETS}
	Let $(\boxtimes_1,m_1)$ and $(\boxtimes_2,m_2)$ be external tensor structures on $\Hbb_1$ and $\Hbb_2$, respectively.
	An \textit{external tensor structure} $\rho$ on $R$ (with respect to $(\boxtimes_1,m_1)$ and $(\boxtimes_2,m_2)$) is the datum of
	\begin{itemize}
		\item for every $S_1, S_2 \in \Scal$, a natural isomorphism of functors $\Hbb_1(S_1) \times \Hbb_1(S_2) \rightarrow \Hbb_2(S_1 \times S_2)$
		\begin{equation*}
			\rho = \rho_{S_1,S_2}: R_{S_1}(A_1) \boxtimes_2 R_{S_2}(A_2) \xrightarrow{\sim} R_{S_1 \times S_2}(A_1 \boxtimes_1 A_2),
		\end{equation*}
		called the \textit{external $R$-monoidality isomorphism} at $(S_1,S_2)$,
	\end{itemize}
	satisfying the following condition:
	\begin{enumerate}
		\item[(mor-ETS)] For every two morphisms $f_i: T_i \rightarrow S_i$ in $\Scal$, $i = 1,2$, the natural diagram of functors $\Hbb_1(S_1) \times \Hbb_1(S_2) \rightarrow \Hbb_2(T_1 \times T_2)$
		\begin{equation*}
			\begin{tikzcd}
				f_1^* R_{S_1}(A_1) \boxtimes_2 f_2^* R_{S_2}(A_2) \arrow{r}{m_2} \arrow{d}{\theta} & (f_1 \times f_2)^* (R_{S_1}(A_1) \boxtimes_2 R_{S_2}(A_2)) \arrow{r}{\rho} & (f_1 \times f_2)^* R_{S_1 \times S_2}(A_1 \boxtimes_1 A_2) \arrow{d}{\theta} \\
				R_{T_1}(f_1^* A_1) \boxtimes_2 R_{T_2}(f_2^* A_2) \arrow{r}{\rho} & R_{T_1 \times T_2}(f_1^* A_1 \boxtimes_1 f_2^* A_2) \arrow{r}{m_1} & R_{T_1 \times T_2}((f_1 \times f_2)^*(A_1 \boxtimes_1 A_2))
			\end{tikzcd}
		\end{equation*}
		is commutative.
	\end{enumerate}
\end{defn}

As the reader may expect, we have the following analogue of \Cref{lem-coherent_conn-monoint}:

\begin{lem}\label{lem-coherent_conn-mor-monoext}
	For $j = 1,2$ let $(\boxtimes_j,m_j)$ be an external tensor structure on $\Hbb_j$ and, for $i = 1,2$, let $f_i: T_i \rightarrow S_i$ be a morphism in $\Scal$. Given, for $i = 1,2$, a composable triple of composable tuples in $\Scal$
	\begin{equation*}
		\underline{g}_i = (g_{i,1}, \dots, g_{i,r}), \quad \underline{h}_i = (h_{i,1}, \dots, h_{i,s}), \quad \underline{l}_i = (l_{i,1},\dots,l_{i,t})
	\end{equation*}
	satisfying $l_{i,t} \circ \dots \circ l_{i,1} \circ h_{i,s} \circ \dots \circ h_{i,1} \circ g_{i,r} \circ \dots \circ g_{i,1} = f_i$, consider the two functors 
	\begin{equation*}
		F^{\boxtimes_1,R}_{(\underline{g}_1, \underline{h}_1, \underline{l}_1), (\underline{g}_2, \underline{h}_2, \underline{l}_2)}, \: F^{\boxtimes_2,R}_{(\underline{g}_1, \underline{h}_1, \underline{l}_1), (\underline{g}_2, \underline{h}_2, \underline{l}_2)}: \Hbb_1(S_1) \times \Hbb_1(S_2) \rightarrow \Hbb_2(T_1 \times T_2)
	\end{equation*}
	defined by the formulas
	\begin{equation*}
		F^{\boxtimes_1,R}_{(\underline{g}_1, \underline{h}_1, \underline{l}_1), (\underline{g}_2, \underline{h}_2, \underline{l}_2)}(A_1,A_2) := (\underline{g}_1 \times \underline{g}_2)^* R_{T'_1 \times T'_2}((\underline{h}_1 \times \underline{h}_2)^* (\underline{l}_1^* A_1 \boxtimes_1 \underline{l}_2^* A_2))
	\end{equation*}
	and
	\begin{equation*}
		F^{\boxtimes_2,R}_{(\underline{g}_1, \underline{h}_1, \underline{l}_1), (\underline{g}_2, \underline{h}_2, \underline{l}_2)}(A_1,A_2) := (\underline{g}_1 \times \underline{g}_2)^* (\underline{h}_1^* R_{T''_1}(\underline{l}_1^* A_1) \boxtimes_2 \underline{h}_2^* R_{T''_2}(\underline{l}_2^* A_2))
	\end{equation*}
	where, for $i = 1,2$, $T'_i$ and $T''_i$ denote the domain of $h_{i,1}$ and $l_{i,1}$, respectively.
	
	Then, for any two given composable triples of composable tuples $(\underline{g}_i^{(1)},\underline{h}_i^{(1)}, \underline{l}_i^{(1)})$ and $(\underline{g}_i^{(2)},\underline{h}_i^{(2)}, \underline{l}_i^{(2)})$ as above, $i = 1,2$, and for any choice of numbers $\epsilon_1, \epsilon_2 \in \left\{1,2\right\}$, all isomorphisms of functors $\Hbb_1(S_1) \times \Hbb_1(S_2) \rightarrow \Hbb_2(T_1 \times T_2)$
	\begin{equation*}
		F^{\boxtimes_{\epsilon_1},R}_{(\underline{g}_1^{(1)}, \underline{h}_1^{(1)},\underline{l}_1^{(1)}), (\underline{g}_2^{(1)}, \underline{h}_2^{(1)}, \underline{l}_2^{(1)})}(A_1,A_2) \xrightarrow{\sim} F^{\boxtimes_{\epsilon_2},R}_{(\underline{g}_1^{(2)}, \underline{h}_1^{(2)}, \underline{l}_1^{(2)}), (\underline{g}_2^{(2)}, \underline{h}_2^{(2)}, \underline{l}_2^{(2)})}(A_1,A_2)
	\end{equation*}
	obtained by composing connection isomorphisms, external monoidality isomorphisms, $R$-transition isomorphisms and external $R$-monoidality isomorphisms (and inverses thereof) coincide.
\end{lem}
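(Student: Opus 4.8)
The plan is to mimic the proof of \Cref{lem-coherent_conn-mor-monoint} by reducing the statement to \Cref{lem-coherent_conn-mor} over the enlarged base category $\Scal^2 \times \Ical$. Recall that the external tensor structures $(\boxtimes_1,m_1)$ and $(\boxtimes_2,m_2)$ give rise, via \Cref{constr_Hboxtimes}, to two $(\Scal^2 \times \Ical)$-fibered categories $\Hbb_1^{\boxtimes_1}$ and $\Hbb_2^{\boxtimes_2}$. The first step is to upgrade the external $R$-monoidality isomorphism $\rho$ to a morphism of $(\Scal^2 \times \Ical)$-fibered categories
\[
	R^{\rho}: \Hbb_1^{\boxtimes_1} \rightarrow \Hbb_2^{\boxtimes_2},
\]
in exact analogy with \Cref{constr_HotimesR}. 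Concretely, I would set $R^{\rho}_{(S_1,S_2;1)}(A_1,A_2) := (R_{S_1}(A_1),R_{S_2}(A_2))$ and $R^{\rho}_{(S_1,S_2;2)}(A) := R_{S_1 \times S_2}(A)$, and I would define the $R^{\rho}$-transition isomorphisms according to the three types of morphisms in $\Scal^2 \times \Ical$ listed in \Cref{nota:Ical}: along $(f_1,f_2;1)$ by the pair $(\theta_{f_1},\theta_{f_2})$, along $(f_1,f_2;2)$ by $\theta_{f_1 \times f_2}$, and along $r_{S_1,S_2} \circ (f_1,f_2;2)$ by the composite involving the external $R$-monoidality isomorphism $\rho_{S_1,S_2}$.

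Next I would verify that $R^{\rho}$ indeed satisfies the defining diagram of the morphism axiom over $\Scal^2 \times \Ical$ for each of the four types of composable pairs $(\phi,\psi)$ of morphisms. As in \Cref{constr_HotimesR}, three of these cases follow from naturality together with the morphism axiom (mor-$\Scal$-fib) for $R$, while the crucial fourth case $\phi = r_{S_1,S_2} \circ (f_1,f_2;2)$, $\psi = (g_1,g_2;1)$ is precisely where axiom (mor-ETS) of \Cref{defn:mor-ETS} enters.

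Finally I would observe that the two functors $F^{\boxtimes_1,R}$ and $F^{\boxtimes_2,R}$ of the statement, which share source $(S_1,S_2;1)$ and target $(T_1,T_2;2)$, are both instances of the functors $F^{R^{\rho}}_{(\underline{G},\underline{H})}$ associated to $R^{\rho}$ by \Cref{lem-coherent_conn-mor}: the functor $F^{\boxtimes_1,R}$ arises by inserting $R^{\rho}$ at the boxed level (i.e.\ after the $r$-morphism implementing $\boxtimes_1$), whereas $F^{\boxtimes_2,R}$ arises by inserting $R^{\rho}$ at the unboxed level (i.e.\ before the $r$-morphism, separately on each factor). Under this dictionary, the connection isomorphisms of $\Hbb_j^{\boxtimes_j}$ encode both the connection and the external monoidality isomorphisms of $\Hbb_j$ (this is exactly the content of types (i)--(iv) of \Cref{constr_Hboxtimes}), while the $R^{\rho}$-transition isomorphisms encode both the $R$-transition and the external $R$-monoidality isomorphisms. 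Hence every isomorphism of the kind considered in the statement---for any choice of $\epsilon_1,\epsilon_2 \in \{1,2\}$---corresponds to an isomorphism built from connection and $R^{\rho}$-transition isomorphisms over $\Scal^2 \times \Ical$, and the claim follows directly from \Cref{lem-coherent_conn-mor}.

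The main obstacle I anticipate is bookkeeping rather than conceptual: one must make sure that the four distinct families of isomorphisms appearing in the statement match exactly the two families (connection and transition) available over $\Scal^2 \times \Ical$, and in particular that the level at which $R^{\rho}$ is inserted faithfully reproduces whether $R$ is applied before or after the external product. Once the morphism $R^{\rho}$ is set up and its morphism axiom is checked, the coherence itself is immediate from \Cref{lem-coherent_conn-mor}.
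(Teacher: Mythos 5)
Your proposal coincides with the paper's own proof: the paper sets up exactly this morphism $R^{\rho}: \Hbb_1^{\boxtimes_1} \rightarrow \Hbb_2^{\boxtimes_2}$ of $(\Scal^2 \times \Ical)$-fibered categories in \Cref{constr_HboxtimesR}, with the same assignments on objects and the same three types of transition isomorphisms, and verifies the morphism axiom by the same case analysis as in \Cref{constr_HotimesR} (the case $\phi = r_{S_1,S_2} \circ (f_1,f_2;2)$, $\psi = (g_1,g_2;1)$ being where (mor-ETS) enters). The lemma is then deduced, exactly as you argue, as a particular case of \Cref{lem-coherent_conn-mor}, with the two functors $F^{\boxtimes_1,R}$ and $F^{\boxtimes_2,R}$ realized as instances of $F^{R^{\rho}}$ according to whether $R^{\rho}$ is inserted after or before the $r$-morphism.
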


\begin{constr}\label{constr_HboxtimesR}
	Keep the notation of \Cref{lem-coherent_conn-mor-monoext}. Consider the two $(\Scal^2 \times \Ical)$-fibered categories $\Hbb_1^{\boxtimes_1}$ and $\Hbb_2^{\boxtimes_2}$ obtained via Construction \ref{constr_Hboxtimes}. Then, following \Cref{nota:Ical}, we can define a morphism of $(\Scal^2 \times \Ical)$-fibered categories $R^{\rho}: \Hbb_1^{\boxtimes_1} \rightarrow \Hbb_2^{\boxtimes_2}$ as follows:
	\begin{itemize}
		\item For every $S_1,S_2 \in \Scal$, we set $R^{\rho}_{(S_1,S_2;1)}(A_1,A_2) := (R_{S_1}(A_1),R_{S_2}(A_2))$ and $R^{\rho}_{(S_1,S_2;2)}(A) := R_{S_1 \times S_2}(A)$.
		\item Given a morphism $\phi$ in $\Scal^2 \times \Ical$, we define the $R^{\rho}$-transition isomorphism $\theta_{\phi}$ according to the type of $\phi$:
		\begin{itemize}
			\item if $\phi = (f_1,f_2;1): (T_1,T_2;1) \rightarrow (S_1,S_2;1)$, we set
			\begin{equation*}
				\theta_{\phi}(A): (f_1^* R_{S_1}(A_1), f_2^* R_{S_2}(A_2)) \xrightarrow{(\theta_f, \theta_f)} (R_{T_1}(f_1^* A_1), R_{T_2}(f_2^* A_2));
			\end{equation*}
			\item if $\phi = (f_1,f_2;2): (T_1,T_2;2) \rightarrow (S_1,S_2;2)$, we set
			\begin{equation*}
				\theta_{\phi}(A): (f_1 \times f_2)^* R_{S_1 \times S_2}(A_1) \xrightarrow{\theta_{f_1 \times f_2}} R_{T_1 \times T_2}((f_1 \times f_2)^* A);
			\end{equation*}
			\item if $\phi = r_{S_1,S_2} \circ (f_1,f_2;2)$, we set
			\begin{equation*}
				\theta_{\phi}(A_1,A_2): (f_1 \times f_2)^*(R_{S_1}(A_1) \boxtimes_2 R_{S_2}(A_2)) \xrightarrow{\rho_{S_1,S_2}} (f_1 \times f_2)^* R_{S_1 \times S_2}(A_1 \boxtimes_1 A_2).
			\end{equation*}
		\end{itemize}
	\end{itemize}
    Arguing in the same way as in \Cref{constr_HotimesR}, one can check that this construction indeed defines a morphism of $(\Scal^2 \times \Ical)$-fibered categories; we omit the details.
\end{constr}

\begin{proof}[Proof of \Cref{lem-coherent_conn-mor-monoext}]
	As a consequence of \Cref{constr_HboxtimesR}, this is just a particular case of \Cref{lem-coherent_conn-mor}.
\end{proof}

\begin{rem}\label{rem_mor}
	In the following, we consider $\Scal$-fibered categories $\Hbb_1$, $\Hbb_2$ and $\Hbb_3$ as well as morphisms of $\Scal$-fibered categories $R_1: \Hbb_1 \rightarrow \Hbb_2$ and $R_2: \Hbb_2 \rightarrow \Hbb_3$.
	\begin{enumerate}
		\item Suppose that $\Hbb_1 = \Hbb_2 = \Hbb$ and $R_1 = \id_{\Hbb}$. By definition, an internal tensor structure on $\id_{\Hbb}$ with respect to two given internal tensor structures $(\otimes',m')$ and $(\otimes,m)$ is the same as an equivalence of internal tensor structures $e: (\otimes,m) \xrightarrow{\sim} (\otimes',m')$.
		\item For $j = 1,2,3$ let $(\otimes_j,m_j)$ be an internal tensor structure on $\Hbb_j$.
		Given internal tensor structures $\rho_1$ on $R_1$ (with respect to $\otimes_1$ and $\otimes_2$) and $\rho_2$ on $R_2$ (with respect to $\otimes_2$ and $\otimes_3$), we obtain an obvious composite internal tensor structure $\rho_2 \circ \rho_1$ on the composite morphism $R_2 \circ R_1$ (with respect to $\otimes_1$ and $\otimes_3$).
		\item For $j = 1,2$ let $e_j: (\otimes_j,m_j) \xrightarrow{\sim} (\otimes'_j,m'_j)$ be an equivalence of internal tensor structures on $\Hbb_j$, for simplicity, set $R := R_1$.
		Combining the previous two observations, we see that the pair $(e_1,e_2)$ determines a canonical bijection between internal tensor structures on $R$ with respect to $(\otimes_1,m_1)$ and $(\otimes_2,m_2)$ and internal tensor structures on $R$ with respect to $(\otimes'_1,m'_1)$ and $(\otimes'_2,m'_2)$.
		In practice, this means that every question concerning the existence of internal tensor structures on the morphism $R$ only depends on the equivalence classes of the chosen internal tensor structures on $\Hbb_1$ and $\Hbb_2$ in a very precise way.
	\end{enumerate}
	Analogous considerations hold in the setting of external tensor structures.
\end{rem}

The following two results explain how to translate an internal tensor structure on $R$ into an external tensor structure on it and conversely:

\begin{lem}\label{lem_mor_int_to_ext}
	For $j = 1,2$, let $(\otimes_j,m_j)$ be an internal tensor structure on $\Hbb_j$, and let $(\boxtimes_j,m_j^e)$ denote the external tensor structure obtained form it via \Cref{lem_int_to_ext}.
	Suppose that we are given an internal tensor structure $\rho$ on $R$ (with respect to $(\otimes_1,m_1)$ and $(\otimes_2,m_2)$). Then, associating
	\begin{itemize}
		\item to every $S_1, S_2 \in \Scal$,the natural isomorphism of functors $\Hbb_1(S_1) \times \Hbb_1(S_2) \rightarrow \Hbb_2(S_1 \times S_2)$
		\begin{equation*}
			\rho^e = \rho^e_{S_1,S_2}: R_{S_1}(A_1) \boxtimes_2 R_{S_2}(A_2) \xrightarrow{\sim} R_{S_1 \times S_2}(A_1 \boxtimes_1 A_2)
		\end{equation*}
		defined by taking the composite
		\begin{equation*}
		    \begin{tikzcd}[font=\small]
				R_{S_1}(A_1) \boxtimes_2 R_{S_2}(A_2) \arrow[equal]{d} && R_{S_1 \times S_2}(A_1 \boxtimes_1 A_2) \arrow[equal]{d} \\
				pr^{12,*}_1 R_{S_1}(A_1) \otimes_2 pr^{12,*}_2 R_{S_2}(A_2) \arrow{r}{\theta} & R_{S_1 \times S_2}(pr^{12,*}_1 A_1) \otimes_2 R_{S_1 \times S_2}(pr^{12,*}_2 A_2) \arrow{r}{\rho} & R_{S_1 \times S_2}(pr^{12,*}_1 A_1 \otimes_1 pr^{12,*}_2 A_2)
			\end{tikzcd}
		\end{equation*}
	\end{itemize}
	defines an external tensor structure $\rho^e$ on $R$ (with respect to $(\boxtimes_1,m_1^e)$ and $(\boxtimes_2,m_2^e)$).
\end{lem}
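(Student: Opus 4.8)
The plan is to verify that the proposed $\rho^e$ satisfies condition (mor-ETS). First I would unwind all the definitions entering the (mor-ETS) square: the external $R$-monoidality isomorphism $\rho^e$ as just defined, the external monoidality isomorphisms $m_1^e$ and $m_2^e$ supplied by \Cref{lem_int_to_ext}, and the external $R$-transition isomorphisms, which are nothing but the componentwise $R$-transition isomorphisms $\theta$. After this expansion the (mor-ETS) square becomes a large diagram of functors $\Hbb_1(S_1) \times \Hbb_1(S_2) \rightarrow \Hbb_2(T_1 \times T_2)$ whose every edge is built out of connection isomorphisms, the internal monoidality isomorphisms $m_1$ and $m_2$, the transition isomorphisms $\theta$, and the internal $R$-monoidality isomorphisms $\rho$; in particular it no longer refers to the external tensor products directly.

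The key observation is that, writing $B_i := pr^{12,*}_i A_i \in \Hbb_1(S_1 \times S_2)$, the genuinely non-formal content of this diagram is concentrated in a single central square, namely the instance of axiom (mor-ITS) for the morphism $f_1 \times f_2 : T_1 \times T_2 \rightarrow S_1 \times S_2$ applied to the objects $B_1, B_2$. Thus I would decompose the unwound diagram so that this (mor-ITS) square sits in the middle, exactly as condition ($c$ETS-2) was reduced to ($c$ITS-2) in the proof of \Cref{lem_symm_int_to_ext} and ($a$ETS-2) to ($a$ITS-2) in the proof of \Cref{lem_ass_int_to_ext}. All the remaining peripheral regions merely reconcile the external expressions---those involving $\boxtimes_2$, the separate pullbacks $f_1^*$, $f_2^*$, and the projections onto the two factors---with their pulled-back internal counterparts over the common bases $S_1 \times S_2$ and $T_1 \times T_2$. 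Each such region is a composite of connection, internal monoidality, transition and $R$-monoidality isomorphisms between fixed iterated functors, so by \Cref{lem-coherent_conn-mor-monoint} (together with naturality of $\theta$ and of $m_2$) it commutes automatically; note that invoking \Cref{lem-coherent_conn-mor-monoint} is legitimate here precisely because $\rho$ is assumed to satisfy (mor-ITS), so there is no circularity.

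The main obstacle I expect is bookkeeping rather than conceptual: the two source objects $A_1$ and $A_2$ initially live over the distinct bases $S_1$ and $S_2$, whereas the coherence statement \Cref{lem-coherent_conn-mor-monoint} is phrased over a single base. The remedy is to use the defining formula $A_1 \boxtimes_i A_2 = pr^{12,*}_1 A_1 \otimes_i pr^{12,*}_2 A_2$ to transport everything to the common base $S_1 \times S_2$ at the very start, replacing $A_1, A_2$ by $B_1 = pr^{12,*}_1 A_1$ and $B_2 = pr^{12,*}_2 A_2$; both now lie in the \emph{same} category $\Hbb_1(S_1 \times S_2)$, so \Cref{lem-coherent_conn-mor-monoint} applies directly with $S = S_1 \times S_2$ and $T = T_1 \times T_2$, the underlying coherence of the two fibered categories and of the morphism $R$ being furnished by \Cref{lem-coherent_conn-mor}. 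Once the central (mor-ITS) square and these peripheral coherence pieces are in place, the verification is entirely routine, and the details can be omitted in the same style as the analogous lemmas above.
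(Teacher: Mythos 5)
Your proposal is correct and follows essentially the same route as the paper's proof: the paper likewise reduces (mor-ETS) to the single instance of (mor-ITS) for $f_1\times f_2$ applied to $B_i = pr^{12,*}_i A_i$, placing that square at the centre of a decomposition whose six peripheral pieces commute by \Cref{lem-coherent_conn-mor-monoint} (path-independence of composites of connection, internal monoidality, $\theta$- and $\rho$-isomorphisms) together with naturality. Your remark that invoking \Cref{lem-coherent_conn-mor-monoint} is licit precisely because the hypothesis already supplies (mor-ITS) is a correct and worthwhile clarification of a point the paper leaves implicit.
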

\begin{proof}
	We need to check that the definition satisfies condition (mor-ETS): given morphisms $f_i: T_1 \rightarrow S_i$ in $\Scal$, $i = 1,2$, we have to show that the diagram of functors $\Hbb(S_1) \times \Hbb(S_2) \rightarrow \Hbb(T_1 \times T_2)$
	\begin{equation*}
		\begin{tikzcd}
			f_1^* R_{S_1}(A_1) \boxtimes_2 f_2^* R_{S_2}(A_2) \arrow{r}{m_2^e} \arrow{d}{\theta} & (f_1 \times f_2)^* (R_{S_1}(A_1) \boxtimes_2 R_{S_2}(A_2)) \arrow{r}{\rho^e} & (f_1 \times f_2)^* R_{S_1 \times S_2}(A_1 \boxtimes_1 A_2) \arrow{d}{\theta} \\
			R_{T_1}(f_1^* A_1) \boxtimes_2 R_{T_2}(f_2^* A_2) \arrow{r}{\rho^e} & R_{T_1 \times T_2}(f_1^* A_1 \boxtimes_1 f_2^* A_2) \arrow{r}{m_1^e} & R_{T_1 \times T_2}((f_1 \times f_2)^*(A_1 \boxtimes_1 A_2))
		\end{tikzcd}
	\end{equation*}
	is commutative. To this end, we decompose it as
	\begin{equation*}
		\begin{tikzcd}[font=\small]
			\bullet \arrow{rr}{m_2^e} \arrow{ddd}{\theta} \arrow{dr}{\sim} && \bullet \arrow{rr}{\rho^e} \isoarrow{d} && \bullet \arrow{ddd}{\theta} \arrow{dl}{\sim} \\
			& f_{12}^* R_{S_{12}}(B_1) \otimes_2 f_{12}^* R_{S_{12}}(B_2) \arrow{r}{m_2} \arrow{d}{\theta} & f_{12}^*(R_{S_{12}}(B_1) \otimes_2 R_{S_{12}}(B_2)) \arrow{r}{\rho} & f_{12}^* R_{S_{12}}(B_1 \otimes_1 B_2) \arrow{d}{\theta} \\
			& R_{T_{12}}(f_{12}^* B_1) \otimes_2 R_{T_{12}}(f_{12}^* B_2) \arrow{r}{\rho} & R_{T_{12}}(f_{12}^* B_1 \otimes_1 (f_{12})^* B_2) \arrow{r}{m_1} & R_{T_{12}}(f_{12}^*(B_1 \otimes_1 B_2)) \\
			\bullet \arrow{rr}{\rho^e} \arrow{ur}{\sim} && \bullet \arrow{rr}{m_1^e} \isoarrow{u} && \bullet \arrow{ul}{\sim}
	    \end{tikzcd}
	\end{equation*}
	where, for notational convenience, we write $B_r := pr^{12,*}_r A_r$, $i = 1,2$. The six unnamed isomorphisms are obtained by combining connection isomorphisms, internal monoidality isomorphisms and $R$-isomorphisms along suitable paths; by \Cref{lem-coherent_conn-mor-monoint}, we know that the definitions are independent of the chosen paths. Since the central rectangle in the latter diagram is commutative by axiom (mor-ITS), it now suffices to show that the six lateral pieces are commutative as well. This follows by the usual trick; we omit the details.
\end{proof}

\begin{lem}\label{lem_mor_ext_to_int}
	For $j = 1,2$, let $(\boxtimes_j,m_j)$ be an internal tensor structure on $\Hbb_j$, and let $(\otimes_j,m_j^i)$ denote the external tensor structure obtained form it via \Cref{lem_ext_to_int}.
	Suppose that we are given an external tensor structure $\rho$ on $R$ (with respect to $(\boxtimes_1,m_1)$ and $(\boxtimes_2,m_2)$). Then, associating
	\begin{itemize}
		\item to every $S \in \Scal$, the natural isomorphism of functors $\Hbb_1(S) \times \Hbb_1(S) \rightarrow \Hbb_2(S)$
		\begin{equation}\label{nat_int_from_ext}
			\rho^i = \rho^i_S: R_S(A) \otimes_2 R_S(B) \xrightarrow{\sim} R_S(A \otimes_1 B)
		\end{equation}
		defined by taking the composite
		\begin{equation*}
			\begin{tikzcd}[font=\small]
				R_S(A) \otimes_2 R_S(B) \arrow[equal]{d} && R_S(A \otimes_1 B) \arrow[equal]{d} \\
				\Delta_S^*(R_S(A) \boxtimes_2 R_S(B)) \arrow{r}{\rho} & \Delta_S^* R_S(A \boxtimes_1 B) \arrow{r}{\theta} & R_S(\Delta_S^*(A \boxtimes_1 B))
			\end{tikzcd}
		\end{equation*}
	\end{itemize}
	defines an internal tensor structure $\rho^e$ on $R$ (with respect to $(\otimes_1,m_1^i)$ and $(\otimes_2,m_2^i)$).
\end{lem}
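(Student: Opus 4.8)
The plan is to verify condition (mor-ITS) for the candidate internal $R$-monoidality isomorphism $\rho^i$, following \emph{verbatim} the pattern of the proof of \Cref{lem_mor_int_to_ext} with the roles of the internal and external settings exchanged; in particular, the coherence result replacing \Cref{lem-coherent_conn-mor-monoint} will be its external counterpart \Cref{lem-coherent_conn-mor-monoext}. Fix a morphism $f: T \rightarrow S$ in $\Scal$. First I would unwind the definitions of $m_1^i$, $m_2^i$ (from \Cref{lem_ext_to_int}) and of $\rho^i$ (as just defined) in terms of $\boxtimes_1$, $\boxtimes_2$ and the external $R$-monoidality isomorphism $\rho$. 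Since $A \otimes_j B = \Delta_S^*(A \boxtimes_j B)$, and all three structural isomorphisms $m_j^i$, $\rho^i$ are built by composing an external monoidality (resp.\ $R$-monoidality) isomorphism with the connection identity $\Delta_T^*(f \times f)^* = f^* \Delta_S^*$ and with an $R$-transition isomorphism $\theta$, the resulting explicit diagram of functors $\Hbb_1(S) \times \Hbb_1(S) \rightarrow \Hbb_2(T)$ is obtained by applying $\Delta_T^*$ to a suitable arrangement of the ingredients of the external theory.

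Next I would decompose this explicit diagram into a central rectangle flanked by peripheral pieces, exactly as in the proof of \Cref{lem_mor_int_to_ext}. The central rectangle is precisely the image under $\Delta_T^*$ of the hexagon appearing in axiom (mor-ETS) specialized to the pair $f_1 = f_2 = f$, so it commutes by the hypothesis on $\rho$. The peripheral pieces consist of unnamed natural isomorphisms produced by composing connection isomorphisms, external monoidality isomorphisms $m_1$, $m_2$, and $R$-transition isomorphisms $\theta$ along various paths; their role is to reconcile the two ways of writing $\Delta_T^*(f \times f)^* = f^* \Delta_S^*$ and to move $R$ past the diagonal pullbacks. By \Cref{lem-coherent_conn-mor-monoext} all such composites are independent of the chosen paths, which makes the decomposition unambiguous; the commutativity of each peripheral piece then follows by naturality together with a convenient expansion of the two isomorphisms involved, i.e.\ by the same trick used throughout \Cref{sect:asso}.

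The only genuinely delicate point is the bookkeeping of the diagonal embeddings: one must keep track of the identification $(f \times f) \circ \Delta_T = \Delta_S \circ f$ and of the way the transition isomorphism $\theta$ for $\Delta_S$ (resp.\ $\Delta_T$) interacts with the external monoidality isomorphisms. However, precisely because $\rho^i$, $m_1^i$ and $m_2^i$ are defined as canonical composites of external data, \Cref{lem-coherent_conn-mor-monoext} absorbs all of this coherence automatically, so that no verification beyond the single instance of (mor-ETS) is required. I therefore expect the argument to be entirely parallel to that of \Cref{lem_mor_int_to_ext}, and I would omit the explicit decomposition in the same spirit.
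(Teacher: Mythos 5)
Your proposal is correct and follows essentially the same route as the paper: the paper also verifies (mor-ITS) by decomposing the diagram into a central rectangle --- which is exactly $\Delta_T^*$ applied to the (mor-ETS) diagram with $f_1 = f_2 = f$, hence commutative by hypothesis --- surrounded by six lateral pieces built from connection isomorphisms, external monoidality isomorphisms and $R$-transition isomorphisms, whose commutativity is handled by the path-independence result of \Cref{lem-coherent_conn-mor-monoext} together with naturality (the ``usual trick''). No gap to report.
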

\begin{proof}
	We need to check that the definition satisfies condition (mor-ITS): given a morphism $f: T \rightarrow S$ in $\Scal$, we have to show that the diagram of functors $\Hbb(S) \times \Hbb(S) \rightarrow \Hbb(T)$
	\begin{equation*}
		\begin{tikzcd}
			f^* R_S(A) \otimes_2 f^* R_S(B) \arrow{r}{m_2^i} \arrow{d}{\theta} & f^* (R_S(A) \otimes_2 R_S(B)) \arrow{r}{\rho^i} & f^* R_S(A \otimes_1 B) \arrow{d}{\theta} \\
			R_T(f^* A) \otimes_2 R_T(f^* B) \arrow{r}{\rho^i} & R_T(f^* A \otimes_1 f^* B) \arrow{r}{m_1^i} & R_T(f^*(A \otimes_1 B))
		\end{tikzcd}
	\end{equation*}
	is commutative. To this end, we decompose it as
	\begin{equation*}
		\begin{tikzcd}[font=\tiny]
			\bullet \arrow{rr}{m_2^i} \arrow{ddd}{\theta} \arrow{dr}{\sim} && \bullet \arrow{rr}{\rho^i} \isoarrow{d} && \bullet \arrow{ddd}{\theta} \arrow{dl}{\sim} \\
			& \Delta_T^*(f^* R_S(A) \boxtimes_2 f^* R_S(B)) \arrow{r}{m_2} \arrow{d}{\theta} & \Delta_T^* (f \times f)^* (R_S(A) \boxtimes_2 R_S(B)) \arrow{r}{\rho} & \Delta_T^*(f \times f)^* R_{S \times S}(A \boxtimes_1 B) \arrow{d}{\theta} \\
			& \Delta_T^*(R_T(f^* A) \boxtimes_2 R_T(f^* B)) \arrow{r}{\rho} & \Delta_T^* R_{T \times T}(f^* A \boxtimes_1 f^* B) \arrow{r}{m_1} & \Delta_T^* R_{T \times T}((f \times f)^*(A \boxtimes_1 B)) \\
			\bullet \arrow{rr}{\rho^i} \arrow{ur}{\sim} && \bullet \arrow{rr}{m_1^i} \isoarrow{u} && \bullet \arrow{ul}{\sim}
		\end{tikzcd}
	\end{equation*}
    Here, the six unnamed natural isomorphisms are obtained by taking composites of connection isomorphisms, external monoidality isomorphisms and $R$-isomorphisms along suitable paths; by \Cref{lem-coherent_conn-mor-monoext}, we know that the definitions are independent of the chosen paths. Since the central rectangle is commutative by axiom (mor-ETS), it now suffices to show that the six lateral pieces are commutative as well. This follows by the usual trick; we omit the details.
\end{proof}

\begin{prop}\label{prop_int_ext_morph}
	For $j = 1,2$, let $(\otimes_j,m_j^i)$ be an internal tensor structure on $\Hbb_i$, and let $(\boxtimes_j,m_j^e)$ denote the external tensor structure corresponding to it (modulo equivalence) via the bijection of \Cref{prop_bij_in_ext}.
	Then the constructions of \Cref{lem_mor_int_to_ext} and \Cref{lem_mor_ext_to_int} define mutually inverse bijections between internal tensor structures on $R$ (with respect to $(\otimes_1,m_1^i)$ and $(\otimes_2,m_2^i)$) and external tensor structures on $R$ (with respect to $(\boxtimes_1,m_1^e)$ and $(\boxtimes_2,m_2^e)$).
\end{prop}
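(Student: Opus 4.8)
The plan is to follow verbatim the strategy of \Cref{prop_bij_in_ext}, replacing the coherence results \Cref{lem-coherent_conn-monoint} and \Cref{lem-coherent_conn-monoext} by their counterparts for morphisms, \Cref{lem-coherent_conn-mor-monoint} and \Cref{lem-coherent_conn-mor-monoext}. Since the internal tensor structures $(\otimes_j,m_j^i)$ on $\Hbb_j$ are fixed and the $(\boxtimes_j,m_j^e)$ are exactly their images under \Cref{lem_int_to_ext}, applying \Cref{lem_mor_int_to_ext} and then \Cref{lem_mor_ext_to_int} to an internal tensor structure $\rho$ on $R$ does not return an internal tensor structure with respect to $(\otimes_j,m_j^i)$ but rather one with respect to the doubly transformed structures $(\otimes'_j,m'_j)$ appearing in the proof of \Cref{prop_bij_in_ext}. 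To make sense of the phrase ``mutually inverse'', I would first invoke the canonical equivalences $e_j\colon(\otimes_j,m_j^i)\xrightarrow{\sim}(\otimes'_j,m'_j)$ produced there, together with the transport bijection of \Cref{rem_mor}(3) determined by the pair $(e_1,e_2)$, so as to identify internal tensor structures on $R$ with respect to $(\otimes'_j,m'_j)$ with internal tensor structures on $R$ with respect to $(\otimes_j,m_j^i)$; the two round-trip maps then become genuine endomaps of the relevant sets.

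Second, I would prove that the int-to-ext-to-int round trip is the identity. Concretely, starting from $\rho$, one forms $\rho^e$ via \Cref{lem_mor_int_to_ext} and then $(\rho^e)^i$ via \Cref{lem_mor_ext_to_int}, and one must check that the image of $\rho$ under the transport bijection of \Cref{rem_mor}(3) coincides with $(\rho^e)^i$. Unwinding the definitions, both of these natural isomorphisms are composites of connection isomorphisms, internal monoidality isomorphisms $m_j$, $R$-transition isomorphisms $\theta$, and exactly one internal $R$-monoidality isomorphism $\rho$, going between the same pair of functors $\Hbb_1(S)\times\Hbb_1(S)\rightarrow\Hbb_2(S)$ of the shape considered in \Cref{lem-coherent_conn-mor-monoint} (the source of type $F^{\otimes_2,R}$, the target of type $F^{\otimes_1,R}$); here I use that the equivalences $e_j$ are themselves built only from connection and monoidality isomorphisms, as in \Cref{prop_bij_in_ext}. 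By \Cref{lem-coherent_conn-mor-monoint} any two such composites coincide, which yields the desired equality at once.

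Third, I would treat the ext-to-int-to-ext round trip in exactly the same way, now appealing to \Cref{lem-coherent_conn-mor-monoext}: starting from an external tensor structure $\sigma$ on $R$, forming $\sigma^i$ and then $(\sigma^i)^e$, one recognizes both $(\sigma^i)^e$ and the transport of $\sigma$ along the external equivalences from \Cref{prop_bij_in_ext} as composites of connection isomorphisms, external monoidality isomorphisms, $R$-transition isomorphisms, and a single external $R$-monoidality isomorphism, between a fixed pair of functors of the form $F^{\boxtimes_{\epsilon},R}$; the coherence result then forces them to agree.

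The main obstacle is not any single diagram chase---thanks to \Cref{lem-coherent_conn-mor-monoint} and \Cref{lem-coherent_conn-mor-monoext} no large figure is needed---but rather the careful bookkeeping required to set up the comparison: one must pin down precisely which transport bijection of \Cref{rem_mor}(3) is in play, verify that both competing composites factor through the \emph{unique} occurrence of the $R$-monoidality isomorphism in a compatible position, and confirm that all the remaining isomorphisms (in particular the equivalences $e_j$) fall within the scope of the relevant coherence lemma. Once this is arranged, the two coherence results do all the work, exactly as \Cref{lem-coherent_conn-monoint} and \Cref{lem-coherent_conn-monoext} did in the proof of \Cref{prop_bij_in_ext}.
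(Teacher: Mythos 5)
Your first paragraph sets things up exactly as the paper does: via the equivalences $e_j$ of \Cref{prop_bij_in_ext} and the transport bijection of \Cref{rem_mor}(3), the assertion reduces to the commutativity of the square comparing $R_S(e_1)\circ\rho$ with $\rho'\circ e_2$ (and of its external analogue). The gap is in the next step, where you claim this commutativity follows ``at once'' from \Cref{lem-coherent_conn-mor-monoint}. That lemma only governs natural isomorphisms whose source, target, \emph{and every intermediate stage} are functors of the special shape $F^{\otimes_{\epsilon},R}_{(\underline{g},\underline{h},\underline{l})}$, in which one and the same tuple of inverse image functors acts on both tensor arguments; this diagonality is forced by \Cref{constr_HotimesR}, where inverse images over objects of type $(S;2)$ act on $\Hbb_1(S)\times\Hbb_1(S)$ componentwise by the \emph{same} morphism. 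The square at hand violates this: its lower corners are $R_S(A)\otimes'_2R_S(B)=\Delta_S^*\bigl(pr^{12,*}_1R_S(A)\otimes_2 pr^{12,*}_2R_S(B)\bigr)$ and $R_S(A\otimes'_1B)=R_S\bigl(\Delta_S^*(pr^{12,*}_1A\otimes_1 pr^{12,*}_2B)\bigr)$, where $pr^{12}_1$ acts on the first argument and $pr^{12}_2$ on the second. No choice of variables repairs this: substituting $A'=pr^{12,*}_1A$, $B'=pr^{12,*}_2B$ makes three corners diagonal (the target becomes $R_S(\Delta_S^*(A'\otimes_1B'))$), but the corner $\Delta_S^*\bigl(pr^{12,*}_1R_S(\Delta_S^*A')\otimes_2 pr^{12,*}_2R_S(\Delta_S^*B')\bigr)$, through which $\rho'\circ e_2$ necessarily passes, stays asymmetric. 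So $\rho'\circ e_2$ is not a composite of moves lying within the scope of the lemma, and the ``confirmation'' you defer to the bookkeeping stage is in fact false. The same objection applies to the external round trip: in $A_1\boxtimes'_1A_2=\Delta^*_{S_1\times S_2}(pr^{12,*}_1A_1\boxtimes_1 pr^{12,*}_2A_2)$ the outer pullback is along the diagonal $\Delta_{S_1\times S_2}$, which is not of the product form $(\underline{g}_1\times\underline{g}_2)^*$ demanded by \Cref{lem-coherent_conn-mor-monoext}.

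This is precisely why the paper, which does invoke these coherence lemmas inside \Cref{lem_mor_int_to_ext} and \Cref{lem_mor_ext_to_int} (there all corners do acquire the diagonal, respectively product, shape after substituting $B_i:=pr^{12,*}_iA_i$), proves the present proposition by a different route: it decomposes the square into pieces, of which the essential one is an instance of axiom (mor-ITS) for the single morphism $\Delta_S\colon S\to S\times S$ evaluated at the pair $(pr^{12,*}_1A,\,pr^{12,*}_2B)$ --- this is where the identity $\theta\circ\rho\circ m_2=m_1\circ\rho\circ\theta$ enters --- while the remaining pieces are instances of naturality and of axiom (mor-$\Scal$-fib), the latter through $\theta_{\Delta_S}\circ\Delta_S^*\theta_{pr^{12}_i}=\theta_{\id_S}=\id$; dually, the external half rests on (mor-ETS), naturality and (mor-$\Scal$-fib). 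Your argument becomes a proof once the ``at once'' step is replaced by such a decomposition; as written, the central identity is left unestablished.
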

\begin{proof}
	Start from two internal tensor structures $(\otimes_1,m_1)$ and $(\otimes_2,m_2)$ and an internal tensor structure $\rho$ on $R$ (with respect to $(\otimes_1,m_1)$ and $(\otimes_2,m_2)$).
	For $j = 1,2$, let $(\otimes'_j,m'_j)$ denote the internal tensor structure on $\Hbb_j$ obtained from $(\otimes_j,m_j)$ by applying first \Cref{lem_int_to_ext} and then \Cref{lem_ext_to_int}; moreover, let $e_j: (\otimes_j,m_j) \xrightarrow{\sim} (\otimes'_j,m'_j)$ denote the equivalence of internal tensor structures constructed in \Cref{prop_bij_in_ext}.
	Finally, let $\rho'$ denote the internal tensor structure on $R$ (with respect to $(\otimes'_1,m'_1)$ and $(\otimes'_2,m'_2)$) obtained from $\rho$ by applying first \Cref{lem_mor_int_to_ext} and then \Cref{lem_mor_ext_to_int}.
	We have to show that, for every $S \in \Scal$, the diagram of functors $\Hbb_1(S) \times \Hbb_1(S) \rightarrow \Hbb_2(S)$
	\begin{equation*}
		\begin{tikzcd}
			R_S(A) \otimes_2 R_S(B) \arrow{r}{\rho} \arrow{d}{e_2} & R_S(A \otimes_1 B) \arrow{d}{e_1} \\
			R_S(A) \otimes'_2 R_S(B) \arrow{r}{\rho'} & R_S(A \otimes'_1 B) 
		\end{tikzcd}
	\end{equation*}
	is commutative. Unwinding the various constructions, we obtain the more explicit diagram
	\begin{equation*}
		\begin{tikzcd}[font=\small]
			R_S(A) \otimes_2 R_S(B) \arrow{r}{\rho} \arrow[equal]{d} & R_S(A \otimes_1 B) \arrow[equal]{d} \\
			\Delta_S^* pr^{12,*}_1 R_S(A) \otimes_2 \Delta_S^* pr^{12,*}_2 R_S(B) \arrow{d}{m_2} & R_S(\Delta_S^* pr^{12,*}_1 A \otimes_1 \Delta_S^* pr^{12,*}_2 B) \arrow{d}{m_1} \\
			\Delta_S^*(pr^{12,*}_1 R_S(A) \otimes_2 pr^{12,*}_2 R_S(B)) \arrow{d}{\theta}  & R_S(\Delta_S^* (pr^{12,*}_1 A \otimes_1 pr^{12,*}_2 B))  \\
			\Delta_S^* (R_{S \times S}(pr^{12,*}_1 A) \otimes_2 R_{S \times S}(pr^{12,*}_2 B)) \arrow{r}{\rho} & \Delta_S^* R_{S \times S}(pr^{12,*}_1 A \otimes_1 pr^{12,*}_2 B) \arrow{u}{\theta}
		\end{tikzcd}
	\end{equation*}
    that we decompose as
    \begin{equation*}
    	\begin{tikzcd}[font=\small]
    		\bullet \arrow{rr}{\rho} \arrow[equal]{d} \arrow[equal]{dr} && \bullet \arrow[equal]{d} \\
    		\bullet \arrow{d}{m_2} \arrow{dr}{\theta} & R_S(\Delta_S^* pr^{12,*}_1 A) \otimes_2 R_S(\Delta_S^* pr^{12,*}_2 B) \arrow{r}{\rho} & \bullet \arrow{d}{m_1} \\
    		\bullet \arrow{d}{\theta} & \Delta_S^* R_{S \times S}(pr^{12,*}_1 A) \otimes_2 \Delta_S^* R_{S \times S}(pr^{12,*}_2 B) \arrow{dl}{m_2} \arrow{u}{\theta} & \bullet  \\
    		\bullet \arrow{rr}{\rho} && \bullet \arrow{u}{\theta}
    	\end{tikzcd}
    \end{equation*}
    Here, the lower-right piece is commutative by axiom (mor-ITS) while the other pieces are commutative by naturality and by axiom (mor-$\Scal$-fib). This proves the claim.
	
	Conversely, start from two external tensor structures $(\boxtimes_1,m_1)$ and $(\boxtimes_2,m_2)$ and an external tensor structure $\rho$ on $R$ (with respect to $(\boxtimes_1,m_1)$ and $(\boxtimes_2,m_2)$).
	For $j = 1,2$, let $(\boxtimes'_j,m'_j)$ denote the external tensor structure obtained from $(\boxtimes_j,m_j)$ by applying first \Cref{lem_ext_to_int} and then \Cref{lem_int_to_ext}; moreover, let $e_j: (\boxtimes'_j,m'_j) \xrightarrow{\sim} (\boxtimes_j,m_j)$ denote the equivalence of external tensor structures constructed in  \Cref{prop_bij_in_ext}.
	Finally, let $\rho'$ denote the external tensor structure on $R$ (with respect to $(\boxtimes'_1,m'_1)$ and $(\boxtimes'_2,m'_2)$) obtained from $\rho$ by applying first \Cref{lem_mor_ext_to_int} and then \Cref{lem_mor_int_to_ext}.
	We have to show that, for every $S_1, S_2 \in \Scal$, the diagram of functors $\Hbb_1(S_1) \times \Hbb_1(S_2) \rightarrow \Hbb_2(S_1 \times S_2)$
	\begin{equation*}
		\begin{tikzcd}
			R_{S_1}(A_1) \boxtimes_2 R_{S_2}(A_2) \arrow{r}{\rho} & R_{S_1 \times S_2}(A_1 \boxtimes_1 A_2) \\
			R_{S_1}(A_1) \boxtimes'_2 R_{S_2}(A_2) \arrow{r}{\rho'} \arrow{u}{e_2} & R_{S_1 \times S_2}(A_1 \boxtimes'_1 A_2) \arrow{u}{e_1}
		\end{tikzcd}
	\end{equation*}
	is commutative. Unwinding the various constructions, we obtain the more explicit diagram
	\begin{equation*}
		\begin{tikzcd}[font=\small]
			R_{S_1}(A_1) \boxtimes_2 R_{S_2}(A_2) \arrow{r}{\rho} \arrow[equal]{d} & R_{S_1 \times S_2}(A_1 \boxtimes_1 A_2) \arrow[equal]{d} \\
			\Delta_{S_1 \times S_2}^*(pr^{12}_1 \times pr^{12}_2)^*(R_{S_1}(A_1) \boxtimes_2 R_{S_2}(A_2)) & R_{S_1 \times S_2}(\Delta_{S_1 \times S_2}^* (pr^{12}_1 \times pr^{12}_2)^* (A_1 \boxtimes_1 A_2)) \\
			\Delta_{S_1 \times S_2}^* (pr^{12,*}_1 R_{S_1}(A_1) \boxtimes_2 pr^{12,*}_2 R_{S_2}(A_2)) \arrow{u}{m_2} \arrow{d}{\theta} & R_{S_1 \times S_2}(\Delta_{S_1 \times S_2}^* (pr^{12,*}_1 A_1 \boxtimes_1 pr^{12,*}_2 A_2)) \arrow{u}{m_1} \\
			\Delta_{S_1 \times S_2}^* (R_{S_1 \times S_2}(pr^{12,*}_1 A_1) \boxtimes_2 R_{S_1 \times S_2}(pr^{12,*}_2 A_2)) \arrow{r}{\rho} & \Delta_{S_1 \times S_2}^* R_{(S_1 \times S_2)^2}(pr^{12,*}_1 A_1 \boxtimes_1 pr^{12,*}_2 A_2) \arrow{u}{\theta}
		\end{tikzcd}
	\end{equation*}
    that we decompose as
    \begin{equation*}
    	\begin{tikzcd}[font=\small]
    		\bullet \arrow{rr}{\rho} \arrow[equal]{d} && \bullet \arrow[equal]{d} \\
    		\bullet \arrow{r}{\rho} & \Delta_{S_1 \times S_2}^* (pr^{12}_1 \times pr^{12}_2)^* R_{S_1 \times S_2}(A_1 \boxtimes_1 A_2) \arrow[equal]{ur} \arrow{d}{\theta} & \bullet \\
    		\bullet \arrow{u}{m_2} \arrow{d}{\theta} & \Delta_{S_1 \times S_2}^* R_{(S_1 \times S_2)^2}((pr^{12}_1 \times pr^{12}_2)^*(A_1 \boxtimes_1 A_2)) \arrow{ur}{\theta} & \bullet \arrow{u}{m_1} \\
    		\bullet \arrow{rr}{\rho} && \bullet \arrow{u}{\theta} \arrow{ul}{m_1}
    	\end{tikzcd}
    \end{equation*}
    Here, the lower-left piece is commutative by axiom (mor-ETS) while the other pieces are commutative by naturality and by axiom (mor-$\Scal$-fib). This proves the claim and concludes the proof.
\end{proof}

\subsection{Associative, symmetric, unitary morphisms}

We now introduce the notions of associativity, commutativity and unitarity for internal and external tensor structures on morphisms, and we compare the two versions of each of them. This task is relatively easy since it does not involve constructing new structures but only amounts to checking some natural properties. This is done in \Cref{lem:mor-asso_int_to_ext} and \Cref{lem:mor-asso_ext_to_int} (associativity), \Cref{lem:mor-symm_int_to_ext} and \Cref{lem:mor-symm_ext_to_int} (symmetry), \Cref{lem:mor-unit_int_to_ext} and \Cref{lem:mor-unit_ext_to_int} (unitarity). Then \Cref{thm_bij_morph} follows combining \Cref{prop_int_ext_morph} with these complementary results.

Let us start by studying the notion of associativity for internal and external tensor structures on morphisms.

\begin{defn}\label{defn:ac-rhoITS}
	Let $(\otimes_1,m_1)$ and $(\otimes_2,m_2)$ be internal tensor structures on $\Hbb_1$ and $\Hbb_2$, respectively; let $\rho$ be an internal tensor structure on $R$ (with respect to $(\otimes_1,m_1)$ and $(\otimes_2,m_2)$).
	
	Suppose that $a_1$ and $a_2$ are associativity constraints on $(\otimes_1,m_1)$ and $(\otimes_2,m_2)$, respectively. We say that $\rho$ is \textit{associative} (with respect to $a_1$ and $a_2$) if it satisfies the following additional condition:
	\begin{enumerate}
		\item[(mor-$a$ITS)] For every $S \in \Scal$, the diagram of functors $\Hbb_1(S) \times \Hbb_1(S) \times \Hbb_1(S) \rightarrow \Hbb_2(S)$
		\begin{equation*}
			\begin{tikzcd}
				(R_S(A) \otimes_2 R_S(B)) \otimes_2 R_S(C) \arrow{r}{\rho} \arrow{d}{a_2} & R_S(A \otimes_1 B) \otimes_2 R_S(C) \arrow{r}{\rho} & R_S((A \otimes_1 B) \otimes_1 C) \arrow{d}{a_1} \\
				R_S(A) \otimes_2 (R_S(B) \otimes_2 R_S(C)) \arrow{r}{\rho} & R_S(A) \otimes_2 R_S(B \otimes_1 C) \arrow{r}{\rho} & R_S(A \otimes_1 (B \otimes_1 C))
			\end{tikzcd}
		\end{equation*}
		is commutative.
	\end{enumerate}
\end{defn}

\begin{defn}
	Let $(\boxtimes_1,m_1)$ and $(\boxtimes_2,m_2)$ be external tensor structures on $\Hbb_1$ and $\Hbb_2$, respectively; let $\rho$ be an external tensor structure on $R$ (with respect to $(\boxtimes_1,m_1)$ and $(\boxtimes_2,m_2)$).
	
	Let $a_1$ and $a_2$ be external associativity constraints on $(\boxtimes_1,m_1)$ and $(\boxtimes_2,m_2)$, respectively. We say that $\rho$ is \textit{associative} (with respect to $a_1$ and $a_2$) if it satisfies the following additional condition:
	\begin{enumerate}
		\item[(mor-$a$ETS)] For every $S_1, S_2, S_3 \in \Scal$, the diagram of functors $\Hbb_1(S_1) \times \Hbb_1(S_2) \times \Hbb_1(S_3) \rightarrow \Hbb_2(S_1 \times S_2 \times S_3)$
		\begin{equation*}
			\begin{tikzcd}
				(R_{S_1}(A_1) \boxtimes R_{S_2}(A_2)) \boxtimes R_{S_3}(A_3) \arrow{r}{\rho} \arrow{d}{a_2} & R_{S_1 \times S_2}(A_1 \boxtimes A_2) \boxtimes R_{S_3}(A_3) \arrow{r}{\rho} & R_{S_1 \times S_2 \times S_3}((A_1 \boxtimes A_2) \boxtimes A_3) \arrow{d}{a_1} \\
				R_{S_1}(A_1) \boxtimes (R_{S_2}(A_2) \boxtimes R_{S_3}(A_3)) \arrow{r}{\rho} & R_{S_1}(A_1) \boxtimes R_{S_2 \times S_2}(A_2 \boxtimes A_3) \arrow{r}{\rho} & R_{S_1 \times S_2 \times S_3}(A_1 \boxtimes (A_2 \boxtimes A_3))
			\end{tikzcd}
		\end{equation*}
		is commutative.
	\end{enumerate}
\end{defn}

\begin{lem}\label{lem:mor-asso_int_to_ext}
	For $j = 1,2$ let $(\otimes_j,m_j;a_j)$ be an associative internal tensor structure on $\Hbb_j$, and let $(\boxtimes_j,m_j^e;a_j^e)$ denote the associative external tensor structure obtained from it via \Cref{lem_int_to_ext} and \Cref{lem_ass_int_to_ext}.
	Moreover, let $\rho$ be an internal tensor structure on $R$ (with respect to $(\otimes_1,m_1)$ and $(\otimes_2,m_2)$), and let $\rho^e$ denote the corresponding external tensor structure on $R$ (with respect to $(\boxtimes_1,m_1^e)$ and $(\boxtimes_2,m_2^e)$).
	
	Suppose that $\rho$ is associative (with respect to $a_1$ and $a_2$). Then $\rho^e$ is associative (with respect to $a_1^e$ and $a_2^e$).
\end{lem}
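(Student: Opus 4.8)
The plan is to verify condition (mor-$a$ETS) for $\rho^e$ directly, in exactly the same spirit as the proof of condition ($a$ETS-2) in \Cref{lem_ass_int_to_ext} combined with the argument of \Cref{lem_mor_int_to_ext}. Fix $S_1, S_2, S_3 \in \Scal$ and, for brevity, write $S_{123} := S_1 \times S_2 \times S_3$ together with $B_i := pr^{123,*}_i A_i$ for $i = 1,2,3$. I would first unwind the definition of $\rho^e$ (\Cref{lem_mor_int_to_ext}) and of the external associativity constraints $a_1^e$ and $a_2^e$ (\Cref{lem_ass_int_to_ext}) in terms of the internal data $\rho$, $a_1$, $a_2$, the monoidality isomorphisms $m_1, m_2$, the transition isomorphisms $\theta$, and the relevant connection isomorphisms.

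The crucial structural point is that, after this substitution, the hexagonal diagram expressing (mor-$a$ETS) decomposes as
\begin{equation*}
	\begin{tikzcd}[font=\small]
		\bullet \arrow{rr}{\rho^e} \arrow{ddd}{a_2^e} \arrow{dr}{\sim} && \bullet \arrow{rr}{\rho^e} \isoarrow{d} && \bullet \arrow{ddd}{a_1^e} \arrow{dl}{\sim} \\
		& (R_{S_{123}}(B_1) \otimes_2 R_{S_{123}}(B_2)) \otimes_2 R_{S_{123}}(B_3) \arrow{r}{\rho} \arrow{d}{a_2} & R_{S_{123}}(B_1 \otimes_1 B_2) \otimes_2 R_{S_{123}}(B_3) \arrow{r}{\rho} & R_{S_{123}}((B_1 \otimes_1 B_2) \otimes_1 B_3) \arrow{d}{a_1} \\
		& R_{S_{123}}(B_1) \otimes_2 (R_{S_{123}}(B_2) \otimes_2 R_{S_{123}}(B_3)) \arrow{r}{\rho} & R_{S_{123}}(B_1) \otimes_2 R_{S_{123}}(B_2 \otimes_1 B_3) \arrow{r}{\rho} & R_{S_{123}}(B_1 \otimes_1 (B_2 \otimes_1 B_3)) \\
		\bullet \arrow{rr}{\rho^e} \arrow{ur}{\sim} && \bullet \arrow{rr}{\rho^e} \isoarrow{u} && \bullet \arrow{ul}{\sim}
	\end{tikzcd}
\end{equation*}
where the six unnamed isomorphisms are obtained by composing connection isomorphisms, internal monoidality isomorphisms (for $\otimes_1$ and $\otimes_2$) and internal $R$-monoidality isomorphisms along suitable paths. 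By construction the central rectangle is precisely the internal associativity diagram (mor-$a$ITS) for $\rho$, evaluated over the base $S_{123}$ at the objects $B_1, B_2, B_3$; hence it commutes by the hypothesis that $\rho$ is associative with respect to $a_1$ and $a_2$.

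It therefore remains to check that the six lateral pieces commute. This is where \Cref{lem-coherent_conn-mor-monoint} does all the work: each lateral piece is bounded by composites of the four admissible types of natural isomorphism between a fixed pair of iterated functors $\Hbb_1(S_1) \times \Hbb_1(S_2) \times \Hbb_1(S_3) \rightarrow \Hbb_2(S_{123})$, and the coherence lemma guarantees that any two such composites coincide. Thus the lateral pieces are simultaneously well-defined (independent of the chosen paths) and commutative, completing the argument.

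The main obstacle is purely organizational, namely arranging the decomposition so that the central rectangle is literally an instance of (mor-$a$ITS). Once the pulled-back objects $B_i$ are introduced and one passes through $S_{123}$, \Cref{lem-coherent_conn-mor-monoint} reduces everything to the associativity hypothesis on $\rho$, so no genuinely new computation is required; as in the analogous proofs above, the expansion of each individual lateral piece follows by the usual trick and the details may be omitted.
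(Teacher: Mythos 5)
Your proposal is correct and takes essentially the same route as the paper: the identical hexagonal decomposition with the central rectangle being an instance of (mor-$a$ITS) at the objects $B_i = pr^{123,*}_i A_i$ over $S_{123}$, with the six lateral pieces disposed of via \Cref{lem-coherent_conn-mor-monoint} and the usual expansion trick. One small precision: for the two lateral pieces bounded by $a_j^e$ and $a_j$ the coherence lemma alone does not suffice, since the associativity constraints are not among its four admissible types of isomorphism; there one must expand the definition of $a_j^e$ and additionally invoke naturality of $a_j$, which is exactly the ``usual trick'' you defer to in your final paragraph.
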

\begin{proof}
	Let us check that the external tensor structure $\rho^e$ satisfies condition (mor-$a$ETS): given $S_1, S_2, S_3 \in \Scal$, we have to show that the diagram of functors $\Hbb_1(S_1) \times \Hbb_1(S_2) \times \Hbb_1(S_3) \rightarrow \Hbb_2(S_1 \times S_2 \times S_3)$
    \begin{equation*}
		\begin{tikzcd}[font=\small]
			(R_{S_1}(A_1) \boxtimes_2 R_{S_2}(A_2)) \boxtimes_2 R_{S_3}(A_3) \arrow{r}{\rho^e} \arrow{d}{a_2^e} & R_{S_{12}}(A_1 \boxtimes_1 A_2) \boxtimes_2 R_{S_3}(A_3) \arrow{r}{\rho^e} & R_{S_{123}}((A_1 \boxtimes_1 A_2) \boxtimes_1 A_3) \arrow{d}{a_1^e} \\
			R_{S_1}(A_1) \boxtimes_2 (R_{S_2}(A_2) \boxtimes_2 R_{S_3}(A_3)) \arrow{r}{\rho^e} & R_{S_1}(A_1) \boxtimes_2 R_{S_{23}}(A_2 \boxtimes_1 A_3) \arrow{r}{\rho^e} & R_{S_{123}}(A_1 \boxtimes_1 (A_2 \boxtimes_1 A_3))
		\end{tikzcd}
	\end{equation*}
    is commutative. To this end, we decompose it as
	\begin{equation*}
		\begin{tikzcd}[font=\tiny]
			\bullet \arrow{rr}{\rho^e} \arrow{ddd}{a_2^e} \arrow{dr}{\sim} && \bullet \arrow{rr}{\rho^e} \isoarrow{d} && \bullet \arrow{ddd}{a_1^e} \arrow{dl}{\sim} \\
			& (R_{S_{123}}(B_1) \otimes_2 R_{S_{123}}(B_2)) \otimes_2 R_{S_{123}}(B_3) \arrow{r}{\rho} \arrow{d}{a_2} & R_{S_{123}}(B_1 \otimes_1 B_2) \otimes_2 R_{S_{123}}(B_3) \arrow{r}{\rho} & R_{S_{123}}((B_1 \otimes_1 B_2) \otimes_1 B_3) \arrow{d}{a_1} \\
			& R_{S_{123}}(B_1) \otimes_2 (R_{S_{123}}(B_2) \otimes_2 R_{S_{123}}(B_3)) \arrow{r}{\rho} & R_{S_{123}}(B_1) \otimes_2 R_{S_{123}}(B_2 \otimes_1 B_3) \arrow{r}{\rho} & R_{S_{123}}(B_1 \otimes_1 (B_2 \otimes_1 B_3)) \\
			\bullet \arrow{rr}{\rho^e} \arrow{ur}{\sim} && \bullet \arrow{rr}{\rho^e} \isoarrow{u} && \bullet \arrow{ul}{\sim}
		\end{tikzcd}
	\end{equation*} 
    where, for notational convenience, we have set $B_i := pr^{123,*}_i A_i$, $i = 1,2,3$. Here, the six unnamed natural isomorphisms are obtained by combining connection isomorphisms, internal monoidality isomorphisms and $R$-transition isomorphisms along suitable paths; by \Cref{lem-coherent_conn-mor-monoint}, we know that the definitions do not depend on the chosen paths. Since the central rectangle in the latter diagram is commutative by axiom (mor-$a$ITS), it now suffices to show that the remaining six lateral pieces are commutative as well. This can be achieved by the usual trick; we omit the details.
\end{proof}

\begin{lem}\label{lem:mor-asso_ext_to_int}
	For $j = 1,2$ let $(\boxtimes_j,m_j;a_j)$ be an associative external tensor structure on $\Hbb_j$, and let $(\otimes_j,m_j^i;a_j^i)$ denote the associative internal tensor structure obtained from it via \Cref{lem_ext_to_int} and \Cref{lem_ass_ext_to_int}.
	Moreover, let $\rho$ be an external tensor structure on $R$ (with respect to $(\boxtimes_1,m_1)$ and $(\boxtimes_2,m_2)$), and let $\rho^i$ denote the corresponding internal tensor structure on $R$ (with respect to $(\otimes_1,m_1^i)$ and $(\otimes_2,m_2^i)$).
	
	Suppose that $\rho$ is associative (with respect to $a_1$ and $a_2$). Then $\rho^i$ is associative (with respect to $a_1^i$ and $a_2^i$).
\end{lem}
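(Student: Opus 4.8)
The plan is to verify that the internal tensor structure $\rho^i$ satisfies condition (mor-$a$ITS) with respect to $a_1^i$ and $a_2^i$; that is, for every $S \in \Scal$ the hexagonal diagram of functors $\Hbb_1(S) \times \Hbb_1(S) \times \Hbb_1(S) \rightarrow \Hbb_2(S)$ built from the four instances of $\rho^i$ together with the two associativity isomorphisms $a_1^i$ and $a_2^i$ commutes. Following the template of \Cref{lem:mor-asso_int_to_ext}, with the roles of internal and external structures interchanged, I would decompose this diagram so that its commutativity is reduced to that of a single central rectangle, the latter being an instance of axiom (mor-$a$ETS) for $\rho$.

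Concretely, writing $B_i := pr^{123,*}_i A_i$ for $i = 1,2,3$, I would expand each occurrence of $\otimes_j$ via the defining formula $-\otimes_j- = \Delta_S^*(-\boxtimes_j-)$, each occurrence of $\rho^i$ via its definition as the composite $\theta \circ \rho$ under the identification coming from $\Delta_S^*$, and each occurrence of $a_j^i$ via its definition in \Cref{lem_ass_ext_to_int} as the $\Delta_S^{(3),*}$-image of $a_j$ conjugated by external monoidality isomorphisms. After this expansion the diagram acquires a central rectangle which is precisely the $\Delta_S^{(3),*}$-pullback of the (mor-$a$ETS) square for $\rho$ evaluated on the objects $B_1, B_2, B_3$; this rectangle commutes by hypothesis. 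The remaining region then splits into six lateral pieces, each bounded by unnamed natural isomorphisms obtained by composing connection isomorphisms, external monoidality isomorphisms, and $R$-transition isomorphisms along various paths.

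The key structural input is \Cref{lem-coherent_conn-mor-monoext}: it guarantees that any two such composites connecting the same pair of iterated functors coincide, so the unnamed isomorphisms are well-defined independently of the chosen paths, and each lateral piece commutes once one expands the two auxiliary isomorphisms involved along compatible paths — this is the same routine trick invoked throughout the preceding lemmas. I expect the only genuine subtlety to lie in correctly identifying the central rectangle, i.e.\ in checking that the bookkeeping of the projections $pr^{123}_i$ and of the diagonal $\Delta_S^{(3)}$ matches the shape of the (mor-$a$ETS) diagram. Once this identification is made, no essentially new computation is required, and the remaining verifications can be safely omitted exactly as in \Cref{lem:mor-asso_int_to_ext}.
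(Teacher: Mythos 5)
Your overall strategy is exactly the paper's: expand the (mor-$a$ITS) hexagon using the definitions of $\otimes_j$, $\rho^i$ and $a_j^i$, isolate a central rectangle that is an instance of (mor-$a$ETS), and dispose of the six lateral pieces via the path-independence guaranteed by \Cref{lem-coherent_conn-mor-monoext} together with the usual expansion trick. Same decomposition, same key coherence lemma, same division of labour.

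There is, however, a concrete error at the one point you yourself single out as the genuine subtlety, namely the identification of the central rectangle. You set $B_i := pr^{123,*}_i A_i$ and claim the central rectangle is the $\Delta_S^{(3),*}$-pullback of the (mor-$a$ETS) square evaluated on $B_1, B_2, B_3$. That notation is imported from the opposite direction (\Cref{lem:mor-asso_int_to_ext}), where the external product is built from the internal one via the projections $pr^{123}_i$; in the present ext-to-int direction no projections occur at all. Unwinding $A \otimes_1 B := \Delta_S^*(A \boxtimes_1 B)$ and the definition of $a_j^i$ from \Cref{lem_ass_ext_to_int} produces only diagonal morphisms ($\Delta_S$, $\Delta_S \times \id_S$, $\id_S \times \Delta_S$, $\Delta_S^{(3)}$), and the central rectangle is $\Delta_S^{(3),*}$ applied to the (mor-$a$ETS) diagram for the original objects $A, B, C \in \Hbb_1(S)$ with $S_1 = S_2 = S_3 = S$, i.e.\ it has entries such as $\Delta_S^{(3),*}\bigl((R_S(A) \boxtimes_2 R_S(B)) \boxtimes_2 R_S(C)\bigr)$ and $\Delta_S^{(3),*} R_{S \times S \times S}\bigl((A \boxtimes_1 B) \boxtimes_1 C\bigr)$. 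Your version does not even typecheck: the objects $B_i$ live in $\Hbb_1(S \times S \times S)$, so the (mor-$a$ETS) diagram evaluated on them consists of functors into $\Hbb_2(S^3 \times S^3 \times S^3)$, which cannot be pulled back along $\Delta_S^{(3)}: S \rightarrow S \times S \times S$ to yield the diagram you need. Once you replace $B_1, B_2, B_3$ by $A, B, C$ and drop the projections from the bookkeeping, your argument coincides with the paper's proof.
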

\begin{proof}
	Let us check that the internal tensor structure $\rho^i$ satisfies condition (mor-$a$ITS): given $S \in \Scal$, we have to show that the diagram of functors $\Hbb_1(S) \times \Hbb_1(S) \times \Hbb_1(S) \rightarrow \Hbb_2(S)$
	\begin{equation*}
		\begin{tikzcd}[font=\small]
			(R_S(A) \otimes_2 R_S(B)) \otimes_2 R_S(C) \arrow{r}{\rho^i} \arrow{d}{a_2^i} & R_S(A \otimes_1 B) \otimes_2 R_S(C) \arrow{r}{\rho^i} & R_S((A \otimes_1 B) \otimes_1 C) \arrow{d}{a_1^i} \\
			R_S(A) \otimes_2 (R_S(B) \otimes_2 R_S(C)) \arrow{r}{\rho^i} & R_S(A) \otimes_2 R_S(B \otimes_1 C) \arrow{r}{\rho^i} & R_S(A \otimes_1 (B \otimes_1 C))
		\end{tikzcd}
	\end{equation*}
	is commutative. To this end, we decompose it as
	\begin{equation*}
		\begin{tikzcd}[font=\tiny]
			\bullet \arrow{rr}{\rho^i} \arrow{ddd}{a_2^i} \arrow{dr}{\sim} && \bullet \arrow{rr}{\rho^i} \isoarrow{d} && \bullet \arrow{ddd}{a_1^i} \arrow{dl}{\sim} \\
			& \Delta_S^{(3),*} ((R_S(A) \boxtimes R_S(B)) \boxtimes R_S(C)) \arrow{r}{\rho} \arrow{d}{a_2} & \Delta_S^{(3),*} (R_{S \times S}(A \boxtimes B) \boxtimes R_S(C)) \arrow{r}{\rho} & \Delta_S^{(3),*} R_{S \times S \times S}((A \boxtimes B) \boxtimes C) \arrow{d}{a_1} \\
			& \Delta_S^{(3),*} (R_S(A) \boxtimes (R_S(B) \boxtimes R_S(C))) \arrow{r}{\rho} & \Delta_S^{(3),*} (R_S(A) \boxtimes R_{S \times S}(B \boxtimes C)) \arrow{r}{\rho} & \Delta_S^{(3),*} R_{S \times S \times S}(A \boxtimes (B \boxtimes C)) \\
			\bullet \arrow{rr}{\rho^i} \arrow{ur}{\sim} && \bullet \arrow{rr}{\rho^i} \isoarrow{u} && \bullet \arrow{ul}{\sim}
		\end{tikzcd}
	\end{equation*}
    Here, the six unnamed natural isomorphisms are obtained by combining connection isomorphisms, external monoidality isomorphisms and $R$-transition isomorphisms along suitable paths; by \Cref{lem-coherent_conn-mor-monoext}, we know that the resulting isomorphisms do not depend on the chosen paths. Since the central rectangle in the latter diagram is commutative by axiom (mor-$a$ETS), it now suffices to show that the remaining six lateral pieces are commutative as well. This can be achieved by the usual trick; we omit the details. 
\end{proof}

In the same spirit, let us now study the notion of symmetry for internal and external tensor structures on morphisms.

\begin{defn}
	Let $(\otimes_1,m_1)$ and $(\otimes_2,m_2)$ be internal tensor structures on $\Hbb_1$ and $\Hbb_2$, respectively; let $\rho$ be an internal tensor structure on $R$ (with respect to $(\otimes_1,m_1)$ and $(\otimes_2,m_2)$).
	
	Suppose that $c_1$ and $c_2$ are commutativity constraints on $(\otimes_1,m_1)$ and $(\otimes_2,m_2)$, respectively. We say that $\rho$ is \textit{symmetric} (with respect to $c_1$ and $c_2$) if it satisfies the following additional condition:
	\begin{enumerate}
		\item[(mor-$c$ITS)] For every $S \in \Scal$, the diagram of functors $\Hbb_1(S) \times \Hbb_1(S) \rightarrow \Hbb_2(S)$
		\begin{equation*}
			\begin{tikzcd}
				R_S(A) \otimes_2 R_S(B) \arrow{r}{\rho} \arrow{d}{c_2} & R_S(A \otimes_1 B) \arrow{d}{c_1} \\
				R_S(B) \otimes_2 R_S(A) \arrow{r}{\rho} & R_S(B \otimes_1 A)
			\end{tikzcd}
		\end{equation*}
		is commutative.
	\end{enumerate}
\end{defn}

\begin{defn}
	Let $(\boxtimes_1,m_1)$ and $(\boxtimes_2,m_2)$ be external tensor structures on $\Hbb_1$ and $\Hbb_2$, respectively; let $\rho$ be an external tensor structure on $R$ (with respect to $(\boxtimes_1,m_1)$ and $(\boxtimes_2,m_2)$).
	
	Let $c_1$ and $c_2$ be external commutativity constraints on $(\boxtimes_1,m_1)$ and $(\boxtimes_2,m_2)$, respectively. We say that $\rho$ is \textit{symmetric} (with respect to $c_1$ and $c_2$) if it satisfies the following additional condition:
	\begin{enumerate}
		\item[(mor-$c$ETS)] For every $S_1, S_2 \in \Scal$, the diagram of functors $\Hbb_1(S_1) \times \Hbb_1(S_2) \rightarrow \Hbb_2(S_1 \times S_2)$
		\begin{equation*}
			\begin{tikzcd}
				R_{S_1}(A_1) \boxtimes_2 R_{S_2}(A_2) \arrow{rr}{\rho} \arrow{d}{c_2} && R_{S_1 \times S_2}(A_1 \boxtimes_1 A_2) \arrow{d}{c_1} \\
				\tau^*(R_{S_2}(A_2) \boxtimes_2 R_{S_1}(A_1)) \arrow{r}{\rho} & \tau^* R_{S_2 \times S_1}(A_2 \boxtimes_1 A_1) \arrow{r}{\theta} & R_{S_1 \times S_2} \tau^*(A_2 \boxtimes_1 A_1)
			\end{tikzcd}
		\end{equation*}
		is commutative.
	\end{enumerate}
\end{defn}

\begin{lem}\label{lem:mor-symm_int_to_ext}
	For $j = 1,2$ let $(\otimes_j,m_j;c_j)$ be a symmetric internal tensor structure on $\Hbb_j$, and let $(\boxtimes_j,m_j^e;c_j^e)$ denote the symmetric external tensor structure obtained from it via \Cref{lem_int_to_ext} and \Cref{lem_symm_int_to_ext}.
	Moreover, let $\rho$ be an internal tensor structure on $R$ (with respect to $(\otimes_1,m_1)$ and $(\otimes_2,m_2)$), and let $\rho^e$ denote the corresponding external tensor structure on $R$ (with respect to $(\boxtimes_1,m_1^e)$ and $(\boxtimes_2,m_2^e)$).
	
	Suppose that $\rho$ is symmetric (with respect to $c_1$ and $c_2$). Then $\rho^e$ is symmetric (with respect to $c_1^e$ and $c_2^e$).
\end{lem}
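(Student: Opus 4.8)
The plan is to follow verbatim the template established for the associative case in \Cref{lem:mor-asso_int_to_ext}. Concretely, I would verify that $\rho^e$ satisfies condition (mor-$c$ETS) by fixing $S_1, S_2 \in \Scal$, expanding the external commutativity square using the defining composites of $\rho^e$ (from \Cref{lem_mor_int_to_ext}) together with those of $c_1^e$ and $c_2^e$ (from \Cref{lem_symm_int_to_ext}), and then decomposing the resulting large diagram into a single central square — which will be an instance of the internal hypothesis (mor-$c$ITS) — surrounded by lateral pieces consisting of \emph{unnamed} natural isomorphisms built from connection isomorphisms, internal monoidality isomorphisms and $R$-transition isomorphisms.

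First I would introduce the shorthand $B_i := pr^{12,*}_i A_i$ for $i = 1,2$ and set $S_{12} := S_1 \times S_2$. After performing the $\theta$-step in the definition of $\rho^e$ and the connection-based identifications hidden in $c_1^e$ and $c_2^e$, the external objects $R_{S_1}(A_1) \boxtimes_2 R_{S_2}(A_2)$ and $R_{S_{12}}(A_1 \boxtimes_1 A_2)$ are carried by canonical isomorphisms onto $R_{S_{12}}(B_1) \otimes_2 R_{S_{12}}(B_2)$ and $R_{S_{12}}(B_1 \otimes_1 B_2)$. The central square is then precisely the diagram of (mor-$c$ITS) evaluated at $S_{12}$ with arguments $B_1$ and $B_2$; it carries the entire content of the argument and commutes by the symmetry hypothesis on $\rho$. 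The remaining lateral pieces encode, on the one hand, the $\theta$-transitions and monoidality identifications defining $\rho^e$, and on the other hand the permutation-pullback $\tau^*$ identifications and the passage from $c$ to $c^e$ supplied by \Cref{lem_symm_int_to_ext}. Each such lateral piece is a composite of connection isomorphisms, internal monoidality isomorphisms and $R$-transition isomorphisms, so by the coherence result \Cref{lem-coherent_conn-mor-monoint} it is independent of the chosen path; hence each commutes by the usual trick of expanding its two auxiliary isomorphisms in whichever way is most convenient.

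The one genuinely new feature relative to the associative case — and the step I expect to require the most care — is the bookkeeping of the permutation isomorphism $\tau = \tau_{(12)}$ appearing in the external commutativity constraints and its interaction with the transition isomorphisms $\theta$. One must confirm that conjugating the internal switch $c$ by the $\tau^*$-identifications reproduces exactly $c^e$, and that $\theta$ transports correctly past $\tau^*$ so that the permuted inverse images $pr^{21,*}_i$ and $pr^{12,*}_i$ match up at every corner. Once one trusts \Cref{lem-coherent_conn-mor-monoint} to absorb all these permutation-and-transition coherences into a single well-defined isomorphism, this difficulty dissolves into routine diagram chasing, and the only substantive point is to arrange the central square so that its four corners coincide with the images of the external objects under the lateral isomorphisms.
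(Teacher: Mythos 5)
Your proposal matches the paper's proof essentially verbatim: the paper decomposes the (mor-$c$ETS) diagram around a central square that is exactly (mor-$c$ITS) at $S_1 \times S_2$ applied to $B_i = pr^{12,*}_i A_i$, with four lateral pieces of unnamed isomorphisms built from connection, internal monoidality and $R$-transition isomorphisms, handled via \Cref{lem-coherent_conn-mor-monoint} and the usual trick. Your concern about the $\tau^*$-bookkeeping is absorbed into those lateral pieces (one of which contains the $\theta$ edge of the bottom row), exactly as you anticipated.
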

\begin{proof}
	Let us check that the external tensor structure $\rho^e$ satisfies condition (mor-$c$ETS): given $S_1,S_2 \in \Scal$, we have to show that the diagram of functors $\Hbb_1(S_1) \times \Hbb_1(S_2) \rightarrow \Hbb_2(S_1 \times S_2)$
	\begin{equation*}
		\begin{tikzcd}
			R_{S_1}(A_1) \boxtimes_2 R_{S_2}(A_2) \arrow{rr}{\rho^e} \arrow{d}{c_2^e} && R_{S_1 \times S_2}(A_1 \boxtimes_1 A_2) \arrow{d}{c_1^e} \\
			\tau^*(R_{S_2}(A_2) \boxtimes_2 R_{S_1}(A_1)) \arrow{r}{\rho^e} & \tau^* R_{S_2 \times S_1}(A_2 \boxtimes_1 A_1) \arrow{r}{\theta} & R_{S_1 \times S_2} \tau^* (A_2 \boxtimes_1 A_1)
		\end{tikzcd}
	\end{equation*}
	is commutative. To this end, we decompose it as
	\begin{equation*}
		\begin{tikzcd}[font=\small]
			\bullet \arrow{rrrr}{\rho^e} \arrow{ddd}{c_2^e} \arrow{dr}{\sim} &&&& \bullet \arrow{ddd}{c_1^e} \arrow{dl}{\sim} \\
			& R_{S_1 \times S_2}(pr^{12,*}_1 A_1) \otimes_2 R_{S_1 \times S_2}(pr^{12,*}_2 A_2) \arrow{rr}{\rho} \arrow{d}{c_2} && R_{S_1 \times S_2}(pr^{12,*}_1 A_1 \otimes_1 pr^{12,*}_2 A_2) \arrow{d}{c_1} \\
			& R_{S_1 \times S_2}(pr^{12,*}_2 A_2) \otimes_2 R_{S_1 \times S_2}(pr^{12,*}_1 A_1) \arrow{rr}{\rho} && R_{S_1 \times S_2}(pr^{12,*}_2 A_2 \otimes_1 pr^{12,*}_1 A_1) \\
			\bullet \arrow{rr}{\rho^e} \arrow{ur}{\sim} && \bullet \arrow{rr}{\theta} && \bullet \arrow{ul}{\sim}
		\end{tikzcd}
	\end{equation*}
    Here, the four unnamed natural isomorphisms are obtained by combining connection isomorphisms, internal monoidality isomorphisms and $R$-transition isomorphisms along suitable paths; by \Cref{lem-coherent_conn-mor-monoint}, we know that the definitions do not depend on the chosen paths. Since the central sauqre is commutative by axiom (mor-$c$ITS), it now suffices to show that the remaining four lateral pieces are commutative as well. This can be achieved by the usual trick; we omit the details.
\end{proof}

\begin{lem}\label{lem:mor-symm_ext_to_int}
	For $j = 1,2$ let $(\boxtimes_j,m_j;c_j)$ be a symmetric external tensor structure on $\Hbb_j$, and let $(\otimes_j,m_j^i;c_j^i)$ denote the symmetric internal tensor structure obtained from it via \Cref{lem_ext_to_int} and \Cref{lem_symm_ext_to_int}.
	Moreover, let $\rho$ be an external tensor structure on $R$ (with respect to $(\boxtimes_1,m_1)$ and $(\boxtimes_2,m_2)$), and let $\rho^i$ denote the corresponding internal tensor structure on $R$ (with respect to $(\otimes_1,m_1^i)$ and $(\otimes_2,m_2^i)$).
	
	Suppose that $\rho$ is symmetric (with respect to $c_1$ and $c_2$). Then $\rho^i$ is associative (with respect to $c_1^i$ and $c_2^i$).
\end{lem}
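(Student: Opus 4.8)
The plan is to verify that the internal tensor structure $\rho^i$ satisfies condition (mor-$c$ITS) with respect to $c_1^i$ and $c_2^i$ — that is, that $\rho^i$ is \emph{symmetric}, which is evidently the intended conclusion (the word ``associative'' in the statement being a slip). Concretely, for every $S \in \Scal$ I must establish the commutativity of the square of functors $\Hbb_1(S) \times \Hbb_1(S) \rightarrow \Hbb_2(S)$
\begin{equation*}
	\begin{tikzcd}
		R_S(A) \otimes_2 R_S(B) \arrow{r}{\rho^i} \arrow{d}{c_2^i} & R_S(A \otimes_1 B) \arrow{d}{c_1^i} \\
		R_S(B) \otimes_2 R_S(A) \arrow{r}{\rho^i} & R_S(B \otimes_1 A).
	\end{tikzcd}
\end{equation*}
Following the pattern of the earlier proofs in this section — and in exact analogy with \Cref{lem:mor-symm_int_to_ext}, of which this is the mirror image — I would first unwind all four arrows using the definition of $\rho^i$ from \Cref{lem_mor_ext_to_int} and of $c_j^i$ from \Cref{lem_symm_ext_to_int}. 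Each arrow is a short composite built from the diagonal pullback $\Delta_S^*$, the external data $\rho$ and $c_j$, and a transition isomorphism $\theta$, so the square expands into a larger diagram whose edges involve only connection isomorphisms, external monoidality isomorphisms, $R$-transition isomorphisms, and the external commutativity constraints $c_1, c_2$.

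Next I would decompose the expanded diagram into a central rectangle surrounded by lateral pieces. The central rectangle will be precisely the instance of condition (mor-$c$ETS) for $\rho$, specialised to $S_1 = S_2 = S$ and pulled back along $\Delta_S^*$; it is therefore commutative by the hypothesis that $\rho$ is symmetric with respect to $c_1$ and $c_2$. The surrounding lateral pieces are natural isomorphisms assembled from connection isomorphisms, external monoidality isomorphisms, and $R$-transition isomorphisms along various paths with fixed endpoints. By \Cref{lem-coherent_conn-mor-monoext}, any two such composites sharing the same source and target coincide, so I can fill in each lateral piece by the usual trick: expand its two boundary isomorphisms in whichever way is most convenient and invoke the coherence result to conclude they agree. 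I would omit these routine verifications, exactly as in the companion lemmas.

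The one genuinely delicate point — just as in \Cref{lem_symm_ext_to_int} and \Cref{prop_symm} — is the bookkeeping of the permutation isomorphism $\tau$ and its interaction with the diagonal. The external constraint $c_j$ carries a factor $\tau^*: \Hbb_j(S \times S) \rightarrow \Hbb_j(S \times S)$, whereas the internal constraint $c_j^i$ carries none; the reconciliation rests on the identity $\tau \circ \Delta_S = \Delta_S$, which makes $\Delta_S^* \tau^*$ collapse to $\Delta_S^*$ modulo the canonical connection isomorphisms. I expect the main obstacle to be threading these $\tau$-collapses together with the correctly placed $\theta$ transition isomorphisms, so that the central rectangle genuinely reads as the $\Delta_S^*$-pullback of (mor-$c$ETS) and the terminal $\theta$ appearing in that axiom is matched against the $\theta$ occurring in the definition of $\rho^i$. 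Once the central piece is correctly identified, the lateral pieces present no further difficulty thanks to \Cref{lem-coherent_conn-mor-monoext}, and the commutativity of the target square follows.
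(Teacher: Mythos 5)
Your proposal is correct and follows essentially the same route as the paper: you rightly read the conclusion as symmetry (condition (mor-$c$ITS), the word ``associative'' being a slip), decompose the square so that the central rectangle is the $\Delta_S^*$-pullback of the (mor-$c$ETS) instance at $S_1 = S_2 = S$, and dispatch the lateral pieces via the coherence result of \Cref{lem-coherent_conn-mor-monoext}. Your remarks on the $\tau \circ \Delta_S = \Delta_S$ collapse and the matching of the $\theta$ from (mor-$c$ETS) with the $\theta$ in the definition of $\rho^i$ are exactly the bookkeeping the paper's diagram carries out.
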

\begin{proof}
	Let us check that the internal tensor structure $\rho^i$ satisfies condition (mor-$c$ITS): given $S \in \Scal$, we have to show that the outer part of the diagram of functors $\Hbb_1(S) \times \Hbb_1(S) \rightarrow \Hbb_2(S)$
	\begin{equation*}
		\begin{tikzcd}
			R_S(A) \otimes_2 R_S(B) \arrow{r}{\rho^i} \arrow{d}{c_2^i} & R_S(A \otimes_1 B) \arrow{d}{c_1^i} \\
			R_S(B) \otimes_2 R_S(A) \arrow{r}{\rho^i} & R_S(B \otimes_1 A)
		\end{tikzcd}
	\end{equation*}
	is commutative. To this end, we decompose it as
	\begin{equation*}
		\begin{tikzcd}[font=\small]
			\bullet \arrow{rrrr}{\rho^i} \arrow{ddd}{c_2^i} \arrow{dr}{\sim} &&&& \bullet \arrow{ddd}{c_1^i} \arrow{dl}{\sim} \\
			& \Delta_S^*(R_S(A) \boxtimes_2 R_S(B)) \arrow{rr}{\rho} \arrow{d}{c_2} && \Delta_S^* R_{S \times S}(A \boxtimes_1 B) \arrow{d}{c_1} \\
			& \Delta_S^* \tau^* (R_S(B) \boxtimes_2 R_S(A)) \arrow{r}{\rho} & \Delta_S^* \tau^* R_{S \times S}(B \boxtimes A) \arrow{r}{\theta} & \Delta_S^* R_{S \times S} (\tau^*(B \boxtimes_1 A)) \\
			\bullet \arrow{rrrr}{\rho^i} \arrow{ur}{\sim} &&&& \bullet \arrow{ul}{\sim}
		\end{tikzcd}
	\end{equation*}
    Here, the four unnamed natural isomorphisms are obtained by combining connection isomorphisms, external monoidality isomorphisms and $R$-transition isomorphisms along suitable paths; by \Cref{lem-coherent_conn-mor-monoext}, we know that the definitions do not depend on the chosen paths. Since the central rectangle in the latter diagram is commutative by axiom (mor-$c$ETS), it now suffices to show that the remaining four lateral pieces are commutative as well. This can be achieved by the usual trick; we omit the details.
\end{proof}

Finally, let us study the notion of unitarity for internal and external tensor structures on morphisms.

\begin{defn}
	Let $(\otimes_1,m_1)$ and $(\otimes_2,m_2)$ be internal tensor structures on $\Hbb_1$ and $\Hbb_2$, respectively; let $\rho$ be an internal tensor structure on $R$ (with respect to $(\otimes_1,m_1)$ and $(\otimes_2,m_2)$).
	
	Let $u_1$ and $u_2$ be internal unit constraints with unit sections $\unit^{(1)}$ and $\unit^{(2)}$ on $(\otimes_1,m_1)$ and $(\otimes_2,m_2)$, respectively, and let $w: R(\unit^{(1)}) \xrightarrow{\sim} \unit^{(2)}$ be an isomorphism between section of $\Hbb_2$. We say that $\rho$ is \textit{unitary via $w$} (with respect to $u_1$ and $u_2$) if it satisfies the following additional condition:
	\begin{enumerate}
		\item[(mor-$u$ITS)] For every $S \in \Scal$, the two diagrams of functors $\Hbb_1(S) \rightarrow \Hbb_2(S)$
		\begin{equation*}
			\begin{tikzcd}
				R_S(A) \otimes_2 R_S(\unit^{(1)}_S) \arrow{r}{w_S} \arrow{d}{\rho} & R_S(A) \otimes_2 \unit^{(2)}_S \arrow{d}{u_{2,r}} \\
				R_S(A \otimes_1 \unit^{(1)}_S) \arrow{r}{u_{1,r}} & R_S(A)
			\end{tikzcd}
			\qquad
			\begin{tikzcd}
				R_S(\unit^{(1)}_S) \otimes_2 R_S(B) \arrow{r}{w_S} \arrow{d}{\rho} & \unit^{(2)}_S \otimes_2 R_S(B) \arrow{d}{u_{2,l}} \\
				R_S(\unit^{(1)}_S \otimes_1 B) \arrow{r}{u_{1,l}} & R_S(B)
			\end{tikzcd}
		\end{equation*}
		are commutative.
	\end{enumerate}
\end{defn}

\begin{defn}
	Let $(\boxtimes_1,m_1)$ and $(\boxtimes_2,m_2)$ be external tensor structures on $\Hbb_1$ and $\Hbb_2$, respectively; let $\rho$ be an external tensor structure on $R$ (with respect to $(\boxtimes_1,m_1)$ and $(\boxtimes_2,m_2)$).
	
	Let $u_1$ and $u_2$ be external unit constraints with unit sections $\unit^{(1)}$ and $\unit^{(2)}$ on $(\boxtimes_1,m_1)$ and $(\boxtimes_2,m_2)$, respectively, and let $w: R(\unit^{(1)}) \xrightarrow{\sim} \unit^{(2)}$ be an isomorphism between section of $\Hbb_2$. We say that $\rho$ is \textit{unitary via $w$} (with respect to $u_1$ and $u_2$) if it satisfies the following additional condition:
	\begin{enumerate}
		\item[(mor-$u$ETS)] For every $S_1, S_2 \in \Scal$, the diagram of functors $\Hbb_1(S_1) \rightarrow \Hbb_2(S_1 \times S_2)$
		\begin{equation*}
			\begin{tikzcd}
				R_{S_1}(A_1) \boxtimes_2 R_{S_2}(\unit^{(1)}_{S_2}) \arrow{r}{w} \arrow{d}{\rho} & R_{S_1}(A_1) \boxtimes_2 \unit^{(2)}_{S_2} \arrow{r}{u_r^{(2)}} & pr^{12,*}_1 R_{S_1}(A_1) \arrow{d}{\theta} \\
				R_{S_1 \times S_2}(A_1 \boxtimes_1 \unit^{(1)}_{S_2}) \arrow{rr}{u_r^{(1)}} && R_{S_1 \times S_2}(pr^{12,*}_1 A_1)
			\end{tikzcd}
		\end{equation*}
		and the diagram of functors $\Hbb_1(S_2) \rightarrow \Hbb_2(S_1 \times S_2)$
		\begin{equation*}
			\begin{tikzcd}
				R_{S_1}(\unit^{(1)}_{S_1}) \boxtimes_2 R_{S_2}(A_2) \arrow{r}{w} \arrow{d}{\rho} & \unit^{(2)}_{S_1} \boxtimes_2 R_{S_2}(A_2) \arrow{r}{u_l^{(2)}} & pr^{12,*}_2 R_{S_2}(A_2) \arrow{d}{\theta} \\
				R_{S_1 \times S_2}(\unit^{(1)}_{S_1} \boxtimes_1 A_2) \arrow{rr}{u_l^{(1)}} && R_{S_1 \times S_2}(pr^{12,*}_2 A_2)
			\end{tikzcd}
		\end{equation*}
		are commutative.
	\end{enumerate}
\end{defn}

\begin{lem}\label{lem:mor-unit_int_to_ext}
	For $j = 1,2$ let $(\otimes_j,m_j;u_j)$ be a unitary internal tensor structure on $\Hbb_j$ with unit section $\unit^{(j)}$, and let $(\boxtimes_j,m_j^e;u_j^e)$ denote the unitary external tensor structure obtained from it via \Cref{lem_ext_to_int} and \Cref{lem:unit-int_to_ext}.
	Moreover, let $\rho$ be an external tensor structure on $R$ (with respect to $(\otimes_1,m_1)$ and $(\otimes_2,m_2)$), and let $\rho^e$ denote the corresponding internal tensor structure on $R$ (with respect to $(\otimes_1,m_1^e)$ and $(\otimes_2,m_2^e)$).
	
	Suppose that $\rho$ is unitary via $w$ (with respect to $u_1$ and $u_2$). Then $\rho^e$ is unitary via $w$ (with respect to $u_1^e$ and $u_2^e$).
\end{lem}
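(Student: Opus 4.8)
We have to check that the external tensor structure $\rho^e$ satisfies condition (mor-$u$ETS). This condition consists of two diagrams; exactly as in the proofs of \Cref{lem:mor-asso_int_to_ext} and \Cref{lem:mor-symm_int_to_ext}, the two cases are completely analogous, so we only treat the first one (governing the right unit isomorphism). Thus, given $S_1, S_2 \in \Scal$, we have to show that the diagram of functors $\Hbb_1(S_1) \rightarrow \Hbb_2(S_1 \times S_2)$
\begin{equation*}
	\begin{tikzcd}[font=\small]
		R_{S_1}(A_1) \boxtimes_2 R_{S_2}(\unit^{(1)}_{S_2}) \arrow{r}{w} \arrow{d}{\rho^e} & R_{S_1}(A_1) \boxtimes_2 \unit^{(2)}_{S_2} \arrow{r}{u_r^{(2),e}} & pr^{12,*}_1 R_{S_1}(A_1) \arrow{d}{\theta} \\
		R_{S_1 \times S_2}(A_1 \boxtimes_1 \unit^{(1)}_{S_2}) \arrow{rr}{u_r^{(1),e}} && R_{S_1 \times S_2}(pr^{12,*}_1 A_1)
	\end{tikzcd}
\end{equation*}
is commutative.

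The plan is to follow the same route as in the previous two lemmas. First I would unwind the definitions: substituting the definition of $\rho^e$ from \Cref{lem_mor_int_to_ext} and of the external right unit isomorphisms $u^{(j),e}_r$ from \Cref{lem:unit-int_to_ext} (both of which are composites built out of internal $R$-monoidality isomorphisms $\rho$, $R$-transition isomorphisms $\theta$, and the section structure maps $\unit^*$), the diagram above expands into a larger figure all of whose arrows are either such elementary isomorphisms, instances of the isomorphism of sections $w$, or instances of the internal right unit isomorphisms $u_{j,r}$. Next I would decompose this enlarged figure so as to isolate a central piece which is precisely an instance of axiom (mor-$u$ITS) for $\rho$, applied to the pullback $pr^{12,*}_1 A_1$ over the base $S_1 \times S_2$ and the unit object $\unit^{(1)}_{S_1 \times S_2}$; this central piece is commutative by hypothesis.

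The remaining peripheral pieces would then be commutative for elementary reasons. Those involving only connection, internal monoidality and $R$-transition isomorphisms are commutative by \Cref{lem-coherent_conn-mor-monoint}, which guarantees that the auxiliary isomorphisms are independent of the chosen composition paths; those involving the section structure maps $\unit^*$ are commutative by axiom ($\Scal$-sect) (recalling that the image section $R(\unit^{(1)})$ is built from $\theta$ and $\unit^*$ compatibly via (mor-$\Scal$-fib)); and the rest are commutative by naturality. This follows by the usual trick; I would omit the routine details.

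The main obstacle I anticipate is the correct handling of the isomorphism of sections $w \colon R(\unit^{(1)}) \xrightarrow{\sim} \unit^{(2)}$. Since $w$ enters the external unit constraints $u^e_j$ only through its composite with the section structure maps $\unit^*$, one has to use the fact that $w$ is a \emph{morphism of sections} --- hence compatible with the inverse-image structure isomorphisms via condition (mor-$\Scal$-sect) --- in order to commute its relevant component past the projection-pullbacks $pr^{12,*}_i$ appearing in the definition of $u^e_r$. Keeping track of which incarnation of the unit section ($\unit^{(1)}_{S_2}$, its pullback $pr^{12,*}_2 \unit^{(1)}_{S_2}$, or $\unit^{(1)}_{S_1 \times S_2}$) is involved at each stage is the only genuinely delicate bookkeeping; once this is settled, the extraction of the central (mor-$u$ITS) square is routine.
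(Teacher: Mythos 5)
Your proposal follows essentially the same route as the paper's own proof: reduce to the first diagram by symmetry, unwind the definitions of $\rho^e$ and of the external unit isomorphisms, and decompose the resulting figure so that the central piece is exactly the instance of axiom (mor-$u$ITS) applied to $pr^{12,*}_1 A_1$ and the unit over $S_1 \times S_2$, with the peripheral pieces disposed of by naturality and the section axioms --- in particular condition (mor-$\Scal$-sect) for $w$, which you correctly single out as the one delicate bit of bookkeeping. This matches the paper's argument, so the proposal is correct.
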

\begin{proof}
	Let us check that the external tensor structure $\rho^e$ satisfies condition (mor-$u$ETS): given $S_1, S_2 \in \Scal$, we have to show that the diagram of functors $\Hbb_1(S_1) \rightarrow \Hbb_2(S_1 \times S_2)$
	\begin{equation*}
		\begin{tikzcd}
			R_{S_1}(A_1) \boxtimes_2 R_{S_2}(\unit^{(1)}_{S_2}) \arrow{r}{w} \arrow{d}{\rho^e} & R_{S_1}(A_1) \boxtimes_2 \unit^{(2)}_{S_2} \arrow{r}{u_{2,r}^e} & pr^{12,*}_1 R_{S_1}(A_1) \arrow{d}{\theta} \\
			R_{S_1 \times S_2}(A_1 \boxtimes_1 \unit^{(1)}_{S_2}) \arrow{rr}{u_{1,r}^e} && R_{S_1 \times S_2}(pr^{12,*}_1 A_1)
		\end{tikzcd}
	\end{equation*}
	and the diagram of functors $\Hbb_1(S_2) \rightarrow \Hbb_2(S_1 \times S_2)$
	\begin{equation*}
		\begin{tikzcd}
			R_{S_1}(\unit^{(1)}_{S_1}) \boxtimes_2 R_{S_2}(A_2) \arrow{r}{w} \arrow{d}{\rho^e} & \unit^{(2)}_{S_1} \boxtimes_2 R_{S_2}(A_2) \arrow{r}{u_{2,l}^e} & pr^{12,*}_2 R_{S_2}(A_2) \arrow{d}{\theta} \\
			R_{S_1 \times S_2}(\unit^{(1)}_{S_1} \boxtimes_1 A_2) \arrow{rr}{u_{1,l}^e} && R_{S_1 \times S_2}(pr^{12,*}_2 A_2)
		\end{tikzcd}
	\end{equation*}
	are commutative. We only show the commutativity of the first diagram; the case of the second diagram is completely analogous. Unwinding the various definitions, we obtain the more explicit diagram
	\begin{equation*}
		\begin{tikzcd}[font=\small]
			pr^{12,*}_1 R_{S_1}(A_1) \otimes_2 pr^{12,*}_2 R_{S_2}(\unit_{S_2}^{(1)}) \arrow{r}{w} \arrow{d}{\theta} & pr^{12,*}_1 R_{S_1}(A_1) \otimes_2 pr^{12,*}_2 \unit_{S_2}^{(2)} \arrow{r}{\unit^{(2),*}} & pr^{12,*}_1 R_{S_1}(A_1) \otimes_2 \unit_{S_1 \times S_2}^{(2)} \arrow{d}{u_r^{(2)}} \\
			R_{S_1 \times S_2} (pr^{12,*}_1 A_1) \otimes_2 R_{S_1 \times S_2} (pr^{12,*}_2 \unit_{S_2}^{(1)}) \arrow{d}{\rho} && pr^{12,*}_1 R_{S_1}(A_1) \arrow{d}{\theta} \\
			R_{S_1 \times S_2} (pr^{12,*}_1 A_1 \otimes_1 pr^{12,*}_2 \unit_{S_2}^{(1)}) \arrow{r}{\unit^{(1),*}} & R_{S_1 \times S_2} (pr^{12,*}_1 A_1 \otimes \unit_{S_1 \times S_2}^{(1)}) \arrow{r}{u_r^{(1)}} & R_{S_1 \times S_2} (pr^{12,*}_1 A_1)
		\end{tikzcd}
	\end{equation*}
    that we decompose as
    \begin{equation*}
    	\begin{tikzcd}[font=\small]
    		\bullet \arrow{rr}{w} \arrow{d}{\theta} \arrow{dr}{\theta} && \bullet \arrow{r}{\unit^{(2),*}} \arrow{d}{\theta} & \bullet \arrow{d}{u_r^{(2)}} \arrow[bend left]{ddl}{\theta} \\
    		\bullet \arrow{dd}{\rho} \arrow{r}{\theta} \arrow{dr}{\unit^{(1),*}} & R_{S_1 \times S_2} (pr^{12,*}_1 A_1) \otimes_2 pr^{12,*}_2 R_{S_2} (\unit_{S_2}^{(1)}) \arrow{r}{w} & R_{S_1 \times S_2} (pr^{12,*}_1 A_1) \otimes_2 pr^{12,*}_2 \unit_{S_2}^{(2)} \arrow{d}{\unit^{(2),*}} & \bullet \arrow{dd}{\theta} \\
    		& R_{S_1 \times S_2} (pr^{12,*}_1 A_1) \otimes_2 R_{S_1 \times S_2} (\unit_{S_1 \times S_2}^{(1)}) \arrow{r}{w} \arrow{dr}{\rho} & R_{S_1 \times S_2} (pr^{12,*}_1 A_1) \otimes_2 \unit_{S_1 \times S_2}^{(2)} \arrow{dr}{u_r^{(2)}} \\
    		\bullet \arrow{rr}{\unit^{(1),*}} && \bullet \arrow{r}{u_r^{(1)}} & \bullet
    	\end{tikzcd}
    \end{equation*}
    Here, the lower central parallelogram is commutative by axiom (mor-$u$ITS) while the remaining pieces are commutative by naturality and by axiom (mor-$\Scal$-sect). This proves the claim.
\end{proof}

\begin{lem}\label{lem:mor-unit_ext_to_int}
	For $j = 1,2$ let $(\boxtimes_j,m_j;u_j)$ be a unitary external tensor structure on $\Hbb_j$ with unit section $\unit^{(j)}$, and let $(\otimes_j,m_j^i;u_j^i)$ denote the unitary internal tensor structure obtained from it via \Cref{lem_ext_to_int} and \Cref{lem:unit-ext_to_int}.
	Moreover, let $\rho$ be an external tensor structure on $R$ (with respect to $(\boxtimes_1,m_1)$ and $(\boxtimes_2,m_2)$), and let $\rho^i$ denote the corresponding internal tensor structure on $R$ (with respect to $(\otimes_1,m_1^i)$ and $(\otimes_2,m_2^i)$).
	
	Suppose that $\rho$ is unitary via $w$ (with respect to $u_1$ and $u_2$). Then $\rho^i$ is unitary via $w$ (with respect to $u_1^i$ and $u_2^i$).
\end{lem}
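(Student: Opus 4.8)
The plan is to mirror the proof of \Cref{lem:mor-unit_int_to_ext}, running the same reduction in the opposite direction: I would deduce condition (mor-$u$ITS) for the internal tensor structure $\rho^i$ from condition (mor-$u$ETS) for the given external tensor structure $\rho$, pulled back along the diagonal embedding $\Delta_S$. As in every previous unit lemma, condition (mor-$u$ITS) comprises two diagrams, one governed by the right unit isomorphism $u_r$ and one by the left unit isomorphism $u_l$; these are completely symmetric, so I would treat only the $u_r$-diagram and remark that the $u_l$-diagram is analogous.

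First I would unwind all the relevant constructions. Recall from \Cref{lem_mor_ext_to_int} that $\rho^i_S$ is the composite $\Delta_S^*(R_S(A) \boxtimes_2 R_S(B)) \xrightarrow{\rho} \Delta_S^* R_{S \times S}(A \boxtimes_1 B) \xrightarrow{\theta} R_S(\Delta_S^*(A \boxtimes_1 B))$, while by \Cref{lem:unit-ext_to_int} the internal unit isomorphisms $u_{1,r}^i$ and $u_{2,r}^i$ are obtained by pulling the external isomorphisms $u_r^{(1)}$ and $u_r^{(2)}$ back along $\Delta_S$ and using the identifications $\Delta_S^* pr_1^{12,*} = \id$. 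Substituting these composites into the $u_r$-diagram of (mor-$u$ITS), together with the map $w_S$ on the unit sections, I would obtain an explicit diagram of functors $\Hbb_1(S) \times \Hbb_1(S) \rightarrow \Hbb_2(S)$, all of whose vertices are of the form $\Delta_S^*$ applied to external objects (or to $R_{S \times S}$ of such, composed with $\theta$).

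Next I would decompose this explicit diagram so that its central piece is precisely $\Delta_S^*$ applied to the $u_r$-diagram of condition (mor-$u$ETS), specialized to $S_1 = S_2 = S$; this central piece is commutative by hypothesis, since $\rho$ is unitary via $w$. The remaining lateral pieces would then be shown to commute by naturality, by \Cref{lem-coherent_conn-mor-monoext} (which handles all the identifications built out of connection isomorphisms, external monoidality isomorphisms, and $R$-transition isomorphisms, independently of the chosen paths), and by the fact that $w \colon R(\unit^{(1)}) \xrightarrow{\sim} \unit^{(2)}$ is an isomorphism of sections, hence satisfies condition (mor-$\Scal$-sect): this last property is exactly what makes the cells mixing $w$ with the section structure maps $\unit^*$ and with $\theta$ commute. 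By the usual trick of expanding the auxiliary isomorphisms in the most convenient way, the commutativity of the lateral pieces reduces to naturality and to these coherence statements, and I would omit the routine details.

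The step I expect to be the main obstacle is the careful bookkeeping in the decomposition: one must arrange the $\theta$-transition isomorphisms and the section maps $\unit^*$ so that, after pulling back along $\Delta_S$, the external axiom (mor-$u$ETS) appears verbatim as the central rectangle, with every surrounding cell reducing to naturality, (mor-$\Scal$-sect), or \Cref{lem-coherent_conn-mor-monoext}. Once this layout is found, no genuinely new input is required; combined with the previous unitarity lemma and with \Cref{prop_int_ext_morph}, the present result completes the unitary case needed for \Cref{thm_bij_morph}.
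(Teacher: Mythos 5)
Your proposal matches the paper's proof: it reduces to the $u_r$-diagram, unwinds $\rho^i$ and $u^i_r$ into their external definitions, and decomposes the resulting diagram so that the central piece is exactly condition (mor-$u$ETS) pulled back along $\Delta_S$ (with $S_1 = S_2 = S$), the remaining pieces commuting by naturality of $\theta$ and the coherence of transition isomorphisms. One minor remark: in this direction no section structure maps $\unit^*$ appear after unwinding (the internal unit constraints produced by \Cref{lem:unit-ext_to_int} do not involve them), so condition (mor-$\Scal$-sect) for $w$ is not actually needed here, unlike in the proof of \Cref{lem:mor-unit_int_to_ext}.
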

\begin{proof}
	Let us check that the internal tensor structure $\rho^i$ satisfies condition (mor-$u$ITS): given $S \in \Scal$, we have to show that the two diagrams of functors $\Hbb_1(S) \rightarrow \Hbb_2(S)$
	\begin{equation*}
		\begin{tikzcd}
			R_S(A) \otimes_2 R_S(\unit^{(1)}_S) \arrow{r}{w} \arrow{d}{\rho^i} & R_S(A) \otimes_2 \unit^{(2)}_S \arrow{d}{u_{2,r}^i} \\
			R_S(A \otimes_1 \unit^{(1)}_S) \arrow{r}{u_{1,r}^i} & R_S(A)
		\end{tikzcd}
		\qquad
		\begin{tikzcd}
			R_S(\unit^{(1)}_S) \otimes_2 R_S(B) \arrow{r}{w} \arrow{d}{\rho^i} & \unit^{(2)}_S \otimes_2 R_S(B) \arrow{d}{u_{2,l}^i} \\
			R_S(\unit^{(1)}_S \otimes_1 B) \arrow{r}{u_{1,l}^i} & R_S(B)
		\end{tikzcd}
	\end{equation*}
	are commutative. We only show the commutativity of the first diagram; the case of the second diagram is completely analogous. Unwinding the various definitions, we obtain the more explicit diagram
	\begin{equation*}
		\begin{tikzcd}[font=\small]
			\Delta_S^* (R_S(A) \boxtimes R_S(\unit_S^{(1)})) \arrow{r}{w} \arrow{d}{\rho} & \Delta_S^* (R_S(A) \boxtimes_2 \unit_S^{(2)}) \arrow{dr}{u_r} \\
			\Delta_S^* R_{S \times S} (A \boxtimes_1 \unit_S^{(1)}) \arrow{d}{\theta} && \Delta_S^* pr^{12,*}_1 R_S(A) \\
			R_S \Delta_S^* (A \boxtimes_1 \unit_S^{(1)}) \arrow{r}{u_r} & R_S (\Delta_S^* pr^{12,*}_1 A) & R_S(A) \arrow[equal]{u} \arrow[equal]{l}
		\end{tikzcd}
	\end{equation*}
    that we decompose as
    \begin{equation*}
    	\begin{tikzcd}[font=\small]
    		\bullet \arrow{r}{w} \arrow{d}{\rho} & \bullet \arrow{dr}{u_r} \\
    		\bullet \arrow{d}{\theta} \arrow{r}{u_r} & \Delta_S^* R_{S \times S} (pr^{12,*}_1 A) \arrow{d}{\theta} & \bullet \arrow{l}{\theta} \\
    		\bullet \arrow{r}{u_r} & \bullet & \bullet \arrow[equal]{u} \arrow[equal]{l}
    	\end{tikzcd}
    \end{equation*}
\end{proof}

\begin{rem}\label{rem_mor_asscom}
	The considerations collected in \Cref{rem_mor} admit obvious refinements to the subclasses of associative or symmetric or unitary tensor structures, both in the internal and in the external setting.
\end{rem}

\section{Remarks on the triangulated case}\label{sect:rem-tri}

In this final section, we consider the case of monoidal triangulated fibered categories in more detail. Our main goal is to explain how the various notions collected in the previous sections should be adapted in the case of triangulated fibered categories in order for them to interact well with the triangulated structure; in fact, the main point is to spell out their compatibility with the shifting functors.

As usual, we work over a fixed base category $\Scal$; when discussing (external) tensor structures on $\Scal$-fibered categories, we will ask $\Scal$ to admit binary products.

\subsection{Triangulated fibered categories}

In the first place, we complete the general preliminaries of \Cref{sect:rec-fib-cats} by reviewing triangulated $\Scal$-fibered categories and morphisms of such. 
For notational convenience, whenever considering a triangulated functor, we write the associated compatibility isomorphisms with shifting functors as mere equalities. In order to avoid confusion, until the end of the present subsection we indicate all connection isomorphisms of $\Scal$-fibered categories explicitly (rather than  also as equalities as stated in \Cref{nota:simple}). 

\begin{defn}\label{defn:triS-fib}
	A \textit{triangulated $\Scal$-fibered category} $\Hbb$ is the datum of
	\begin{itemize}
		\item for every $S \in \Scal$, a triangulated category $\Hbb(S)$,
		\item for every morphism $f: T \rightarrow S$ in $\Scal$, a triangulated functor
		\begin{equation*}
			f^*: \Hbb(S) \rightarrow \Hbb(T),
		\end{equation*}  
	    called the \textit{inverse image functor} along $f$,
		\item for every pair of composable morphisms $f: T \rightarrow S$ and $g: S \rightarrow V$ in $\Scal$, a natural isomorphism of functors $\Hbb(V) \rightarrow \Hbb(T)$
		\begin{equation*}
			\conn = \conn_{f,g}: (gf)^* A \xrightarrow{\sim} f^* g^* A,
		\end{equation*}
		called the \textit{connection isomorphism} at $(f,g)$
	\end{itemize}
	satisfying conditions ($\Scal$-fib-$0$) and ($\Scal$-fib-$1$) of \Cref{defn:S-fib} plus the following additional condition:
	\begin{enumerate}
		\item[(tri$\Scal$-fib)] For every pair of composable morphisms $f: T \rightarrow S$, $g: S \rightarrow V$ in $\Scal$, the diagram of functors $\Hbb(V) \rightarrow \Hbb(T)$
		\begin{equation*}
			\begin{tikzcd}
				(gf)^* (A[1]) \arrow[equal]{rr} \arrow{d}{\conn_{f,g}} && ((gf)^* A)[1] \arrow{d}{\conn_{f,g}} \\
				f^* g^* (A[1]) \arrow[equal]{r} & f^* ((g^* A)[1]) \arrow[equal]{r} & (f^* g^* A)[1]
			\end{tikzcd}
		\end{equation*}
		is commutative.
	\end{enumerate}
\end{defn}

\begin{defn}\label{defn:mor-triS-fib}
	Let $\Hbb_1$ and $\Hbb_2$ be triangulated $\Scal$-fibered categories in the sense of \Cref{defn:triS-fib}. 
	\begin{enumerate}
		\item A \textit{morphism of triangulated $\Scal$-fibered categories} $R: \Hbb_1 \rightarrow \Hbb_2$ is the datum of
		\begin{itemize}
			\item for every $S \in \Scal$, a triangulated functor 
			\begin{equation*}
				R_S: \Hbb_1(S) \rightarrow \Hbb_2(S),
			\end{equation*}
			\item for every morphism $f: T \rightarrow S$ in $\Scal$, a natural isomorphism of functors $\Hbb_1(S) \rightarrow \Hbb_2(T)$
			\begin{equation*}
				\theta = \theta_f: f^* R_S(A) \xrightarrow{\sim} R_T(f^* A),
			\end{equation*}
			called the \textit{$R$-transition isomorphism} along $f$
		\end{itemize}
		satisfying condition (mor-$\Scal$-fib) of \Cref{defn:mor-Scal-fib}(1) plus the following additional condition:
		\begin{enumerate}
			\item[(mor-tri$\Scal$-fib)] For every morphism $f: T \rightarrow S$ in $\Scal$, the diagram of functors $\Hbb_1(S) \rightarrow \Hbb_2(T)$
			\begin{equation*}
				\begin{tikzcd}
					f^* R_S(A[1]) \arrow[equal]{r} \arrow{d}{\theta_f} & f^* (R_S(A)[1]) \arrow[equal]{r} & (f^* R_S(A))[1] \arrow{d}{\theta_f} \\
					R_T(f^* (A[1])) \arrow[equal]{r} & R_T ((f^* A)[1]) \arrow[equal]{r} & (R_T (f^* A))[1]
				\end{tikzcd}
			\end{equation*}
			is commutative.
		\end{enumerate}
		\item If we are already given a family of triangulated functors $\Rcal := \left\{R_S: \Hbb_1(S) \rightarrow \Hbb_2(S) \right\}_{S \in \Scal}$, we say that an $\Scal$-structure on $\Rcal$ in the sense of \Cref{defn:S-fib}(2) is \textit{triangulated} if the resulting morphism of $\Scal$-fibered categories $R: \Hbb_1 \rightarrow \Hbb_2$ satisfies condition (mor-tri$\Scal$-fib). 
	\end{enumerate}
\end{defn}

\subsection{Triangulated tensor structures}

From now on, we assume that $\Scal$ admits binary products. Throughout this section, we consider one fixed $\Scal$-fibered category $\Hbb$, which we assume to be triangulated in the sense of \Cref{defn:triS-fib}.

\begin{defn}\label{defn:triITS}
	\begin{enumerate}
		\item A \textit{triangulated internal tensor structure} $(\otimes,m)$ on $\Hbb$ is the datum of
		\begin{itemize}
			\item for every $S \in \Scal$, a bi-triangulated functor
			\begin{equation*}
				- \otimes - = - \otimes_S -: \Hbb(S) \times \Hbb(S) \rightarrow \Hbb(S),
			\end{equation*}
			called the \textit{internal tensor product functor} over $S$,
			\item for every morphism $f: T \rightarrow S$ in $\Scal$, a natural isomorphism of functors $\Hbb(S) \times \Hbb(S) \rightarrow \Hbb(T)$
			\begin{equation*}
				m = m_f: f^* A \otimes f^* B \xrightarrow{\sim} f^* (A \otimes B),
			\end{equation*}
			called the \textit{internal monoidality isomorphism} along $f$
		\end{itemize}
		satisfying condition ($m$ITS) of \Cref{defn:ITS} plus the following two additional conditions:
		\begin{enumerate}
			\item[(triITS-1)] For every $S \in \Scal$, the diagram of functors $\Hbb(S) \times \Hbb(S) \rightarrow \Hbb(S)$
			\begin{equation*}
				\begin{tikzcd}
					A[1] \otimes B[1] \arrow{r}{\sim} \isoarrow{d} & (A \otimes B[1])[1] \isoarrow{d} \\
					(A[1] \otimes B)[1] \arrow{r}{\sim} & (A \otimes B)[2]
				\end{tikzcd}
			\end{equation*}
			is anti-commutative. 
			\item[(triITS-2)] For every morphism $f: T \rightarrow S$ in $\Scal$, the two diagrams of functors $\Hbb(S) \times \Hbb(S) \rightarrow \Hbb(T)$
			\begin{equation*}
				\begin{tikzcd}
					f^*(A[1]) \otimes f^* B \arrow{r}{m} \arrow[equal]{d} & f^*(A[1] \otimes B) \isoarrow{d} \\
					(f^* A)[1] \otimes f^* B \isoarrow{d} & f^* ((A \otimes B)[1]) \arrow[equal]{d} \\
					(f^* A \otimes f^* B)[1] \arrow{r}{m} & f^* (A \otimes B)[1]
				\end{tikzcd}
				\qquad
				\begin{tikzcd}
					f^* A \otimes f^*(B[1]) \arrow{r}{m} \arrow[equal]{d} & f^*(A \otimes B[1]) \isoarrow{d} \\
					f^* A \otimes (f^* B)[1] \isoarrow{d} & f^* ((A \otimes B)[1]) \arrow[equal]{d} \\
					(f^* A \otimes f^* B)[1] \arrow{r}{m} & f^* (A \otimes B)[1]
				\end{tikzcd}
			\end{equation*}
			are commutative.
		\end{enumerate}
	    \item Let $(\otimes,m)$ and $(\otimes',m')$ be two triangulated internal tensor structures on $\Hbb$. An equivalence of internal tensor structures $e: (\otimes,m) \xrightarrow{\sim} (\otimes',m')$ is \textit{triangulated} if it satisfies the following additional condition:
	    \begin{enumerate}
	    	\item[(eq-triITS)] For every $S \in \Scal$, the two diagrams of functors $\Hbb(S) \times \Hbb(S) \rightarrow \Hbb(S)$
	    	\begin{equation*}
	    		\begin{tikzcd}
	    			A[1] \otimes B \arrow{r}{e} \isoarrow{d} & A[1] \otimes' B \isoarrow{d} \\
	    			(A \otimes B)[1] \arrow{r}{e} & (A \otimes' B)[1]
	    		\end{tikzcd}
	    		\qquad
	    		\begin{tikzcd}
	    			A \otimes B[1] \arrow{r}{e} \isoarrow{d} & A \otimes' B[1] \isoarrow{d} \\
	    			(A \otimes B)[1] \arrow{r}{e} & (A \otimes' B)[1] 
	    		\end{tikzcd}
	    	\end{equation*}
	    	are commutative.
	    \end{enumerate}
	\end{enumerate}
\end{defn}

\begin{rem}
	The sign choice in axiom (triITS-1) above agrees with that of \cite[Defn.~2.1.148]{Ayo07a}. This choice is motivated by the behavior of the tensor product on the derived category of vector spaces over a field; the same rule holds for the bi-derived functor of any bi-exact functor between abelian categories.
\end{rem}

\begin{defn}\label{defn:triETS}
	\begin{enumerate}
		\item A \textit{triangulated external tensor structure} $(\boxtimes,m)$ on $\Hbb$ is the datum of
		\begin{itemize}
			\item for every $S_1, S_2 \in \Scal$, a bi-triangulated functor 
			\begin{equation*}
				- \boxtimes -= - \boxtimes_S -: \Hbb(S_1) \times \Hbb(S_2) \rightarrow \Hbb(S_1 \times S_2),
			\end{equation*}
		    called the \textit{external tensor product functor} at $(S_1,S_2)$,
		    \item for every choice of morphisms $f_i: T_i \rightarrow S_i$ in $\Scal$, $i = 1,2$, a natural isomorphism of functors $\Hbb(S_1) \times \Hbb(S_2) \rightarrow \Hbb(T_1 \times T_2)$
		    \begin{equation*}
		    	m = m_{f_1,f_2}: f_1^* A_1 \boxtimes f_2^* A_2 \xrightarrow{\sim} (f_1 \times f_2)^* (A_1 \boxtimes A_2),
		    \end{equation*}
	        called the \textit{external monoidality isomorphism} along $(f_1,f_2)$
		\end{itemize}
	    satisfying condition ($m$ETS) of \Cref{defn:ETS} plus the following two additional conditions:
	    \begin{enumerate}
	    	\item[(triETS-1)] For every $S_1, S_2 \in \Scal$, the diagram of functors $\Hbb(S_2) \times \Hbb(S_2) \rightarrow \Hbb(S_1 \times S_2)$
	    	\begin{equation*}
	    		\begin{tikzcd}
	    			A_1[1] \boxtimes A_2[1] \arrow{r}{\sim} \isoarrow{d} & (A_1 \boxtimes A_2[1])[1] \isoarrow{d} \\
	    			(A_1[1] \boxtimes A_2)[1] \arrow{r}{\sim} & (A_1 \boxtimes A_2)[2]
	    		\end{tikzcd}
	    	\end{equation*}
	    	is anti-commutative.
	    	\item[(triETS-2)] For every two morphisms $f_i: T_i \rightarrow S_i$ in $\Scal$, $i = 1,2$, the two diagrams of functors $\Hbb(S_1) \times \Hbb(S_2) \rightarrow \Hbb(T_1 \times T_2)$
	    	\begin{equation*}
	    		\begin{tikzcd}[font=\small]
	    			f_1^*(A_1[1]) \boxtimes f_2^* A_2 \arrow{r}{m} \arrow[equal]{d} & (f_1 \times f_2)^* (A_1[1] \boxtimes A_2) \isoarrow{d} \\
	    			(f_1^* A_1)[1] \boxtimes f_2^* A_2 \isoarrow{d} & (f_1 \times f_2)^* ((A_1 \boxtimes A_2)[1]) \arrow[equal]{d} \\
	    			(f_1^* A_1 \boxtimes f_2^* A_2)[1] \arrow{r}{m} & (f_1 \times f_2)^* (A_1 \boxtimes A_2)[1]
	    		\end{tikzcd}
	    		\qquad 
	    		\begin{tikzcd}[font=\small]
	    			f_1^* A_1 \boxtimes f_2^*(A_2[1]) \arrow{r}{m} \arrow[equal]{d} & (f_1 \times f_2)^* (A \boxtimes B[1]) \isoarrow{d} \\
	    			f_1^* A_1 \boxtimes (f_2^* A_2)[1] \isoarrow{d} & (f_1 \times f_2)^* ((A_1 \boxtimes A_2)[1]) \arrow[equal]{d} \\
	    			(f_1^* A_1 \boxtimes f_2^* A_2)[1] \arrow{r}{m} & (f_1 \times f_2)^* (A_1 \boxtimes A_2)[1]
	    		\end{tikzcd}
	    	\end{equation*}
	    	are commutative.
	    \end{enumerate}
        \item Let $(\boxtimes',m')$ and $(\boxtimes,m)$ be two triangulated external tensor structures on $\Hbb$. An equivalence of external tensor structures $e: (\boxtimes',m') \xrightarrow{\sim} (\boxtimes,m)$ is \textit{triangulated} if it satisfies the following additional condition:
        \begin{enumerate}
        	\item[(eq-triETS)] For every $S_1, S_2 \in \Scal$, the two diagrams of functors $\Hbb(S_1) \times \Hbb(S_2) \rightarrow \Hbb(S_1 \times S_2)$
        	\begin{equation*}
        		\begin{tikzcd}
        			A_1[1] \boxtimes' A_2 \arrow{r}{e} \isoarrow{d} & A_1[1] \boxtimes A_2 \isoarrow{d} \\
        			(A_1 \boxtimes' A_2)[1] \arrow{r}{e} & (A_1 \boxtimes A_2)[1]
        		\end{tikzcd}
        		\qquad
        		\begin{tikzcd}
        			A_1 \boxtimes' A_2[1] \arrow{r}{e} \isoarrow{d} & A_1 \boxtimes A_2[1] \isoarrow{d} \\
        			(A_1 \boxtimes' A_2)[1] \arrow{r}{e} & (A_1 \boxtimes A_2)[1] 
        		\end{tikzcd}
        	\end{equation*}
        	are commutative.
        \end{enumerate}
	\end{enumerate}
\end{defn}

We can then state the triangulated analogue of \Cref{prop_bij_in_ext} as follows:

\begin{prop}\label{prop:bij-tri}
	The constructions of \Cref{lem_int_to_ext} and \Cref{lem_ext_to_int} canonically define mutually inverse bijections between triangulated equivalence classes of triangulated internal and external tensor structures on $\Hbb$.
\end{prop}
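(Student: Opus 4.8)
The plan is to build on \Cref{prop_bij_in_ext}, which already supplies the underlying mutually inverse bijections between equivalence classes of internal and external tensor structures, together with the explicit equivalences $e$ constructed in its proof. Since a triangulated internal (resp.\ external) tensor structure is by definition an ordinary one satisfying the extra axioms (triITS-1)–(triITS-2) (resp.\ (triETS-1)–(triETS-2)), and a triangulated equivalence is an ordinary equivalence satisfying (eq-triITS) (resp.\ (eq-triETS)), the work splits cleanly into three verifications. First I would check that the construction of \Cref{lem_int_to_ext} sends a triangulated internal tensor structure to a triangulated external one, i.e.\ that (triITS-1)–(triITS-2) for $(\otimes,m)$ force (triETS-1)–(triETS-2) for $(\boxtimes,m^e)$; second, the symmetric statement for \Cref{lem_ext_to_int}; third, that the canonical equivalences $e$ built in the proof of \Cref{prop_bij_in_ext} are themselves triangulated, i.e.\ satisfy (eq-triITS) and (eq-triETS). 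Only after these three points does the bijection on \emph{triangulated} equivalence classes follow formally, exactly as \Cref{thm_bij} follows from \Cref{prop_bij_in_ext} once the constraints are matched up.

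\textbf{Transporting the shift-compatibility axioms.}
For the first verification, recall that $A_1 \boxtimes A_2 := pr^{12,*}_1 A_1 \otimes pr^{12,*}_2 A_2$ and that the inverse image functors $pr^{12,*}_i$ are triangulated (hence strictly commute with shifts by the convention fixed in \Cref{sect:rem-tri}). Thus a shift $A_1[1]$ is carried by $pr^{12,*}_1$ to $(pr^{12,*}_1 A_1)[1]$ with the identity comparison, and axiom (triETS-1) for $\boxtimes$ becomes, after unwinding, precisely axiom (triITS-1) applied to the pair $(pr^{12,*}_1 A_1, pr^{12,*}_2 A_2)$; the anti-commutativity sign is preserved because no reordering of shifts is introduced. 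Similarly, (triETS-2) unwinds into a diagram whose only non-identity edges are an instance of $m$ (the internal monoidality isomorphism) and the canonical shift-comparison equalities of the triangulated functors $pr^{12,*}_i$ and $(f_1\times f_2)^*$; decomposing it exactly as in the proof of \Cref{lem_int_to_ext}, the nontrivial rectangle is an instance of (triITS-2) and the remaining pieces commute by naturality and by \Cref{lem-coherent_conn}. The verification for \Cref{lem_ext_to_int} is entirely parallel, with $\Delta_S^*$ in place of the projections, using that $\Delta_S^*$ is triangulated; here (triITS-1) follows from (triETS-1) and (triITS-2) from (triETS-2) by the same decomposition already drawn in the proof of \Cref{lem_ext_to_int}.

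\textbf{Triangulability of the canonical equivalences.}
For the third point, I would revisit the equivalence $e\colon A\otimes B \xrightarrow{\sim} A\otimes' B$ from the first half of the proof of \Cref{prop_bij_in_ext}, which is the single internal monoidality isomorphism $m$ for the composite $\Delta_S^* pr^{12,*}_1 A \otimes \Delta_S^* pr^{12,*}_2 B \xrightarrow{m} \Delta_S^*(pr^{12,*}_1 A \otimes pr^{12,*}_2 B)$. To check (eq-triITS) one substitutes the definition of $e$ into each of the two square diagrams and recognizes, after inserting the shift-comparison equalities for $\Delta_S^*$ and $pr^{12,*}_i$, that the resulting diagram is an instance of (triITS-2) glued to naturality squares; the same bookkeeping as in the non-triangulated proof applies, now tracking the shift on one factor. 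The external equivalence of the second half is handled symmetrically using (triETS-2).

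\textbf{Expected obstacle and conclusion.}
The main subtlety I anticipate is purely a matter of sign- and shift-bookkeeping: one must confirm that the canonical identifications used to move shifts through the triangulated functors $pr^{12,*}_i$ and $\Delta_S^*$ do not introduce spurious signs, so that the anti-commutativity in (triITS-1) matches the anti-commutativity in (triETS-1) and the commutativity of the (triITS-2)/(triETS-2) diagrams is genuinely preserved rather than flipped. Because the shift comparisons are written as equalities (per the convention opening \Cref{sect:rem-tri}) and no two shifts are ever permuted past one another in these translations, I expect the signs to transport transparently; the genuinely new input beyond \Cref{prop_bij_in_ext} is only the compatibility of $m$ with shifts, which is exactly what (triITS-2)/(triETS-2) encode. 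Granting the three verifications above, the bijection of \Cref{prop_bij_in_ext} restricts to one between triangulated structures, and the canonical equivalences being triangulated guarantees it descends to triangulated equivalence classes, proving the proposition.
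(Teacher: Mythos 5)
Your proposal is correct and takes essentially the same route as the paper: the paper's proof likewise reduces the statement to (i) showing that the constructions of \Cref{lem_int_to_ext} and \Cref{lem_ext_to_int} promote canonically to the triangulated setting, with the tensor functors made bi-triangulated in the obvious way and the extra shift axioms of \Cref{defn:triITS}(1) and \Cref{defn:triETS}(1) verified, and (ii) checking that the canonical equivalences from the proof of \Cref{prop_bij_in_ext} are triangulated in the sense of \Cref{defn:triITS}(2) and \Cref{defn:triETS}(2), declaring both tasks easy and leaving them to the reader. Your three-step verification is exactly this scheme, only with more of the shift-bookkeeping spelled out.
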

\begin{proof}
	In the first place, one needs to show that the constructions of \Cref{lem:mor-asso_int_to_ext} and \Cref{lem:mor-asso_ext_to_int} can be canonically promoted to the triangulated setting. Note that there exists an obvious way to make the single external and internal tensor product functors constructed in this way into bi-triangulated functors; it then suffices to check that this respects the additional conditions of \Cref{defn:triITS}(1) and \Cref{defn:triETS}(1). In the second place, one has to show that the canonical equivalences constructed in the proof of \Cref{prop_bij_in_ext} are triangulated in the sense of \Cref{defn:triITS}(2) and \Cref{defn:triETS}(2). Both tasks are easy and left to the interested reader.
\end{proof}

\subsection{Triangulated associativity, commutativity and unit constraints}

We go on by explaining how to adapt associativity, commutativity and unit constraints to the triangulated case, both in the internal and in the external setting. 

\begin{defn}\label{defn:tri-acuITS}
	Let $(\otimes,m)$ be a triangulated internal tensor structure on $\Hbb$.
	\begin{enumerate}
		\item A \textit{triangulated associativity constraint} $a$ on $(\otimes,m)$ is the datum of
		\begin{itemize}
			\item for every $S \in \Scal$, a tri-triangulated natural transformation of tri-triangulated functors
			\begin{equation*}
				a = a_S: (A \otimes B) \otimes C \xrightarrow{\sim} A \otimes (B \otimes C)
			\end{equation*}
		\end{itemize}
	    satisfying conditions ($a$ITS-1) and ($a$ITS-2) of \Cref{defn:aITS} plus the following additional condition\:
	    \begin{enumerate}
	    	\item[(tri$a$ITS)] For every $S \in \Scal$, the three diagrams of functors $\Hbb(S) \times \Hbb(S) \times \Hbb(S)$
	    	\begin{equation*}
	    		\begin{tikzcd}
	    			(A[1] \otimes B) \otimes C \arrow{r}{\sim} \arrow{d}{a} & (A \otimes B)[1] \otimes C \arrow{r}{\sim} & ((A \otimes B) \otimes C)[1] \arrow{d}{a} \\
	    			A[1] \otimes (B \otimes C) \arrow{rr}{\sim} && (A \otimes (B \otimes C))[1]
	    		\end{tikzcd}
	    	\end{equation*}
	    	\begin{equation*}
	    		\begin{tikzcd}
	    			(A \otimes B[1]) \otimes C \arrow{r}{\sim} \arrow{d}{a} & (A \otimes B)[1] \otimes C \arrow{r}{\sim} & ((A \otimes B) \otimes C)[1] \arrow{d}{a} \\
	    			A \otimes (B[1] \otimes C) \arrow{r}{\sim} & A \otimes (B \otimes C)[1] \arrow{r}{\sim} & (A \otimes (B \otimes C))[1]
	    		\end{tikzcd}
	    	\end{equation*}
	    	\begin{equation*}
	    		\begin{tikzcd}
	    			(A \otimes B) \otimes C[1] \arrow{rr}{\sim} \arrow{d}{a} && ((A \otimes B) \otimes C)[1] \arrow{d}{a} \\
	    			A \otimes (B \otimes C[1]) \arrow{r}{\sim} & A \otimes (B \otimes C)[1] \arrow{r}{\sim} & (A \otimes (B \otimes C))[1]
	    		\end{tikzcd}
	    	\end{equation*}
	    	are commutative.
	    \end{enumerate}
        \item A triangulated commutativity constraint $c$ on $(\otimes,m)$ is the datum of
        \begin{itemize}
        	\item for every $S \in \Scal$, a bi-triangulated natural transformation of functors $\Hbb(S) \times \Hbb(S) \rightarrow \Hbb(S)$
        	\begin{equation*}
        		c = c_S: A \otimes B \xrightarrow B \otimes A
        	\end{equation*}
        \end{itemize}
        satisfying conditions ($c$ITS-1) and ($c$ITS-2) of \Cref{defn:cITS} plus the following additional condition:
        \begin{enumerate}
        	\item[(tri$c$ITS)] For every $S \in \Scal$, the diagram of functors $\Hbb(S) \times \Hbb(S) \rightarrow \Hbb(S)$
        	\begin{equation*}
        		\begin{tikzcd}
        			A[1] \otimes B \arrow{r}{\sim} \arrow{d}{c} & (A \otimes B)[1] \arrow{d}{c} \\
        			B \otimes A[1] \arrow{r}{\sim} & (B \otimes A)[1]
        		\end{tikzcd}
        	\end{equation*}
        	is commutative.
        \end{enumerate}
        \item A \textit{triangulated internal unit constraint} $u$ on $(\otimes,m)$ is tha datum of
        \begin{itemize}
        	\item a section $\unit$ of the $\Scal$-fibered category $\Hbb$, called the \textit{unit section},
        	\item for every $S \in \Scal$, two triangulated natural isomorphisms of functors $\Hbb(S) \rightarrow \Hbb(S)$
        	\begin{equation*}
        		u_r = u_{r,S}: A \otimes \unit_S \xrightarrow{\sim} A, \qquad u_l = u_{l,S}: \unit_S \otimes B \xrightarrow{\sim} B
        	\end{equation*}
        \end{itemize}
        satisfying conditions ($u$ITS-1) and ($u$ITS-2) of \Cref{defn:uITS} plus the following additional condition:
        \begin{enumerate}
        	\item[(tri$u$ITS)] For every $S \in \Scal$, the two diagrams of functors $\Hbb(S) \rightarrow \Hbb(S)$
        	\begin{equation*}
        		\begin{tikzcd}
        			A[1] \otimes \unit_S \arrow{r}{u_r} \isoarrow{d} & A[1] \\
        			(A \otimes \unit_S)[1] \arrow{ur}{u_r}
        		\end{tikzcd}
        		\qquad \qquad
        		\begin{tikzcd}
        			\unit_S \otimes B[1] \arrow{r}{u_l} \isoarrow{d} & B[1] \\
        			(\unit_S \otimes B)[1] \arrow{ur}{u_l}
        		\end{tikzcd}
        	\end{equation*}
        	are commutative.
        \end{enumerate}
	\end{enumerate}
\end{defn}

\begin{defn}\label{defn:tri-acuETS}
	Let $(\boxtimes,m)$ be a triangulated external tensor structure on $\Hbb$.
	\begin{enumerate}
		\item A \textit{triangulated external associativity constraint} $a$ on $(\boxtimes,m)$ is the datum of
		\begin{itemize}
			\item for every $S_1, S_2 S_3 \in \Scal$, a tri-triangulated natural isomorphism of functors $\Hbb(S_1) \times \Hbb(S_2) \times \Hbb(S_3) \rightarrow \Hbb(S_1 \times S_2 \times S_3)$
			\begin{equation*}
				a = a_{S_1,S_2,S_3}: (A_1 \boxtimes A_2) \boxtimes A_3 \xrightarrow{\sim} A_1 \boxtimes (A_2 \boxtimes A_3)
			\end{equation*}
		\end{itemize}
	    satisfying conditions ($a$ETS-1) and ($a$ETS-2) of \Cref{defn:aETS} plus the following additional condition:
	    \begin{enumerate}
	    	\item[(tri$a$ETS)] For every $S_1, S_2 \in \Scal$, the three diagrams of functors $\Hbb(S_1) \times \Hbb(S_2) \times \Hbb(S_3) \rightarrow \Hbb(S_1 \times S_2 \times S_3)$
	    	\begin{equation*}
	    		\begin{tikzcd}
	    			(A_1[1] \boxtimes A_2) \boxtimes A_3 \arrow{r}{\sim} \arrow{d}{a} & (A_1 \boxtimes A_2)[1] \boxtimes A_3 \arrow{r}{\sim} & ((A_1 \boxtimes A_2) \boxtimes A_3)[1] \arrow{d}{a} \\
	    			A_1[1] \boxtimes (A_2 \boxtimes A_3) \arrow{rr}{\sim} && (A_1 \boxtimes (A_2 \boxtimes A_3))[1]
	    		\end{tikzcd}
	    	\end{equation*}
	    	\begin{equation*}
	    		\begin{tikzcd}
	    			(A_1 \boxtimes A_2[1]) \boxtimes A_3 \arrow{r}{\sim} \arrow{d}{a} & (A_1 \boxtimes A_2)[1] \boxtimes A_3 \arrow{r}{\sim} & ((A_1 \boxtimes A_2) \boxtimes A_3)[1] \arrow{d}{a} \\
	    			A_1 \boxtimes (A_2[1] \boxtimes A_3) \arrow{r}{\sim} & A_1 \boxtimes (A_2 \boxtimes A_3)[1] \arrow{r}{\sim} & (A_1 \boxtimes (A_2 \boxtimes A_3))[1]
	    		\end{tikzcd}
	    	\end{equation*}
	    	\begin{equation*}
	    		\begin{tikzcd}
	    			(A_1 \boxtimes A_2) \boxtimes A_3[1] \arrow{rr}{\sim} \arrow{d}{a} && ((A_1 \boxtimes A_2) \boxtimes A_3)[1] \arrow{d}{a} \\
	    			A_1 \boxtimes (A_2 \boxtimes A_3[1]) \arrow{r}{\sim} & A_1 \boxtimes (A_2 \boxtimes A_3)[1] \arrow{r}{\sim} & (A_1 \boxtimes (A_2 \boxtimes A_3))[1]
	    		\end{tikzcd}
	    	\end{equation*}
	    	are commutative.
	    \end{enumerate}
        \item A \textit{triangulated external commutativity constraint} $c$ on $(\boxtimes,m)$ is the datum of
        \begin{itemize}
        	\item for every $S_1, S_2 \in \Scal$, a bi-triangulated natural isomorphism of functors $\Hbb(S_1) \times \Hbb(S_2) \rightarrow \Hbb(S_1 \times S_2)$
        	\begin{equation*}
        		c = c_{S_1,S_2}: A_1 \boxtimes A_2 \xrightarrow{\sim} \tau^* (A_2 \boxtimes A_1)
        	\end{equation*}
        \end{itemize}
        satisfying conditions ($c$ETS-1) and ($c$ETS-2) of \Cref{defn:cETS} plus the following additional condition:
        \begin{enumerate}
        	\item[(tri$c$ETS)] For every $S_1, S_2 \in \Scal$, the diagram of functors $\Hbb(S_1) \times \Hbb(S_2) \rightarrow \Hbb(S_1 \times S_2)$
        	\begin{equation*}
        		\begin{tikzcd}
        			A_1[1] \boxtimes A_2 \arrow{rr}{\sim} \arrow{d}{c} && (A_1 \boxtimes A_2)[1] \arrow{d}{c} \\
        			\tau^* (A_2 \boxtimes A_1[1]) \arrow{r}{\sim} & \tau^* ((A_2 \boxtimes A_1)[1]) \arrow{r}{\sim} & \tau^* (A_2 \boxtimes A_1)[1]
        		\end{tikzcd}
        	\end{equation*}
        	is commutative.
        \end{enumerate}
        \item A \textit{triangulated external unit constraint} $u$ on $(\boxtimes,m)$ is the datum of
        \begin{itemize}
        	\item a section $\unit$ of the $\Scal$-fibered category $\Hbb$, called the \textit{unit section},
        	\item for every $S_1, S_2 \in \Scal$, a triangulated natural isomorphism of functors $\Hbb(S_1) \rightarrow \Hbb(S_1 \times S_2)$
        	\begin{equation*}
        		u_r = u_{r,S_1,S_2}: A_1 \boxtimes \unit_{S_2} \xrightarrow{\sim} pr^{12,*}_1 A_1
        	\end{equation*}
            and a natural isomorphism of functors $\Hbb(S_2) \rightarrow \Hbb(S_1 \times S_2)$
            \begin{equation*}
            	u_l = u_{l,S_1,S_2}: \unit_{S_1} \boxtimes A_2 \xrightarrow{\sim} pr^{12,*}_2 A_2
            \end{equation*}
        \end{itemize}
        satisfying conditions ($u$ETS-1) and ($u$ETS-2) of \Cref{defn:uETS} plus the following additional condition:
        \begin{enumerate}
        	\item[(tri$u$ETS)] For every $S_1, S_2 \in \Scal$, the diagram of functors $\Hbb(S_1) \rightarrow \Hbb(S_1 \times S_2)$
        	\begin{equation*}
        		\begin{tikzcd}
        			A_1[1] \boxtimes \unit_{S_2} \arrow{r}{u_r} \isoarrow{d} & pr^{12,*}_1 (A_1[1]) \arrow[equal]{d} \\
        			(A_1 \boxtimes \unit_{S_2})[1] \arrow{r}{u_r} & (pr^{12,*}_1 A_1)[1]
        		\end{tikzcd}
        	\end{equation*}
        	and the diagram of functors $\Hbb(S_2) \rightarrow \Hbb(S_1 \times S_2)$
        	\begin{equation*}
        		\begin{tikzcd}
        			\unit_{S_1} \boxtimes A_2 \arrow{r}{u_l} \isoarrow{d} & pr^{12,*}_2 (A_2[1]) \arrow[equal]{d} \\
        			(\unit_{S_1} \boxtimes A_2)[1] \arrow{r}{u_l} & (pr^{12,*}_2 A_2)[1]
        		\end{tikzcd}
        	\end{equation*}
        	are commutative.
        \end{enumerate}
	\end{enumerate}
\end{defn}

\begin{prop}
	The following statements hold:
	\begin{enumerate}
		\item The constructions of \Cref{lem_ass_int_to_ext} and \Cref{lem_ass_ext_to_int} define mutually inverse bijections between triangulated equivalence classes of triangulated associative internal and external tensor structures on $\Hbb$.
		\item The constructions of \Cref{lem_symm_int_to_ext} and \Cref{lem_symm_ext_to_int} define mutually inverse bijections between triangulated  equivalence classes of triangulated symmetric internal and external tensor structures on $\Hbb$.
		\item The constructions of \Cref{lem:unit-int_to_ext} and \Cref{lem:unit-ext_to_int} define mutually inverse bijections between triangulated equivalence classes of triangulated unitary internal and external tensor structures on $\Hbb$.
	\end{enumerate}
\end{prop}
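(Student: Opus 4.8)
The plan is to upgrade the bijections of \Cref{prop_asso}, \Cref{prop_symm} and \Cref{prop:unit} to the triangulated setting in exactly the way that \Cref{prop:bij-tri} upgrades \Cref{prop_bij_in_ext}. For each of the three types of constraint there are two things to verify: first, that the relevant construction (\Cref{lem_ass_int_to_ext} and \Cref{lem_ass_ext_to_int} for associativity, and their commutativity and unit analogues) carries a triangulated constraint to a triangulated constraint; and second, that the canonical equivalences witnessing the bijection are triangulated in the sense of the \textup{(eq-tri\dots)} axioms of \Cref{defn:triITS} and \Cref{defn:triETS}.

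First I would dispatch the second point, which requires no new computation. The canonical equivalences appearing in \Cref{prop_asso}, \Cref{prop_symm} and \Cref{prop:unit} are literally the equivalences $e$ constructed in the proof of \Cref{prop_bij_in_ext}; the extra compatibility with the constraints (conditions \textup{(eq-$a$ITS)}, \textup{(eq-$c$ITS)}, \textup{(eq-$u$ITS)} and their external counterparts) concerns how $e$ interacts with the constraints, whereas being triangulated concerns how $e$ interacts with the shift functors alone. Since \Cref{prop:bij-tri} already guarantees that these equivalences are triangulated, the triangulated refinement is automatic, and no axiom needs rechecking.

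The substance of the proof is therefore the first point. For each construction, the constructed constraint is by definition a composite of connection isomorphisms, internal or external monoidality isomorphisms, permutation or projection isomorphisms, and the originally given constraint. By the triangulated hypotheses collected in \Cref{defn:triS-fib}, \Cref{defn:triITS} and \Cref{defn:triETS}, every connection and monoidality isomorphism is strictly compatible with the shift functors (we write these compatibilities as equalities), and each permutation isomorphism $\tau^*$ is an inverse image along an isomorphism, hence a triangulated functor with strict shift compatibility. Consequently, checking each additional axiom \textup{(tri$a$ITS)}, \textup{(tri$c$ITS)}, \textup{(tri$u$ITS)} and their external analogues for the constructed constraint reduces, upon decomposing the relevant square exactly as in the non-triangulated proofs of \Cref{prop_asso}, \Cref{prop_symm} and \Cref{prop:unit}, to the corresponding axiom for the originally given constraint together with these strict compatibilities.

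The only genuinely delicate bookkeeping concerns signs: the anti-commutativity in axioms \textup{(triITS-1)} and \textup{(triETS-1)}, and the single shift in the commutativity axioms \textup{(tri$c$ITS)} and \textup{(tri$c$ETS)}, mean that moving a shift past one factor of a tensor or external product introduces a Koszul sign. I expect this to be the main (though still routine) obstacle: one must confirm that the unique sign produced by the given constraint's axiom matches the sign on the constructed side. Because the construction inserts only sign-free, strictly shift-compatible isomorphisms between the two sides, the signs propagate unchanged and the squares commute. In keeping with the proof of \Cref{prop:bij-tri}, all of these verifications are easy and are left to the interested reader.
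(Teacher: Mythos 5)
Your proposal is correct and takes essentially the same route as the paper: the paper's own proof likewise promotes the constraint constructions to the triangulated setting, observes that the constructed natural isomorphisms are canonically multi-triangulated, and reduces everything to checking the additional conditions of \Cref{defn:tri-acuITS} and \Cref{defn:tri-acuETS}, with those routine verifications left to the reader. Your two supplementary observations --- that the canonical equivalences require no new check beyond \Cref{prop:bij-tri}, and the remark on Koszul sign bookkeeping --- are consistent with, and slightly more explicit than, the paper's treatment.
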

\begin{proof}
	One has to show that the two constructions mentioned in each statement can be canonically promoted to the triangulated setting. Note that, in each case, there exists an obvious way to make the single natural isomorphisms in the external and internal setting into multi-triangulated natural isomorphisms. Thus one only needs to check that this respects the relevant additional conditions listed in \Cref{defn:tri-acuITS} and \Cref{defn:tri-acuETS}. This is easy and left to the interested reader.
\end{proof}

\begin{rem}
	Note that no additional condition is needed in the triangulated case for the various compatibility conditions involving associativity, commutativity and unit constraints discussed in \Cref{sect:comp}. This should not be surprising, since these compatibility conditions are properties rather than additional structures. 
\end{rem}

\subsection{Triangulated tensor structures on morphisms}

Finally, let us return to the setting of \Cref{sect:tens-mor}: we fix two $\Scal$-fibered categories $\Hbb_1$ and $\Hbb_2$ and a morphism of $\Scal$-fibered categories $R: \Hbb_1 \rightarrow \Hbb_2$, which we assume to be triangulated in the sense of \Cref{defn:mor-triS-fib}.  

\begin{defn}\label{defn:mor-triITS}
	Let $(\otimes_1,m_1)$ and $(\otimes_2,m_2)$ be triangulated internal tensor structures on $\Hbb_1$ and $\Hbb_2$, respectively. A \textit{triangulated internal tensor structure} $\rho$ on $R$ is the datum of
	\begin{itemize}
		\item for every $S \in \Scal$, a bi-triangulated natural transformation of functors $\Hbb_1(S) \times \Hbb_1(S) \rightarrow \Hbb_2(S)$
		\begin{equation*}
			\rho = \rho_S: R_S(A) \otimes R_S(B) \xrightarrow{\sim} R_S(A \otimes B)
		\end{equation*}
	\end{itemize}
    satisfying condition (mor-ITS) of \Cref{defn:mor-ITS} plus the following additional condition:
    \begin{enumerate}
    	\item[(mor-triITS)] For every $S \in \Scal$, the two diagrams of functors $\Hbb_1(S) \times \Hbb_1(S) \rightarrow \Hbb_2(S)$
    	\begin{equation*}
    		\begin{tikzcd}
    			R_S(A[1]) \otimes_2 R_S(B) \arrow{r}{\sim} \arrow{d}{\rho} & R_S(A)[1] \otimes_2 R_S(B) \arrow{r}{\sim} & (R_S(A) \otimes_2 R_S(B))[1] \arrow{d}{\rho} \\
    			R_S(A[1] \otimes_1 B) \arrow{r}{\sim} & R_S((A \otimes_1 B)[1]) \arrow{r}{\sim} & R_S(A \otimes_1 B)[1]
    		\end{tikzcd}
    	\end{equation*}
    	and
    	\begin{equation*}
    		\begin{tikzcd}
    			R_S(A) \otimes_2 R_S(B[1]) \arrow{r}{\sim} \arrow{d}{\rho} & R_S(A) \otimes_2 R_S(B[1]) \arrow{r}{\sim} & (R_S(A) \otimes_2 R_S(B))[1] \arrow{d}{\rho} \\
    			R_S(A \otimes_1 B[1]) \arrow{r}{\sim} & R_S((A \otimes_1 B)[1]) \arrow{r}{\sim} & R_S(A \otimes_1 B)[1]
    		\end{tikzcd}
    	\end{equation*}
    	are commutative.
    \end{enumerate}
\end{defn}

\begin{defn}\label{defn:mor-triETS}
	Let $(\boxtimes_1,m_1)$ and $(\boxtimes_2,m_2)$ be triangulated external tensor structures on $\Hbb_1$ and $\Hbb_2$, respectively. A \textit{triangulated external tensor structure} $\rho$ on $R$ is the datum of
	\begin{itemize}
		\item for every $S_1, S_2 \in \Scal$, a natural isomorphism of functors $\Hbb_1(S_1) \times \Hbb_1(S_2) \rightarrow \Hbb_2(S_1 \times S_2)$
		\begin{equation*}
			\rho = \rho_{S_1,S_2}: R_{S_1}(A_1) \boxtimes_2 R_{S_2}(A_2) \xrightarrow{\sim} R_{S_1 \times S_2}(A_1 \boxtimes_1 A_2)
		\end{equation*}
	\end{itemize}
    satisfying condition (mor-ETS) of \Cref{defn:mor-ETS} plus the following additional condition:
    \begin{enumerate}
    	\item[(mor-triETS)] For every $S_1, S_2 \in \Scal$, the two diagrams of functors $\Hbb_1(S_1) \times \Hbb_1(S_2) \rightarrow \Hbb_2(S_1 \times S_2)$
    	\begin{equation*}
    		\begin{tikzcd}
    			R_{S_1}(A_1[1]) \boxtimes_2 R_{S_2}(A_2) \arrow{r}{\sim} \arrow{d}{\rho} & R_{S_1}(A_1)[1] \boxtimes_2 R_{S_2}(A_2) \arrow{r}{\sim} & (R_{S_1}(A_1) \boxtimes_2 R_{S_2}(A_2))[1] \arrow{d}{\rho} \\
    			R_{S_1 \times S_2}(A_1[1] \boxtimes_1 A_2) \arrow{r}{\sim} & R_{S_1 \times S_2}((A_1 \boxtimes_1 A_2)[1]) \arrow{r}{\sim} & R_{S_1 \times S_2}(A_1 \boxtimes_1 A_2)[1]
    		\end{tikzcd}
    	\end{equation*}
    	and
    	\begin{equation*}
    		\begin{tikzcd}
    			R_{S_1}(A_1) \boxtimes_2 R_{S_2}(A_2[1]) \arrow{r}{\sim} \arrow{d}{\rho} & R_{S_1}(A_1) \boxtimes_2 R_{S_2}(A_2[1]) \arrow{r}{\sim} & (R_{S_1}(A_1) \boxtimes_2 R_{S_2}(A_2))[1] \arrow{d}{\rho} \\
    			R_{S_1 \times S_2}(A_1 \boxtimes_1 A_2[1]) \arrow{r}{\sim} & R_{S_1 \times S_2}((A_1 \boxtimes_1 A_2)[1]) \arrow{r}{\sim} & R_{S_1 \times S_2}(A_1 \boxtimes_1 A_2)[1]
    		\end{tikzcd}
    	\end{equation*}
    	are commutative.
    \end{enumerate}
\end{defn}
	
\begin{prop}
	For $j = 1,2$ let $(\otimes_j,m_j)$ be a triangulated internal tensor structure on $\Hbb_j$, and let $(\boxtimes_j,m_j)$ denote the triangulated external tensor structure corresponding to it (modulo triangulated equivalence) via the bijection of \Cref{prop:bij-tri}. 
	
	Then the constructions of \Cref{lem_mor_int_to_ext} and \Cref{lem_mor_ext_to_int} canonically induce mutually inverse bijections between triangulated internal tensor structures on $R$ (with respect to $(\otimes_1,m_1)$ and $(\otimes_2,m_2)$) and triangulated external tensor structures on $R$ (with respect to $(\boxtimes_1,m_1)$ and $(\boxtimes_2,m_2)$).
\end{prop}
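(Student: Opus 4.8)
The plan is to imitate the strategy already used for \Cref{prop:bij-tri} and for the triangulated versions of the associativity, commutativity, and unit propositions: the underlying bijection is supplied by \Cref{prop_int_ext_morph}, so the only genuinely new content is to check that the two constructions preserve the respective triangulated conditions. Concretely, \Cref{prop_int_ext_morph} already shows that the constructions of \Cref{lem_mor_int_to_ext} and \Cref{lem_mor_ext_to_int} are mutually inverse bijections between (plain) internal and external tensor structures on $R$; what remains is to verify that $\rho$ satisfies (mor-triITS) if and only if the associated $\rho^e$ satisfies (mor-triETS), and symmetrically that $\rho$ satisfies (mor-triETS) if and only if $\rho^i$ satisfies (mor-triITS). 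By the formal symmetry of the two constructions it suffices to treat the internal $\to$ external direction.

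First I would record that the external $R$-monoidality isomorphism $\rho^e$ is, by its construction in \Cref{lem_mor_int_to_ext}, the composite of the $R$-transition isomorphism $\theta$ (pulled back along the projections $pr^{12}_1, pr^{12}_2$) with the internal $R$-monoidality isomorphism $\rho$. Since $\theta$ is compatible with the shift functors by condition (mor-tri$\Scal$-fib) of \Cref{defn:mor-triS-fib}, and the inverse image functors together with the internal tensor products $\otimes_1,\otimes_2$ are all (bi-)triangulated, the verification of (mor-triETS) for $\rho^e$ can be reduced to the corresponding shift-compatibility of $\rho$, namely (mor-triITS). Here I would use that the triangulated external tensor structure $(\boxtimes_2,m_2)$ is the one induced from $(\otimes_2,m_2)$ exactly as in \Cref{prop:bij-tri}; in particular the shift isomorphism $(A_1[1]) \boxtimes_2 A_2 \xrightarrow{\sim} (A_1 \boxtimes_2 A_2)[1]$ is, by construction, obtained from the triangulatedness of $pr^{12,*}_1$ and the bi-triangulatedness of $\otimes_2$. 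Expanding these identifications turns each of the two diagrams of (mor-triETS) for $\rho^e$ into one built solely from $\theta$, $\rho$, the internal shift isomorphisms for $\otimes_1$ and $\otimes_2$, inverse images, and the structural connection and monoidality isomorphisms.

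With this in hand I would check the first diagram of (mor-triETS) for $\rho^e$, the second being entirely analogous, by the usual decomposition. I would arrange the large diagram so that its central rectangle is precisely an instance of the first diagram of (mor-triITS) for $\rho$, which commutes by hypothesis; the surrounding lateral pieces then commute by naturality, by the compatibility of $\theta$ with shift (condition (mor-tri$\Scal$-fib)), and by the compatibility of the internal shift isomorphisms with the monoidality isomorphisms $m_1,m_2$ (condition (triITS-2) of \Cref{defn:triITS}), which is exactly the ingredient making $(\boxtimes_2,m_2)$ triangulated. The coherence result \Cref{lem-coherent_conn-mor-monoint} guarantees that the various unnamed structural isomorphisms appearing in the decomposition are independent of the chosen path, so that the pieces fit together unambiguously. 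The converse direction proceeds identically, using the description of $\rho^i$ from \Cref{lem_mor_ext_to_int} as a composite of $\rho$, the inverse image along the diagonal $\Delta_S$, and $\theta$, reducing (mor-triITS) for $\rho^i$ to (mor-triETS) for $\rho$ and invoking \Cref{lem-coherent_conn-mor-monoext} in place of \Cref{lem-coherent_conn-mor-monoint}.

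The main obstacle here is organizational rather than conceptual: as throughout this section, no new idea is needed, but once the definitions of $\rho^e$ and $\rho^i$ are unwound and all shift isomorphisms are expanded, the (mor-triETS) and (mor-triITS) diagrams become sizeable, and one must keep careful track of the interleaving of shift isomorphisms with $\theta$, $\rho$, and $m$. Since every individual piece commutes for a structural reason already established in the triangulated definitions, the verification is routine and, in keeping with the style of \Cref{prop:bij-tri}, may be safely left to the interested reader.
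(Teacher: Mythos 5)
Your proposal is correct and follows essentially the same route as the paper: the underlying bijection comes from \Cref{prop_int_ext_morph}, and the only new content is verifying that the constructions of \Cref{lem_mor_int_to_ext} and \Cref{lem_mor_ext_to_int} exchange conditions (mor-triITS) and (mor-triETS), which the paper, like you, treats as a routine diagram check left to the reader. Your sketch of how that check would go (central rectangle given by the hypothesis, lateral pieces by naturality, (mor-tri$\Scal$-fib), (triITS-2)/(triETS-2), and the coherence lemmata) is exactly the style of argument the paper intends.
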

\begin{proof}
	One has to show that the constructions of \Cref{lem_mor_int_to_ext} and \Cref{lem_mor_ext_to_int}  can be canonically promoted to the triangulated setting. To this end, note that there exists an obvious way to make the natural isomorphisms in the external and internal setting into bi-triangulated natural isomorphisms. Thus one only needs to check that this respects the additional conditions of \Cref{defn:mor-triITS} and \Cref{defn:mor-triETS}. This is easy and left to the interested reader. 
\end{proof}
	
\begin{rem}
	Note that no additional condition is needed in the triangulated case for the associativity, symmetry and unitarity of a morphism as discussed in the second part of \Cref{sect:tens-mor}. Again, the reason is that these are properties rather than additional structures.
\end{rem}


\begin{thebibliography}{9}
	\bibitem[SGA1]{SGA1} 
	\textit{Revêtements étales et groupe fondamental}. Séminaire de Géometrie Algébrique du Bois-Marie, dirigé par A. Grothendieck, augmenté de deux exposés de Mme M. Raynaud, Lecture Notes in Math., 224, Springer-Verlag, Berlin, 1964.
	\bibitem[Ayo07a]{Ayo07a} J. Ayoub,
	\textit{Les six opérations de Grothendieck et le formalisme des cycles évanescents dans le monde motivique, I}. Astérisque (2007), no. 314, x + 466 pp. (2008). 
	\bibitem[Ayo07b]{Ayo07b} J. Ayoub,
	\textit{Les six opérations de Grothendieck et le formalisme des cycles évanescents dans le monde motivique, II}. Astérisque (2007), no. 315, vi + 364 pp. (2008).
	\bibitem[Ayo10]{Ayo10} J. Ayoub,
	\textit{Note sur les opérations de Grothendieck et la réalization de Betti}. J. Inst. Math Jussieu \textbf{9} (2010), no. 2, pp. 225–263.
	\bibitem[Bei87]{Bei87} A. Beilinson,
	\textit{On the derived category of perverse sheaves}. K-theory, arithmetic and geometry (Moscow, 1984–1986), Lecture Notes in Math., vol. 1289, Springer, Berlin, 1987, pp. 27–41.
	\bibitem[BBD82]{BBD82} A. Beilinson, J. Bernstein, P. Deligne,
	\textit{Faisceaux pervers}. Analysis and topology on singular spaces, I (Luminy, 1981), Astérisque, vol. 100, Soc. Math. France, Paris, 1982, pp. 5–171.
	\bibitem[CD19]{CisDeg} D.-C. Cisinski, F. Déglise,
	\textit{Triangulated categories of mixed motives}. Springer Monographs in Mathematics (Springer), 2019, xlii+406 pp.
	\bibitem[Del01]{DelVoe} P. Deligne,
	\textit{Voevodsky's lectures on cross functors}. Motivic Homotopy Theory program, IAS Princeton, Fall 2001. 
	\bibitem[DG22]{DrewGal} B. Drew, M. Gallauer,
	\textit{The universal six-functor formalism}. Ann. K-theory, vol. 7, n. 4, 2022.
	\bibitem[Hör17]{Hoer17} F. Hörmann,
	\textit{Six-functor-formalisms and fibered multiderivators}. Sel. Math. New Ser. \textbf{24}, pp. 2841–2925 (2018).
	\bibitem[IM24]{IM24} F. Ivorra, S. Morel,
	\textit{The four operations on perverse motives}. J. Eur. Math. Soc. (online first), 2024.
	\bibitem[Mac63]{Mac63} S. Mac Lane,
	\textit{Natural associativity and commutativity}. Rice University Studies 49, 1963, no. 4, pp. 28–46.
	\bibitem[Mac71]{Mac71} S. Mac Lane,
	\textit{Categories for the working mathematician}. Volume 5 of Graduate Texts in Mathematics, Springer-Verlag, 1971.
	\bibitem[MV20]{MV20} J. Moeller, C. Vasilakopoulou,
	\textit{Monoidal Grothendieck construction}. Theory and Applications of Categories, vol. 35, 2020, no. 31, pp. 1159–1207.
	\bibitem[Ram22]{Ram22} M. Ramzi,
	\textit{A monoidal Grothendieck construction for $\infty$-categories}. Preprint, available at \url{https://arxiv.org/abs/2209.12569}.
	\bibitem[Sai90]{Sai90} M. Saito,
	\textit{Mixed Hodge modules}. Publ. Res. Inst. Math. Sci., 26(2), pp. 221–333, 1990.
	\bibitem[Shu08]{Shu08} M. Shulman,
	\textit{Framed bicategories and monoidal fibrations}. Theory and Applications of Categories, vol. 20, 2008, no. 18, pp 650–738.
	\bibitem[Ter24F]{Ter24Fact} L. Terenzi,
	\textit{Constructing monoidal structures on fibered categories via factorizations}. Preprint, available at \url{https://arxiv.org/abs/2401.13489}.
	\bibitem[Ter24E]{Ter24Emb} L. Terenzi,
	\textit{Extending monoidal structures on fibered categories via embeddings}. Preprint, available at \url{https://arxiv.org/abs/2401.13517}.
	\bibitem[Ter24N]{Ter24Nori} L. Terenzi,
	\textit{Tensor structure on perverse Nori motives}. Preprint, available at \url{https://arxiv.org/abs/2401.13547}.
\end{thebibliography}
\end{document}